\documentclass[12pt]{article}
\usepackage{amsmath,amssymb}
\usepackage{amscd}
\usepackage{comment}
\usepackage{color}
\usepackage{enumerate}
\usepackage{cases}
\usepackage{mathtools}
\usepackage[all]{xy}
\usepackage{amsthm}
\usepackage{pb-diagram}
\usepackage[top=25truemm,bottom=25truemm,left=20truemm,right=20truemm]{geometry}
\theoremstyle{definition}
\newtheorem{defi}{Definition}
\newtheorem{theo}[defi]{Theorem}
\newtheorem{lemm}[defi]{Lemma}
\newtheorem{prop}[defi]{Proposition}
\newtheorem{cor}[defi]{Corollary}
\newtheorem{ex}[defi]{Example}
\newtheorem{rem}[defi]{Remark}

\def\det{\mathrm{det}}

\def\SO{\mathrm{SO}}

\def\SL{\mathrm{SL}}

\def\SU{\mathrm{SU}}

\def\O{\mathrm{O}}

\def\End{\mathrm{End}}

\def\refe{\ast}

\def\vol{\mathrm{vol}}

\def\max{\mathrm{max}}

\def\min{\mathrm{min}}

\def\loc{\mathrm{loc}}

\def\cut{\mathrm{cut}}

\def\sh{\mathrm{sh}}

\def\sgn{\mathrm{sgn}}

\def\esssup{\mathrm{ess} \ \mathrm{sup}}

\def\Ric{\mathrm{Ric}}

\def\R{{\mathbb R}}
\def\Z{{\mathbb Z}}
\def\C{{\mathbb C}}
\def\N{{\mathbb N}}

\def\K{{\mathbb K}}

\def\D{{\mathbb D}}

\def\L{{\cal L}}
\def\O{{\cal O}}

\def\inum{{\sqrt{-1}}}

\providecommand{\keywords}[1]
{
\small 
\textbf{{Keywords---}} #1
}
\providecommand{\MSC}[1]
{
\small 
\textbf{{MSC---}} #1
}

\begin{document}

\title {Complete harmonic metrics and subharmonic functions \\
on the complex plane}
\author {Natsuo Miyatake}
\date{}
\maketitle

\begin{abstract}
This paper continues our previous work \cite{Miy6} on an extension of the Li--Mochizuki theorem concerning the existence and uniqueness of complete diagonal harmonic metrics on cyclic Higgs bundles over open Riemann surfaces. We consider the extended Hitchin equation associated with general subharmonic weight functions on the canonical bundle, not necessarily induced by holomorphic $r$-differentials. We establish uniqueness of complete solutions with determinant $1$ for arbitrary subharmonic weight functions, without additional regularity assumptions on them. We also prove that, on the complex plane, such a complete solution exists whenever the weight function is not identically $-\infty$. Furthermore, if $(\varphi_\epsilon)_{\epsilon>0}$ is a family of smooth subharmonic weight functions decreasing to such a weight function $\varphi$ as $\epsilon\searrow0$, the associated family of smooth complete solutions $(h_\epsilon)_{\epsilon>0}$ with $\det(h_\epsilon)=1$ converges monotonically to the unique complete solution $h$ associated with $\varphi$ and satisfying $\det(h)=1$.
\end{abstract}

\MSC{30C15, 31A05, 53C07}

\keywords{Cyclic Higgs bundles, Harmonic metrics, Complete solutions, Subharmonic functions, Weak Omori--Yau maximum principle, Stochastic completeness, Keller--Osserman condition, Strong Liouville property}

\section{Introduction and main theorems}\label{1}
This paper is a continuation of \cite{Miy6}, in which we studied an extension of the Li--Mochizuki theorem on the existence and uniqueness of complete harmonic metrics on cyclic Higgs bundles associated with holomorphic $r$-differentials, by allowing Hermitian metrics solving the extended Hitchin equation to be associated with more general subharmonic weight functions on the canonical bundle, not necessarily induced by holomorphic $r$-differentials. In the present paper, we prove in full generality the uniqueness theorem, which was established only under an additional regularity condition in \cite{Miy6}. We also prove existence and approximation results for complete solutions on the complex plane, which were not addressed in the previous paper. In this section, we present a precise statement of the main theorems. Before doing so, we briefly review the background on complete harmonic metrics on cyclic Higgs bundles and their extension to more general subharmonic weight functions.

Let $X$ be a Riemann surface, $K_X \to X$ its canonical bundle, and $T_X \coloneqq K_X^{-1} \to X$ the holomorphic tangent bundle. For each integer $r \ge 2$ and a choice of a square root $K_X^{1/2}$ of the canonical bundle, Hitchin \cite{Hit1, Hit2} introduced a holomorphic vector bundle $\K_r$ of rank $r$, defined by
\begin{align*}
\K_r \coloneqq \bigoplus_{j=1}^r K_X^{\frac{r-(2j-1)}{2}}
= K_X^{\frac{r-1}{2}} \oplus K_X^{\frac{r-3}{2}} \oplus \cdots \oplus K_X^{-\frac{r-3}{2}} \oplus K_X^{-\frac{r-1}{2}} .
\end{align*}
The definition of $\K_r$ arises from the Weyl vector of type $A_{r-1}$.  
For each $q \in H^0(K_X^r)$, one associates a holomorphic section $\Phi(q) \in H^0(\End \K_r \otimes K_X)$, called a Higgs field, given by
\begin{align*}
\Phi(q) \coloneqq 
\begin{pmatrix}
0 &        &        & q \\ 
1 & \ddots &        &   \\ 
  & \ddots & \ddots &   \\ 
  &        & 1      & 0
\end{pmatrix}.
\end{align*}
The pair $(\K_r,\Phi(q)) \to X$ is called a cyclic Higgs bundle (cf. \cite{Bar1, Hit1, Hit2}). A diagonal Hermitian metric $h=(h_1,\dots,h_r)$ on $\K_r$ induces Hermitian metrics $H_1,\dots,H_{r-1}$ on $T_X \to X$, defined by $H_j \coloneqq h_j^{-1}\otimes h_{j+1}$ for $j=1,\dots,r-1$. Moreover, $h_1$, $h_r$, and a holomorphic $r$-differential $q$ together determine a degenerate Hermitian metric $H_r$ on $T_X \to X$, defined by $H_r\coloneqq h_r^{-1}\otimes h_1\otimes h_q$, where $h_q$ is a degenerate Hermitian metric on $K_X^{-r}$ given by
\begin{align*}
h_q(u,v) \coloneqq \langle q_x, u\rangle\, \overline{\langle q_x, v\rangle}
\quad \text{for } x\in X, \ u,v\in (K_X^{-r})_x.
\end{align*}
A diagonal Hermitian metric is said to be \emph{complete} if the K\"ahler metrics induced by $H_1,\dots,H_{r-1}$ are all complete. A Hermitian metric on $(\K_r,\Phi(q))\rightarrow X$ is said to be a {\it harmonic metric} if it solves the Hitchin equation. Li--Mochizuki \cite{LM1} proved that for any Riemann surface $X$ and any holomorphic section $q$ that is non-zero unless $X$ is hyperbolic, there exists a unique complete harmonic metric $h$ on $(\K_r,\Phi(q)) \to X$ with $\det(h)=1$.  

There are two extreme cases in which the Hitchin equation admits explicit solutions: the case where the holomorphic section $q$ has no zeros, and the case where $q$ is identically zero. If $q$ has no zeros, then the diagonal metric $h$ constructed from $H_1=\cdots=H_{r-1}=h_q^{1/r}$
by the inverse change of variables is a harmonic metric. In particular, if the metric $h_q^{1/r}$ is complete, then the harmonic metric $h$ is also complete. By contrast, suppose that $q$ is identically zero and that there exists a K\"ahler metric of Gaussian curvature $-2$ on $X$. Let $H$ be the Hermitian metric on $K_X^{-1}\to X$ induced by this K\"ahler metric. Then the diagonal Hermitian metric $h$ on $\K_r$ constructed from the Hermitian metrics $H_j \coloneqq j(r-j)H\ (j=1,\dots,r-1)$
by the inverse change of variables is a harmonic metric, and it is complete whenever the K\"ahler metric is complete. From this perspective, complete harmonic metrics on cyclic Higgs bundles may be viewed as interpolating between complete flat K\"ahler metrics and complete hyperbolic metrics. The zeros of $q$ bend a flat metric, while the metric approaches the hyperbolic one in the limiting case $q=0$.

The section $q$ induces a semipositive singular Hermitian metric $h_q^{-1/r}=e^{-\frac{1}{r}\log|q|^2_{h_\ast}}h_\ast$ on $K_X$, and the associated diagonal harmonic metric depends only on this metric. When we consider the harmonic metric as something intermediate between a flat K\"ahler metric and a K\"ahler metric with negative constant curvature, rather than as something that produces a harmonic map, and as an object of study in complex analysis, it is therefore natural to extend the theory to a general semipositive singular Hermitian metric $e^{-\varphi}h_\refe$, moving beyond the classical setting of holomorphic differentials.  Since subharmonic functions can be approximated by logarithms of absolute values of holomorphic functions, this extension may be interpreted as studying Higgs fields with cyclic multivalency and the associated harmonic metrics on ramified coverings, as well as limits of these metrics as the covering degrees tend to infinity.

The main significance of extending the theory of complete solutions to general semipositive singular Hermitian metrics lies in the construction of two functions, called entropy and free energy, from these solutions and in their application to the approximation theory of equilibrium metrics. These functions are briefly reviewed in Section \ref{S and F}, and details of their application to the approximation theory of equilibrium metrics will be presented in \cite{Miy8}.

In this paper, following \cite{Miy6}, we study the existence and uniqueness of complete solutions associated with more general subharmonic weight functions. In our previous paper, we proved uniqueness under the assumption that $e^{\varphi}h_\refe^{-1}$ is $C^2$ outside a compact subset of the Riemann surface, where $e^{-\varphi}h_\refe$ is the given semipositive singular Hermitian metric used to define the equation. In the present paper, we establish the uniqueness theorem in full generality. We also establish existence and approximation results for complete solutions on the complex plane, which were not addressed in our previous paper. Throughout this paper, when we say that ``$h$ is a complete solution associated with $\varphi$," we mean that $h$ is a solution to the extended Hitchin equation \eqref{phi} in Section \ref{2.2} associated with $\varphi$, and that it is complete in the sense defined in Section \ref{rdc}. Our main theorems are as follows:

\begin{theo}\label{main theorem 1}
{\it
Let $h=(h_1,\dots,h_r)$ and $h^\prime=(h_1^\prime,\dots,h_r^\prime)$ be complete solutions associated with $\varphi$ satisfying $\det(h)=\det(h^\prime)=1$.  
Then $h=h^\prime$. 
}
\end{theo}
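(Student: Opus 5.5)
We would follow the strategy of Li--Mochizuki, adapted to weights $\varphi$ without regularity, as in \cite{Miy6}. We write the equation in terms of $H_1,\dots,H_{r-1}$ on $T_X$. Set $u_j \coloneqq \log(H_j'/H_j)$ for $j=1,\dots,r-1$; the condition $\det(h)=\det(h')=1$ makes the functions $w_j \coloneqq \log(h_j'/h_j)$ satisfy $\sum_j w_j=0$. For each $j$, the difference of the two extended Hitchin equations gives
\begin{align*}
\Delta_{g} w_j = c\,\bigl(|\,\cdot\,|^2_{h'} - |\,\cdot\,|^2_{h}\bigr)_j ,
\end{align*}
where the right-hand side is a sum of terms $e^{-\varphi}(\dots)$ and $H_k$-terms. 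The key algebraic fact, as in the cyclic case, is that the right-hand side is monotone in the $w_j$: at a point where $w_1$ attains the largest value among the $w_j$, it has a sign forcing $\Delta w_1\ge0$ with a quantitative lower bound. Since $\varphi$ is only subharmonic, possibly with $e^{-\varphi}$ unbounded or zero on a polar set, we would interpret these relations in the sense of distributions. Because the solutions $h,h'$ are continuous with $\Delta$ in $L^1_{\loc}$ (from the regularity established for solutions in Section \ref{2.2}), the $w_j$ are continuous, and the inequalities hold weakly.

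First step: we show $w_{\max}\coloneqq\max_j w_j$ is bounded. Here we use completeness. The metric $g=H_1$ (or any $H_j$) is complete, and its curvature is bounded below by the equation itself, since the Gaussian curvature of $H_j$ equals $-(\text{positive terms}) + (\text{positive terms})$ and can be controlled from below only on the part not involving $e^{-\varphi}$. This is exactly the point where the previous paper needed $C^2$ regularity, so we avoid curvature bounds entirely. Instead we use the weak Omori--Yau maximum principle (stochastic completeness) for the complete metric $g_0 \coloneqq \sum_j H_j$ or $H_1+H_{r-1}$, whose volume growth we control by comparison. Concretely, we compare each $H_j$ with $H'_j$ via a Keller--Osserman type differential inequality:
\begin{align*}
\Delta_{g} w_{\max} \ge c\,(e^{w_{\max}}-1)
\end{align*}
weakly, with $g$ a suitable combination of the $H_j$. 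The strong Liouville / Keller--Osserman property on a stochastically complete surface then forces $w_{\max}\le 0$. By symmetry, $\max_j(-w_j)\le0$ as well, so $w\equiv0$.

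Main obstacle: justifying the maximum principle when $\varphi$ is arbitrary. The Laplacian inequality holds only weakly, and the metric $g$ used must be complete without assuming any curvature lower bound. We would handle this by a Kato-type inequality for the maximum of the subsolutions $w_j$, which preserves weak subharmonicity. We would also replace the Omori--Yau principle requiring $\Ric\ge -C$ by its weak (stochastic completeness) version, which holds for the complete metric $H_1$ because the equation gives $\Delta_{H_1}\log(\text{volume density})\le C$ off the singular part. If this fails, a fallback is to approximate $\varphi$ by decreasing smooth $\varphi_\epsilon$ locally and pass the inequality to the limit using monotone convergence of $e^{-\varphi_\epsilon}$.
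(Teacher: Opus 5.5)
There is a genuine gap at the centre of your argument: the Keller--Osserman inequality $\Delta_g w_{\max}\ge c\,(e^{w_{\max}}-1)$ cannot be derived from the two equations. Subtracting them gives exactly
\begin{align*}
\inum\partial\bar\partial w_j=\vol(H_j)\bigl(1-e^{w_{j+1}-w_j}\bigr)+\vol(H_{j-1})\bigl(e^{w_j-w_{j-1}}-1\bigr),\qquad w_0\coloneqq w_r,\ w_{r+1}\coloneqq w_1 .
\end{align*}
At a maximal index both terms are nonnegative, so your ``sign'' claim is correct. Their size, however, is governed by the gaps $w_j-w_{j\pm1}$ to the two neighbours, not by $w_{\max}$. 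The first term never exceeds $\vol(H_j)$. The only term that can grow exponentially is weighted by $\vol(H_{j-1})$, which for $j=1$ is $\vol(H_0)$ and carries $e^{r\varphi}$. That weight vanishes on the polar set, and identically when $\varphi\equiv-\infty$, a case Theorem~\ref{main theorem 1} must cover on hyperbolic surfaces.

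For instance, take $r=3$ and $\varphi\equiv-\infty$, with non-real solutions (reality is not assumed). On the open set where $w_1$ is the strict maximum, $\inum\partial\bar\partial w_{\max}=\vol(H_1)(1-e^{w_2-w_1})$. This tends to $0$ near $\{w_1=w_2\}$ while $w_{\max}$ can stay large. So no bound $\inum\partial\bar\partial w_{\max}\ge c(e^{w_{\max}}-1)\vol(g)$ with any positive area form $\vol(g)$ follows. In the bad direction the nonlinearity saturates, so Keller--Osserman has nothing superlinear to exploit. The weak Omori--Yau principle is not available either, because it presupposes $\sup w<\infty$. Hence your argument never produces the a priori bound on $w$, which is the crux.

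The paper gets that bound differently. The superlinear inequality is available for a \emph{single} solution measured against a complete background $H$, not for the difference. Take $H$ with curvature bounded below and $e^{\varphi}h_\refe^{-1}\otimes H^{-1}$ bounded, e.g.\ $H=H_1$ after Corollary~\ref{ji}. Then Lemma~\ref{ephi} gives $\inum\Lambda_H\partial\bar\partial u\ge C(e^u-1)$ for $u=\log\sum_k\vol(H'_k)/\vol(H)+\mathrm{const}$. Proposition~\ref{SL}, via Corollaries~\ref{CYM} and~\ref{ji}, then yields mutual boundedness of all $H_j,H'_j$, hence boundedness of $s_j=e^{w_j}$. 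Only then is the weak Omori--Yau principle (Proposition~\ref{wstoc}) applied, and to $\sum_j s_j$ rather than to $w_{\max}$. The inequality of Lemma~\ref{ddbars} controls all adjacent differences $s_{j-1}-s_j$ at once, and Lemma~\ref{lem4} with AM--GM finishes.

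Your diagnosis of the curvature is also mistaken. For $1\le j\le r-1$,
\begin{align*}
\inum F_{H_j}=-2\vol(H_j)+\vol(H_{j-1})+\vol(H_{j+1})\ge-2\vol(H_j),
\end{align*}
because the $e^{r\varphi}$ term enters with a favourable sign, so each $H_j$ has Gaussian curvature bounded below for free. The real obstacle is regularity: the $H_j$ are only $C^{1,\alpha}$ and the relevant functions are not $C^2$. The paper handles this by smoothing the metric while keeping a Ricci lower bound (Proposition~\ref{GWT}) and by using weak versions of both maximum principles. By discarding the curvature bound, you leave stochastic completeness unsupported; ``volume growth controlled by comparison'' is not an argument. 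You also lose the Laplacian comparison needed for the barrier in Proposition~\ref{SL}.

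Finally, your fallback is circular. The given $h,h'$ are not known to be limits of solutions for smoothed weights, and that convergence (Theorem~\ref{main theorem 3}) is itself proved using uniqueness.
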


\begin{theo}\label{main theorem 2}
{\it
On the complex plane $\C$, for any $\varphi$ that is not identically $-\infty$, there exists a real complete solution $h=(h_1,\dots,h_r)$ associated with $\varphi$ satisfying the following property:
\begin{itemize}
\item For each $R>0$, let $\D_R$ be the open ball of radius $R$ and $h^{>R}$ the unique complete solution associated with $\varphi|_{\C\setminus\overline{\D}_R}$ satisfying $\det(h^{>R})=1$. Then for any $R_1>R_2>0$, $h$ is mutually bounded with $h^{>R_2}$ on $\C\setminus \D_{R_1}$.
\end{itemize}
}
\end{theo}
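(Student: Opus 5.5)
Throughout, I write the extended Hitchin equation in the change of variables $H_j = h_j^{-1}\otimes h_{j+1}$ ($j=1,\dots,r-1$), with $H_r$ built from $h_r^{-1}\otimes h_1\otimes e^{\varphi}h_\refe^{-1}$. It then becomes a Toda-type system. With $u_j=\log|{\cdot}|_{H_j}$-type potentials, it has the schematic form $\Delta u_j = 2e^{u_j}-e^{u_{j-1}}-e^{u_{j+1}}$ (indices cyclic), where the $r$-th term carries the weight $e^{\varphi}$.

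The plan is to construct $h$ by exhaustion by annuli combined with approximation, using Theorem \ref{main theorem 1} for the exterior solutions $h^{>R}$. These exist because $\C\setminus\overline{\D}_R$ is hyperbolic, so the results of \cite{Miy6} (or the Li--Mochizuki-type existence there) provide a unique complete solution with determinant $1$.

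\textbf{Step 1 (smooth approximation).} Take smooth subharmonic $\varphi_\epsilon\searrow\varphi$, for instance by convolution. Since $\varphi\not\equiv-\infty$, each $\varphi_\epsilon$ is finite. For smooth weights on $\C$ with $\varphi_\epsilon$ not identically $-\infty$, I would first establish existence directly. Solve Dirichlet-type problems on $\D_R$ with boundary values blowing up (or use the hyperbolic-type solution on $\D_R$ as a supersolution) and let $R\to\infty$. Completeness then comes from a lower bound obtained by comparing with a flat-type subsolution built from $e^{\varphi_\epsilon/r}$, using that $\Delta\varphi_\epsilon\ge0$. Uniqueness (Theorem \ref{main theorem 1}) ensures these $h_\epsilon$ are well defined. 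The maximum-principle comparison, in the weak Omori--Yau form available for complete solutions, gives monotonicity in $\epsilon$: as $\varphi_\epsilon$ decreases, the $H_j^\epsilon$ are monotone.

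\textbf{Step 2 (uniform two-sided bounds).} For the upper bound I compare with the exterior solutions. On $\C\setminus\overline{\D}_{R_2}$, the metric $h^{>R_2}$ associated with $\varphi$ dominates, by the maximum principle, each $h_\epsilon$ restricted there, in the sense $H^\epsilon_j\le H^{>R_2}_j$, uniformly in $\epsilon$. Inside $\D_{R_1}$ I use a local interior estimate (Cheng--Yau / Osserman-type): any solution of the Toda system with the positive coupling satisfies $u_j\le C(R)$ on compact sets, independently of boundary behavior. This is the Keller--Osserman property. For the lower bound away from infinity I use monotonicity from Step 1 together with the fact that $\varphi\not\equiv-\infty$. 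Near infinity, the lower bound on $\C\setminus\D_{R_1}$ comes from a comparison $H_j\ge c\,H^{>R_2}_j$. To get it, I would show that $h^{>R_2}$ restricted to $\C\setminus\D_{R_1}$ is a subsolution up to a constant, after modifying by a cutoff, and apply the Omori--Yau maximum principle on the complete end.

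\textbf{Step 3 (limit).} Pass to the monotone limit $h=\lim h_\epsilon$. The uniform bounds give local $L^\infty$ control on the $u_j$, and the equations with measure right-hand sides give weak convergence, so $h$ solves the equation associated with $\varphi$. The lower bound by $H^{>R_2}$ near infinity yields completeness, since $h^{>R_2}$ is complete at the infinite end. Mutual boundedness with $h^{>R_2}$ on $\C\setminus\D_{R_1}$ is exactly the pair of bounds from Step 2. The resulting $h$ is real and has determinant $1$.

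\textbf{Main obstacle.} The main obstacle is the lower bound near infinity uniformly in $\epsilon$. The comparison between a solution on $\C$ and one on the exterior domain breaks down near $\partial\D_{R_2}$, where $h^{>R_2}$ blows up. I would handle this by working on $\C\setminus\D_{R_1}$ with a barrier. Specifically, I would use $h^{>R_2}$ multiplied by a constant chosen so that the boundary inequality holds on $|z|=R_1$, where $h^{>R_2}$ is bounded. The comparison at infinity is then controlled via the stochastic completeness of the complete metric.
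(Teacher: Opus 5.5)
Your overall architecture (smooth approximation, monotone limit, comparison with $h^{>R_2}$ near infinity) matches the paper's. Your ``main obstacle'' argument on $\C\setminus\D_{R_1}$ (supersolution $e^ch^{>R_2}$, a boundary constant on $|z|=R_1$, stochastic completeness) is essentially the paper's last step for $h_j\le e^ch^{>R_2}_j$; the paper also needs the a priori bound \eqref{80} there so that Omori--Yau applies. There are, however, two genuine gaps.

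First, in Step 1 you derive completeness of the smooth solutions $h_\epsilon$ from the flat barrier $H_j\gtrsim e^{\varphi_\epsilon}h_{\refe}^{-1}$. That metric need not be complete. For the harmonic weight $\mathrm{Re}\,z$ (which equals its own mollifications), the flat metric $e^{\mathrm{Re}\,z}|dz|^2$ is itself a solution, yet the ray $z=-t$ has finite length. So your exhaustion by discs produces a solution but not a complete one. Completeness of $h_\epsilon$ is then used later, both for monotonicity in $\epsilon$ and for Omori--Yau. The paper obtains it from Corollary \ref{cor52}. There the solution is sandwiched between a subsolution glued from $h^{<R_1}$ and $h^{>R_2}$ and a supersolution glued from $h^{(\varphi)}$ and $e^ch^{>R_2}$. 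Hence it is mutually bounded near infinity with $h^{>R_2}$, which is complete there.

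Second, your claim that $H^\epsilon_j\le H^{>R_2}_j$ (i.e.\ $h_{\epsilon,j}\ge h^{>R_2}_j$) holds uniformly in $\epsilon$ ``by the maximum principle'' does not follow. Since $\varphi_\epsilon\ge\varphi$, the restriction of $h_\epsilon$ is only a \emph{sub}solution for $\varphi$ (Example \ref{subandsuper}). Comparing it with the complete $\varphi$-solution $h^{>R_2}$ can therefore only bound $h_{\epsilon,j}$ from above. Moreover, nothing controls $h_\epsilon$ relative to $h^{>R_2}$ at infinity, and your interior Keller--Osserman bound on compact sets gives nothing there.

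The missing idea is to use the $\varphi_\epsilon$-exterior solution as a complete background. On $(\C\setminus\overline{\D}_{R_2},H^{>R_2}_{\epsilon,j})$ the circle $|z|=R_2$ is at infinite distance. Proposition \ref{SL}, applied to $\log\sum_k\vol(H_{\epsilon,k})/\vol(H^{>R_2}_{\epsilon,j})$ with the $\epsilon$-uniform bound of Lemma \ref{le56.5}, then yields $h_{\epsilon,j}^{-1}h^{>R_2}_{\epsilon,j}\le C(R_2)$ without any boundary data (Lemma \ref{le56}). One then passes to the limit via $h^{>R_2}_\epsilon\to h^{>R_2}$, which itself relies on Theorem \ref{main theorem 1} (Lemma \ref{le57}).

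Finally, your ``lower bound away from infinity from monotonicity'' points the wrong way. Since $h_{\epsilon,j}$ increases as $\epsilon\searrow0$, monotonicity only bounds $H^\epsilon$ from above. The paper instead uses $h_{\epsilon,j}\le h^{(\varphi)}_j$ together with $W^{2,p}$ regularity to show the limit is nondegenerate even on the polar set of $\varphi$. This is also what makes your boundary constant on $|z|=R_1$ uniform in $\epsilon$.
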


\begin{theo}\label{main theorem 3}
{\it
Let $(e^{-\varphi_\epsilon}h_\refe)_{\epsilon>0}$ be a family of smooth semipositive Hermitian metrics on the canonical bundle over $\C$ satisfying the following property:
\begin{itemize}
\item For each $\epsilon>\epsilon^\prime>0$, $\varphi_{\epsilon^\prime}\leq \varphi_\epsilon$, and $(\varphi_\epsilon)_{\epsilon>0}$ converges to a subharmonic weight function $\varphi$, which is not identically $-\infty$, as $\epsilon\searrow 0$.
\end{itemize}
Then the corresponding family of complete solutions
$(h_\epsilon=(h_{\epsilon,1},\dots,h_{\epsilon,r}))_{\epsilon>0}$ with $\det(h_\epsilon)=1$
converges monotonically, as $\epsilon\searrow 0$, to the complete solution
$h=(h_1,\dots,h_r)$ associated with $\varphi$ satisfying  $\det(h)=1$.
}
\end{theo}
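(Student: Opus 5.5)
The plan is to obtain Theorem \ref{main theorem 3} from Theorems \ref{main theorem 1} and \ref{main theorem 2}, using a comparison (monotonicity) principle for the extended Hitchin equation. I work in the variables $w_j=\log(H_j/g)$ for a fixed reference metric $g$, or equivalently with $\log h_j$. These are the variables in which the extended Hitchin equation becomes a cooperative (Toda-type) system.

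\textbf{Step 1: monotonicity of the approximants.} For $\epsilon>\epsilon'$ we have $\varphi_{\epsilon'}\le\varphi_\epsilon$, so the weight $e^{\varphi_{\epsilon'}}h_\refe^{-1}$ entering $H_r$ is smaller. I will show that $h_{\epsilon'}$ is a subsolution of the equation for $\varphi_\epsilon$, or a supersolution depending on the sign convention. The weak Omori--Yau maximum principle on the complete metrics, applied as in the proof of Theorem \ref{main theorem 1}, then gives the componentwise inequality $\log h_{\epsilon',j}\le\log h_{\epsilon,j}$ for the appropriate ordering. The smooth uniqueness and comparison statement from \cite{Miy6} should suffice here, because both weights are smooth. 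Hence the limit $\tilde h=\lim_{\epsilon\searrow0}h_\epsilon$ exists pointwise, possibly degenerate a priori.

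\textbf{Step 2: two-sided barriers.} The same comparison, applied between $h_\epsilon$ and the complete solution $h$ associated with $\varphi$ given by Theorem \ref{main theorem 2} (normalized so that $\det h=1$), should give $h_\epsilon\ge h$ or $h_\epsilon\le h$ in the direction opposite to Step 1. This uses $\varphi\le\varphi_\epsilon$. The family is therefore monotone and bounded by $h$ on one side, and by $h_{\epsilon_0}$ for a fixed $\epsilon_0$ on the other. The limit $\tilde h$ is then mutually bounded with $h$, so it is nondegenerate and complete, since completeness passes to mutually bounded metrics.

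This step is the main obstacle. The weight $\varphi$ is only subharmonic and $h$ is not smooth, so the maximum principle must be run for weak (distributional) sub- and supersolutions. I would first try to reuse exactly the device of Theorem \ref{main theorem 1}'s proof, namely the Kato-type inequality for $\max(\log(h_{\epsilon,j}/h_j),0)$ together with stochastic completeness of the complete metric. If that fails near the poles of $\varphi$, I would instead compare on $\C\setminus\overline{\D}_R$ using the bounds with $h^{>R}$ in Theorem \ref{main theorem 2}.

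\textbf{Step 3: the limit solves the equation.} On compact sets the $\log h_{\epsilon,j}$ are uniformly bounded by Step 2. The equation expresses $\Delta\log h_{\epsilon,j}$ as a combination of $e^{\log h_{\epsilon,j+1}-\log h_{\epsilon,j}}$ and of $e^{\varphi_\epsilon}$ times bounded factors. The terms $e^{\varphi_\epsilon}$ decrease to $e^{\varphi}$ and converge in $L^1_{\loc}$ by monotone convergence. We thus obtain local $L^1$ bounds on the Laplacians, and monotone convergence of $\log h_{\epsilon,j}$ in $L^1_{\loc}$. Passing to the limit in the distributional equation, $\tilde h$ is a solution associated with $\varphi$, with $\det\tilde h=1$ and complete by Step 2. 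Theorem \ref{main theorem 1} then forces $\tilde h=h$, which is the claim.
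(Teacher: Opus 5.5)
\textbf{The argument is circular.} You plan to derive Theorem~\ref{main theorem 3} from Theorem~\ref{main theorem 2}, taking a complete solution $h$ for the general weight $\varphi$ and using it both as the upper barrier in Step~2 and as the source of completeness for $\tilde h$. In the paper, Theorem~\ref{main theorem 2} is proved independently only for regular weights: Corollary~\ref{cor52} requires $e^{-\varphi}$ finite and continuous on a ball, and $\varphi$ continuous and $W^{1,2}_{\loc}$ off its polar set. For a general subharmonic $\varphi$, the complete solution is \emph{constructed} as the monotone limit $\sup_\epsilon h_\epsilon$. Its existence is therefore exactly what the proof of Theorem~\ref{main theorem 3} has to deliver, and your proposal assumes the hard part.

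\textbf{What is missing} is an argument that the limit is complete which does not presuppose a complete solution for $\varphi$. The paper gets this from the complete solutions $h^{>R_2}_\epsilon$ on the hyperbolic domain $\C\setminus\overline{\D}_{R_2}$.
\begin{enumerate}
\item Their metrics $H^{>R_2}_{\epsilon,j}$ are bounded below by a fixed multiple of the hyperbolic metric. So their monotone limit is automatically complete, and it equals $h^{>R_2}$ by Theorem~\ref{main theorem 1}.
\item Proposition~\ref{SL}, with $H^{>R_2}_{\epsilon,j}$ as background, gives the $\epsilon$-uniform bound \eqref{79}.
\item Lemma~\ref{cands} on $\C\setminus\D_{R_1}$ gives $h_{\epsilon,j}\le e^c h^{>R_2}_{\epsilon,j}$. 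It uses the non-uniform a priori bound \eqref{80}, and a boundary constant $c$ on $\partial\D_{R_1}$ that works for all $\epsilon<\epsilon_0$ because $h_{\epsilon,j}\le h_j$ and $h^{>R_2}_{\epsilon,j}\ge h^{>R_2}_{\epsilon_0,j}$.
\item Letting $\epsilon\searrow0$, the limit is mutually bounded with $h^{>R_2}$ near infinity, hence complete.
\end{enumerate}
The local bounds in your Step~3 also need no upper barrier. The paper uses $h_{\epsilon,j}\le h^{(\varphi)}_j$ together with $H_{\epsilon,j}\le H_{\epsilon,n}\le h_{\epsilon_0,n}^{-(2n+2-r)}$.

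\textbf{Step~2 fails even granting Theorem~\ref{main theorem 2}.} From $h_{\epsilon_0,j}\le\tilde h_j\le h_j$ you cannot conclude that $\tilde h$ and $h$ are mutually bounded, because $h_{\epsilon_0}$ and $h$ need not be. Take $r=2$ and $\varphi=\log|\sin z\,e^{z^2}|^2$, i.e.\ $\varphi=\phi_q$ with $q=(\sin z\,e^{z^2})^2dz^2$. Near the zero $k\pi$, the natural length scale of the flat metric is $e^{-k^2\pi^2/2}\ll\epsilon_0$. An Ahlfors-type barrier on a disc of that radius gives $H_1(k\pi)\lesssim e^{k^2\pi^2}$. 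A blow-up subsolution barrier on a disc of fixed radius gives $H_{\epsilon_0,1}(k\pi)\gtrsim e^{\varphi_{\epsilon_0}(k\pi)}\gtrsim\epsilon_0^2e^{2k^2\pi^2}$. Hence $h_{\epsilon_0,1}/h_1\to0$ along the points $k\pi$.

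A correct lower bound $\tilde h_j\ge c\,h_j$ would instead come from Corollary~\ref{CYM}. Apply it to the solution $\tilde h$ with a complete $H_i$ of $h$ as background, using $e^{\varphi}h_\refe^{-1}\lesssim H_i$. This gives $\tilde H_k\le CH_i$, and then $\tilde h_j\ge c\,h_j$ via $\prod_{k=j}^{r-j}H_k=h_j^{-2}$. That repairs Step~2 only conditionally on Theorem~\ref{main theorem 2}, so the circularity remains the essential gap.
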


Although Theorems 2 and 3 are stated separately, their proofs follow the same strategy as in \cite{Miy6}: we first prove Theorem \ref{main theorem 2} in the case where $\varphi$ is smooth, then establish the approximation result of Theorem \ref{main theorem 3}, and finally complete the proof of Theorem \ref{main theorem 2} for general $\varphi$. 

Combining Theorems \ref{main theorem 1}, \ref{main theorem 2}, \cite[Theorem 2]{Miy6}, and the argument given in \cite[Proof of Propositions 5.7 and 5.10]{LM1}, we obtain the following corollary:
\begin{cor}\label{ex}{\it Let $\varphi$ be a subharmonic weight function on $X$ that is not identically $-\infty$ unless $X$ is hyperbolic. Then there exists a real complete solution $h=(h_1,\dots,h_r)$ associated with $\varphi$.
}
\end{cor}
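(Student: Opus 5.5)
We argue by the case distinction that the statement itself suggests: either $X$ is hyperbolic, or it is not. By uniformization, a non-hyperbolic Riemann surface $X$ is (up to biholomorphism) $\C$, $\C^\ast$, or a torus $\C/\Lambda$. We do not treat the case $X=\mathbb{P}^1$: $K_X$ has negative degree there, so it carries no semipositive singular metric $e^{-\varphi}h_\refe$ and no subharmonic weight exists. In the hyperbolic case $\varphi$ is allowed to be identically $-\infty$. This corresponds to $q=0$, and we take the explicit complete solution built from the complete hyperbolic metric of curvature $-2$ via $H_j=j(r-j)H$. For general $\varphi$ on a hyperbolic surface we invoke \cite[Theorem 2]{Miy6}, which we take to be the existence theorem there for hyperbolic $X$ (or, more generally, under a condition that hyperbolic surfaces satisfy). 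On $X=\C$ with $\varphi\not\equiv-\infty$, Theorem \ref{main theorem 2} applies directly and yields a real complete solution.

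The remaining non-hyperbolic cases are $\C^\ast$ and tori. For these we pull back to the universal cover, following the argument of \cite[Proof of Propositions 5.7 and 5.10]{LM1}. Let $\pi:\C\to X$ be the universal covering. The weight $\tilde\varphi\coloneqq\pi^\ast\varphi$ is a subharmonic weight on $K_\C\cong\pi^\ast K_X$, and it is not identically $-\infty$. Theorem \ref{main theorem 2} gives a complete solution $\tilde h$ on $\C$ associated with $\tilde\varphi$; after normalising, $\det(\tilde h)=1$. For every deck transformation $\gamma$, the pullback $\gamma^\ast\tilde h$ is again a complete solution associated with $\gamma^\ast\tilde\varphi=\tilde\varphi$. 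The reason is that $\gamma$ is a biholomorphic isometry of the data, and the equation and the completeness condition are natural under biholomorphisms. Moreover $\det(\gamma^\ast\tilde h)=1$. The uniqueness Theorem \ref{main theorem 1} therefore gives $\gamma^\ast\tilde h=\tilde h$. Hence $\tilde h$ descends to a solution $h$ on $X$.

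Completeness passes to the quotient. The Kähler metrics induced by $H_j$ on $X$ are locally isometric to those on $\C$ through the covering map, and a Riemannian quotient of a complete manifold by a group acting freely and properly by isometries is complete. Reality of $h$ also descends, because reality is a pointwise condition preserved by pullback.

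The main point to verify is that \cite[Theorem 2]{Miy6} really covers every hyperbolic surface with arbitrary $\varphi$, including $\varphi\equiv-\infty$. If it instead requires some regularity, we would fall back on the LM1 exhaustion argument. In that argument we take an exhaustion by relatively compact domains, solve the equation on each domain with a complete solution (for example by Theorem \ref{main theorem 1} combined with Miy6's construction), and use the hyperbolic metric as a uniform barrier. The limit is then extracted by monotonicity, as in Theorem \ref{main theorem 3}.
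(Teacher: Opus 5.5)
Your treatment of $\C$, $\C^\ast$ and tori is the paper's argument: lift to the universal cover, obtain existence there, get deck-invariance from Theorem \ref{main theorem 1}, and descend completeness and reality. Your exclusion of $\mathbb{P}^1$ and your explicit solution for $\varphi\equiv-\infty$ are also fine.

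The gap is the hyperbolic case with $\varphi\not\equiv-\infty$. There you apply \cite[Theorem 2]{Miy6} directly on $X$, but the paper does not draw that from \cite{Miy6}. It takes from \cite{Miy6}, together with \cite[Theorem 3]{Miy6} and Lemma \ref{le56.5}, only existence on the unit disc; Lemma \ref{le56.5} is formulated for domains of $\C$, where $\varphi$ can be mollified. The paper then says explicitly that existence on every hyperbolic surface follows by combining Theorem \ref{main theorem 1} with the covering argument. Existence on an arbitrary hyperbolic surface is therefore a consequence of that argument, not an input. It became available only once uniqueness holds with no regularity assumption on the weight, since the lifted weight $\pi^\ast\varphi$ on $\D$ is in general not $C^2$ outside any compact set. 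Your fallback does not repair this. The exhausting domains $Y_i$ are again hyperbolic, generally non-simply-connected, surfaces, so finding a complete solution on each $Y_i$ is the same unsolved problem. Theorem \ref{main theorem 1} is a uniqueness statement and constructs nothing. You also give no lower barrier or compactness argument for the limit.

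The fix is already in your proposal: drop the case distinction and run your $\C^\ast$/torus argument for every $X$. The universal cover $\widetilde{X}$ is $\C$ or $\D$. On $\C$, use Theorem \ref{main theorem 2}. On $\D$, use the disc existence coming from \cite{Miy6}, or the explicit solution $H_j=j(r-j)H$ if $\pi^\ast\varphi\equiv-\infty$. Then $\gamma^\ast\widetilde{h}=\widetilde{h}$ for every deck transformation by Theorem \ref{main theorem 1}. Finally, $\widetilde{h}$ descends to a real complete solution on $X$ by Propositions \ref{hsolves} and \ref{pistarphi}. This is exactly the paper's proof.
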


This paper is structured as follows: Section \ref{2} provides background. Section \ref{3} is devoted to the proof of the main theorems. We first develop a maximum principle on open Riemann surfaces that can be applied to our equations, and then combine it with methods and results from \cite{LM1, Miy6}.

\noindent
{\bf Acknowledgements.} In preparing this paper, I used conversations with ChatGPT (OpenAI; GPT-5.6 Sol, GPT-6.1 Sol, and GPT-6 Astra models, accessed through a ChatGPT Plus subscription) for mathematical discussions, English editing, and literature searches. In particular, Proposition \ref{SL} plays an important role in the establishment of Theorem \ref{main theorem 1}. I initially extended \cite[Theorem 8.20]{BMPR1} to functions with weaker regularity by introducing an auxiliary continuous function. However, the resulting formulation was not strong enough to establish uniqueness in full generality, and additional assumptions were needed. I had doubted whether further extending the argument of that theorem would resolve this difficulty. During the preparation of a precursor to this paper, the GPT-6.1 Sol model suggested the present formulation of Proposition \ref{SL}. Discussions with ChatGPT also helped me identify numerous other gaps in my arguments and substantially improve the manuscript. I have carefully checked all of the mathematical arguments and the final text and take full responsibility for the content of this paper. This work was supported by the Grant-in-Aid for Early-Career Scientists (Grant Number: 24K16912).

\section{Background}\label{2}
\subsection{Real, diagonal, and complete Hermitian metrics}\label{rdc}
Let $X$ be a connected, possibly non-compact Riemann surface and $K_X\rightarrow X$ the canonical bundle. We choose a square root $K_X^{1/2}$ of the canonical bundle $K_X\rightarrow X$. Let $\K_r$ be a holomorphic vector bundle of rank $r$ defined as follows:
\begin{align*}
\K_r\coloneqq \bigoplus_{j=1}^r K_X^{\frac{r-(2j-1)}{2}} = K_X^{\frac{r-1}{2}} \oplus K_X^{\frac{r-3}{2}} \oplus \cdots \oplus K_X^{-\frac{r-3}{2}} \oplus K_X^{-\frac{r-1}{2}}.
\end{align*}
A Hermitian metric $h$ on $\K_r\rightarrow X$ is said to be {\it real} (cf. \cite{Hit2}) if the following bundle isomorphism $S$ is isometric with respect to $h$:
\begin{align*}
S\coloneqq \left(
\begin{array}{ccc}
& & 1 \\ 
& \reflectbox{$\ddots$} & \\ 
1 & &
\end{array}
\right): \K_r\rightarrow \K_r^{\vee},
\end{align*}
where $\K_r^{\vee}$ denotes the dual bundle of $\K_r$. Also, $h$ is called {\it diagonal} (cf. \cite{Bar1, Col1, DL2}) if it splits as $h=(h_1,\dots, h_r)$. Let $T_X\rightarrow X$ be the dual bundle of $K_X\rightarrow X$. By taking the difference between the adjacent components in a diagonal Hermitian metric $h=(h_1,\dots, h_r)$, we obtain $r-1$ Hermitian metrics $H_1,\dots, H_{r-1}$ on $T_X\rightarrow X$:
\begin{align*}
H_j\coloneqq h_j^{-1}\otimes h_{j+1}.
\end{align*}
A diagonal Hermitian metric $h$ is real if and only if $h_j=h_{r-j+1}^{-1}$ for all $j=1,\dots, r$. If $\det(h)=1$, this is also equivalent to the metrics $H_1,\dots, H_{r-1}$ satisfying $H_j=H_{r-j}$ for all $j=1,\dots, r-1$. We say that a diagonal Hermitian metric $h$ is {\it complete} (cf. \cite{LM1}) if the K\"ahler metrics induced by $H_1,\dots, H_{r-1}$ are all complete. 

\begin{rem} Note that the notion of completeness introduced above is considerably broader than the original notion in \cite{LM1}. In \cite{LM1}, a reference K\"ahler metric is fixed, thereby allowing diagonal Hermitian metrics on $\K_r$ to be identified with $r$-tuples of functions. The term ``complete solution'' is then used for a diagonal Hermitian metric $h$ on $\K_r$, viewed as an $r$-tuple of functions, that satisfies $\det(h)=1$ and solves the Hitchin equation for a cyclic Higgs bundle. In contrast, we use the term ``complete'' for arbitrary diagonal Hermitian metrics, without requiring them to be harmonic or imposing any condition on their determinants, although the main theorem and its proof deal exclusively with metrics satisfying $\det(h)=1$.

\end{rem} 
\begin{rem}
In this paper, we allow K\"ahler metrics on Riemann surfaces and Hermitian metrics on $\K_r$ to be continuous but not necessarily smooth. 
Whenever we say that a (possibly non-smooth) K\"ahler metric is complete, we mean that the induced distance function defines a complete metric space.
\end{rem}

\begin{rem}
Throughout this paper, unless otherwise specified, it is implicitly assumed that Riemann surfaces and manifolds have no boundary. Manifolds and Riemann surfaces with boundary will also be considered later, but in such cases the presence of boundary will always be stated explicitly.
\end{rem}

\subsection{Harmonic metrics on cyclic Higgs bundles}\label{2.1}
Let $q$ be a holomorphic section of $K_X^r\rightarrow X$. Then we associate a holomorphic section $\Phi(q)\in H^0(\End \K_r\otimes K_X)$, which is called a Higgs field, as follows:
\begin{align*}
\Phi(q)\coloneqq 
\left(
\begin{array}{cccc}
0 & && q \\ 
1 & \ddots && \\ 
& \ddots & \ddots & \\ 
& & 1 & 0
\end{array}
\right).
\end{align*}
The pair $(\K_r,\Phi(q))$ is called a cyclic Higgs bundle (cf.~\cite{Bar1, Hit1, Hit2}). The above cyclic Higgs bundle is an example of cyclotomic Higgs bundles introduced in \cite{Sim3}. For each Hermitian metric $h$ on $\K_r$, we associate a connection $D_q(h)$ defined as follows:
\begin{align*}
D_q(h)\coloneqq \nabla^h + \Phi(q) + \Phi(q)^{\ast h}.
\end{align*}
A Hermitian metric on $(\K_r,\Phi(q))\rightarrow X$ is called a {\it harmonic metric} if the connection $D_q(h)$ is a flat connection. A harmonic metric $h$ yields a harmonic map $\hat{h}: \widetilde{X}\rightarrow \SL(r,\C)/\SU(r)$, where $\widetilde{X}$ is the universal covering space. If $h$ is real, then $\hat{h}$ maps into $\SL(r,\R)/\SO(r)$. Suppose that $X$ is a compact Riemann surface of genus at least $2$. Then the Higgs bundle $(\K_r,\Phi(q))$ is stable for any $q\in H^0(K_X^r)$, and thus there uniquely exists a harmonic metric $h$ on $(\K_r,\Phi(q))$ such that $\det(h)=1$. From the uniqueness of the metric and the symmetry of the Higgs field (cf.~\cite{Bar1, Hit2}), the harmonic metric $h$ is diagonal and real. To construct a non-diagonal harmonic metric (resp. a non-real harmonic metric), one may, for example, use a solution to the Dirichlet problem for the Hitchin equations with a non-diagonal (resp. non-real) Hermitian metric prescribed on the boundary (cf. \cite{Don1, LM2}).
Let $h=(h_1,\dots, h_r)$ be a diagonal harmonic metric on a cyclic Higgs bundle $(\K_r,\Phi(q))\rightarrow X$. As mentioned in the previous subsection we obtain $r-1$ Hermitian metrics $H_1,\dots, H_{r-1}$ on $T_X\rightarrow X$ defined as
\begin{align*}
H_j\coloneqq h_j^{-1}\otimes h_{j+1} \ \text{for each $j=1,\dots,r -1$}.
\end{align*}
We also obtain a Hermitian metric $H_r$ on $T_X\rightarrow X$ which is degenerate at the zeros of $q$ as follows:
\begin{align*}
H_r\coloneqq h_r^{-1}\otimes h_1 \otimes h_q,
\end{align*}
where $h_q$ is a Hermitian metric on $K_X^{-r}\rightarrow X$ that is degenerate at the zeros of $q$ defined as follows:
\begin{align}
(h_q)_x(u,v)\coloneqq \langle q_x,u\rangle\overline{\langle q_x,v\rangle} \ \text{for $x\in X$, $u,v\in (K_X^{-r})_x$}. \label{q}
\end{align}
The connection $D_q(h)$ is flat if and only if the Hermitian metric $h$ satisfies the following elliptic equation, called the Hitchin equation:
\begin{align*}
F_h + [\Phi(q)\wedge\Phi(q)^{\ast h}] = 0.
\end{align*}
By using $H_1,\dots, H_r$, we can describe the Hitchin equation as follows: 
\begin{align}
\inum F_{h_j} + \vol(H_{j-1}) - \vol(H_j) = 0 \ \text{for $j=1,\dots, r-1$}, \label{FToda}
\end{align}
where for each $j=1,\dots, r$, $\vol(H_j)$ is the volume form of the metric $H_j$ which is locally described as $\vol(H_j)=\inum H_j(\frac{\partial}{\partial z}, \frac{\partial}{\partial z})\ dz\wedge d\bar{z}$, and $H_0$ is $H_r$. Equation (\ref{FToda}) is a kind of Toda equation (cf.~ \cite{AF1, Bar1, GH1, GL1, Moc0, Moc1}). By making a variable change from $h=(h_1,\dots, h_r)$ to $H_1,\dots, H_{r-1}$, the Hitchin equation becomes the following:
\begin{align}
\inum F_{H_j} + 2\vol(H_j) - \vol(H_{j-1}) - \vol(H_{j+1}) = 0 \ \text{for $j=1,\dots, r-1$.} \label{cyc}
\end{align}
In the case where $r=2$ and $q=0$, the Hitchin equation for the cyclic Higgs bundle is equivalent to the following equation:
\begin{align}
\inum F_H + 2\vol(H) = 0, \label{H}
\end{align}
where the solution $H$ to equation (\ref{H}) is a Hermitian metric $H$ on $T_X\rightarrow X$. The K\"ahler metric induced by $H$ is a constant negative Gaussian curvature K\"ahler metric (cf.~\cite{Hit1}). 
We remark on the following two points:
\begin{itemize}
\item If $q$ has no zeros, then $H_1=\cdots=H_{r-1}=h_q^{1/r}$ is a solution to equation (\ref{cyc}).
\item Suppose that $q=0$ and that there is a solution $H$ to equation (\ref{H}). We define positive constants $\lambda_1,\dots, \lambda_{r-1}$ as follows: 
\begin{align}
\lambda_j\coloneqq 2\sum_{k=1}^{r-1}(\Lambda_{r-1}^{-1})_{jk} = j(r-j), \label{lambda}
\end{align}
where $\Lambda_{r-1}^{-1}$ is the inverse matrix of the Cartan matrix of type $A_{r-1}$. Then $H_j=\lambda_j H\ (j=1,\dots, r-1)$ is a solution to equation (\ref{cyc}).
\end{itemize}

Li--Mochizuki \cite{LM1} established the following result:
\begin{theo}[\cite{LM1}]{\it Suppose that $q$ is a non-zero holomorphic section unless $X$ is hyperbolic. Then there always exists a unique complete harmonic metric $h$ on $(\K_r,\Phi(q))\rightarrow X$ satisfying $\det(h)=1$. Moreover, the complete harmonic metric $h$ is real.
}
\end{theo}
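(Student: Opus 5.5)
The statement just above is the Li--Mochizuki theorem, quoted from \cite{LM1}; here I sketch how I would establish it. I split it into existence, uniqueness, and reality. I work throughout with the variables $H_1,\dots,H_{r-1}$ and equation (\ref{cyc}). I fix a reference metric and write $H_j=e^{w_j}g_0$, so that (\ref{cyc}) becomes a cooperative Toda-type elliptic system in $w=(w_1,\dots,w_{r-1})$. Its nonlinearity is monotone in the sense that $\Delta w_j$ is increasing in $w_j$ and decreasing in $w_{j\pm1}$.

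\textbf{Existence.} I use an exhaustion $X_1\Subset X_2\Subset\cdots$ by relatively compact domains with smooth boundary. On each $X_n$ I solve the Dirichlet problem (e.g.\ \cite{Don1, LM2}) with boundary values tending to $+\infty$; concretely, I take a limit of solutions with boundary data $N\to\infty$. This yields complete solutions on $X_n$, and the comparison principle for the cooperative system makes them monotone decreasing in $n$.
\begin{itemize}
\item For the \emph{upper} bound on compacts I use a barrier built from the complete hyperbolic metric on the disc: $H_j\le \lambda_j H_{\mathrm{hyp},X_n}$, with $\lambda_j$ as in (\ref{lambda}). This is a supersolution and blows up at $\partial X_n$.
\item For the \emph{lower} bound I need a global complete subsolution on $X$. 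If $X$ is hyperbolic, I take $\lambda_j H$ with $H$ the complete curvature $-2$ metric. If $q\neq0$, I take the flat metric $|q|^{2/r}$, suitably smoothed near the zeros of $q$ and combined (via a maximum) with the hyperbolic-type subsolution so that it stays a subsolution.
\end{itemize}
Local elliptic estimates then give a limit solution that dominates a complete metric, hence is complete.

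\textbf{Uniqueness.} Given two complete solutions $H,H'$ with $\det=1$, I set $u_j=\log(H_j/H'_j)$. Subtracting the equations gives the differential inequality
\[
\Delta_{H'_j} u_j \ge c\,(e^{u_j}-1)-(\text{coupling terms}).
\]
I pick the index at which $\sup u_j$ is largest. At that index the coupling terms have a favourable sign, because the system matrix is the Cartan matrix, which is diagonally dominant. The Omori--Yau maximum principle on the complete surface $(X,H'_j)$ applies, since the curvature is bounded below via (\ref{cyc}). It yields $\sup u_j\le 0$, and the symmetric argument gives $\inf u_j\ge0$.

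The main obstacle is that $u$ need not be bounded a priori. I would first prove a Cheng--Yau type bound $H_j\le\lambda_j$ times the hyperbolic barrier, which shows $H$ and $H'$ are mutually bounded, and only then apply Omori--Yau. If $q$ has zeros, the Gaussian curvature lower bound can fail near the zeros of $q$; I would handle this with a Keller--Osserman/weak maximum principle (stochastic completeness) instead of the full Omori--Yau principle.

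\textbf{Reality.} The involution $h\mapsto (S^{\ast}h)^{-1}$ maps complete solutions with $\det=1$ to complete solutions with $\det=1$, since $H_j\mapsto H_{r-j}$. Uniqueness then gives $h_j=h_{r-j+1}^{-1}$.
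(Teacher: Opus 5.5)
\textbf{Uniqueness.} The componentwise maximum principle for $u_j=\log(H_j/H'_j)$ does not close.

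\begin{itemize}
\item \emph{Unequal weights.} Divide the difference of the equations (\ref{cyc}) by $\vol(H'_j)$. The coupling terms are $-\frac{\vol(H'_{j\pm1})}{\vol(H'_j)}(e^{u_{j\pm1}}-1)$. They are negative whenever $u_{j\pm1}>0$, and their coefficients are arbitrary positive functions. Diagonal dominance of the Cartan matrix is a statement about equal weights, so it controls nothing here.
\item \emph{The affine node.} For $j=1,r-1$ the affine term satisfies $\vol(H_0)/\vol(H'_0)=e^{-(u_1+\cdots+u_{r-1})}$, so it couples to every component.
\item \emph{Example.} Take $r=3$ and a point where $u_1=u_2=U>0$. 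The right-hand side for $u_1$ is $(2\vol(H'_1)-\vol(H'_2))(e^U-1)+\vol(H'_0)(1-e^{-2U})$. This is negative when $\vol(H'_2)$ is large compared with $\vol(H'_1)$ and $\vol(H'_0)$. You cannot rule this out by reality, since you deduce reality from uniqueness.
\end{itemize}

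The missing idea is to work with the single scalar $\sum_j s_j$, where $h(s\,\cdot,\cdot)=h'$. It satisfies $\inum\partial\bar\partial\sum_j s_j\ge\sum_j s_{j-1}^{-1}(s_{j-1}-s_j)^2\vol(H_{j-1})$ (Lemma~\ref{ddbars}). Once $H$ and $H'$ are mutually bounded, $\sum_j s_j$ is bounded. The weak Omori--Yau principle then yields points where $\sum_j s_j$ is almost maximal and the right-hand side is almost $0$. At those points Lemma~\ref{lem4} and $\prod_j s_j=1$ give $s_j\approx1$, so $\sup\sum_j s_j\le r$, and AM--GM forces $s\equiv1$.

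Mutual boundedness should not come from a hyperbolic barrier: there is none on $\C$, and the bound is false for $q\neq0$ (see below). It comes from the Keller--Osserman estimate (Proposition~\ref{SL}):
\begin{itemize}
\item each complete $H_i$ dominates every $H_j$, including $H_0$ (Corollary~\ref{ji});
\item since $\prod_{j=0}^{r-1}H_j=h_q$, $H_i$ therefore dominates $|q|^{2/r}$;
\item Corollary~\ref{CYM} with background $H_i$ then bounds every $H'_j$.
\end{itemize}
This needs only $\inum\Lambda_{H_i}F_{H_i}\ge-2$, which (\ref{cyc}) gives everywhere, zeros of $q$ included. So your worry about the curvature near the zeros is unfounded. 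Once uniqueness among all diagonal complete solutions is established this way, your reality argument is fine.

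\textbf{Existence.} Two of your claims fail for $q\neq0$.

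First, the system in the $w_j$ is not cooperative for $r\ge3$. In a flat frame $e^{w_0}=|q|^2e^{-(w_1+\cdots+w_{r-1})}$. So the right-hand side $2e^{w_1}-e^{w_0}-e^{w_2}$ of the $w_1$-equation is increasing in $w_3,\dots,w_{r-1}$, and its monotonicity in $w_2$ is indefinite. The comparison principle behind your monotonicity in $N$ and in $n$ is therefore unavailable in these variables. The monotone formulation uses $\log h_1,\dots,\log h_n$ for real metrics (Proposition~\ref{min-max}, Lemma~\ref{cands}). There an ordering of the $h_j$ does not order each $H_j$.

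Second, $\lambda_jH_{\mathrm{hyp},X_n}$ is not an upper barrier. It solves the $q=0$ system, so for $q\neq0$ it is a supersolution in the $h$-sense (Example~\ref{subandsuper}), which lies on the wrong side. For $r=2$, in a flat coordinate $u=\log H_1$ solves $\Delta u=c(e^u-|q|^2e^{-u})$. Meanwhile $\Delta\log H_{\mathrm{hyp}}=cH_{\mathrm{hyp}}\ge c(H_{\mathrm{hyp}}-|q|^2H_{\mathrm{hyp}}^{-1})$. The maximum principle then gives $H_1\ge H_{\mathrm{hyp},X_n}$ for the complete solution on $X_n$, strictly when $q\not\equiv0$. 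The upper control must instead come from Corollary~\ref{CYM}, applied with a complete background metric of curvature bounded below that dominates $|q|^{2/r}$. The resulting constant is unrelated to $\lambda_j$.

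Finally, on $\C$ a maximum of the flat metric with a hyperbolic-type metric on a subdomain does not glue, because the latter blows up at the boundary of its domain. The paper follows \cite[Proposition 5.9]{LM1}:
\begin{itemize}
\item it centres a disc away from the zeros of $q$;
\item it uses $\max\{h^{<R_1}_j,h^{>R_2}_j\}$ on an annulus as a complete subsolution (Lemma~\ref{R1R2});
\item it uses $\min\{h^{(\varphi)}_j,e^{c}h^{>R_2}_j\}$ as a supersolution (Lemma~\ref{lemma51});
\item completeness then follows from their mutual boundedness near infinity (Lemma~\ref{le52}).
\end{itemize}
General $X$ is reached through the universal cover and uniqueness.
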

In addition to \cite{LM1}, we refer the reader to~\cite{DL3, DW1, GL1, LT1, LTW1, Li1, LM2, Moc0, Moc1, Nie1, Wan1, WA1} for the works related to complete harmonic metrics. 

\subsection{More general subharmonic weight functions}\label{2.2}
For each $q\in H^0(K_X^r)$, the metric $h_q$ induces a singular metric (cf. \cite{GZ1}) $h_q^{-1/r}$ that diverges at the zeros of $q$. We can define a curvature for $h_q^{-1/r}$ in the sense of currents, which is semipositive and has support at the zero points of $q$. We fix a smooth metric $h_\refe$ and denote by $e^{-\phi_q}h_\refe$ the metric $h_q^{-1/r}$, where the weight function $\phi_q$ is defined as $\phi_q\coloneqq \frac{1}{r}\log|q|_{h_\refe}^2$. If we choose a local flat reference metric, then the weight function defines a local subharmonic function. Also, although we will not go into details here, there is a way to identify the weight function with a global psh function on the total space of the dual line bundle excluding zero points (see \cite[Section 2.1]{BB1}). In the sense described above, we call the function $\phi_q$ a {\it subharmonic weight function}, although this is a bit of an abuse of terminology. We consider a more general singular metric $e^{-\varphi}h_\refe$ with semipositive curvature, where the weight function $\varphi$ is a function that is locally a sum of a subharmonic function and a smooth function. For each $q_N\in H^0(K_X^N)$, let $\phi_{q_N}$ be defined as $\frac{1}{N}\log|q_N|_{h_\refe}^2$. Then $e^{-\phi_{q_N}}h_\refe$ is a singular metric with semipositive curvature. Moreover, any subharmonic weight function can be approximated, at least in the $L^1_{\loc}$ sense, by a sequence $(\phi_{q_N})_{N\in\N}$, where $q_N\in H^0(K_X^N)$ (cf. \cite{GZ1, Hor1}). In \cite{Miy2, Miy3}, for each semipositive singular metric $e^{-\varphi}h_\refe$ and each $r\geq 2$, the following equation for a diagonal metric $h=(h_1,\dots, h_r)$ on $\K_r\rightarrow X$ was introduced:
\begin{align}
\inum F_{h_j} + \vol(H_{j-1}) - \vol(H_j) = 0 \ \text{for $j=1,\dots, r-1$}, \label{phi}
\end{align}
where $H_1,\dots, H_{r-1}$ are defined as $H_j\coloneqq h_j^{-1}\otimes h_{j+1}$ for each $j=1,\dots, r-1$, and $H_r=H_0$ is defined as follows:
\begin{align*}
H_r = H_0 &\coloneqq h_r^{-1}\otimes h_1 \otimes (e^{-\varphi}h_\refe)^{-r} \\ 
&= H_1^{-1}\otimes \cdots \otimes H_{r-1}^{-1} \otimes (e^{-\varphi}h_\refe)^{-r}.
\end{align*}
For $\varphi=\phi_{q_N}$, a solution of equation (\ref{phi}) pulls back to a harmonic metric on the bundle $\pi^\ast\K_r$ over a suitable cyclic branched covering $\pi:Y\to X$ on which $q_N^{1/N}$ is single-valued. The corresponding Higgs field is defined using this root and the natural map $\pi^\ast K_X\to K_Y$ (see \cite[Section 2]{Miy3}). As mentioned above, more general $\varphi$ can be approximated by a sequence $(\phi_{q_N})_{N\in\N}$ at least in the $L^1_{\loc}$-sense (cf. \cite{GZ1, Hor1}); therefore, equation (\ref{phi}) can be considered to be an equation obtained as the limit when the covering degree increases infinitely or the number of zeros increases infinitely.  By making a variable change from $h=(h_1,\dots, h_r)$ to $H_1,\dots, H_{r-1}$, the equation becomes the following:
\begin{align}
\inum F_{H_j} + 2\vol(H_j) - \vol(H_{j-1}) - \vol(H_{j+1}) = 0 \ \text{for $j=1,\dots, r-1$.}\label{cyc2}
\end{align}
We remark on the following two points:
\begin{itemize}
\item If $e^{-\varphi}h_\refe$ is flat, then $H_1=\cdots=H_{r-1}=(e^{-\varphi}h_\refe)^{-1}$ is a solution to equation (\ref{cyc2}).
\item Suppose that $\varphi=-\infty$. Then equation (\ref{cyc2}) is the same as equation (\ref{cyc}) for the case where $q=0$, and thus for a solution $H$ to equation (\ref{H}), $H_j=\lambda_j H\ (j=1,\dots, r-1)$ is a solution to equation (\ref{cyc2}), where $\lambda_1,\dots, \lambda_{r-1}$ are constants defined in (\ref{lambda}).
\end{itemize}

\subsection{Entropy and free energy}\label{S and F}
This subsection reviews the definitions of the functions called entropy and free energy given in \cite{Miy4, Miy5}. Let $h=(h_1,\dots, h_r)$ be a solution to equation (\ref{phi}) and let $H_1,\dots, H_r$ be the Hermitian metrics constructed from $h$ and $e^{-\varphi}h_\refe$ as in Section \ref{2.2}. Then we define functions, which we call entropy and free energy, as follows:

\begin{defi}\label{entropy2} 
For each $j=0,1,\dots,r-1$ and a nonzero real number $\beta$, 
we define a nonnegative function
$p_j(r,\beta,\varphi):X\to[0,1]$ as follows.
For each $x\in X$, if $(\beta\varphi)(x)$ does not take the value $\infty$,
which means that either $\beta>0$, or $\beta<0$ and
$x\notin\varphi^{-1}(-\infty)$, we define
\begin{align*}
p_j(r,\beta,\varphi)(x)\coloneqq
\frac{\vol(H_j)^\beta}{\sum_{j=0}^{r-1}\vol(H_j)^\beta}(x) \ \text{for each $j=0,\dots, r-1$,}
\end{align*}
where $\vol(H_j)^\beta/\sum_{j=0}^{r-1}\vol(H_j)^\beta$ is understood to be
$(\vol(H_j)/\vol(H_\refe))^\beta/\sum_{j=0}^{r-1}(\vol(H_j)/\vol(H_\refe))^\beta$
for some reference metric $H_\refe$, which does not affect
$p_j(r,\beta,\varphi)$.
If $(\beta\varphi)(x)$ takes the value $\infty$, we set
$p_0(r,\beta,\varphi)(x)=0$ and define
\begin{align*}
p_j(r,\beta,\varphi)(x)\coloneqq
\frac{\vol(H_j)^\beta}{\sum_{j=1}^{r-1}\vol(H_j)^\beta}(x) \ \text{for each $j=1,\dots, r-1$.}
\end{align*}
We define the {\it entropy} by
\begin{align*}
S(r,\beta,\varphi)\coloneqq
-\sum_{j=0}^{r-1} p_j(r,\beta,\varphi)\log p_j(r,\beta,\varphi).
\end{align*}
\end{defi}

\begin{defi}\label{F}
Let $\beta$ be a nonzero real number.
For each $x\in X$, if $(\beta\varphi)(x)$ does not take the value $\infty$, we set
\begin{align*}
F(r,\beta,\varphi,H_\refe)(x)
\coloneqq
-\frac{1}{\beta}
\log(
\sum_{j=0}^{r-1}
(\vol(H_j)/\vol(H_\refe))^\beta)(x),
\end{align*}
where $H_\refe$ is a fixed reference metric.
If $(\beta\varphi)(x)$ takes the value $\infty$, we define
\begin{align*}
F(r,\beta,\varphi,H_\refe)(x)
\coloneqq
-\frac{1}{\beta}
\log (
\sum_{j=1}^{r-1}
(\vol(H_j)/\vol(H_\refe))^\beta)(x).
\end{align*}
We call $F(r,\beta,\varphi,H_\refe)$ the \it{free energy}.
\end{defi}

Note that the difference between free energy functions defined using the same reference metric does not depend on the choice of the reference metric. In addition, note that in this definition entropy and free energy are both continuous with respect to the parameter $\beta$ at each fixed point $x\in X$. The author expects these functions to provide a new scale for measuring the degree of accuracy of {\it approximations to equilibrium metrics} in various situations; see, e.g.,~\cite{BCHM1, Ber1, Ber2, Ber3, BB1, BBN1,   BL1, BL2, BL3, BL4, Dys1, FLM1, Gin1, GZ1, Ham1, Lju1, MP1, MV1, Ran1, ST1, SZ1}. One of the goals in this direction is to quantitatively establish entropy-increase and free energy-decay principles for processes in which non-equilibrium metrics approximate equilibrium metrics. See~\cite{Miy4, Miy5, Miy6, Miy8} for further details and for the results obtained so far.

\section{Proof}\label{3}
\subsection{Preliminaries on approximation of non-smooth K\"ahler metrics}
The objective of this subsection is to prove the following proposition:
\begin{prop}\label{GWT}
{\it Let $(X,g_X)$ be a Riemann surface equipped with a continuous K\"ahler metric. Then the following holds:
\begin{enumerate}
\item For each $\epsilon>0$, there exists a smooth K\"ahler metric $\widetilde{g}_{X,\epsilon}$ such that
\begin{align}
e^{-\epsilon} g_X \le \widetilde{g}_{X,\epsilon} \le e^{\epsilon} g_X.
\end{align}
\item Suppose moreover that the Ricci curvature of $g_X$ is bounded below by $C$. 
Then for each $\epsilon>0$, there exists a smooth K\"ahler metric $\widetilde{g}_{X,\epsilon}$ such that
\begin{align}
e^{-\epsilon} g_X \le \widetilde{g}_{X,\epsilon} \le e^{\epsilon} g_X, \label{gxe}
\end{align}
and $\widetilde{g}_{X,\epsilon}$ has Ricci curvature bounded below by a constant $C_\epsilon$, where
\begin{align}
C_\epsilon =
\begin{cases}
e^{-\epsilon}C & \text{if } C > 0, \label{Cepsilon}\\
e^{\epsilon}C & \text{if } C < 0, \\
-\epsilon & \text{if } C=0.
\end{cases}
\end{align}
\end{enumerate}
}
\end{prop}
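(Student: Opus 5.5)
The plan is to reduce everything to a statement about a continuous conformal factor and then regularize it by local mollification followed by Richberg-type gluing with a regularized minimum. Fix a smooth K\"ahler metric $g_0$ on $X$ and write $g_X=e^{u}g_0$ with $u$ a continuous real function on $X$. Proving (\ref{gxe}) amounts to finding a smooth $\widetilde u$ with $|\widetilde u-u|\le\epsilon$ and setting $\widetilde g_{X,\epsilon}=e^{\widetilde u}g_0$.

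For part 1, I will use a locally finite cover by coordinate discs and a subordinate partition of unity $(\chi_i)$. I mollify $u$ in each chart to get $u_i$ with $|u_i-u|<\epsilon$ on $\mathrm{supp}\,\chi_i$, and put $\widetilde u=\sum_i\chi_iu_i$. Since $\sum_i\chi_i=1$, we get $|\widetilde u-u|\le\sum_i\chi_i|u_i-u|<\epsilon$, which finishes part 1. No curvature control is needed here, which is why the naive partition-of-unity gluing is enough.

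For part 2, I would work locally. On a coordinate disc write $g_X=e^{w}|dz|^2$, with $w$ continuous. The hypothesis $\Ric\ge C$, read in the sense of distributions, is
\begin{align*}
-\Delta w\ge 2Ce^{w},
\end{align*}
possibly up to a harmless normalizing constant; I will keep that constant fixed throughout. Let $\rho_\delta$ be a standard mollifier and set $w_\delta=w*\rho_\delta$. Then $-\Delta w_\delta\ge 2C\,(e^{w}*\rho_\delta)$, and I compare the right-hand side with $e^{w_\delta}$ case by case.
\begin{itemize}
\item If $C>0$, Jensen's inequality gives $e^{w}*\rho_\delta\ge e^{w_\delta}$, so $-\Delta w_\delta\ge 2Ce^{w_\delta}$.
\item If $C<0$, uniform continuity of $w$ gives $w\le w_\delta+\eta$ for small $\delta$. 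Hence $e^{w}*\rho_\delta\le e^{\eta}e^{w_\delta}$ and $-\Delta w_\delta\ge 2Ce^{\eta}e^{w_\delta}$.
\item If $C=0$, superharmonicity is preserved exactly.
\end{itemize}
In each case the local smooth factor has curvature bounded below by $C$ up to an arbitrarily small multiplicative or additive loss. I will also make sure this lower bound holds with a small strict margin below the target $C_\epsilon$ in (\ref{Cepsilon}).

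The main obstacle is gluing the local pieces without destroying the curvature bound. The partition-of-unity average from part 1 fails here. Computing $\Delta\sum_i\chi_iw_i$ produces the terms $\nabla\chi_i\cdot\nabla w_i$ and $w_i\Delta\chi_i$. The second is controlled because $\sum_i\Delta\chi_i=0$ and the $w_i$ are $C^0$-close, but the first is not, since the gradients of the mollifications are not controlled when $u$ is only continuous.

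Instead I will use the fact that the inequality $-\Delta w\ge f(w)$ is a supersolution condition, and glue by a regularized minimum $M_\eta(t_1,\dots,t_k)$ as in Richberg's theorem. Here $M_\eta$ is smooth, concave, nondecreasing in each variable, satisfies $\sum_i\partial_iM_\eta=1$, and only sees the arguments within $\eta$ of the minimum. By concavity,
\begin{align*}
-\Delta M_\eta(w_1,\dots,w_k)\ge\sum_i\partial_iM_\eta\,(-\Delta w_i)\ge\sum_i\partial_iM_\eta\,f(w_i),
\end{align*}
and since only $w_i$ within $\eta$ of $M_\eta$ contribute, $f(w_i)$ agrees with $f(M_\eta)$ up to a factor $e^{\pm\eta}$. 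To ensure that near the edge of each chart the minimum is realized by the other pieces, I will use the standard device of adding small bump corrections $w_i+\epsilon'\theta_i$. The $\theta_i$ are chosen to be small on the interior of chart $i$ and larger near its boundary, and the strict margin from the local step absorbs the curvature error they introduce. Choosing $\delta,\eta,\epsilon'$ small relative to $\epsilon$ then yields a global smooth $\widetilde u$ with $|\widetilde u-u|\le\epsilon$ and curvature of $e^{\widetilde u}g_0$ bounded below by $C_\epsilon$. This also covers the case $C=0$, where the bound becomes $-\epsilon$, coming from the $\epsilon'$ corrections.
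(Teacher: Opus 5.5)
Your proof is correct, but it takes a different route from the paper, which never does the gluing by hand.

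\textbf{The paper's route.} The paper freezes the nonlinearity. Writing $H=e^{-f}H_\ast$, the curvature bound says $\inum\Lambda_{H_\ast}\partial\bar\partial f\ge Ce^{-f}-\inum\Lambda_{H_\ast}F_{H_\ast}$. The right-hand side is a fixed continuous function strictly larger than $\eta\coloneqq C_{\epsilon/2}e^{-f}-\inum\Lambda_{H_\ast}F_{H_\ast}$, so $f$ is $\eta$-subharmonic for $2g_\ast$. The Greene--Wu density theorem (Corollary~\ref{dense}) then gives a smooth $\eta$-subharmonic $u_\epsilon$ with $|f-u_\epsilon|<\epsilon/2$. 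The exponential is restored only at the end, via $C_{\epsilon/2}e^{-f}\ge C_\epsilon e^{-u_\epsilon}$; this is where $\epsilon\le 1$ is needed in the case $C=0$.

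\textbf{How the two compare.} The Greene--Wu machinery packages exactly your two ingredients. One is local smoothing by (Riemannian) convolution. The other is globalization via closure under maxima plus stability under small $C^2$ perturbations, which is your regularized minimum with bump corrections, signs reversed. Your version is self-contained and uses dimension two: isothermal coordinates turn the condition into a flat-Laplacian inequality, so Euclidean mollification suffices. The price is the Richberg bookkeeping. You must fix each $\epsilon'_i$ from the local margin on its chart, then take regularization widths and mollification errors small relative to the $\epsilon'$ of every overlapping chart.

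\textbf{A point your sketch blurs.} The regularized minimum must act on conformal factors in a common gauge, $u_i=w_{i,\delta}-\psi_i$ with $g_0=e^{\psi_i}|dz_i|^2$, not on the chart-wise $w_i$. For these $u_i$ the inequality reads $-\Delta_{g_0}u_i\ge 2c\,e^{u_i}-2K_{g_0}$, up to your normalizing constant. The extra $x$-dependent term passes through the gluing because $\sum_i\partial_iM_\eta=1$.

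\textbf{A simplification.} Adopting the paper's freezing trick would shorten your argument. Once the inequality is compared against a fixed continuous function, the local uniform convergence $(2Ce^{w})*\rho_\delta\to 2Ce^{w}$ removes the case split on the sign of $C$. The gluing step would then no longer need to compare $f(w_i)$ with $f(M_\eta)$.
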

Note that assertion 1 is an immediate consequence of the well-known Whitney approximation theorem (see \cite[Theorem 6.21]{Lee1}). Therefore, the main part of the proof of Proposition \ref{GWT} is the proof of assertion 2, which relies on \cite[p.~66, Corollary 1]{GW1}. 
Before giving the proof, we review this corollary together with \cite[p.~66, Theorem 3.1]{GW1}, from which the corollary is derived. Let $(M,g_M)$ be a Riemannian manifold of real dimension $n$. We denote by $\Delta_{g_M}=-d^\ast d$ the Laplace--Beltrami operator acting on functions, and by $\gamma_p$ the Riemannian distance function from $p\in M$. Let $\O_M$ denote the collection of all open subsets of $M$.

\begin{defi}[\cite{GW1}, p.~49 Definition 1.4]
Let $\L = (\L(U))_{U \in \O_M}$ be a subsheaf of the sheaf of continuous real-valued functions on $M$. 
We say that $\L$ has the {\it semilocal $C^\infty$ approximation property} if the following holds:
Let $U \in \O_M$, let $K \subset U$ be compact, and let $f \in \L(U)$ be such that $f$ is $C^\infty$ on a neighborhood of a compact (possibly empty) subset $K_1 \subset K$. 
Then there exists an open subset $V \subset U$ containing $K$ such that for every $\epsilon > 0$ there exists a $C^\infty$ function $F \in \L(V)$ satisfying
\begin{enumerate}[(a)]
\item $\sup_{p \in K} |f(p) - F(p)| < \epsilon$
\item \label{b} $d_{K_1}(f,F) < \epsilon$
\end{enumerate}
where the definition of $d_{K_1}(\cdot,\cdot)$ can be found in \cite[p.~49]{GW1}.
\end{defi}
\begin{defi}[\cite{GW1}, p.~49]
For each $U\in\O_M$, let $C^0(U)$ be the space of all real-valued continuous functions on $U$. 
The {\it $C^0$ fine topology} on $C^0(U)$ is defined to be the topology generated by the subsets of the form
\begin{align*}
O_{f_1,f_2}\coloneqq \{f\in C^0(U)\mid |f(p)-f_1(p)|<f_2(p) \ \text{for all } p\in U\},
\end{align*}
where $f_1\in C^0(U)$, $f_2\in C^0(U)$, and $f_2$ is a positive function.
\end{defi}
\begin{defi}[\cite{GW1}, p.~65] Let $\eta:M\rightarrow\R$ be a continuous function. A continuous function $f:M\rightarrow \R$ is said to be {\it $\eta$-subharmonic} if for each point $p\in M$ there is a positive number $\delta$ such that the following function $F:M\rightarrow \R$ is subharmonic in a neighborhood of $p$:
\begin{align*}
F(q)\coloneqq f(q)-\left(\delta+\frac{1}{2n}\eta(p)\right)\gamma_p(q)^2 \ \text{for}\ q\in M.
\end{align*}
\end{defi}
We note that the square $\gamma_p^2$ of the Riemannian distance function is smooth in a neighborhood of $p$ and satisfies $\Delta_{g_M}\gamma_p^2|_p = 2n$. 
This can be verified by using normal coordinates at $p$. 
From this and the definition of $\eta$-subharmonicity, the following characterization immediately follows:

\begin{prop}[\cite{GW1}, p.~66 Lemma 3.2] {\it A continuous function $f:M\rightarrow \R$ is $\eta$-subharmonic if and only if there is a $C^\infty$ function $\eta_1$ such that $\Delta_{g_M}f\geq \eta_1$ holds in the weak sense and $\eta_1>\eta$ on $M$.}
\end{prop}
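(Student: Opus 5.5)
We prove the two implications separately, working locally and using only that $\gamma_p^2$ is smooth near $p$ with $\Delta_{g_M}\gamma_p^2|_p=2n$.

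\emph{($\Leftarrow$)} Suppose $\Delta_{g_M}f\geq\eta_1$ weakly, with $\eta_1$ smooth and $\eta_1>\eta$. Fix $p$ and put $\delta\coloneqq\frac{1}{2n}(\eta_1(p)-\eta(p))/2>0$. Then
\begin{align*}
\Delta_{g_M}\Bigl(\bigl(\delta+\tfrac{1}{2n}\eta(p)\bigr)\gamma_p^2\Bigr)\Big|_p=2n\delta+\eta(p)=\tfrac{1}{2}\bigl(\eta_1(p)+\eta(p)\bigr)<\eta_1(p).
\end{align*}
Both sides are continuous, so this strict inequality holds with $\eta_1(q)$ on the right for all $q$ in a small neighborhood $W$ of $p$. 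Hence $F=f-(\delta+\frac{1}{2n}\eta(p))\gamma_p^2$ satisfies $\Delta_{g_M}F\ge 0$ weakly on $W$. A continuous function with nonnegative weak Laplacian is subharmonic, by the standard Weyl-type lemma for weak subharmonicity, applied via mollification in coordinates or by comparison with harmonic functions on small balls. Therefore $f$ is $\eta$-subharmonic.

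\emph{($\Rightarrow$)} Suppose $f$ is $\eta$-subharmonic. For each $p$ choose $\delta_p$ and a neighborhood $W_p$ on which $F_p$ is subharmonic. On $W_p$ this gives $\Delta_{g_M}f\ge(\delta_p+\frac{1}{2n}\eta(p))\Delta_{g_M}\gamma_p^2$ weakly, since subharmonic continuous functions have nonnegative distributional Laplacian. The right-hand side is smooth and equals $2n\delta_p+\eta(p)>\eta(p)$ at $p$. Shrinking $W_p$, we obtain a local smooth function $\eta_p$ on $W_p$ with $\Delta_{g_M}f\ge\eta_p$ and $\eta_p>\eta$ on $W_p$. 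Using a locally finite refinement and a smooth partition of unity $(\chi_\alpha)$ subordinate to it, set $\eta_1\coloneqq\sum_\alpha\chi_\alpha\eta_{p_\alpha}$. Then $\eta_1$ is smooth and $\eta_1>\eta$, being a convex combination of functions each exceeding $\eta$.

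The main obstacle is that the inequality $\Delta f\ge\sum\chi_\alpha\eta_{p_\alpha}$ does not follow termwise, since the $\chi_\alpha$ are not constant. To handle it, I would instead work with the measure $\mu=\Delta_{g_M}f$, which is a signed Radon measure because locally $f$ is subharmonic plus smooth. The local bounds say that $\mu-\eta_{p_\alpha}\,dv\ge0$ on each $W_{p_\alpha}$. Hence the absolutely continuous part of $\mu$, and $\mu$ itself, dominates the pointwise lower envelope. Define $m(x)\coloneqq\sup\{\eta_{p_\alpha}(x)\}$, the supremum taken over the finitely many $\alpha$ with $x$ in the support of $\chi_\alpha$. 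Then $\mu\ge m\,dv$ locally, and $\sum_\alpha\chi_\alpha\eta_{p_\alpha}\le m$ pointwise. Therefore $\mu\ge\eta_1\,dv$, which gives $\Delta_{g_M}f\ge\eta_1$ weakly.
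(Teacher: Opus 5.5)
Your argument is correct and is exactly what the paper compresses into ``immediately follows'' from $\Delta_{g_M}\gamma_p^2|_p=2n$ and the definition of $\eta$-subharmonicity, with the partition of unity supplying the global $\eta_1$ in the direction ($\Rightarrow$). The measure-theoretic detour at the end is unnecessary: the gluing does follow termwise if you put the cutoffs on the test function rather than on $f$, since for $\psi\ge 0$ in $C^\infty_c(M)$ each $\chi_\alpha\psi$ is a nonnegative test function supported in $W_{p_\alpha}$, so $\int f\,\Delta_{g_M}\psi=\sum_\alpha\int f\,\Delta_{g_M}(\chi_\alpha\psi)\ge\sum_\alpha\int\eta_{p_\alpha}\chi_\alpha\psi=\int\eta_1\psi$.
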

\begin{rem}
In \cite[p.~66 Lemma 3.2]{GW1}, the range of $\eta_1$ is stated to be $\R^+$, which appears to be a typo; it should simply be $\R$.
\end{rem}
We denote by $\L_{\eta\sh}$ the sheaf of (continuous) $\eta$-subharmonic functions. The following holds:
\begin{theo}[\cite{GW1}, p.~66 Theorem 3.1]\label{sla}{\it For any continuous $\eta:M\rightarrow \R$, $\L_{\eta\sh}$ has the semilocal $C^\infty$ approximation property.
}
\end{theo}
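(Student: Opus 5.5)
The approach is the standard one for such approximation theorems: smooth locally, then glue the local smoothings with a regularized maximum rather than a partition of unity. Gluing by a maximum is what lets the lower bound on the Laplacian survive.

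\textbf{Setup and the margin.} Let $U$, $K\subset U$, $K_1\subset K$ and $f\in\L_{\eta\sh}(U)$, smooth near $K_1$, be given. By the characterization \cite[p.~66, Lemma 3.2]{GW1} recalled above, there is a smooth $\eta_1>\eta$ with $\Delta_{g_M}f\ge \eta_1$ weakly on $U$. Shrinking to a relatively compact neighborhood $V$ of $K$, we get a uniform margin $\eta_1-\eta\ge 3\delta_0>0$ on $\overline V$. Every later error will be absorbed into this margin. The target is a smooth $F$ with $\Delta_{g_M}F\ge \eta_1-2\delta_0>\eta$ on $V$, which is $\eta$-subharmonic by the same lemma.

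\textbf{Local smoothing.} Cover $K$ by finitely many small geodesic balls $B_i=B(p_i,\rho_i)$, each inside a normal coordinate chart, together with a neighborhood $W$ of $K_1$ on which $f$ is already smooth. On each $B_i$, set
\begin{align*}
f_{i,\tau}(p)\coloneqq \int_{T_pM}f(\exp_p v)\,\chi_\tau(|v|)\,dv,
\end{align*}
the Riemannian convolution with a radial mollifier $\chi_\tau$ of radius $\tau$. Then $f_{i,\tau}$ is smooth and converges to $f$ uniformly on compact sets, since $f$ is continuous. The key local estimate is
\begin{align*}
\Delta_{g_M}f_{i,\tau}\ge \eta_1-\delta_0
\end{align*}
on a slightly smaller ball, for $\tau$ small. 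To prove it, compare the averaging over $\exp_p$ with flat convolution in normal coordinates. Where the metric coefficients deviate from Euclidean, the error is $O(\tau)$ times the total mass of the measure $\Delta_{g_M}f-\eta_1\ge0$ locally, together with the continuity of $\eta_1$. Equivalently, one can reduce to the sub-mean-value inequality for $f-(\delta+\eta(p)/2n)\gamma_p^2$ at each point, which is exactly the definition of $\eta$-subharmonicity, and average it.

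\textbf{Gluing (main obstacle).} A partition of unity fails here. The cross terms $\nabla\chi_i\cdot\nabla(f_{i,\tau}-f)$ are not small, because $f$ is only continuous. Instead I would use the $\delta_0$-margin in the following way.
\begin{enumerate}
\item Choose smooth functions $\theta_i$ on $B_i$ with $\theta_i=0$ on a smaller ball $B_i'$ (the $B_i'$ still covering $K\setminus W$) and $\theta_i\to-\infty$, or simply very negative, near $\partial B_i$. Because $|\Delta\theta_i|$ is bounded on the support region, multiplying by a small constant $\sigma$ gives $\Delta(\sigma\theta_i)\ge-\delta_0$.
\item Take $\tau$ so small that $|f_{i,\tau}-f|<\epsilon'\ll\sigma\min_{\partial B_i}|\theta_i|$.
\item Set $F\coloneqq \widetilde{\max}\{f_{i,\tau}+\sigma\theta_i,\ f+\sigma\theta_W\}$, where $\widetilde{\max}$ is a regularized maximum. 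Near $\partial B_i$ the $i$-th term is strictly dominated by another term, so it drops out and $F$ is smooth.
\end{enumerate}
Since $\widetilde{\max}$ is convex and nondecreasing in each variable with partial derivatives summing to $1$, we get $\Delta F\ge \min_i \Delta(\cdot)\ge \eta_1-2\delta_0$.

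The same construction gives both required approximation properties. Uniform closeness $\sup_K|F-f|<\epsilon$ holds because every competitor is within $\epsilon'+\sigma\sup|\theta|$ of $f$ on $B_i'$ or on $W$. For the $d_{K_1}$ condition, arrange that near $K_1$ the term $f+\sigma\theta_W$ (with $\theta_W=0$ near $K_1$) dominates except for a controlled smooth perturbation. Then $F-f$ is $C^k$-small near $K_1$, because there $F$ is a regularized max of $f$ and functions that are $C^k$-close to it: on $W$ itself $f$ is smooth, so $f_{i,\tau}\to f$ in $C^k$ there.

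The delicate bookkeeping is choosing $\sigma$, $\epsilon'$, $\tau$ and the regularization width of $\widetilde{\max}$ in the right order: $\sigma$ first, then $\epsilon'$, then $\tau$. I expect this step to be the main technical difficulty.
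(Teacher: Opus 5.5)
The paper gives no proof of this statement. It imports it from Greene--Wu, and in the paper's account gluing by maxima is a separate step: the maximum-closure property is fed into their Theorem~1.1 to obtain Corollary~\ref{dense}, and is not part of the semilocal property. Your proposal spends its effort on exactly that gluing, and it is not needed here, because your $f_{i,\tau}$ does not depend on $i$. The Riemannian convolution $f_\tau(p)=\int_{T_pM}f(\exp_p v)\chi_\tau(|v|)\,dv$ is a single smooth function on a neighbourhood of $K$ once $\tau$ is small. Writing $f_\tau(p)=\int_{|u|\le 1}f(\exp_p(\tau u))\chi(|u|)\,du$ in a local orthonormal frame shows that $f_\tau\to f$ in every $C^k$ on compact subsets of $W$. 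So $F=f_\tau$ already gives (a) and (b). Your gluing as written would actually endanger (b): unless the $B_i$ avoid a neighbourhood of $K_1$, the terms $f_\tau+\sigma\theta_i$ can be active near $K_1$, and $\sigma\theta_i$ is not $C^1$-small.

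The genuine gap is the step you treat as routine, namely $\Delta_{g_M}f_\tau\ge\eta_1-\delta_0$ near $K$. This estimate is the whole content of the theorem, and neither of your justifications establishes it. In a fixed chart the differences $g^{ij}(q)-g^{ij}(y)$ are only $O(\tau)$, so the commutator of $\Delta_{g_M}$ with flat convolution is a priori only $O(\tau^{-1}\mathrm{osc}_{B(q,\tau)}f)$. The measure $\Delta_{g_M}f-\eta_1$ sees only the trace of the Hessian, so it does not control that commutator. The sub-mean-value route only reduces the claim to the same estimate for subharmonic functions.

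An argument that works: let $\vartheta_q$ be the Jacobian of $\exp_q$ and set $K_\tau(q,y)\coloneqq\chi_\tau(d(q,y))\,\vartheta_q(y)^{-1}$, with $\chi\ge0$ constant near $0$. Then $f_\tau(q)=\int_M f(y)K_\tau(q,y)\,d\mu(y)$ and $\int_M K_\tau(q,\cdot)\,d\mu=1$ for every $q$. Hence
\begin{align*}
\Delta_{g_M}f_\tau(q)=\int_M (f(y)-f(q))\,\Delta_qK_\tau(q,y)\,d\mu(y),\qquad \int_M (f(y)-f(q))\,\Delta_yK_\tau(q,y)\,d\mu(y)\ge\int_M\eta_1K_\tau(q,\cdot)\,d\mu,
\end{align*}
where the second inequality comes from testing $\Delta_{g_M}f\ge\eta_1$ against $K_\tau(q,\cdot)\ge0$. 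Write $d=d(q,y)$. We have $|\nabla_q d|=|\nabla_y d|=1$, both $\Delta_q d$ and $\Delta_y d$ equal $(n-1)/d+O(d)$, and $\vartheta-1$ vanishes to second order on the diagonal. So the $\chi_\tau''$ terms cancel and $|(\Delta_q-\Delta_y)K_\tau|\le C\tau^{-n}$ on $\{d\le\tau\}$. This gives $\Delta_{g_M}f_\tau(q)\ge\min_{B(q,\tau)}\eta_1-C'\,\mathrm{osc}_{B(q,\tau)}f$, and this is where the continuity of $f$ is used. The bound yields the estimate uniformly on a compact neighbourhood of $K$, and then $F=f_\tau$ proves the theorem with no gluing at all.
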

Since the sheaf $\L_{\eta \sh}$ has the $C^\infty$ stability property and the maximum-closure property (cf. \cite[Definition 1.1, Definition 1.3, and the arguments immediately below Lemma 3.2]{GW1}), it follows from \cite[Theorem 1.1]{GW1} that the following holds:
\begin{cor}[\cite{GW1}, p.~66 Corollary 1]\label{dense}
{\it For any open set $U \subset M$, smooth $\eta$-subharmonic functions on $U$ are dense in $\L_{\eta \sh}(U)$ with respect to the $C^0$ fine topology.}
\end{cor}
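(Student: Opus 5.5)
The plan is to deduce Corollary \ref{dense} from Theorem \ref{sla} by an exhaustion-and-gluing argument, in the spirit of \cite[Theorem 1.1]{GW1}. The gluing will use a regularized maximum, which is possible because of the two structural properties of $\L_{\eta\sh}$ mentioned above.
\begin{itemize}
\item The $C^\infty$ stability property: if $F$ is smooth and $\eta$-subharmonic near a compact set, then $\Delta_{g_M}F\ge\eta_1>\eta$ with $\eta_1$ continuous. Hence there is a strict margin on the compact set, so any $G$ sufficiently $C^2$-close to $F$ there is still $\eta$-subharmonic.
\item The maximum-closure property: $\max(f_1,f_2)$ of $\eta$-subharmonic functions is $\eta$-subharmonic. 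Moreover, a regularized maximum $M_\delta(f_1,f_2)$ (a smooth, convex, coordinatewise nondecreasing function of $(f_1,f_2)$ with partial derivatives summing to $1$) preserves the inequality $\Delta\ge\eta_1$. For smooth inputs this follows from the chain rule; for general inputs one approximates.
\end{itemize}

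Fix an open set $U$, a function $f\in\L_{\eta\sh}(U)$ and a positive continuous error function $e$ on $U$. Choose an exhaustion of $U$ by compact sets $K_1\subset \mathrm{int}K_2\subset K_2\subset\cdots$, and a locally finite family of relatively compact open sets $W_i$ covering $U$, each $W_i$ meeting only finitely many others.

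I will inductively construct functions $F_i$ with the following properties:
\begin{itemize}
\item $F_i$ is defined on a neighborhood of $L_i\coloneqq \overline{W_1\cup\cdots\cup W_i}$;
\item $F_i$ is $C^\infty$ and $\eta$-subharmonic there;
\item $|F_i-f|<e$ on $L_i$;
\item $F_i=F_{i-1}$ outside a neighborhood of $\overline{W_i}$.
\end{itemize}

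For the inductive step, first apply Theorem \ref{sla} on a neighborhood of $\overline{W_i}$. The compact set $K$ will be a slightly enlarged $\overline{W_i}$, and the compact set on which the input is already smooth will be the part of $\overline{W_i}$ lying in the region where the previous approximant is used (more precisely, I apply the theorem to the function equal to $F_{i-1}$ there and equal to $f$ elsewhere, which is continuous and $\eta$-subharmonic after the patching described below). This produces a smooth $\eta$-subharmonic $G_i$ that is uniformly $\epsilon_i$-close to that function and $C^2$-close to $F_{i-1}$ on the already-smooth part.

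Next I glue. Pick a smooth cutoff $\chi_i\ge 0$ which is large near $\partial W_i$ and outside a neighborhood of $\overline{W_i}$, and vanishes on a slightly smaller set. Set
\begin{align*}
F_i\coloneqq M_{\delta_i}\bigl(F_{i-1},\, G_i-\chi_i\bigr)
\end{align*}
near $L_{i-1}$, and $F_i\coloneqq G_i$ elsewhere.

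The parameters are chosen so that $F_i$ is well defined and has the required properties.
\begin{itemize}
\item Near the boundary of the modification region, $F_{i-1}$ dominates strictly, so $F_i=F_{i-1}$ there and the two definitions agree.
\item In the part of $W_i$ away from $L_{i-1}$, the term $G_i$ dominates.
\item Both arguments of $M_{\delta_i}$ are within $\epsilon_i$ of $f$ wherever they are active, so the error grows by at most $\epsilon_i+\delta_i$ at each step. I will choose $\sum_i(\epsilon_i+\delta_i)$ below $\min_{L_i}e$ locally, which is possible since each point lies in only finitely many $\overline{W_i}$.
\end{itemize}

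By local finiteness the sequence $F_i$ stabilizes on each compact set. Its limit $F$ is therefore a smooth $\eta$-subharmonic function on $U$ with $|F-f|<e$, which is exactly density in the $C^0$ fine topology.

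The main obstacle is the step where $G_i-\chi_i$ enters the regularized maximum. Subtracting $\chi_i$ lowers the Laplacian by $\Delta\chi_i$, which may push it below $\eta$. I expect to handle this in one of two ways:
\begin{itemize}
\item Arrange that in the region where $G_i-\chi_i$ is actually active inside $M_{\delta_i}$, $\chi_i$ is tiny in $C^2$ relative to the strict margin $\eta_1-\eta$ of $G_i$ on that compact set. This uses the stability property and the uniform positivity of the margin on compacts. Where $\chi_i$ is large, the other argument $F_{i-1}$ wins and $M_{\delta_i}$ is locally equal to it.
\item Alternatively, in the overlap region use the $C^2$-closeness of $G_i$ to $F_{i-1}$ from condition (b) of the semilocal approximation property, so that the convex combination produced by $M_{\delta_i}$ is a small $C^2$ perturbation of $F_{i-1}$, and then apply $C^\infty$ stability.
\end{itemize}
Checking that the active regions and the margins can be chosen compatibly is where the care lies.
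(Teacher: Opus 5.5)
Your architecture (a locally finite family of patches, local smoothing by Theorem \ref{sla}, gluing by a regularized maximum protected by $C^\infty$ stability, and stabilization on compacts) is the mechanism behind \cite[Theorem 1.1]{GW1}. The paper does not reprove that theorem: it notes that $\L_{\eta\sh}$ has the $C^\infty$ stability and maximum-closure properties and quotes it. Read as a self-contained proof, your gluing step has a genuine gap, located where you say the care lies. Concretely:
(1) The function you feed into Theorem \ref{sla} (equal to $F_{i-1}$ near $L_{i-1}$ and to $f$ elsewhere) is declared $\eta$-subharmonic ``after the patching described below''. That patching uses the output $G_i$, so the definition is circular. Building such a function is itself a gluing of the smooth $F_{i-1}$ to the nonsmooth $f$.
(2) The gluing is one-sided. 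Consider where the region ``near $L_{i-1}$'' ends inside $W_i$ and $\chi_i=0$. There your two definitions $M_{\delta_i}(F_{i-1},G_i-\chi_i)$ and $G_i$ agree only if $G_i\ge F_{i-1}+\delta_i$. But $G_i$ is built to be $\epsilon_i$-close to $F_{i-1}$ (or to $f$), so nothing makes it dominate. A second cutoff is unavoidable: either lower $F_{i-1}$ near the edge of its domain, or lift $G_i$ by a constant and enlarge $\chi_i$.
(3) Every such cutoff must have height at least the local $C^0$ discrepancy between the two pieces, while its Laplacian stays below the margin $\eta_1-\eta$ of the function it is attached to. Your option (a) is circular: the margin of $G_i$ is known only after $\epsilon_i$ is fixed, and Theorem \ref{sla} gives no lower bound for it. Option (b) controls the margin only on $K_1$, where the input was already smooth. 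It therefore says nothing about the gluing in (1), where the discrepancy is the accumulated error of $F_{i-1}$, chosen before the margin of $F_{i-1}$ was known.

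The missing ingredient, which this particular sheaf provides, is a margin fixed in advance. Since $f\in\L_{\eta\sh}(U)$, there is a smooth $\eta_1>\eta$ on $U$ with $\Delta_{g_M}f\ge\eta_1$ weakly; set $\eta'\coloneqq\frac{1}{2}(\eta+\eta_1)$. Then $f$ is $\eta'$-subharmonic. Theorem \ref{sla} applied with $\eta'$ on $(U,g_M)$ produces smooth approximants $G$ with $\Delta_{g_M}G>\eta'$. Their margin over $\eta$ therefore exceeds $\eta'-\eta$, a positive continuous function independent of any accuracy. Moreover, by the weak characterization, $f-\psi\in\L_{\eta\sh}(U)$ whenever $\psi$ is smooth with $|\Delta_{g_M}\psi|<\eta_1-\eta'$, so even the nonsmooth $f$ can be modified by cutoffs.

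With this in hand, the order of choices is as follows. Fix the cover and the cutoff profiles first. Scale the $i$-th cutoff so that $|\Delta_{g_M}\chi_i|<2^{-i-1}(\eta'-\eta)$ on its support; the total loss of margin at any point then stays below $\frac{1}{2}(\eta'-\eta)$. Only afterwards choose all approximation errors much smaller than the cutoff heights. With two-sided cutoffs, both switching conditions and the $\eta$-subharmonicity of the regularized maximum then hold, as in the proof of Richberg's approximation theorem. Either supply this step, or stop after checking the two structural properties and quote \cite[Theorem 1.1]{GW1} as the paper does.
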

We now prove Proposition \ref{GWT}.
\begin{proof}[Proof of Proposition \ref{GWT}]
We prove assertion 2. It suffices to consider $0<\epsilon\leq1$. Fix such an $\epsilon$. Let $H$ be the continuous Hermitian metric on $K_X^{-1} \to X$ induced by $g_X$. 
By assumption, there exists a constant $C$ such that
\begin{align}
\inum F_H \geq C \vol(H). \label{CH}
\end{align}
Let $H_\ast$ be a smooth reference metric on $K_X^{-1}$ and write $H = e^{-f} H_\ast$ for some continuous function $f$ on $X$. Let $g_\ast$ be the smooth K\"ahler metric induced by $H_\ast$. We apply Corollary \ref{dense} on $(X,2g_\ast)$, for which $\Delta_{2g_\ast}=\inum\Lambda_{H_\ast}\partial\bar\partial$.
It follows from \eqref{CH} that
\begin{align}
\inum F_{H_\ast} + \inum \partial\bar{\partial} f \geq C e^{-f} \vol(H_\ast).
\end{align}
Set $\eta \coloneqq C_{\epsilon/2}e^{-f} - \inum \Lambda_{H_\ast} F_{H_\ast}$, where $\Lambda_{H_\ast}$ denotes the adjoint of wedging with $\vol(H_\ast)$ and where $C_\epsilon$ is defined in (\ref{Cepsilon}). By applying Corollary \ref{dense} with domain $X$, for any positive continuous function $f'$, there exists a smooth $\eta$-subharmonic function $u$ such that $|f-u| < f'$ on $X$.  In particular, there exists a smooth $\eta$-subharmonic function $u_\epsilon$ such that $|f-u_\epsilon| < \epsilon/2$ on $X$. Set $\widetilde{H}_\epsilon \coloneqq e^{-u_\epsilon} H_\ast$, and let $\widetilde{g}_{X,\epsilon}$ be the K\"ahler metric induced by $\widetilde{H}_\epsilon$. 
By construction, $\widetilde{g}_{X,\epsilon}$ satisfies \eqref{gxe}. Moreover, the curvature satisfies
\begin{align*}
\inum F_{\widetilde{H}_\epsilon}
&= \inum F_{H_\ast} + \inum \partial\bar{\partial} u_\epsilon \\
&\geq \inum F_{H_\ast} + \eta \vol(H_\ast) \\
&= C_{\epsilon/2} e^{-f} \vol(H_\ast) \\
&\geq C_{\epsilon} e^{-u_\epsilon} \vol(H_\ast)
= C_{\epsilon} \vol(\widetilde{H}_\epsilon).
\end{align*}
This completes the proof.
\end{proof}
\subsection{Preliminaries on maximum principles}
This subsection provides some preliminaries on maximum principles. Let $(M,g_M)$ be a connected Riemannian manifold. We denote by $\Delta_{g_M}=-d^\ast d$ the Laplace--Beltrami operator acting on functions. Let $p_{g_M}(x,y,t)$ be the positive minimal heat kernel (cf.~\cite{Gri1, PLi1}). Throughout this section, we denote by $L^1(+\infty)$ the space of functions on $\R$ that belong to $L^1([c,\infty))$ for some $c\in\R$.
We first review the definition of stochastic completeness and several equivalent conditions.
\begin{defi}[cf.~\cite{AMR1}] We say that $(M,g_M)$ is {\it stochastically complete} if the following holds for some (hence any) $(x,t)\in M\times \R_{>0}$:
\begin{align*}
\int_Mp_{g_M}(x,y,t)d\mu_{g_M}(y)=1,
\end{align*}
where we denote by $d\mu_{g_M}$ the volume measure associated with $g_M$.
\end{defi}

\begin{prop}[cf.~\cite{AMR1}]\label{conditions for sc}{\it The following conditions are equivalent:
\begin{enumerate}[(i)]
\item $(M,g_M)$ is stochastically complete.
\item\label{iistoc} For every $\lambda>0$, the only nonnegative bounded $C^2$ function satisfying $\Delta_{g_M}u\geq \lambda u$ on $M$ is $u\equiv 0$.
\item For every $\lambda>0$, the only nonnegative bounded $C^2$ solution of $\Delta_{g_M}u=\lambda u$ on $M$ is $u\equiv 0$.
\item For every $T>0$, the only bounded solution on $M\times (0,T)$ of the Cauchy problem
\begin{align*}
\begin{cases}
\frac{du_t}{dt}=\frac{1}{2}\Delta_{g_M}u_t \\
\lim_{t\to0}u_t=0 \ \text{in the $L^1_{\loc}(M)$ sense}
\end{cases}
\end{align*}
is $u\equiv 0$.
\end{enumerate}
}
\end{prop}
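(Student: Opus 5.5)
This is the classical equivalence for stochastic completeness (Grigor'yan; Alías--Mastrolia--Rigoli). The plan is the cycle (i)$\Rightarrow$(iv)$\Rightarrow$(iii)$\Rightarrow$(ii)$\Rightarrow$(i), with the bounded $\lambda$-harmonic function
\[
v_\lambda(x)\coloneqq\int_0^\infty e^{-\lambda t}\Bigl(1-\int_M p_{g_M}(x,y,t)\,d\mu_{g_M}(y)\Bigr)dt
\]
as the central object. Set $P_t1(x)\coloneqq\int_M p_{g_M}(x,y,t)\,d\mu_{g_M}(y)$. This function is smooth, satisfies $0<P_t1\le1$, and is nonincreasing in $t$. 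By the semigroup property, if $P_t1(x)=1$ for one $(x,t)$, then $P_t1\equiv1$ for all $(x,t)$. This justifies the phrase ``for some (hence any)''.

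\textbf{(ii)$\Rightarrow$(iii)} is trivial.

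\textbf{(iii)$\Rightarrow$(ii).} Let $u\ge0$ be bounded and $C^2$ with $\Delta_{g_M}u\ge\lambda u$. Exhaust $M$ by smooth relatively compact domains $\Omega_k$. On each $\Omega_k$, solve $\Delta w_k=\lambda w_k$ with $w_k=\sup u$ on $\partial\Omega_k$. By the comparison principle for $\Delta-\lambda$, we get $u\le w_k\le\sup u$ and $w_{k+1}\le w_k$. So $w_k$ decreases to a bounded nonnegative solution $w$ of $\Delta w=\lambda w$ with $w\ge u$. By (iii), $w\equiv0$, hence $u\equiv0$.

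\textbf{(i)$\Leftrightarrow$(iii).} Direct computation shows $\Delta v_\lambda=\lambda v_\lambda$, with $0\le v_\lambda\le1/\lambda$.
\begin{itemize}
\item If (iii) holds, then $v_\lambda\equiv0$. Since $1-P_t1\ge0$ is continuous in $t$, this gives $P_t1\equiv1$, which is (i).
\item Conversely, let $u$ be a nonnegative bounded solution of $\Delta u=\lambda u$. Compare $u$ with the minimal construction on the exhaustion: on $\Omega_k$, $u-\sup u\,(1-\lambda\int_0^\infty e^{-\lambda t}P^{\Omega_k}_t1\,dt)$ is a $\lambda$-harmonic function that is nonpositive on $\partial\Omega_k$. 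By the maximum principle, $u\le \sup u\cdot \lambda\, v^{(k)}_\lambda$, where $v^{(k)}_\lambda$ is the Dirichlet analogue of $v_\lambda$ on $\Omega_k$.
\item Let $k\to\infty$. The Dirichlet heat kernels increase to $p_{g_M}$, so $u\le \lambda\sup u\cdot v_\lambda$. Under (i), $v_\lambda\equiv0$, hence $u\equiv0$.
\end{itemize}

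\textbf{(i)$\Leftrightarrow$(iv).}
\begin{itemize}
\item If (i) fails, then $u_t\coloneqq1-P_{t/2}1$ is a nonzero bounded solution of the Cauchy problem with zero initial data. This uses $P_t1\to1$ in $L^1_{\loc}$ as $t\to0$. So (iv) implies (i).
\item For (i)$\Rightarrow$(iv), take a bounded solution $u$ of the Cauchy problem. Take the Laplace transform $\hat u(x)=\int_0^T e^{-\lambda t}u_t\,dt$, after first restricting to a time interval and correcting with a cutoff in $t$. This produces a bounded function satisfying $\Delta\hat u-2\lambda\hat u$ equal to a boundary term at $t=T$.
\item The cleaner route is the parabolic comparison on $\Omega_k\times(0,T)$: bound $|u|$ by $\sup|u|\,(1-P^{\Omega_k}_{t/2}1)$ plus an error vanishing as $k\to\infty$. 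Letting $k\to\infty$ gives $|u_t|\le \sup|u|\,(1-P_{t/2}1)=0$.
\end{itemize}

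\textbf{Main obstacle.} The delicate step is this parabolic comparison. The initial condition holds only in the $L^1_{\loc}$ sense, so the weak maximum principle on $\Omega_k\times(0,T)$ must be applied with care. I would first mollify in time, i.e. start at $t=\delta$, and use the fact that the Dirichlet semigroup acting on $u_\delta$ tends to zero as $\delta\to0$ locally. Then let $\delta\to0$ before $k\to\infty$.

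Since the statement is cited from \cite{AMR1}, in the paper itself I would simply invoke that reference, and also Grigor'yan's survey, for these standard steps.
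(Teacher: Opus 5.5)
The paper gives no proof of this proposition. It is imported from \cite{AMR1} as background (Grigor'yan's characterization of stochastic completeness), which is what you say you would do. Your sketch is a correct reconstruction of the standard argument. It uses the resolvent function $v_\lambda=\int_0^\infty e^{-\lambda t}(1-P_t1)\,dt$, the exhaustion bound $u\le\lambda(\sup u)\,v_\lambda$ obtained from the Dirichlet analogues $v^{(k)}_\lambda$, and a parabolic comparison on $\Omega_k\times(\delta,T)$; these are the right ingredients. What it adds over the citation is self-containedness.

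Two remarks on economy. First, the exhaustion bound works verbatim for bounded nonnegative \emph{sub}solutions $\Delta_{g_M}u\ge\lambda u$. Indeed, $u-\lambda(\sup u)v^{(k)}_\lambda$ is still a subsolution of $\Delta_{g_M}-\lambda$ and is nonpositive on $\partial\Omega_k$. So it gives (i)$\Rightarrow$(ii) directly, and your separate (iii)$\Rightarrow$(ii) step becomes unnecessary. Second, the only implication the paper actually uses later is (i)$\Rightarrow$(iii), in the proof of Proposition \ref{wstoc}.

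A few details need tightening if you write this out.
\begin{enumerate}
\item The ``some (hence any)'' claim needs strict positivity of $p_{g_M}$, not only the semigroup law. From $1=P_t1(x)=\int_M p_{g_M}(x,y,t-s)P_s1(y)\,d\mu_{g_M}(y)$, with $P_s1\le1$ and $\int p_{g_M}\le1$, you get $P_s1\equiv1$ for all $s<t$; then iterate.
\item If $p_{g_M}$ is the kernel of $\partial_t-\frac12\Delta_{g_M}$, as (iv) suggests, then $\Delta_{g_M}v_\lambda=2\lambda v_\lambda$. The counterexample in (iv)$\Rightarrow$(i) is then $1-P_t1$ rather than $1-P_{t/2}1$. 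This is harmless.
\item Drop the unfinished Laplace-transform bullet.
\item In the parabolic comparison, compare $u$ with $z=P^{\Omega_k}_{(t-\delta)/2}\bigl(u(\cdot,\delta)\bigr)+\sup|u|\,\bigl(1-P^{\Omega_k}_{(t-\delta)/2}1\bigr)$. The first term tends to $0$ as $\delta\to0$ for fixed $k$, because $u(\cdot,\delta)\to0$ in $L^1(\Omega_k)$ and $p^{\Omega_k}(x,\cdot,(t-\delta)/2)$ stays bounded. It does not vanish ``as $k\to\infty$'', as one of your sentences says. With that correction, your order of limits ($\delta\to0$ first, then $k\to\infty$) is right.
\item You should also check $\limsup(u-z)\le0$ at the corner $\partial\Omega_k\times\{\delta\}$, where $z$ is discontinuous. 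This follows from $u(\cdot,\delta)-\sup|u|\le0$, its continuity, and the sub-Markov property of the Dirichlet semigroup.
\end{enumerate}
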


The following characterization of stochastic completeness was established by Pigola--Rigoli--Setti \cite{PRS1}:
\begin{prop}[\cite{PRS1}]\label{stoc}{\it The following are equivalent:
\begin{enumerate}[(i)]
\item\label{stoci} $(M,g_M)$ is stochastically complete.
\item\label{u} For every $C^2$ function $u$ satisfying $u^\ast=\sup_M u < \infty$ and every $\epsilon > 0$, we have
\begin{align*}
\inf_{\Omega_\epsilon}\Delta_{g_M}u\leq 0,
\end{align*}
where $\Omega_\epsilon=\{x\in M\mid u(x)>u^\ast-\epsilon\}$.
\item For every $C^2$ function $u$ with $u^\ast=\sup_M u<\infty$, there exists a sequence of points $(x_k)_{k\geq 1}$ such that
\begin{align*}
&u(x_k)>u^\ast-1/k, \\
&(\Delta_{g_M}u)(x_k)<1/k 
\end{align*}
for each $k\geq 1$.
\item For every $C^2$ function $u$ with $u^\ast=\sup_Mu<\infty$ and every continuous function $f$, if $\Delta_{g_M}u\geq f(u)$ on the subset $\Omega_\epsilon=\{x\in M\mid u(x)>u^\ast-\epsilon\}$ for some $\epsilon>0$, then $f(u^\ast)\leq 0$.
\end{enumerate}
}
\end{prop}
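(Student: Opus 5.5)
We prove the cycle of implications (\ref{stoci})$\Rightarrow$(\ref{u})$\Rightarrow$(iii)$\Rightarrow$(iv)$\Rightarrow$(\ref{stoci}). Throughout, we use Proposition \ref{conditions for sc}(\ref{iistoc}) as the working form of stochastic completeness. The idea is to turn a function that violates the weak maximum principle into a bounded nonnegative subsolution of $\Delta_{g_M} v\geq \lambda v$, and conversely.

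\emph{(\ref{stoci})$\Rightarrow$(\ref{u}).} Argue by contradiction. Suppose $u\in C^2$ with $u^\ast<\infty$, and suppose that for some $\epsilon>0$ we have $\inf_{\Omega_\epsilon}\Delta_{g_M}u=c>0$. Set $s\coloneqq u-(u^\ast-\epsilon)$ and $v\coloneqq \psi(s)$, where $\psi(t)=t^3$ for $t>0$ and $\psi(t)=0$ for $t\le 0$.
\begin{itemize}
\item Since $\psi$ is $C^2$, $v$ is a nonnegative $C^2$ function. It is bounded, because $0\le s\le\epsilon$ on $\Omega_\epsilon=\{s>0\}$. It is not identically zero, because $\sup u=u^\ast$ forces $\Omega_\epsilon\neq\emptyset$.
\item On $\Omega_\epsilon$ we have
\begin{align*}
\Delta_{g_M}v=\psi'(s)\Delta_{g_M}u+\psi''(s)|\nabla u|^2\ge 3c\,s^2\ge \frac{3c}{\epsilon}\,s^3=\frac{3c}{\epsilon}\,v,
\end{align*}
using $\psi''\ge0$ and $s\le\epsilon$.
\item Off $\Omega_\epsilon$, $v$ vanishes identically on the closed set $\{s\le 0\}$. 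At points of $\{s\leq 0\}$, $v$ and its first and second derivatives vanish, since $v=O(s_+^3)$. So $\Delta_{g_M}v=0=\lambda v$ there, for any $\lambda$.
\end{itemize}
Hence $v$ is a nonzero, nonnegative, bounded $C^2$ solution of $\Delta_{g_M}v\ge\lambda v$ with $\lambda=3c/\epsilon>0$. This contradicts Proposition \ref{conditions for sc}(\ref{iistoc}). The only delicate point in the whole proof is here: the cutoff must be $C^2$ and must satisfy $\psi'\gtrsim\psi$ near $0$, and the cubic cutoff does both.

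\emph{(\ref{u})$\Rightarrow$(iii).} For each $k$, apply (\ref{u}) with $\epsilon=1/k$. This gives $\inf_{\Omega_{1/k}}\Delta_{g_M}u\le0<1/k$. Hence there is $x_k\in\Omega_{1/k}$ with $(\Delta_{g_M}u)(x_k)<1/k$ and $u(x_k)>u^\ast-1/k$.

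\emph{(iii)$\Rightarrow$(iv).} Take the sequence $(x_k)$ from (iii). For $k>1/\epsilon$ we have $x_k\in\Omega_\epsilon$, so $f(u(x_k))\le(\Delta_{g_M}u)(x_k)<1/k$. Since $u(x_k)\to u^\ast$ and $f$ is continuous, letting $k\to\infty$ gives $f(u^\ast)\le0$.

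\emph{(iv)$\Rightarrow$(\ref{stoci}).} We verify Proposition \ref{conditions for sc}(\ref{iistoc}). Let $u\ge0$ be a bounded $C^2$ function with $\Delta_{g_M}u\ge\lambda u$ for some $\lambda>0$. Then $u^\ast<\infty$, and applying (iv) with $f(t)=\lambda t$ on all of $M$ (hence on any $\Omega_\epsilon$) gives $\lambda u^\ast\le0$. Thus $u^\ast\leq 0$, and since $u\ge0$ we get $u\equiv0$. So $(M,g_M)$ is stochastically complete.
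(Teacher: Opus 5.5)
Your proof is correct. The implications (ii)$\Rightarrow$(iii)$\Rightarrow$(iv)$\Rightarrow$(i) are the routine ones; the real difference from the paper is in (i)$\Rightarrow$(ii).

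The paper does not reprove Proposition \ref{stoc}; it quotes it from \cite{PRS1}. It does, however, reproduce the \cite{PRS1} argument in the proof of Proposition \ref{wstoc}. There, the Lipschitz truncation $\max\{u+\epsilon-u^\ast,0\}$ is only a \emph{weak} subsolution of $\Delta_{g_M}w=\lambda w$. The monotone iteration scheme, with a large constant as supersolution, is then used to produce a genuine bounded, nonnegative, nontrivial solution of $\Delta_{g_M}v=\lambda v$. This contradicts condition (iii) of Proposition \ref{conditions for sc}.

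You instead use the $C^2$ convex cutoff $\psi(t)=t_+^3$, which satisfies $\psi'(t)\geq(3/\epsilon)\psi(t)$ on $(0,\epsilon]$. This gives a classical $C^2$ subsolution of $\Delta_{g_M}v\geq\lambda v$ directly. You can then contradict condition (ii) of Proposition \ref{conditions for sc} without any existence theory. In the $C^2$ setting of Proposition \ref{stoc}, your route is more elementary and self-contained.

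The paper's route is the one that carries over to Proposition \ref{wstoc}, where $u$ is only in $C^0\cap W^{1,2}_{\loc}$ and the inequality holds only weakly. Your cubic composition would still yield a weak subsolution there, via a weak chain rule for convex nondecreasing $\psi$. But the characterizations in Proposition \ref{conditions for sc} are stated for $C^2$ functions, so an upgrading step such as monotone iteration is still needed. Once that step is available, the simpler Lipschitz cutoff is enough.
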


The following is a sufficient condition for a Riemannian manifold $(M,g_M)$ to be stochastically complete:
\begin{prop}[cf.~\cite{AMR1}]\label{rx}
{\it Suppose that $(M,g_M)$ is a complete Riemannian manifold of dimension $m$. 
Let $p\in M$ be a fixed origin and denote by $\gamma_p(x)$ the Riemannian distance function from $p$, and by $\cut(p)$ its cut locus.
Assume that the Ricci curvature satisfies
\begin{align*}
\Ric(\nabla \gamma_p,\nabla \gamma_p)\geq -(m-1)G^2(\gamma_p)
\end{align*}
on $M\setminus (\cut(p)\cup\{p\})$ for some positive nondecreasing continuous function $G$ with
\begin{align*}
\frac{1}{G}\notin L^1(+\infty).
\end{align*}
Then $(M,g_M)$ is stochastically complete.
}
\end{prop}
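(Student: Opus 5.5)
The plan is to reduce stochastic completeness to condition (\ref{iistoc}) of Proposition \ref{conditions for sc}. I will build a proper \emph{Khas'minskii-type function} $\psi$ whose Laplacian is bounded above outside a compact set, and then run a maximum-principle argument on $u-\epsilon\psi$. The natural candidate, dictated by the hypothesis $1/G\notin L^1(+\infty)$, is
\begin{align*}
\psi(x)\coloneqq \Psi(\gamma_p(x)),\qquad \Psi(t)\coloneqq\int_0^{t}\frac{ds}{G(s)} .
\end{align*}
Since $1/G\notin L^1(+\infty)$, we have $\Psi(t)\to\infty$ as $t\to\infty$. Since $(M,g_M)$ is complete, closed metric balls are compact, so $\psi$ is an exhaustion function.

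The first step is the Laplacian comparison for $\gamma_p$. Let $g$ solve $g''=G^2g$ with $g(0)=0$ and $g'(0)=1$. The Ricci lower bound gives
\begin{align*}
\Delta_{g_M}\gamma_p\le (m-1)\frac{g'}{g}(\gamma_p)\quad\text{on } M\setminus(\cut(p)\cup\{p\}),
\end{align*}
and this inequality holds on all of $M\setminus\{p\}$ in the barrier (Calabi) sense. Next I bound $y\coloneqq g'/g$, which satisfies the Riccati equation $y'=G^2-y^2$. Whenever $y(s)>G(s)$, we get $y'(s)<0$, and $G$ is nondecreasing. It follows that $y(s)\le\max\{y(1),G(s)\}$ for $s\ge1$. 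Because $G\ge G(0)>0$, this gives $y\le C\,G$ on $[1,\infty)$ for a constant $C$.

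The second step is the estimate for $\psi$. We have $\Psi'=1/G$, and $\Psi''\le0$ because $G$ is nondecreasing (if $G$ is only continuous, I would first replace it by a smooth nondecreasing function $\tilde G\ge G$ with $1/\tilde G\notin L^1(+\infty)$, e.g.\ by a mollification from the right). Using $|\nabla\gamma_p|=1$ off the cut locus,
\begin{align*}
\Delta_{g_M}\psi=\Psi''(\gamma_p)+\Psi'(\gamma_p)\,\Delta_{g_M}\gamma_p\le \frac{(m-1)C\,G(\gamma_p)}{G(\gamma_p)}=(m-1)C
\end{align*}
on $\{\gamma_p\ge1\}$, again in the barrier sense across $\cut(p)$.

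The third step is the maximum-principle argument. Let $u\ge0$ be bounded and $C^2$ with $\Delta_{g_M}u\ge\lambda u$, and suppose $u(x_0)>0$. For small $\epsilon>0$, the function $u-\epsilon\psi$ tends to $-\infty$ at infinity, so it attains a maximum at some point $x_\epsilon$. For $\epsilon$ small this maximum is positive and dominates its value at $x_0$.
\begin{itemize}
\item If $x_\epsilon$ lies in the compact set $\{\gamma_p\le1\}$, I would handle this by first modifying $\psi$ there to be smooth. This changes $\Delta\psi$ only by a bounded amount, so the estimate $\Delta_{g_M}\psi\le C'$ survives on all of $M$.
\item At $x_\epsilon$ we then have $0\ge\Delta_{g_M}(u-\epsilon\psi)(x_\epsilon)\ge\lambda u(x_\epsilon)-\epsilon C'$. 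This is interpreted in the barrier sense if $x_\epsilon\in\cut(p)$: one uses an upper barrier for $\psi$ there, obtained by moving the origin slightly along a minimizing geodesic.
\end{itemize}
Hence $\lambda u(x_0)\le \lambda u(x_\epsilon)+\epsilon\psi(x_0)\lambda\le\epsilon(C'+\lambda\psi(x_0))$. Letting $\epsilon\to0$ gives $u(x_0)\le0$, a contradiction. So $u\equiv0$, and Proposition \ref{conditions for sc} yields stochastic completeness.

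I expect the main technical obstacle to be the lack of smoothness of $\psi$ at $\cut(p)$ (and at $p$). This requires making the comparison and the maximum argument rigorous in the barrier/weak sense, via Calabi's trick with a shifted base point. The Riccati bound $g'/g\le CG$ is where the monotonicity of $G$ is genuinely used.
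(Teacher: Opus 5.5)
The paper gives no proof of this proposition. It is quoted from \cite{AMR1} and is only ever used with $G\equiv\sqrt{K+1}$, that is, to know that complete manifolds with Ricci curvature bounded below are stochastically complete. Judged on its own, your argument is correct and follows the standard route. The Laplacian comparison, the Riccati bound $g'/g\le\max\{y(1),G\}\le C\,G$ on $[1,\infty)$ (your sup argument is right), and the right-sided mollification $G\le\tilde G\le G(\cdot+1)$ are all sound. So are the Khas'minskii-type exhaustion $\psi=\int_0^{\gamma_p}ds/G$ with $\Delta_{g_M}\psi\le C'$ and the maximum principle for $u-\epsilon\psi$. Together these establish condition (ii) of Proposition \ref{conditions for sc}, which is all that is needed. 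A genuinely different route would use volume comparison to get $\log\vol B_r\lesssim rG(r)$ and then Grigor'yan's test $\int^\infty r\,dr/\log\vol B_r=\infty$. That avoids cut-locus barriers but rests on heavier heat-kernel theory.

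Two details should be made explicit.

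First, Calabi's step at a cut point $x=\sigma(\ell)$, with $p_\delta=\sigma(\delta)$:
the barrier inequality $\Psi(\delta+\gamma_{p_\delta})\ge\psi$ uses that $\Psi$ is nondecreasing.
Bounding $\Delta_{g_M}\gamma_{p_\delta}(x)$ needs the Ricci estimate only along $\sigma|_{[\delta,\ell]}$. There $\sigma'=\nabla\gamma_p$ and $\gamma_p=\gamma_{p_\delta}+\delta$, so the purely radial hypothesis suffices. The model function is $g_\delta''=G^2(\cdot+\delta)g_\delta$. Since $g_\delta'/g_\delta(\ell-\delta)\to g'/g(\ell)$ as $\delta\to0$, you get $\Delta_{g_M}u(x_\epsilon)\le\epsilon C'$.

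Second, do the smoothing near $p$ on a ball $B_\rho(p)$ with $\rho$ below the injectivity radius at $p$, so that no cut points lie in the modified region. Use a nondecreasing $\tilde\Psi$ that is even and smooth near $0$ and equals $\Psi$ plus a constant on $[\rho,\infty)$. Then apply the Riccati bound on $[\rho,\infty)$ with a $\rho$-dependent constant. Since $\psi$ is then only bounded below, the final estimate should read $\lambda u(x_0)\le\epsilon\bigl(C'+\lambda(\psi(x_0)-\inf_M\psi)\bigr)$.
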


In particular, it follows from Proposition \ref{rx} that any complete Riemannian manifold with Ricci curvature bounded below is stochastically complete. Indeed, if $\Ric \ge -(m-1)K$ for some constant $K\ge 0$, then we may take $G\equiv \sqrt{K+1}$. For later use, we record the following variant of Proposition \ref{stoc}:
\begin{prop}\label{wstoc}
{\it
Suppose that $(M,g_M)$ is stochastically complete. 
Then for every $u\in C^0(M)\cap W^{1,2}_{\loc}(M)$ with 
$u^\ast=\sup_M u<\infty$, the following cannot occur:
\begin{itemize}
\item
There exist $\epsilon>0$, $c>0$, and a continuous function $g:U\rightarrow \R$ defined on a nonempty open subset $U\subseteq M$ such that
\begin{align}
\Delta_{g_M} u&\ge g \ \text{in the weak sense on $U$}, \label{gMg} \\
g &\ge 2c \ \text{on} \ \Omega_\epsilon, \label{Oeps} \\ 
\overline{\Omega}_\epsilon&\subseteq U, \label{OmegaU}
\end{align}
where $\Omega_\epsilon=\{x\in M \mid u(x)>u^\ast-\epsilon\}$.
\end{itemize}
}
\end{prop}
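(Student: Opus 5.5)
We argue by contradiction: suppose such $\epsilon$, $c$, $U$ and $g$ exist. The plan is to replace $u$ by a $C^2$ function to which Proposition \ref{stoc}\eqref{u} applies, or, more directly, to violate the characterization \eqref{iistoc} of Proposition \ref{conditions for sc}. The latter is less sensitive to regularity, so we use it. The idea is to build a nonnegative bounded weak subsolution $v$ of $\Delta v\ge \lambda v$ that is not identically zero. We then upgrade it to a contradiction via the heat semigroup, since stochastic completeness is equivalent to the same Liouville property for bounded continuous weak subsolutions.

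Concretely, set $\delta=\epsilon/2$ and $v\coloneqq \max(u-(u^\ast-\delta),0)$. Then $v$ is continuous, lies in $W^{1,2}_{\loc}$, and satisfies $0\le v\le \delta$. It is not identically zero because $\sup u=u^\ast$, and its support $\{v>0\}\subseteq\Omega_{\delta}\subseteq \Omega_\epsilon$. On the open set $U$, $u$ is weakly subharmonic-with-source: $\Delta u\ge g$. The maximum of two weak subsolutions is a weak subsolution (Kato-type inequality for $W^{1,2}_{\loc}$ functions). Hence on $U$ we get $\Delta v\ge g\,\chi_{\{u>u^\ast-\delta\}}\ge 2c\,\chi_{\{v>0\}}$ in the weak sense. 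Away from $\overline{\Omega}_\delta$, $v$ vanishes identically. Since $\overline\Omega_\delta\subseteq\overline\Omega_\epsilon\subseteq U$ by \eqref{OmegaU}, the complement of $\overline{\Omega}_\delta$ and $U$ form an open cover of $M$. Gluing with a partition of unity, the weak inequality $\Delta v\ge 2c\,\chi_{\{v>0\}}\ge 0$ holds on all of $M$. With $\lambda\coloneqq 2c/\delta$ we then have $\Delta v\ge 2c\chi_{\{v>0\}}\ge \lambda v$ weakly on $M$, since $v\le\delta$.

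It remains to show that stochastic completeness forbids a nonzero bounded nonnegative continuous weak solution of $\Delta v\ge\lambda v$. This is the main obstacle, since Proposition \ref{conditions for sc} is stated for $C^2$ functions. Our first approach is the standard exhaustion argument. Take a smooth exhaustion $D_k$ of $M$ and let $w_k$ solve $\Delta w_k=\lambda w_k$ in $D_k$ with $w_k=\delta$ on $\partial D_k$. By the weak maximum principle on bounded domains, applied to $v-w_k\in W^{1,2}$ with $(v-w_k)^+$ vanishing on $\partial D_k$, we get $v\le w_k$ on $D_k$. The $w_k$ are smooth and decrease in $k$ to a smooth nonnegative bounded solution $w$ of $\Delta w=\lambda w$ with $w\ge v\not\equiv 0$. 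This contradicts condition (iii) of Proposition \ref{conditions for sc}.

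An alternative route would mollify $u$ using the approximation property of Corollary \ref{dense}, with $\eta$ built from $g$, and then apply Proposition \ref{stoc} directly. However, that requires $g$ to be defined globally and the approximation to preserve the sublevel geometry, so the exhaustion argument above is the one we would carry out.
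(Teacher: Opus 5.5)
Your argument is correct and follows essentially the same route as the paper. You truncate $u$ to obtain a nonzero, bounded, nonnegative continuous $W^{1,2}_{\loc}$ weak subsolution of $\Delta v\ge\lambda v$, then produce a genuine bounded nonnegative solution of $\Delta w=\lambda w$ lying above it, which contradicts condition (iii) of Proposition \ref{conditions for sc}. The only differences are in how the two steps are justified: you use Kato's inequality for the truncation and an explicit Dirichlet exhaustion with constant boundary data for the sub/supersolution step, whereas the paper cites \cite[Lemma 1.5]{DS1} and the monotone iteration scheme of \cite{RRV1}.
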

\begin{proof} The proof is the same as that of the implication (i)$\Rightarrow$(ii) of Proposition \ref{stoc}. Suppose that (\ref{gMg}), (\ref{Oeps}), and (\ref{OmegaU}) hold for some $\epsilon>0$, $c>0$, and a continuous function $g:U\rightarrow \R$. We let $\Omega^\ast=\{x\in U\mid g(x)>c\}$, so that $\overline{\Omega}_\epsilon\subseteq\Omega^\ast$. Setting $\lambda=c/\epsilon$, at each $x\in \Omega^\ast$ we have
\begin{align*}
g(x)>c\geq c+\lambda(u(x)-u^\ast)=\lambda(u(x)+\epsilon-u^\ast),
\end{align*}
so that $u+\epsilon-u^\ast$ is a weak subsolution of
\begin{align}
\Delta_{g_M}u-\lambda u=0 \label{lambdau}
\end{align}
on $\Omega^\ast$. Since the constant function $0$ is obviously a subsolution of (\ref{lambdau}) on $M$, we see that $u_\epsilon\coloneqq \max\{u+\epsilon-u^\ast,0\}$ is also a subsolution on $M$ (see, e.g., \cite[Lemma 1.5]{DS1}). 
Also, since $\max:\R^2\to \R$ is Lipschitz continuous, hence a $W^{1,\infty}_{\loc}$ function (cf. \cite[Theorem 4.5]{EG1}), $u_\epsilon$ is again a continuous $W^{1,2}_{\loc}$ function. Furthermore, $u_\epsilon\not\equiv 0$ and $0\leq u_\epsilon \leq \epsilon<\infty$. Note that any positive constant is a supersolution of (\ref{lambdau}). Choose a constant $u_+>\epsilon$. Applying the monotone iteration scheme (see \cite[Proposition 2.4]{RRV1} for the formulation needed here) yields a smooth solution $v$ of (\ref{lambdau}) on $M$ such that $u_\epsilon \leq v\leq u_+$. Now, since $u_\epsilon$ does not vanish identically, the same holds for $v$, and this contradicts condition (iii) in Proposition \ref{conditions for sc} and, equivalently, condition (i). 
\end{proof}

Following the proof of \cite[Lemma 3.2]{LM1}, we extend Proposition \ref{wstoc} to manifolds with smooth compact boundary, under a stronger assumption than stochastic completeness:
\begin{prop}\label{wstocb}
{\it
Let $(M,g_M)$ be a Riemannian manifold with smooth compact boundary $\partial M$. Suppose that $(M,g_M)$ is complete and that the Ricci curvature is bounded below. Then, for every $u\in C^0(M)\cap W^{1,2}_{\loc}(M\setminus \partial M)$ with 
$u^\ast=\sup_M u<\infty$, if $\max_{\partial M}u<u^\ast$, the following cannot occur:
\begin{itemize}
\item
There exist $\epsilon>0$, $c>0$, and a continuous function $g:U\to \R$ defined on a nonempty open subset $U\subseteq M\setminus \partial M$ such that
\begin{align}
\Delta_{g_M} u &\ge g \quad \text{in the weak sense on $U$}, \label{gmu}\\
g &\ge 2c \quad \text{on} \ \Omega_\epsilon, \label{ome}\\ 
\overline{\Omega}_\epsilon&\subseteq U, \label{bOmU} 
\end{align}
where $\Omega_\epsilon=\{x\in M \mid u(x)>u^\ast-\epsilon\}$. 
\end{itemize}
}
\end{prop}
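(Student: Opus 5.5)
Since $\max_{\partial M}u<u^\ast$, I will first shrink $\epsilon$ so that $\epsilon<u^\ast-\max_{\partial M}u$. Then $\overline{\Omega}_\epsilon$ stays a positive distance away from $\partial M$; here I use that $\partial M$ is compact and $u$ is continuous. Conditions \eqref{gmu}, \eqref{ome}, \eqref{bOmU} survive this shrinking, because $\Omega_{\epsilon'}\subseteq\Omega_\epsilon$ for $\epsilon'<\epsilon$. Following \cite[Lemma 3.2]{LM1}, the idea is to replace $M$ by a boundaryless complete manifold $N$ with Ricci curvature bounded below, which agrees with $M$ outside a compact neighbourhood of $\partial M$. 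Proposition \ref{wstoc} then applies to $N$, which is stochastically complete by Proposition \ref{rx} and the remark after it.

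\textbf{Construction of $N$.} Choose a compact collar neighbourhood $W$ of $\partial M$ with $W\cap\overline{\Omega}_\epsilon=\emptyset$. Let $N$ be obtained by gluing to $M$ a manifold that caps off $\partial M$. Concretely, take the double of a slightly larger compact collar, or glue $M\setminus W$ to any compact manifold with boundary $\partial W'$ along the inner boundary of the collar. Give $N$ a smooth metric $g_N$ that equals $g_M$ on $M\setminus W$. Since the modified region is compact, $g_N$ is complete and its Ricci curvature is still bounded below. Hence $(N,g_N)$ is stochastically complete by Proposition \ref{rx}.

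\textbf{Transfer of $u$.} Define $\tilde u$ on $N$ by $\tilde u=\max\{u,\,u^\ast-\epsilon/2\}$ on $M\setminus W$ and $\tilde u\equiv u^\ast-\epsilon/2$ on the new part $N\setminus(M\setminus W)$. This is consistent because $u<u^\ast-\epsilon$ on a neighbourhood of $W$ in $M$. The function $\tilde u$ is continuous, lies in $W^{1,2}_{\loc}$ because $\max$ is Lipschitz, and satisfies $\sup\tilde u=u^\ast$. Its superlevel set $\tilde\Omega_{\epsilon/2}=\{\tilde u>u^\ast-\epsilon/2\}$ equals $\Omega_{\epsilon/2}\subseteq\Omega_\epsilon$.

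\textbf{Application of Proposition \ref{wstoc}.} I need an open set $\tilde U$ and a continuous $\tilde g$ on it. Take $\tilde U=\{x\in U\mid u(x)>u^\ast-\tfrac{3\epsilon}{4}\}$ and $\tilde g=g|_{\tilde U}$. On $\tilde U$ we have $\tilde u=u$, so $\Delta\tilde u\ge\tilde g$ weakly there. Moreover $\tilde g\ge 2c$ on $\tilde\Omega_{\epsilon/2}$, and $\overline{\tilde\Omega_{\epsilon/2}}\subseteq\{u\ge u^\ast-\epsilon/2\}\subseteq\tilde U$, using $\overline\Omega_\epsilon\subseteq U$. This contradicts Proposition \ref{wstoc} on $N$. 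Note that we never need any subharmonicity of $\tilde u$ away from $\tilde U$, since Proposition \ref{wstoc} only requires the inequality on $U$.

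\textbf{Main obstacle.} The step needing the most care is the construction of $N$ with a metric that is both complete and has Ricci curvature bounded below. I would cap off with the double of a compact collar and smooth the metric on a compact set. Any smooth interpolation there only changes curvature on a compact set, so the lower bound persists.
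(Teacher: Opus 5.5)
Your overall strategy matches the paper's. You change the metric only near the compact boundary to get a complete manifold without boundary and with Ricci curvature bounded below, then invoke Proposition \ref{wstoc}. However, two steps fail as written, although both can be repaired.

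First, the claim that $\tilde u=u$ on $\tilde U=U\cap\{u>u^\ast-3\epsilon/4\}$ is false. Consider the set $\{u^\ast-3\epsilon/4<u<u^\ast-\epsilon/2\}$. It lies in $\Omega_\epsilon\subseteq U$. When $\partial M\neq\emptyset$ it is nonempty and open, by connectedness, since $u$ takes values near $u^\ast$ and values below $u^\ast-\epsilon$ near $\partial M$. On this set $\tilde u\equiv u^\ast-\epsilon/2$, so $\Delta\tilde u=0$, while $\tilde g=g\ge 2c>0$. Choosing a different $\tilde U$ at the same level does not help. Continuity forces $\tilde g>0$ on a neighbourhood of $\overline{\tilde\Omega_{\epsilon/2}}$, and such a neighbourhood generally meets the open set $\{u<u^\ast-\epsilon/2\}$, where $\tilde u$ is constant. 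You must separate the truncation level from the level you test. Keep $\tilde u=\max\{u,u^\ast-\epsilon/2\}$, but apply Proposition \ref{wstoc} to $\tilde\Omega_{\epsilon/4}=\Omega_{\epsilon/4}$ with $\tilde U=U\cap\{u>u^\ast-\epsilon/3\}$. On this set $\tilde u=u$ really holds, $g\ge 2c$ on $\Omega_{\epsilon/4}$, and $\overline{\Omega_{\epsilon/4}}\subseteq\{u\ge u^\ast-\epsilon/4\}\cap\overline{\Omega}_\epsilon\subseteq\tilde U$.

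Second, your cap need not exist in the generality of the statement. Doubling a collar $\partial M\times[0,1]$ gives $\partial M\times[-1,1]$, which still has a free boundary component, so it caps nothing. A compact manifold with boundary $\partial M$ also need not exist. Take $M=\mathbb{RP}^2\times[0,\infty)$ with the product metric, which is complete with $\mathrm{Ric}\geq 0$. No compact $3$-manifold has boundary $\mathbb{RP}^2$, because the boundary of a compact odd-dimensional manifold has even Euler characteristic. In the paper's applications $\dim M=2$, so discs would suffice, but the proposition is stated in general.

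You can instead double $M$ itself along $\partial M$, after making $g_M$ a product near $\partial M$. Alternatively, attach a half-cylinder $\partial M\times(-\infty,0]$ with the product metric. In either case extend $\tilde u$ by the constant; both manifolds are complete with Ricci bounded below because $\partial M$ is compact. With these two repairs your argument is a valid variant of the paper's.

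The paper avoids both issues at once. It puts on the interior $M\setminus\partial M$ a complete metric with Ricci bounded below, agreeing with $g_M$ outside a compact collar $K$ on which $\max_K u<u^\ast$. Then $u$ itself is already admissible there, since its supremum is still $u^\ast$, and one only shrinks $\epsilon$ and $U$ so that $U\subseteq M\setminus K$. No truncation or extension of $u$ is needed.
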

\begin{proof}
Suppose, for contradiction, that there exist $\epsilon>0$, $c>0$, and a continuous function $g:U\to \R$ satisfying \eqref{gmu}, \eqref{ome}, and \eqref{bOmU}. There exists a compact neighborhood $K \subset M$ of $\partial M$, together with a diffeomorphism $\Psi: K \to \partial M \times [0,1]$ such that $\Psi(\partial M)=\partial M\times\{0\}$. By shrinking $K$ if necessary, we may assume that $\max_K u < u^\ast$. There exists a complete Riemannian metric $\widetilde{g}_M$ on $M\setminus \partial M$ such that $\widetilde{g}_M = g_M$ on $M\setminus K$ and $\widetilde{g}_M$ has Ricci curvature bounded below. After possibly shrinking $\epsilon$ and $U$, we may assume that $U \subseteq M \setminus K$. Then we have
\begin{align*}
\Delta_{\widetilde{g}_M} u &\ge g \quad \text{in the weak sense on $U$}, \\
g &\ge 2c \quad \text{on } \Omega_\epsilon, \\ 
\overline{\Omega}_\epsilon &\subset U.
\end{align*}
This contradicts Proposition \ref{wstoc}, completing the proof.
\end{proof}

In order to extend the role of the Cheng--Yau maximum principle in \cite{LM1} to equations with weaker regularity of solutions, following \cite[Theorem 8.20]{BMPR1}, we prove the following proposition.

\begin{prop}\label{SL}{\it
Let $(X,g_X)$ be a Riemann surface with a complete continuous K\"ahler metric $g_X$ whose Ricci curvature is bounded below.
Let $f:\R\to\R$ be continuous and define
\begin{align*}
F(t):=\int_0^t f(s)ds.
\end{align*}
Assume that $f(0)=0$ and $f(t)>0$ for $t>0$ and that there exists a positive constant $C$ such that
for all $t>0$ the following holds (cf. \cite[Definition 1.9]{BMPR1}):
\begin{align}
\sup_{s\in [0,t]}f(s)\leq Cf(t) \label{Cinc}
\end{align}
Suppose moreover that 
\begin{align}
1/\sqrt{F}\in L^1(+\infty). \label{KOs}
\end{align}
Let $u$ be a function belonging to $L^1_\loc(X)$ such that $f(u)\in L^1_\loc(X)$ and $u$ satisfies the following inequality in the sense of distributions:
\begin{align}
\Delta_{g_X}u\geq f(u). \label{ugfu}
\end{align}
Then $\esssup_Xu<\infty$. Furthermore, under the stronger assumption that $u$ is continuous on $X\setminus Y$ for some relatively compact open set $Y$ with smooth boundary $\partial Y$, and that $u\in W^{1,2}_{\loc}(X\setminus \overline{Y})$, either $f\bigl(\sup_{X\setminus Y} u\bigr)\leq 0$ or $\sup_{X\setminus Y} u=\max_{\partial Y} u$ holds.
}
\end{prop}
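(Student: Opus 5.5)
We will follow the scheme of \cite[Theorem 8.20]{BMPR1}, with two changes: we first pass to a smooth background metric, and we then use the weak maximum principles of Propositions \ref{wstoc} and \ref{wstocb} in place of the classical ones.

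\textbf{Reduction to a smooth metric.} By Proposition \ref{GWT} (2), we replace $g_X$ by a smooth metric $\widetilde g=\widetilde g_{X,\epsilon}$ with $e^{-\epsilon}g_X\le\widetilde g\le e^{\epsilon}g_X$ and $\Ric\ge C_\epsilon$. This metric is still complete. On a Riemann surface, $\Delta_{\widetilde g}=\rho\,\Delta_{g_X}$, where $\rho=g_X/\widetilde g\in[e^{-\epsilon},e^{\epsilon}]$ is a continuous conformal factor. Hence, wherever $f(u)\ge0$, \eqref{ugfu} gives
\[
\Delta_{\widetilde g}u\ge e^{-\epsilon}f(u)
\]
in the sense of distributions. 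To arrange this everywhere, we first apply Kato's inequality to pass to $u_+=\max\{u,0\}$. We then work with $f$ extended by $0$ on $(-\infty,0]$, which is legitimate since $f\ge0$ on $[0,\infty)$. So from now on we may assume $g_X$ is smooth with Ricci curvature bounded below, $u\ge0$, and $\Delta u\ge f(u)/2$. The constants in \eqref{Cinc} and \eqref{KOs} are unaffected by this scaling.

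\textbf{Essential upper bound.} The Keller--Osserman condition \eqref{KOs} lets us build radial supersolutions. Fix a point $p$ and a ball $B_R(p)$. On this ball we solve the ODE $w''+\frac{G'}{G}w'=f(w)/2$ with $w\to+\infty$ at $r=R$; here $G$ is a comparison warping function controlled via Laplacian comparison, since $\Ric$ is bounded below. Setting $W(x)=w(\gamma_p(x))$, the Laplacian comparison theorem shows that $W$ is a viscosity/distributional supersolution of $\Delta W\le f(W)/2$ on $B_R(p)$. The monotonicity condition \eqref{Cinc} is what is used to absorb the drift term $\frac{G'}{G}w'$, exactly as in \cite{BMPR1}. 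A comparison argument then gives $u\le W$ a.e. on $B_R(p)$. For the comparison we mollify $u$ locally, use the Kato-type inequality for $(u-W)_+$, and note that $W$ blows up at the boundary of the ball. Because the blow-up profile does not depend on $p$, we get $u\le w(R/2)$ a.e. on $B_{R/2}(p)$, uniformly in $p$. Therefore $\esssup_X u<\infty$.

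\textbf{Boundary alternative.} Now assume in addition that $u$ is continuous on $X\setminus Y$ and lies in $W^{1,2}_{\loc}(X\setminus\overline Y)$. Set $M=X\setminus Y$, a manifold with compact smooth boundary; it is complete and has $\Ric$ bounded below. Suppose that $f(u^\ast)>0$, where $u^\ast=\sup_{M}u<\infty$, and that $\max_{\partial Y}u<u^\ast$. By continuity, $f(u)\ge f(u^\ast)/2=:2c$ on $\Omega_\epsilon$ for small $\epsilon$. We take $U=\{x\in M\setminus\partial M\mid u(x)>u^\ast-2\epsilon\}$, whose closure avoids $\partial Y$ for small $\epsilon$, and $g=f(u)$, which is continuous on $U$. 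Proposition \ref{wstocb} then gives a contradiction. For this we need the distributional inequality to hold weakly in the $W^{1,2}$ sense, which follows since $u\in W^{1,2}_{\loc}$ and $f(u)$ is continuous. If the metric is only continuous, we apply Proposition \ref{wstocb} to the smoothed $\widetilde g$ from the first step, using the same rescaling of $g$ by $\rho$. This yields the dichotomy in the statement.

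\textbf{Main obstacle.} The hard step is the upper bound for a merely $L^1_{\loc}$ distributional subsolution. The comparison with the radial barrier needs a Kato inequality and a maximum principle for distributions without any pointwise sense. The approach I would try first is Brezis' argument: show $\Delta(u-W)_+\ge \tfrac12\bigl(f(u)-f(W)\bigr)\chi_{\{u>W\}}\ge0$ in $\mathcal D'$, where the last inequality needs $f$ nondecreasing. When $f$ is not monotone, we first replace $f$ by $\tilde f(t)=\inf_{s\ge t}f(s)$, which is nondecreasing and satisfies $\tilde f\ge f/C$ by \eqref{Cinc}; this replacement preserves \eqref{KOs} up to constants. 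Then $(u-W)_+$ is a nonnegative subharmonic distribution with compact support in $B_R(p)$, hence it vanishes.
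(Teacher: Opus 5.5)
Your proposal is correct in outline, but for the essential upper bound it takes a genuinely different route from the paper.

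\textbf{Your route.} You replace $f$ by the nondecreasing minorant $\tilde f(t)=\inf_{s\ge t}f(s)$. Your use of \eqref{Cinc} to get $f/C\le\tilde f\le f$, and hence \eqref{KOs} for $\tilde F$, is right. You then run the classical Keller--Osserman local barrier argument: a radial large solution of $w''+\frac{G'}{G}w'=\tilde f(w)/2$ on a full geodesic ball $B_R(p)$, comparison on every such ball, and the uniform bound $u\le w(R/2)$.

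\textbf{The paper's route.} The paper argues by contradiction with one fixed center $o$. It works only on the annulus $B_{R_1}\setminus\overline{B}_{r_0}$, where Laplacian comparison gives the constant drift bound $\Delta r\le A$. It uses the barrier of Lemma \ref{fRR}, built from the profile $\int ds/(1+s)$. This barrier stays between $\delta>M_0=\esssup_{B_{2r_0}}u_+$ and $\lambda$ on an arbitrarily large annulus $B_{r_1}\setminus B_{r_0}$ and blows up at $R_1$. There \eqref{Cinc} is used directly, with no monotonization, in two places:
\begin{itemize}
\item to absorb the drift through $w'\le\hat C_\sigma f(w)$;
\item in the comparison, since $u_+>\bar w$ gives $f(u_+)\ge f(\bar w)/C$.
\end{itemize}
The comparison itself is an energy argument with $v=(u_+-\bar w)_+\in W^{1,2}_0(U)$. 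This is why the paper first proves $u_+\in W^{1,2}_{\loc}\cap L^\infty_{\loc}$.

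\textbf{What each buys.} Your argument gives a quantitative bound depending only on $f$ and the Ricci lower bound. The price is twofold:
\begin{itemize}
\item You must handle the singular drift $\frac{G'}{G}\sim 1/r$ at the center. This needs the integral-equation solution with $w'(0)=0$, so that $w\circ\gamma_p$ is a supersolution across $p$.
\item You need a distributional Kato inequality for $(u-w\circ\gamma_p)_+$. Lemma \ref{uvFG} (ii), applied to $u_+-w\circ\gamma_p$ and $0$ in local coordinates, supplies this.
\end{itemize}
The paper's argument avoids both the center and the monotonization, but it is purely qualitative.

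\textbf{Details to tighten.}
\begin{itemize}
\item In your scheme, \eqref{Cinc} does not ``absorb the drift'' as you claim. The drift is built into the ODE, and \eqref{Cinc} enters only through $\tilde f$.
\item Compact support of $(u-w\circ\gamma_p)_+$ in $B_R(p)$ requires $u_+\in L^\infty_{\loc}$. This holds because $u_+$ is subharmonic; the blow-up of the barrier alone is not enough.
\item You need not prescribe $R$. Since $\tilde f$ is nondecreasing and satisfies the Keller--Osserman condition, taking $w(0)$ large makes the blow-up radius finite. That single radius then works for every $p$.
\end{itemize}

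Your boundary alternative is essentially the paper's argument: Proposition \ref{wstocb} applied to the smoothed metric on $X\setminus Y$. You phrase it as a direct contradiction rather than by extracting points $x_k$.
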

For $f(t)=e^t-1$, all the above conditions are satisfied. Moreover, $f(u)=e^u-1$ is locally bounded, and hence locally integrable, whenever $u$ is upper semicontinuous. For the proof of Proposition \ref{SL}, we prepare the following lemma, which is a simplified version of \cite[Proposition 8.17]{BMPR1}:
\begin{lemm}[cf.~\cite{BMPR1}]\label{fRR}
{\it Let $f:\R\to\R$ be a continuous function that satisfies the conditions in Proposition \ref{SL}. Then for each $\epsilon>0$, $\theta>0$, $0<\delta<\lambda$, and $0\leq r_0<r_1$, there exist $R_1>r_1$ and a $C^2$ function $w:[r_0,R_1)\to[\delta,\infty)$ that solves
\begin{align}
\begin{cases}
w''(r)+\theta w'(r)\le \epsilon f(w(r)) & \text{on } [r_0,R_1),\\
w'(r)>0 & \text{on} \ [r_0,R_1),\\
w(r)\to\infty & \text{as } r\to R_1,\\
\delta\le w(r)\le \lambda & \text{on } [r_0,r_1]. 
\end{cases}
\label{wwf}
\end{align}
}
\end{lemm}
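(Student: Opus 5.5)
We construct $w$ from the first integral of the autonomous ODE $w''=\epsilon f(w)$ and then absorb the drift term $\theta w'$ using the Keller--Osserman condition \eqref{KOs} together with the almost-monotonicity \eqref{Cinc}.

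\textbf{Step 1: the profile.} Fix a small parameter $\sigma\in(0,1]$ and define, for $t\ge\delta$,
\begin{align*}
\Psi(t)\coloneqq\int_\delta^t\frac{ds}{\sqrt{\sigma(F(s)-F(\delta))}}.
\end{align*}
This is finite for every $t>\delta$. Near $s=\delta$ we have $F(s)-F(\delta)\sim f(\delta)(s-\delta)$ with $f(\delta)>0$, so the singularity is integrable. At infinity, $F(s)-F(\delta)\ge F(s)/2$ for large $s$, so \eqref{KOs} gives $\Psi(\infty)=\sigma^{-1/2}T_0<\infty$, where $T_0$ depends only on $f$ and $\delta$.

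Let $w_0\coloneqq\Psi^{-1}$ on $[0,\Psi(\infty))$. Then $w_0(0)=\delta$ and $w_0'=\sqrt{\sigma(F(w_0)-F(\delta))}$. Differentiating gives $w_0''=\frac{\sigma}{2}f(w_0)$, and $w_0\to\infty$ at the finite endpoint.

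The profile degenerates at $0$, since $w_0'(0)=0$. To get $w'>0$ on the closed interval $[r_0,R_1)$, we start slightly inside: set $w(r)\coloneqq w_0(r-r_0+a)$ for a small $a>0$. Then $R_1=r_0+\Psi(\infty)-a$.

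\textbf{Step 2: controlling the drift.} We need $\theta w'\le(\epsilon-\sigma/2)f(w)$, that is,
\begin{align*}
\theta\sqrt{\sigma(F(w)-F(\delta))}\le(\epsilon-\tfrac{\sigma}{2})f(w)\quad\text{for } w\ge\delta.
\end{align*}
By \eqref{Cinc},
\begin{align*}
F(w)-F(\delta)\le(w-\delta)\sup_{[\delta,w]}f\le C(w-\delta)f(w),
\end{align*}
so it suffices to have $\theta\sqrt{\sigma C(w-\delta)f(w)}\le\frac{\epsilon}{2}f(w)$ once $\sigma\le\epsilon$.

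This is where the main obstacle lies: the condition cannot hold uniformly for large $w$ unless $f(w)/(w-\delta)$ stays bounded below. We handle it as follows.

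For large $w$, \eqref{KOs} forces $F$ to grow superquadratically, so $F(w)/w^2\to\infty$. Using $F(w)\le Cwf(w)$, this gives $f(w)/w\to\infty$. Hence $\frac{f(w)}{w-\delta}\ge m>0$ on $[\delta,\infty)$: near $\delta$ this holds because $f(\delta)>0$, in the middle range by continuity and positivity of $f$, and at infinity by the growth just derived. The inequality then reduces to $\theta\sqrt{\sigma C}\le\frac{\epsilon}{2}\sqrt m$, which holds once $\sigma$ is chosen small.

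\textbf{Step 3: fitting $[r_0,r_1]$.} A smaller $\sigma$ makes the profile flatter, since $\Psi(t)$ scales like $\sigma^{-1/2}$. Choose $\sigma$ small enough that $\Psi(\lambda)>r_1-r_0+a$. Then $w\le\lambda$ on $[r_0,r_1]$, and $w\ge\delta$ holds automatically, so all four conditions in \eqref{wwf} are satisfied.

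Regularity is not an issue: on $[a,\cdot)$ the function $w_0$ is $C^2$, because $w_0'>0$ there and $w_0''=\frac{\sigma}{2}f(w_0)$ is continuous.

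The hardest step is Step 2, specifically the lower bound $f(w)\gtrsim w$ deduced from \eqref{KOs} combined with \eqref{Cinc}. If that argument proves too delicate, the fallback is to replace $F$ in the profile by $\int_\delta^t\tilde f$ for a suitable minorant $\tilde f\le f$ that is linear near $\delta$.
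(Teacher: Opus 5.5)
Your proof is correct, but it absorbs the drift term $\theta w'$ differently from the paper.

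The paper uses the radially weighted profile $(1+r)w'(r)=\sqrt{2\sigma F(w(r))}$. It takes $F$ itself rather than $F-F(\delta)$, so $w'(r_0)>0$ and no shift is needed. This profile only gives the inequality $w''\le\sigma f(w)/(1+r)^2$. The drift is then handled in three steps:
\begin{enumerate}
\item integrate $((1+r)w')'=\sigma f(w)/(1+r)$;
\item pull $f(w(r))$ out of the integral using \eqref{Cinc} and the monotonicity of $w$;
\item convert $\int_{r_0}^r\frac{ds}{1+s}$ back into a Keller--Osserman integral.
\end{enumerate}
This yields $w'\le\hat C_\sigma f(w)$ with $\hat C_\sigma\to0$ as $\sigma\to0$.

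You instead use the autonomous first integral, for which $w''=\frac{\sigma}{2}f(w)$ holds exactly. You absorb the drift pointwise, via $F(w)-F(\delta)\le C(w-\delta)f(w)$. You combine this with the fact that \eqref{KOs} and \eqref{Cinc} force $f$ to be superlinear, so $f(w)\ge m(w-\delta)$ on $(\delta,\infty)$ with $m$ independent of $\sigma$. Your route is shorter and shows clearly what each hypothesis contributes, at the cost of having to extract this growth property of $f$. The paper's route never needs a growth rate for $f$ and stays within the integral framework of \cite{BMPR1}.

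You state $F(w)/w^2\to\infty$ without proof; it holds, but you should include the justification. It uses that $F$ is nondecreasing on $(0,\infty)$, which is true because $f>0$ there. Then $\frac{w}{2}F(w)^{-1/2}\le\int_{w/2}^{w}F(s)^{-1/2}\,ds\to0$ as $w\to\infty$, by \eqref{KOs}. With this line added, the bound on $f(w)/(w-\delta)$ follows, and all four conditions in \eqref{wwf} hold as you describe.
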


\begin{proof}
For a given $0<\sigma\leq 1$ to be specified later, set
\begin{align*}
C_\sigma\coloneqq \frac{1}{\sqrt{2\sigma}}\int_\delta^\infty\frac{1}{\sqrt{F(s)}}ds,
\end{align*}
which is well defined since we assume condition (\ref{KOs}). Since $\frac{1}{1+s}\notin L^1(+\infty)$, there exists $R_\sigma>r_0$ such that
\begin{align*}
C_\sigma=\int_{r_0}^{R_\sigma}\frac{1}{1+s}ds.
\end{align*}
Since $C_\sigma\to\infty$ as $\sigma\to 0$, we can choose $\sigma$ small enough that $R_\sigma>r_1$. We let $w:[r_0,R_\sigma)\rightarrow [\delta,\infty)$ be implicitly defined by the equation
\begin{align}
\int_{r}^{R_\sigma}\frac{1}{1+s}ds=\frac{1}{\sqrt{2\sigma}}\int_{w(r)}^\infty\frac{1}{\sqrt{F(s)}}ds, \label{wr}
\end{align}
so that, by definition, 
\begin{align*}
w(r_0)=\delta, w(r)\to\infty \ \text{as} \ r\to R_\sigma.
\end{align*}
Differentiating (\ref{wr}) yields
\begin{align}
w^\prime(r)=\frac{1}{1+r}\sqrt{2\sigma F(w(r))}, \label{wpr}
\end{align}
so that $w^\prime>0$ and 
\begin{align}
2\sigma F(w(r))=((1+r)w^\prime(r))^2.
\end{align}
Differentiating once more, we obtain
\begin{align*}
\sigma f(w(r))w^\prime(r)=(1+r)w^\prime(r)((1+r)w^\prime(r))^\prime,
\end{align*}
that is,
\begin{align}
\sigma \frac{f(w(r))}{1+r}=((1+r)w^\prime(r))^\prime. \label{that is}
\end{align}
Since $f>0$ on $(0,\infty)$, we infer that $(1+r)w^\prime(r)$ is increasing. Moreover, we deduce
\begin{align*}
(w^\prime(r)(1+r))^\prime &=w^{\prime\prime}(r)(1+r)+w^\prime(r) \\
&\geq w^{\prime\prime}(r)(1+r).
\end{align*}
Inserting this into (\ref{that is}), we obtain
\begin{align}
w^{\prime\prime}(r)\leq \sigma \frac{f(w(r))}{(1+r)^2}. \label{wpp}
\end{align}
We next integrate (\ref{that is}) on $[r_0,r]$, and we use (\ref{Cinc}) to deduce 
\begin{align*}
w^\prime(r)(1+r)&=w^\prime(r_0)(1+r_0)+\sigma\int_{r_0}^r\frac{f(w(s))}{1+s}ds \\
&\leq w^\prime(r_0)(1+r_0)+\sigma Cf(w(r))\int_{r_0}^r\frac{1}{1+s}ds,
\end{align*}
and thus, using $w(r_0)=\delta$, we obtain
\begin{align}
w^\prime(r)&\leq w^\prime(r)(1+r) \notag \\
&\leq w^\prime(r_0)(1+r_0)+\sigma Cf(w(r))\int_{r_0}^r\frac{1}{1+s}ds \notag \\
&\leq \left(\frac{w^\prime(r_0)(1+r_0)}{f(w(r))}+\sigma C\int_{r_0}^r\frac{1}{1+s}ds\right)f(w(r)) \notag \\
&\leq \left(\frac{Cw^\prime(r_0)(1+r_0)}{f(\delta)}+\sigma C\int_{r_0}^r\frac{1}{1+s}ds\right)f(w(r)) \notag \\
&=\left(\frac{Cw^\prime(r_0)(1+r_0)}{f(\delta)}+\frac{\sigma C}{\sqrt{2\sigma}}\int_\delta^{w(r)}\frac{1}{\sqrt{F(s)}}ds\right)f(w(r)) \label{mistake}\\
&\leq \left(\frac{Cw^\prime(r_0)(1+r_0)}{f(\delta)}+\frac{\sigma C}{\sqrt{2\sigma}}\int_\delta^\infty\frac{1}{\sqrt{F(s)}}ds\right)f(w(r)), \notag
\end{align}
where we have used (\ref{wr}) in (\ref{mistake}). Then we use (\ref{wpr}) with $r=r_0$ to obtain
\begin{align}
w^\prime(r)&\leq \left(\frac{C\sqrt{2\sigma F(w(r_0))}}{f(\delta)}+\frac{\sigma C}{\sqrt{2\sigma}}\int_\delta^\infty\frac{1}{\sqrt{F(s)}}ds\right)f(w(r)) \\
&\leq \hat{C}_\sigma f(w(r)),
 \label{Csq}
\end{align}
where we set
\begin{align*}
\hat{C}_\sigma\coloneqq \frac{C\sqrt{2\sigma F(w(r_0))}}{f(\delta)}+\frac{\sigma C}{\sqrt{2\sigma}}\int_\delta^\infty\frac{1}{\sqrt{F(s)}}ds.
\end{align*}
Putting together (\ref{wpp}) and (\ref{Csq}), we obtain
\begin{align}
w^{\prime\prime}(r)+\theta w^\prime(r) \leq \left(\frac{\sigma}{(1+r_0)^2}+\theta\hat{C}_{\sigma}\right)f(w(r)).
\end{align}
Since $\hat{C}_{\sigma}\to 0$ as $\sigma\to 0$, we can choose $\sigma$ small enough that the differential inequality in (\ref{wwf}) is satisfied. To prove the last condition in (\ref{wwf}), simply observe that by (\ref{wr})
\begin{align*}
\int_{r_0}^{r_1}\frac{1}{1+s}ds=\frac{1}{\sqrt{2\sigma}}\int_\delta^{w(r_1)}\frac{1}{\sqrt{F(s)}}ds,
\end{align*}
and it is enough to choose $\sigma$ in such a way that 
\begin{align*}
\int_{r_0}^{r_1}\frac{1}{1+s}ds<\frac{1}{\sqrt{2\sigma}}\int_{\delta}^\lambda \frac{1}{\sqrt{F(s)}} ds.
\end{align*}
\end{proof}
\begin{lemm}{\it Under the hypotheses of Proposition \ref{SL}, set $u_+\coloneqq \max\{u,0\}$. Then $u_+$ also satisfies the following inequality in the sense of distributions:
\begin{align}
\Delta_{g_X}u_+\geq f(u_+). \label{fuplus}
\end{align}
Moreover, $u_+$ belongs to $W^{1,2}_\loc(X)\cap L^\infty_\loc(X)$.
}
\end{lemm}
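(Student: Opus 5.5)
The argument has two stages: a Kato-type inequality for the positive part, and then a regularity statement obtained from it. Let $\Delta=\Delta_{g_X}$. The metric is only continuous, but in local isothermal coordinates $g_X=e^{\rho}|dz|^2$ with $\rho$ continuous. Hence $\Delta_{g_X}=e^{-\rho}\cdot 4\partial_z\partial_{\bar z}$ up to the paper's normalization, and \eqref{ugfu} is equivalent to the distributional inequality $\Delta_0 u\ge e^{\rho}f(u)$ for the flat Laplacian $\Delta_0$. Since $e^{\rho}f(u)\in L^1_{\loc}$, this is well defined. Everything is local, so I will work with $\Delta_0$ on a disc and the locally integrable right-hand side $\mu:=e^{\rho}f(u)$.

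The first step is to establish \eqref{fuplus}. I will use the distributional Kato inequality in the form valid for $L^1_{\loc}$ functions with $L^1_{\loc}$ Laplacian measure (Brezis--Ponce / Kato): if $u\in L^1_{\loc}$ and $\Delta_0 u$ is a Radon measure whose negative part is absolutely continuous, then $\Delta_0 u_+\ge \chi_{\{u>0\}}\,(\Delta_0 u)_{\mathrm{ac}}$ in the sense of distributions. In our case $\Delta_0u-\mu\ge0$ is a positive distribution and hence a Radon measure, so $\Delta_0 u$ is a measure with absolutely continuous negative part. Kato's inequality then gives $\Delta_0 u_+\ge \chi_{\{u>0\}}\mu=e^{\rho}f(u_+)$, because $f(0)=0$ and $f(u_+)=\chi_{\{u>0\}}f(u)$.

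To avoid citing the measure-valued version, I would instead mollify directly. Set $u_\delta=u*\eta_\delta$, so that $\Delta_0 u_\delta\ge \mu*\eta_\delta$. For a smooth convex nondecreasing approximation $\psi_k$ of $t\mapsto t_+$ with $\psi_k'\in[0,1]$, we get $\Delta_0\psi_k(u_\delta)\ge \psi_k'(u_\delta)\,\mu*\eta_\delta$. Letting $\delta\to0$ along an a.e.-convergent subsequence (dominated convergence for the product, since $\mu*\eta_\delta\to\mu$ in $L^1_{\loc}$ and $\psi_k'$ is bounded), and then $k\to\infty$ with $\psi_k'\to\chi_{(0,\infty)}$, gives the claim. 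Note that $\mu\chi_{\{u>0\}}=e^\rho f(u)_+$-type terms have the good sign, so no cancellation is needed.

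The second step is $L^\infty_{\loc}$. Now $u_+\ge0$ and $\Delta_0 u_+\ge e^{\rho}f(u_+)\ge0$, so $u_+$ is (a.e. equal to) a nonnegative subharmonic function. By the sub-mean value property on a disc $D_{2s}\Subset$ chart, for a.e. $x\in D_s$,
\begin{align*}
u_+(x)\le \frac{1}{\pi s^2}\int_{D_s(x)}u_+\le \frac{1}{\pi s^2}\|u_+\|_{L^1(D_{2s})}.
\end{align*}
This gives local boundedness.

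The third step is $W^{1,2}_{\loc}$, which is a Caccioppoli estimate. For a nonnegative subharmonic $v\in L^\infty_{\loc}$, I will test $\Delta_0 v_\delta\ge0$ against $\chi^2 v_\delta$ with $\chi\in C_c^\infty$ a cutoff. Integrating by parts gives
\begin{align*}
\int\chi^2|\nabla v_\delta|^2\le 2\int \chi|\nabla\chi|\,v_\delta|\nabla v_\delta|,
\end{align*}
and therefore $\int\chi^2|\nabla v_\delta|^2\le 4\int|\nabla\chi|^2v_\delta^2$. The right-hand side is bounded uniformly in $\delta$ by the $L^\infty_{\loc}$ bound. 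Hence $\nabla v_\delta$ is bounded in $L^2_{\loc}$, and a weak limit gives $u_+\in W^{1,2}_{\loc}$.

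The main obstacle is the first step: justifying the passage to the limit in $\psi_k'(u_\delta)\,(\mu*\eta_\delta)$ when $u$ is merely $L^1_{\loc}$. I expect a.e. convergence of $u_\delta$ together with $L^1$ convergence of $\mu*\eta_\delta$ to suffice. The step uses a Vitali-type argument: the product of a bounded a.e.-convergent sequence with an $L^1$-convergent sequence converges in $L^1_{\loc}$. The only delicacy is the set $\{u=0\}$, which I will handle by using $\psi_k'(0)=0$ with $\psi_k$ vanishing on $(-\infty,0]$. On $\{u=0\}$ the limit then contributes $0\cdot\mu$, which is harmless since $f(0)=0$.
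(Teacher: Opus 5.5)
Your proposal is correct and follows essentially the paper's route: pass to the flat Laplacian in a chart, obtain the Kato-type inequality for $u_+$ by mollification, and then use that $u_+$ is a nonnegative subharmonic function. The paper gets the first step by applying Lemma~\ref{uvFG}~(ii) to $u$ and the constant solution $0$ (valid because $f(0)=0$), where you use convex approximations $\psi_k$ of $t_+$. It also simply cites local boundedness and the $W^{1,2}_{\loc}$ regularity of locally bounded subharmonic functions, which you prove directly via the sub-mean value property and a Caccioppoli estimate.
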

\begin{proof}Let $U\subseteq X$ be a coordinate neighborhood identified with the unit disc and let $g_E$ be the Euclidean metric on $U$. Writing $g_X=e^{F_U}g_E$ on $U$, where $F_U\in C^0(U)$, inequality (\ref{ugfu}) reduces to the following on $U$:
\begin{align}
\Delta_{g_E}u\geq e^{F_U}f(u). \label{gEfu}
\end{align}
Since $f(0)=0$, the constant function 0 also satisfies inequality (\ref{gEfu}), and hence the claim in the first part of the lemma follows by applying Lemma \ref{uvFG} (ii), which we will prove later, to $u$ and the constant function 0, with $G(x,t)=e^{F_U(x)}f(t)$.  Since $f(t)\geq 0$ for $t\geq 0$, we observe that $u_+$ agrees almost everywhere with a subharmonic function. Since $u_+\ge0$ and subharmonic functions are locally bounded above, $u_+$ belongs to $L^\infty_\loc(X)$. Moreover, it is well-known that every locally bounded subharmonic function belongs to $W^{1,2}_\loc$ (cf. \cite[Section 1.1.1]{DGZ1}), and therefore $u_+\in W^{1,2}_\loc(X)$. This completes the proof. 
\end{proof}
\begin{proof}[Proof of Proposition \ref{SL}]

We suppose that the conclusion does not hold, i.e., for each positive real number $\lambda$, the set
\begin{align}
E_\lambda\coloneqq \{x\in X\mid u_+(x)>\lambda\}
\end{align}
has positive measure with respect to every smooth positive measure. 
Fix $\epsilon_0 > 0$. By Proposition \ref{GWT}, there exists a smooth K\"ahler metric $\widetilde{g}_{X,\epsilon_0}$ such that
\begin{align}
e^{-\epsilon_0} g_X \le \widetilde{g}_{X,\epsilon_0} \le e^{\epsilon_0} g_X, \label{gXe0}
\end{align}
and the Ricci curvature of $\widetilde{g}_{X,\epsilon_0}$ is bounded below. Since $g_X$ is complete, $\widetilde{g}_{X,\epsilon_0}$ is also complete. From here on, geodesic balls and the Riemannian distance function are taken with respect to this smooth K\"ahler metric. 
Let $B_{r_0}$ be an open geodesic ball centered at $o$ with radius $r_0 > 0$. Set 
\begin{align*}
M_0\coloneqq \esssup_{B_{2r_0}}u_+.
\end{align*}
Since $u_+$ belongs to $L^\infty_\loc(X)$ and $B_{2r_0}$ is relatively compact, $M_0$ is finite. Fix positive real numbers $\delta$ and $\lambda$ so that $M_0< \delta < \lambda$. Also, choose $r_1>2r_0$ large enough so that
\begin{align*}
E_\lambda\cap (B_{r_1}\setminus B_{r_0})
\end{align*}
has positive measure. 
Since the Ricci curvature of $\widetilde{g}_{X,\epsilon_0}$ is bounded below, the Laplacian comparison theorem (see, e.g., \cite[Theorem 1.2]{AMR1}) yields
\begin{align}
\Delta_{\widetilde{g}_{X,\epsilon_0}} r \leq A  \quad \text{weakly on } X \setminus B_{r_0/2}, \label{Ar}
\end{align}
for some constant $A > 0$, where $\Delta_{\widetilde{g}_{X,\epsilon_0}} = -d^\ast d$ denotes the Laplace--Beltrami operator, and $r$ is the Riemannian distance function from $o$. Applying Lemma \ref{fRR} with
\begin{align*}
&\theta=A, \\
&\epsilon=\frac{e^{-\epsilon_0}}{C},
\end{align*}
where $C$ is the constant in condition (\ref{Cinc}), we deduce the existence of $w$ satisfying (\ref{wwf}). Setting $\bar{w}(x)=w(r(x))$ and taking into account $w^\prime>0$, (\ref{Ar}), and the well-known formula $|\nabla^{\widetilde{g}_{X,\epsilon_0}}r|_{\widetilde{g}_{X,\epsilon_0}}=1$ almost everywhere, $\bar{w}$ solves
\begin{align}
\begin{cases}
\Delta_{\widetilde{g}_{X,\epsilon_0}}\bar{w}\leq \frac{e^{-\epsilon_0}}{C}f(\bar{w}) \ \text{on}\ B_{R_1}\setminus B_{r_0}\ \text{in the weak sense}, \\
w^\prime(r)>0 \ \text{on} \ B_{R_1}\setminus B_{r_0}, \ \bar{w}\to\infty \ \text{as} \ x\to\partial B_{R_1}\\
\delta\leq \bar{w}\leq \lambda \ \text{on} \ B_{r_1}\setminus B_{r_0}.
\end{cases}
\label{partialB1}
\end{align}
We compare $u_+$ and $\bar{w}$ on $U\coloneqq B_{R_1}\setminus \overline{B}_{r_0}$. We set 
\begin{align}
v\coloneqq\max\{u_+-\overline{w},0\}.
\end{align}
Observe that $v$ vanishes almost everywhere outside a compact subset of $U$.  Indeed, on $B_{2r_0}\setminus \overline{B}_{r_0}$, by the assumption $M_0<\delta$, $v$ is almost everywhere 0, and near the boundary of $B_{R_1}$, $v$ is also almost everywhere 0 since $u_+\in L^\infty_\loc(X)$ and $\overline{w}$ diverges at the boundary of $B_{R_1}$. Since $X$ is a Riemann surface, the metrics $g_X$ and $\widetilde{g}_{X,\epsilon_0}$ are conformally equivalent, and we may write $\widetilde{g}_{X,\epsilon_0}=\rho g_X$ with $e^{-\epsilon_0}\leq \rho \leq e^{\epsilon_0}$. 
By (\ref{fuplus}), we obtain
\begin{align}
\Delta_{\widetilde{g}_{X,\epsilon_0}}u_+&\geq \rho^{-1}f(u_+) \geq e^{-\epsilon_0}f(u_+). \label{fuplusplus}
\end{align}
Subtracting the first inequality in (\ref{partialB1}) from (\ref{fuplusplus}), we obtain the following:
\begin{align*}
\Delta_{\widetilde{g}_{X,\epsilon_0}}(u_+-\overline{w})\geq e^{-\epsilon_0}(f(u_+)-\frac{1}{C}f(\overline{w})).
\end{align*}
Note that the function $v$ belongs to $W^{1,2}_0(U)$ since $u_+\in W^{1,2}_\loc$ and $\overline{w}\in W^{1,\infty}_\loc$ and since $v$ vanishes almost everywhere outside a compact subset of $U$. Therefore, $v$ can be used as a test function, and multiplying both sides by $v$ and integrating with respect to the measure induced by $\widetilde{g}_{X,\epsilon_0}$, we have
\begin{align}
-\int_U(d(u_+-\overline{w}), dv)_{\widetilde{g}_{X,\epsilon_0}}d\mu_{\widetilde{g}_{X,\epsilon_0}}\geq \int_Ue^{-\epsilon_0}(f(u_+)-\frac{1}{C}f(\overline{w}))vd\mu_{\widetilde{g}_{X,\epsilon_0}}, \label{integralU}
\end{align}
where we denote by $(\cdot, \cdot)_{\widetilde{g}_{X,\epsilon_0}}$ the inner product induced by $\widetilde{g}_{X,\epsilon_0}$, and by $d\mu_{\widetilde{g}_{X,\epsilon_0}}$ the volume measure of $\widetilde{g}_{X,\epsilon_0}$. From the weak differential formula for the function $\max\{\cdot, 0\}$ and the chain rule for Sobolev functions, we obtain $dv=\mathbf{1}_{\{u_+>\overline{w}\}}d(u_+-\overline{w})$ almost everywhere on $U$, where we denote by $\mathbf{1}_{\{u_+>\overline{w}\}}$ the indicator function of the set $\{u_+>\overline{w}\}$. Since $\mathbf{1}_{\{u_+>\overline{w}\}}^2=\mathbf{1}_{\{u_+>\overline{w}\}}$, from (\ref{integralU}), we have
\begin{align}
-\int_U(dv, dv)_{\widetilde{g}_{X,\epsilon_0}}d\mu_{\widetilde{g}_{X,\epsilon_0}}\geq \int_Ue^{-\epsilon_0}(f(u_+)-\frac{1}{C}f(\overline{w}))vd\mu_{\widetilde{g}_{X,\epsilon_0}}. \label{integralU2}
\end{align}
The right-hand side of inequality (\ref{integralU2}) is nonnegative. Indeed, for almost every point $x\in U$ such that $v(x)>0$, since $u_+(x)>\bar{w}(x)$, by (\ref{Cinc}), we obtain $f(u_+(x))\geq \frac{1}{C}f(\overline{w}(x))$. This implies that $\int_U(dv, dv)_{\widetilde{g}_{X,\epsilon_0}}d\mu_{\widetilde{g}_{X,\epsilon_0}}=0$. Since $v$ vanishes almost everywhere outside a compact subset of $U$, there exists a function $\widetilde{v}$ on $X$ that belongs to $W^{1,2}_\loc(X)$ and such that
\begin{align*}
\widetilde{v}=
\begin{cases}
v &\ \text{on $U$}\\
0 &\ \text{on $X\setminus U$}.
\end{cases}
\end{align*}
Since $dv=0$ almost everywhere on $U$, we have $d\widetilde{v}=0$ almost everywhere on $X$. Hence $\widetilde{v}$ is almost everywhere constant on the connected surface $X$. Since $\widetilde{v}=0$ on $B_{r_0}$, this constant is 0. Therefore,  $v=0$ almost everywhere on $U$. Since $\overline{w}$ satisfies $\overline{w}\leq \lambda$ on  $B_{r_1}\setminus B_{r_0}$, this in particular implies that $
E_\lambda\cap (B_{r_1}\setminus B_{r_0})$ has measure 0, which contradicts the assumption. 

Finally, we prove the last assertion of the proposition. 
Suppose that $u$ is continuous on $X\setminus Y$ for some relatively compact open set $Y$ with smooth boundary $\partial Y$, and that $u \in W^{1,2}_{\loc}(X\setminus \overline{Y})$. 
Assume further that $\sup_{X\setminus Y} u > \max_{\partial Y} u$. Fix $\epsilon_0>0$. By Proposition \ref{GWT}, there exists a complete K\"ahler metric $\widetilde{g}_{X,\epsilon_0}$ satisfying (\ref{gXe0}) and whose Ricci curvature is bounded below. By (\ref{ugfu}), we obtain
\begin{align}
\Delta_{\widetilde{g}_{X,\epsilon_0}}u&\geq \rho^{-1}f(u), 
\end{align}
where $\rho$ is defined by $\widetilde{g}_{X,\epsilon_0}=\rho g_X$.
By Proposition \ref{wstocb}, for each integer $k \ge 1$, there exists a point 
\begin{align*}
x_k \in \Omega_{1/k} = \{ x \in X\setminus Y \mid u(x) > \sup_{X\setminus Y} u - 1/k \}
\end{align*}
such that $\rho(x_k)^{-1}f(u(x_k)) < 1/k$.  Since $\rho$ is bounded, letting $k \to \infty$, we obtain $f(\sup_{X\setminus Y} u) \leq 0$.
\end{proof}

\subsection{Preliminaries on linear algebra}
The purpose of this subsection is to reformulate the linear-algebraic lemmas established in \cite{LM1} (see also \cite{Sim2}). All the proofs can be found in \cite{LM1}, except for Lemma 31 (see also \cite[Section 4]{Miy6} for more direct proofs of some of them).
Let $h_1,\dots, h_r$ be positive real numbers satisfying $\Pi_{i=1}^r h_i=h_1\cdot h_2\cdots h_{r-1}\cdot h_r=1$. We set $H_j\coloneqq h_j^{-1}\cdot h_{j+1}$ for each $j=1,\dots, r-1$. Let $\phi$ be a real number or $-\infty$. We set a non-negative real number $H_r$ as $H_r\coloneqq H_1^{-1}\cdot H_2^{-1}\cdots H_{r-1}^{-1}\cdot e^{r\phi}=h_r^{-1}\cdot h_1\cdot e^{r\phi}$. We also set $H_0\coloneqq H_r$. The following Lemma \ref{lem1} and Lemma \ref{lem2} are special cases of well-known inequalities which are formulated in more general settings (cf. \cite{LM1, Sim2}).
\begin{lemm}[cf.~\cite{LM1, Sim2}]\label{lem1}{\it There exists a positive constant $C$ which depends only on $r$ such that the following holds for any $h_1,\dots, h_r$ satisfying $\Pi_{i=1}^r h_i=h_1\cdot h_2\cdots h_{r-1}\cdot h_r=1$ and $\phi$:
\begin{align}
\sum_{j=1}^r(H_{j-1}-H_j)^2\geq C(\sum_{j=1}^rH_j-re^\phi)^2. \label{ephi1}
\end{align}
}
\end{lemm}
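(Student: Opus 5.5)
The plan is to view the inequality as an elementary comparison between the arithmetic mean, the geometric mean and the oscillation of the $r$ nonnegative numbers $H_1,\dots,H_r$, read cyclically with $H_0=H_r$.

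First I would record the constraint on these numbers. By definition $H_r=H_1^{-1}\cdots H_{r-1}^{-1}e^{r\phi}$, so
\begin{align*}
H_1H_2\cdots H_r=e^{r\phi}.
\end{align*}
Hence $e^{\phi}$ is the geometric mean of $H_1,\dots,H_r$. This also covers $\phi=-\infty$: there $H_r=0$, $e^{\phi}=0$, and the identity still holds. Set $S\coloneqq\sum_{j=1}^rH_j$, $m\coloneqq\min_jH_j\ge 0$ and $M\coloneqq\max_jH_j$.

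Second, I would bound the right-hand side from above by the oscillation $M-m$.
\begin{itemize}
\item By the AM--GM inequality, $S-re^{\phi}\ge 0$.
\item Since the geometric mean is at least the minimum, $e^{\phi}\ge m$.
\end{itemize}
Together these give
\begin{align*}
0\le S-re^{\phi}\le S-rm=\sum_{j=1}^r (H_j-m)\le r(M-m).
\end{align*}

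Third, I would bound the oscillation by the left-hand side. Choose indices $a,b$ with $H_a=M$ and $H_b=m$. Walk from $a$ to $b$ along consecutive indices of the cycle $1,2,\dots,r,1$; the telescoping sum and the triangle inequality give
\begin{align*}
M-m\le\sum_{j=1}^r|H_{j-1}-H_j|\le \sqrt{r}\Bigl(\sum_{j=1}^r(H_{j-1}-H_j)^2\Bigr)^{1/2},
\end{align*}
where the last step is Cauchy--Schwarz. Combining with the previous display yields
\begin{align*}
(S-re^{\phi})^2\le r^2(M-m)^2\le r^3\sum_{j=1}^r(H_{j-1}-H_j)^2,
\end{align*}
so the lemma holds with $C=r^{-3}$, which depends only on $r$.

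There is no genuine obstacle. The only point needing care is that $H_0=H_r$ really does close the cycle. This is what makes every pair of indices joinable by consecutive differences that all appear in the left-hand sum. The degenerate case $\phi=-\infty$, where $H_r=0$, needs only the remark that the geometric mean is then $0=e^{\phi}$, so both the AM--GM step and the minimum step remain valid.
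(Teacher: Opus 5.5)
Your proof is correct, and it takes a different and more elementary route than the paper. The paper gives no proof of its own. It regards the lemma as a special case of a more general linear-algebraic inequality from \cite{LM1, Sim2}: a Simpson-type estimate for an endomorphism $f$ of a Hermitian vector space, bounding $|f|^2-\sum_i|\alpha_i|^2$ (with $\alpha_i$ the eigenvalues of $f$) by a constant times $|[f,f^{\ast}]|$. For the cyclic matrix with entries $\sqrt{H_j}$ one has $|f|^2=\sum_jH_j$. All eigenvalues have modulus $e^{\phi/2}$, since $f^r=(H_1\cdots H_r)^{1/2}=e^{r\phi/2}$, so $\sum_i|\alpha_i|^2=re^{\phi}$. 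Finally, $[f,f^{\ast}]$ is diagonal with entries $H_{j-1}-H_j$, which gives \eqref{ephi1}. That route places the lemma inside a statement valid for general Higgs fields, at the cost of invoking a less elementary matrix estimate. Yours instead uses the constraint $H_1\cdots H_r=e^{r\phi}$ directly, through AM--GM and the fact that the geometric mean is at least the minimum, and then controls the oscillation by telescoping and Cauchy--Schwarz. This yields the explicit constant $C=r^{-3}$ and handles $\phi=-\infty$ with no extra work. One small correction to your closing remark: closing the cycle is not actually needed. The indices $a,b\in\{1,\dots,r\}$ can be joined by a monotone run of consecutive indices, which uses only the terms $j=2,\dots,r$, so the term $(H_0-H_1)^2$ is superfluous. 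Dropping it, together with $\sum_j(H_j-m)\le(r-1)(M-m)$, even improves the constant to $(r-1)^{-3}$.
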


\begin{lemm}[cf.~\cite{LM1, Sim2}]\label{lem2} {\it Let $C$ be a positive constant as in Lemma \ref{lem1}. Suppose that there exists a positive real number $H>0$ such that $e^\phi\leq H$ and that $\sum_{j=1}^rH_j\geq 2r H$. Then the following holds:
\begin{align*}
\sum_{j=1}^r(H_{j-1}-H_j)^2\geq \frac{C}{4}(\sum_{j=1}^rH_j)^2.
\end{align*}
}
\end{lemm}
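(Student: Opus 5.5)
The plan is to derive the statement directly from Lemma \ref{lem1}, with no new estimate. Lemma \ref{lem1} bounds $\sum_{j=1}^r(H_{j-1}-H_j)^2$ from below by $C\bigl(\sum_{j=1}^rH_j-re^\phi\bigr)^2$. So it suffices to show that, under the two hypotheses $e^\phi\le H$ and $\sum_{j=1}^rH_j\ge 2rH$, the quantity $\sum_{j=1}^rH_j-re^\phi$ is nonnegative and at least $\frac12\sum_{j=1}^rH_j$.

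First I obtain the linear lower bound. From $e^\phi\le H$ we get $re^\phi\le rH$. The second hypothesis gives $rH\le \frac12\sum_{j=1}^rH_j$. Hence
\begin{align*}
\sum_{j=1}^rH_j-re^\phi\ \ge\ \sum_{j=1}^rH_j-rH\ \ge\ \frac12\sum_{j=1}^rH_j\ \ge\ 0,
\end{align*}
where the last inequality uses that all $H_j$ are nonnegative; indeed $H_1,\dots,H_{r-1}>0$ and $H_r\ge 0$. The case $\phi=-\infty$ is included, since then $e^\phi=0$ and the first step is trivial.

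Second, both sides of this chain are nonnegative, so squaring preserves the inequality:
\begin{align*}
\Bigl(\sum_{j=1}^rH_j-re^\phi\Bigr)^2\ \ge\ \frac14\Bigl(\sum_{j=1}^rH_j\Bigr)^2.
\end{align*}
Combining this with Lemma \ref{lem1}, which applies with the same constant $C$ depending only on $r$, gives the claim.

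There is no genuine obstacle here. The only point needing care is the sign check before squaring: it is the hypothesis $\sum_j H_j\ge 2rH$, rather than merely $\sum_j H_j\ge rH$, that makes the difference both nonnegative and comparable to $\sum_j H_j$ with the explicit factor $\frac12$, which is where the constant $\frac C4$ comes from.
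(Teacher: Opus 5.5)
Your proposal is correct: the chain $\sum_{j=1}^r H_j - re^\phi \ge \sum_{j=1}^r H_j - rH \ge \frac12\sum_{j=1}^r H_j \ge 0$, followed by squaring and applying Lemma \ref{lem1}, is the intended derivation. The paper gives no written proof; it treats this as a special case of the estimates in Li--Mochizuki and Simpson, where the bound is obtained from Lemma \ref{lem1} in this same way.
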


For the proof of the following Lemma \ref{lem3}, we refer the reader to \cite[Proposition 3.8]{LM1}.
\begin{lemm}[\cite{LM1}]\label{lem3}{\it For any $0<\epsilon<1$, there exists $\delta>0$ that depends only on $\epsilon$ and $r$, such that the following holds:
\begin{itemize}
\item If there exists $1\leq j\leq r-1$ such that $H_j\leq \epsilon e^\phi$, then we obtain $\sum_{j=1}^r(H_{j-1}-H_j)^2\geq \delta (\sum_{j=1}^r H_j)^2$.
\end{itemize}
}
\end{lemm}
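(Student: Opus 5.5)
The plan is to normalize away both $\phi$ and the overall scale, and then argue that a small left-hand side forces all $H_j$ to be nearly equal, which is incompatible with some $H_j$ being small.

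\emph{Reductions.} First dispose of the degenerate case. If $\phi=-\infty$, then $H_r=0$ while $H_1,\dots,H_{r-1}>0$, so the hypothesis $H_j\le \epsilon e^\phi=0$ can never hold and there is nothing to prove. Hence assume $\phi\in\R$. Replacing $H_j$ by $H_je^{-\phi}$ multiplies both sides of the desired inequality by $e^{-2\phi}$ and preserves the relation $\prod_{j=1}^rH_j=e^{r\phi}$. We may therefore assume $\phi=0$, so that
\begin{align*}
H_1,\dots,H_r>0,\qquad \prod_{j=1}^r H_j=1,\qquad H_{j_0}\le\epsilon \ \text{for some } 1\le j_0\le r-1.
\end{align*}
Next set $S\coloneqq\sum_{j=1}^rH_j$ and $x_j\coloneqq H_j/S$, so that $\sum_j x_j=1$. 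With $D\coloneqq\sum_{j=1}^r(x_{j-1}-x_j)^2$ (indices cyclic, $x_0=x_r$), the claim becomes $D\ge\delta$.

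\emph{Key step.} Suppose $D<\eta^2/r$ for a small $\eta>0$ to be fixed in terms of $\epsilon$ and $r$.
\begin{itemize}
\item For any $i,k$, the points $x_i$ and $x_k$ are joined along the cycle by at most $r$ consecutive differences. By Cauchy--Schwarz, $|x_i-x_k|\le\sum_j|x_{j-1}-x_j|\le\sqrt{rD}<\eta$.
\item Since $\sum_jx_j=1$, the average of the $x_j$ is $1/r$. Hence $|x_j-1/r|<\eta$ for every $j$.
\item From $\prod_jH_j=1$ we get $S^r\prod_jx_j=1$, so $S=(\prod_jx_j)^{-1/r}\ge(1/r+\eta)^{-1}$.
\item Consequently, for every $j$,
\begin{align*}
H_j=Sx_j\ge\frac{1/r-\eta}{1/r+\eta}.
\end{align*}
\end{itemize}
Choose $\eta=\eta(\epsilon,r)>0$ so small that $(1/r-\eta)/(1/r+\eta)>\epsilon$; this is possible since $\epsilon<1$. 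Then every $H_j>\epsilon$, contradicting $H_{j_0}\le\epsilon$. Therefore $D\ge\eta^2/r$, and $\delta\coloneqq\eta^2/r$ works, depending only on $\epsilon$ and $r$.

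\emph{Main obstacle.} The only delicate point is that the normalization must not lose the information $H_{j_0}\le\epsilon$. That information survives only through the product constraint, which is what pins down $S$. Controlling $S$ from below by AM--GM-type reasoning on the near-uniform $x_j$ is exactly what converts "all $x_j$ nearly equal" into "all $H_j$ near $1$".
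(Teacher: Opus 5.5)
Your proof is correct. Each step checks out: the hypothesis is vacuous when $\phi=-\infty$, the rescaling to $\phi=0$ is justified by $\prod_{j=1}^r H_j=e^{r\phi}$, the Cauchy--Schwarz bound $\max_j x_j-\min_j x_j\le\sqrt{rD}$ holds around the cycle, and $S=(\prod_j x_j)^{-1/r}>(1/r+\eta)^{-1}$. Any $0<\eta<\frac{1-\epsilon}{r(1+\epsilon)}$ then gives $\delta=\eta^2/r$.

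The paper does not reprove this lemma; it imports it from \cite[Proposition 3.8]{LM1}. The argument suggested by its placement right after Lemmas \ref{lem1} and \ref{lem2} has two regimes:
\begin{itemize}
\item If $\sum_j H_j\ge 2re^{\phi}$, Lemma \ref{lem2} with $H=e^\phi$ already gives $\delta=C/4$.
\item Otherwise, normalize $\phi=0$. The tuples with $\prod_j H_j=1$, $\sum_j H_j\le 2r$ and some $H_j\le\epsilon$ form a compact set, since the product constraint keeps every $H_j$ away from $0$. On this set $\sum_j(H_{j-1}-H_j)^2$ cannot vanish, because equal $H_j$ would all equal $1>\epsilon$. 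Compactness then yields some $\delta>0$.
\end{itemize}
Your route avoids both Lemma \ref{lem2} and compactness. Once the normalized $x_j$ are nearly uniform, you let the product constraint fix the scale $S$ directly. This gives an explicit constant. For finite $\phi$ it also proves a slightly stronger statement, in which the small entry may be $H_r$ as well. The two-regime argument reuses Lemma \ref{lem2} but gives no explicit control on $\delta$.
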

Let $h_1^\prime,\dots, h_r^\prime$ be positive real numbers satisfying $\Pi_{i=1}^rh_i^\prime=h_1^\prime\cdot h_2^\prime\cdots h_{r-1}^\prime\cdot h_r^\prime=1$. We set $s_j\coloneqq h_j^{-1}\cdot h_j^\prime$ for each $j=1,\dots, r$. We also set $s\coloneqq (s_1,\dots, s_r)\in (\R_{>0})^r$. We assume that there exist positive constants $B$ and $C$ such that the following conditions hold:
\begin{itemize}
\item $H_j^{-1}\leq B$ for each $j=1,\dots, r-1$.
\item $\sum_{j=1}^rs_j\leq C$.
\end{itemize}
The following lemma is a variant of \cite[Lemma 3.10]{LM1}:
\begin{lemm}\label{lem4}{\it For each $\epsilon>0$, if $\sum_{j=1}^{r-1}s_j^{-1}(s_{j+1}-s_j)^2H_j \leq \epsilon^2$, then 
\begin{align*}
\sum_{j=1}^r|s_j-1|\leq r\sqrt{(r-1)BC}\cdot \epsilon.
\end{align*}
}
\end{lemm}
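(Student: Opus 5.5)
The plan is a direct elementary estimate with three steps: bound the total variation of the sequence $s_1,\dots,s_r$, use $\prod_j s_j=1$ to place $1$ between the smallest and largest $s_j$, and sum over $j$.

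First, I would record that $\prod_{j=1}^r s_j=\prod_j h_j^{-1}\cdot\prod_j h_j^\prime=1$. Also $s_j\le \sum_k s_k\le C$ for each $j$, since all $s_k>0$.

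Second, for each $j=1,\dots,r-1$, I would write
\begin{align*}
(s_{j+1}-s_j)^2=\bigl(s_jH_j^{-1}\bigr)\cdot s_j^{-1}(s_{j+1}-s_j)^2H_j\le BC\cdot s_j^{-1}(s_{j+1}-s_j)^2H_j ,
\end{align*}
using $H_j^{-1}\le B$ and $s_j\le C$. Summing over $j$ and invoking the hypothesis gives
\begin{align*}
\sum_{j=1}^{r-1}(s_{j+1}-s_j)^2\le BC\epsilon^2 .
\end{align*}
The Cauchy--Schwarz inequality then yields
\begin{align*}
\sum_{j=1}^{r-1}|s_{j+1}-s_j|\le \sqrt{r-1}\,\sqrt{BC}\,\epsilon=\sqrt{(r-1)BC}\,\epsilon\eqqcolon\eta .
\end{align*}
By the triangle inequality along the chain of indices, $\max_j s_j-\min_j s_j\le\eta$.

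Third, since $\prod_j s_j=1$ with all $s_j>0$, we have $\min_j s_j\le 1\le\max_j s_j$. Hence every $s_j$ and the number $1$ both lie in the interval $[\min_k s_k,\max_k s_k]$, whose length is at most $\eta$. Therefore $|s_j-1|\le\eta$ for every $j$. Summing over the $r$ indices gives $\sum_{j=1}^r|s_j-1|\le r\sqrt{(r-1)BC}\,\epsilon$, which is the claimed bound.

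There is no genuine obstacle. The only points needing care are the pointwise bound $s_j\le C$, which comes from positivity, and the use of the determinant normalization to trap $1$ between the extreme values of $s$. This normalization is exactly what replaces the averaging argument in \cite[Lemma 3.10]{LM1}.
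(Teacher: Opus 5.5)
Your proposal is correct and follows essentially the same argument as the paper. Both proofs obtain $\sum_{j}(s_{j+1}-s_j)^2\le BC\epsilon^2$ from $s_j\le C$ and $H_j^{-1}\le B$, use Cauchy--Schwarz to bound $\max_j s_j-\min_j s_j$ by $\sqrt{(r-1)BC}\,\epsilon$, and use $\prod_j s_j=1$ to place $1$ between the extreme values. The only cosmetic difference is that you apply Cauchy--Schwarz once to the full chain and then use the triangle inequality, whereas the paper applies it to each sub-chain from $i$ to $k$; the constant is the same.
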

\begin{proof} By assumption, we have
\begin{align}
\frac{1}{BC}\sum_{j=1}^{r-1}(s_{j+1}-s_j)^2\leq \sum_{j=1}^{r-1}s_j^{-1}(s_{j+1}-s_j)^2H_j\leq \epsilon^2.
\end{align}
For each $i<k$, we have
\begin{align*}
s_k-s_i=\sum_{j=i}^{k-1}(s_{j+1}-s_j),
\end{align*}
and thus, by the Cauchy-Schwarz inequality, we obtain
\begin{align}
|s_k-s_i|\leq \sqrt{k-i}\left(\sum_{j=i}^{k-1}(s_{j+1}-s_j)^2\right)^{1/2}\leq \sqrt{r-1}\sqrt{BC}\epsilon.\label{sk-sj}
\end{align}
Since $\Pi_{j=1}^rs_j=1$, $\min_{1\leq j\leq r}s_j\leq 1\leq \max_{1\leq j\leq r}s_j$. Therefore, we obtain
\begin{align}
|s_j-1|\leq \max_{1\leq j\leq r} s_j-\min_{1\leq j\leq r}s_j.\label{sj-1}
\end{align}
From (\ref{sk-sj}) and (\ref{sj-1}), we have
\begin{align*}
\sum_{j=1}^r|s_j-1|\leq r\sqrt{(r-1)BC}\epsilon.
\end{align*}
This implies the claim.
\end{proof}
\begin{lemm}[\cite{LM1}, A part of Theorem 4.4]\label{lemma33}{\it Suppose that $H_j=H_{r-j}$ for all $j=1,\dots, r$ and $H_1,\dots, H_r$ satisfy
\begin{align}
H_{j-1}\otimes H_j^{-1} < 1 \ \text{for all $1\leq j\leq n$},\label{2jn}
\end{align}
where we have defined $n\coloneqq [r/2]$. Then the following holds:
\begin{align*}
h_j<e^{-\frac{1}{2}(r-(2j-1))\phi}\ \text{for all $j=1,\dots, n$.}
\end{align*}
}
\end{lemm}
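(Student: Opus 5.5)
The plan is to reduce the statement to an elementary fact about an increasing sequence of positive numbers whose weighted product equals $1$, after normalizing by the appropriate powers of $e^{\phi}$. The case $\phi=-\infty$ is trivial: for $j\le n$ we have $r-(2j-1)>0$, so the right-hand side $e^{-\frac12(r-(2j-1))\phi}$ is $+\infty$ while $h_j$ is finite. We therefore assume $\phi\in\R$.

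\textbf{Step 1: normalize.} Set
\begin{align*}
a_j\coloneqq h_j e^{\frac12(r-(2j-1))\phi}\quad (j=1,\dots,r), \qquad x_j\coloneqq H_je^{-\phi}\quad (j=0,\dots,r-1).
\end{align*}
Since $h_{j+1}=h_jH_j$, we get $a_{j+1}=a_jx_j$. Since $\det(h)=1$, the symmetry $H_j=H_{r-j}$ gives reality, i.e.\ $h_{r-j+1}=h_j^{-1}$. In particular $h_r=h_1^{-1}$, so $H_0=h_r^{-1}h_1e^{r\phi}=h_1^2e^{r\phi}$, and hence $a_1^2=x_0$. Iterating $a_{j+1}=a_jx_j$ yields
\begin{align*}
a_j^2=x_0\,x_1^2\cdots x_{j-1}^2 .
\end{align*}
Reality also gives $a_ja_{r-j+1}=1$, because the exponents of $e^{\phi}$ cancel. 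Hypothesis \eqref{2jn} says $x_{j-1}<x_j$ for $1\le j\le n$, so
\begin{align*}
0<x_0<x_1<\cdots<x_n .
\end{align*}

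\textbf{Step 2: the total product.} We show that the weighted product of the $x_j$ with $r$ factors in total equals $1$, splitting by the parity of $r$.

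If $r=2n$, then $a_na_{n+1}=1$ and $a_{n+1}=a_nx_n$, so $a_n^2x_n=1$. Hence
\begin{align*}
x_0\,x_1^2\cdots x_{n-1}^2\,x_n=1 ,
\end{align*}
a product of exactly $r$ factors.

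If $r=2n+1$, then $a_{n+1}^2=1$, and therefore
\begin{align*}
x_0\,x_1^2\cdots x_n^2=1 ,
\end{align*}
again a product of exactly $r$ factors.

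In both cases, list the factors as a nondecreasing sequence
\begin{align*}
y_1=x_0,\quad y_2=y_3=x_1,\quad y_4=y_5=x_2,\ \dots
\end{align*}
so that $y_1\cdots y_r=1$ and $y_1<y_2$. Then $a_j^2=y_1\cdots y_{2j-1}$, and $2j-1<r$ for $j\le n$.

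\textbf{Step 3: geometric means.} For a nondecreasing positive sequence, the geometric mean of an initial segment is at most the geometric mean of the whole sequence, which here equals $1$. The inequality is strict as soon as the segment omits some factor strictly larger than its first entry. Concretely, if $y_1\cdots y_m\ge 1$ for some $m<r$, then $y_m\ge1$ (as $y_m$ is the largest factor in the segment), so every $y_k$ with $k>m$ is $\ge1$. This forces $y_1\cdots y_r\ge1$, with equality only if all the $y_k$ equal $1$. That contradicts $y_1<y_2$.

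Hence $a_j^2<1$ for $j\le n$, which is exactly $h_j<e^{-\frac12(r-(2j-1))\phi}$.

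The only delicate point is the bookkeeping in Step 2: checking that the reality relation $a_ja_{r-j+1}=1$ indeed holds after normalization, and that the middle index produces exactly $r$ weighted factors in each parity. The strict inequality then comes from $x_0<x_1$ via Step 3.
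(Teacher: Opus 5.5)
Your proof is correct and is essentially the paper's argument written multiplicatively: your $\log a_j$ and $\log x_j$ are the paper's $v_j$ and $A_j$, your identity $a_j^2=x_0x_1^2\cdots x_{j-1}^2$ and the folded total product correspond to $2v_j=A_0+2\sum_{k=1}^{j-1}A_k$ and $\sum_{k=0}^{r-1}A_k=0$, and your Step 3 is the paper's contradiction argument. The differences are cosmetic. You derive the folded product from reality ($a_na_{n+1}=1$, resp.\ $a_{n+1}^2=1$), whereas the paper uses $\sum_k A_k=0$ together with $A_k=A_{r-k}$. You also obtain strictness from the equality case and $y_1<y_2$, whereas the paper uses $A_k>0$ for $k\ge j$.
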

\begin{proof} We give an alternative proof based on a slightly different perspective from that in \cite[p.~27]{LM1}. It is enough to consider the case where $\phi\in\R$. Note that from the assumption, we have $h_j=h_{r-j+1}^{-1}$ for all $j=1,\ldots,r$ (see Section \ref{rdc}). We set 
\begin{align*}
&v_j\coloneqq \log (h_j)+\frac{1}{2}(r-(2j-1))\phi \ \text{for $j=1,\dots, r$}, \\
&A_j\coloneqq \log(H_j)-\phi\ \text{for $j=0,1,\dots, r-1$}.
\end{align*}
Then for each $j=1,\dots, n$, by a direct calculation, we have
\begin{align*}
2v_j=A_0+2\sum_{k=1}^{j-1}A_k.
\end{align*}
It is enough to show that $v_j<0$ for all $j=1,\dots,n$. Suppose, for contradiction, that $v_j\geq 0$ for some $1\leq j\leq n$. By (\ref{2jn}), we have
$A_0<A_1<\cdots<A_n$.
Hence $A_{j-1}\geq0$; otherwise, $A_k<0$ for all $0\leq k\leq j-1$, and the above identity would imply $v_j<0$, a contradiction. From (\ref{2jn}), $A_k>0$ for all $j\leq k\leq n$. This implies 
\begin{align*}
\sum_{k=0}^{r-1}A_k&=A_0+\sum_{k=1}^{j-1}A_k+\sum_{k=j}^nA_k+\sum_{k=n+1}^{r-1}A_k \\
&=A_0+\sum_{k=1}^{j-1}A_k+\sum_{k=j}^nA_k+\sum_{k=1}^{r-(n+1)}A_k \\
&=2v_j+2\sum_{k=j}^{n-1}A_k+(r+1-2n)A_n>0,
\end{align*}
which contradicts the fact $\sum_{j=0}^{r-1}A_j=0$.
\end{proof}
\subsection{Proof of Theorem \ref{main theorem 1}}
Let $h=(h_1,\dots, h_r)$ be a weak solution to equation (\ref{phi}) satisfying $\det(h)=1$. As stated in \cite{Miy3}, it follows from the general regularity theory of the Poisson equation that $h$ is at least of class $C^{1,\alpha}_{\loc}$ for any $0<\alpha<1$. We denote by $H_1,\dots, H_r$ the Hermitian metrics on $K_X^{-1}\rightarrow X$ induced by $h$ as in Section \ref{2.2}.
The following is an extension of the well-known inequality for Higgs bundles established by Simpson \cite[Lemma 10.1]{Sim1} (for the proof, see \cite[Proposition 11]{Miy3} and \cite[Proposition 23]{Miy5}):

\begin{lemm}\label{free energy}{\it The following inequality holds in the sense of distributions: 
\begin{align}
\inum\partial\bar{\partial} \log(\sum_{j=1}^r\vol(H_j)/\vol(H_\refe)) \geq \frac{\sum_{j=1}^r(\vol(H_{j-1})-\vol(H_j))^2}{\sum_{j=1}^r\vol(H_j)}+\inum F_{H_\refe}. \label{well-known}
\end{align}
}
\end{lemm}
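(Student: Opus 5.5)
The plan is to reduce inequality \eqref{well-known} to a pointwise computation with the log-sum-exp function, using the equations \eqref{cyc2} for $j=1,\dots,r-1$ and a one-sided inequality for $j=0$. Afterwards I will justify that computation by regularization.

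Work locally and write $\vol(H_j)=e^{a_j}\vol(H_\refe)$ for $j=0,1,\dots,r-1$. For $j\ge1$, $a_j$ is $C^{1,\alpha}_\loc$ because $h$ is. Since $iF_{H_j}=iF_{H_\refe}-i\partial\bar\partial a_j$, writing $V_j\coloneqq\vol(H_j)$ with cyclic indices, equation \eqref{cyc2} reads
\begin{align*}
i\partial\bar\partial a_j = iF_{H_\refe}+2V_j-V_{j-1}-V_{j+1}\quad (j=1,\dots,r-1).
\end{align*}
For $j=0$, the definition $H_0=H_1^{-1}\otimes\cdots\otimes H_{r-1}^{-1}\otimes(e^{-\varphi}h_\refe)^{-r}$ gives $a_0=-\sum_{j\ge1}a_j+r\varphi+(\text{smooth})$. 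Summing the equations for $j\ge1$ and using the Toda structure, the smooth curvature terms recombine, and the positivity of $i\partial\bar\partial\varphi+(\text{smooth curvature of }h_\refe)\ge0$ (semipositivity of $e^{-\varphi}h_\refe$) yields
\begin{align*}
i\partial\bar\partial a_0 \ge iF_{H_\refe}+2V_0-V_{r-1}-V_1
\end{align*}
as currents. This is an equality off the support of the Riesz measure of $\varphi$.

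Next comes the formal computation. Assume all $a_j$ are smooth. Set $S=\sum_j e^{a_j}$ and $p_j=e^{a_j}/S$. Then
\begin{align*}
i\partial\bar\partial\log S=\sum_j p_j\, i\partial\bar\partial a_j+\Bigl(\sum_j p_j\, i\partial a_j\wedge\bar\partial a_j- i\Bigl(\sum_j p_j\partial a_j\Bigr)\wedge\Bigl(\overline{\sum_j p_j\partial a_j}\Bigr)\Bigr).
\end{align*}
The bracket is a nonnegative $(1,1)$-form by Cauchy--Schwarz, i.e.\ it is a variance. Inserting the relations above, the first sum is bounded below by
\begin{align*}
iF_{H_\refe}+\frac{\sum_j V_j(2V_j-V_{j-1}-V_{j+1})}{\sum_j V_j}=iF_{H_\refe}+\frac{\sum_j(V_{j-1}-V_j)^2}{\sum_j V_j},
\end{align*}
where the last identity is the cyclic summation by parts. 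This is exactly \eqref{well-known}. Note that $\sum_{j=1}^r$ and $\sum_{j=0}^{r-1}$ coincide, since $H_0=H_r$.

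The main obstacle is rigor. The function $\varphi$ may be $-\infty$ on a polar set, $a_0$ is then only locally (subharmonic minus $C^{1,\alpha}$), and the $a_j$ have only continuous or merely bounded Laplacians. I will first treat the $\varphi$-singularity by taking smooth subharmonic $\varphi_\epsilon\searrow\varphi$ locally, via convolution in a flat chart. I replace $a_0$ by $a_0^\epsilon$, built from $\varphi_\epsilon$ and the fixed $a_1,\dots,a_{r-1}$. I then mollify all $a_j$ at scale $\delta$ and apply the pointwise computation. The exact relations now hold only up to commutator errors of the form $(V_j)_\delta-V_j^\delta$, and these tend to $0$ in $L^1_\loc$ as $\delta\to0$ by local boundedness. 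Letting $\delta\to0$ and then $\epsilon\to0$, the left-hand side converges in distributions: $\log S_\epsilon$ decreases to $\log S$ and is locally uniformly bounded above, hence converges in $L^1_\loc$. On the right, $e^{r\varphi_\epsilon}\to e^{r\varphi}$ boundedly a.e., and $\sum_j V_j$ is bounded below by the positive $V_1$. Dominated convergence therefore handles the quadratic quotient, and the inequality survives in the limit.
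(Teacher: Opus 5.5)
Your proposal is correct. The paper does not prove this lemma itself. It cites [Miy3, Prop.~11] and [Miy5, Prop.~23] and presents the inequality as the singular-weight, cyclic version of Simpson's inequality $\Delta\log|\theta|^2\geq|[\theta,\theta^\ast]|^2/|\theta|^2$ for harmonic bundles.

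Your log-sum-exp computation is the diagonal form of that inequality:
\begin{itemize}
\item The weighted average $\sum_j p_j\,\inum\partial\bar\partial a_j$ of the Toda relations produces $\sum_j(V_{j-1}-V_j)^2/\sum_jV_j$, which is the analogue of $|[\theta,\theta^\ast]|^2/|\theta|^2$.
\item Your variance term is the Cauchy--Schwarz counterpart of the Kato step $|\partial|\theta|^2|^2\leq|\theta|^2|\nabla\theta|^2$ in the usual Higgs-bundle proof.
\end{itemize}
Your route has two advantages. It never needs an actual Higgs field; for general $\varphi$ there is none, and one exists only on branched covers when $\varphi=\phi_{q_N}$. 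In addition, the curvature current of $e^{-\varphi}h_\refe$ enters only through $p_0\,\inum\partial\bar\partial a_0$, where its sign is favourable and it can be dropped.

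Two details should be stated explicitly.

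\emph{The $\epsilon$-level inequality.} After replacing $\varphi$ by $\varphi_\epsilon$ with $a_1,\dots,a_{r-1}$ fixed, those $a_j$ still satisfy the relations with the true $V_0$, and $\inum\partial\bar\partial a_0^\epsilon$ also carries the true $2V_0$. So at level $\epsilon$, with $V_j^\epsilon=V_j$ for $j\geq1$, you actually obtain
\begin{align*}
\inum\partial\bar\partial\log S_\epsilon\geq\inum F_{H_\refe}+\frac{\sum_j(V_{j-1}-V_j)^2+(V_0^\epsilon-V_0)(2V_0-V_{r-1}-V_1)}{\sum_k V_k^\epsilon}.
\end{align*}
This is not literally \eqref{well-known} for any system. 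However, the right-hand side converges boundedly to the desired one, so your dominated-convergence step does cover it.

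\emph{The case $\varphi\equiv-\infty$.} This case is allowed, for instance on hyperbolic $X$, and convolution then gives no finite approximants. There $V_0\equiv0$, so you simply drop $j=0$; the summation-by-parts identity still holds with $V_0=V_r=0$.

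Finally, the $\delta$-mollification can be dispensed with. For fixed $\epsilon$, every $a_j$ lies in $W^{2,p}_{\loc}$ for all $p<\infty$, since its Laplacian is locally bounded. The chain rule for $\log\sum_je^{a_j}$ therefore holds almost everywhere.
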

By using the above Lemma \ref{free energy}, we can show the following as in \cite[Lemma 3.13]{LM1}:
\begin{lemm}\label{ephi}{\it Suppose that there exists a Hermitian metric $H$ on $K_X^{-1}\rightarrow X$ such that
\begin{align*}
e^{\varphi}h_\refe^{-1}\leq H.
\end{align*}
Then there exist positive constants $C_1$ and $C_2$ independent of $H$ such that
\begin{align}
\inum\Lambda_H\partial\bar{\partial}\log(\sum_{j=1}^r\vol(H_j)/\vol(H))\geq C_1(\sum_{j=1}^r\vol(H_j)/\vol(H))-C_2+\inum \Lambda_HF_H, \label{C1C2}
\end{align}
where we denote by $\Lambda_H=(\vol(H)\wedge)^{\ast}$ the contraction operator induced by $H$.
}
\end{lemm}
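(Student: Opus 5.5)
We start from Lemma \ref{free energy}, taking the reference metric there to be $H$ itself, so that $H_\refe=H$. Wedging inequality \eqref{well-known} with the contraction $\Lambda_H$ gives, in the sense of distributions,
\begin{align*}
\inum\Lambda_H\partial\bar\partial\log\Big(\sum_{j=1}^r\vol(H_j)/\vol(H)\Big)\geq \frac{\sum_{j=1}^r(\vol(H_{j-1})-\vol(H_j))^2}{\sum_{j=1}^r\vol(H_j)}\cdot\frac{1}{\vol(H)}+\inum\Lambda_HF_H .
\end{align*}
It then suffices to bound the first term on the right from below pointwise by $C_1 S - C_2$, where $S\coloneqq\sum_{j=1}^r\vol(H_j)/\vol(H)$.

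To do this we fix a point and normalize by $\vol(H)$. Concretely, we set $\tilde H_j\coloneqq \vol(H_j)/\vol(H)$ for $j=1,\dots,r$, which are nonnegative numbers, with $\tilde H_0=\tilde H_r$. We also define $\phi$ by $e^{r\phi}\coloneqq \vol(H_r)/\vol(H)\cdot\prod_{j=1}^{r-1}\tilde H_j$. By the definition of $H_r$, this equals the ratio $(e^{\varphi}h_\refe^{-1}/H)^r$, so the hypothesis $e^\varphi h_\refe^{-1}\le H$ gives $e^\phi\le 1$. Choosing $h_j$ with $\prod_j h_j=1$ realizing the $\tilde H_j$ (this is possible since only ratios matter), we are in the setting of Lemmas \ref{lem1} and \ref{lem2} with threshold constant $1$.

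The argument now splits into two cases. If $S\ge 2r$, Lemma \ref{lem2} gives $\sum_j(\tilde H_{j-1}-\tilde H_j)^2\ge \frac{C}{4}S^2$, so the quotient is at least $\frac{C}{4}S$. If $S<2r$, the quotient is nonnegative, so it is at least $\frac C4 S-\frac C4\cdot 2r$. Combining the two cases, the bound holds with $C_1=C/4$ and $C_2=rC/2$, and both constants depend only on $r$, hence not on $H$. The possible value $\varphi=-\infty$ causes no difficulty, since it corresponds to $\phi=-\infty$, which Lemmas \ref{lem1} and \ref{lem2} allow.

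The main subtlety is regularity. The identity $\Lambda_H(\text{quotient form})=$ the scalar quotient must hold almost everywhere, and the distributional inequality must survive multiplication by the continuous positive factor $1/\vol(H)$. Since $h\in C^{1,\alpha}_{\loc}$, the right-hand side of \eqref{well-known} is a locally integrable function plus a measure, and multiplying a positive distribution by a continuous positive function preserves the inequality. We would check this explicitly in a local coordinate, rather than expect it to be a real obstacle.
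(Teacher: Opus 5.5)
Your proposal is correct and follows essentially the route the paper indicates. The paper omits the proof, deferring to \cite[Lemma 22]{Miy6} and \cite[Lemma 3.13]{LM1}, and that argument is exactly yours: apply Lemma \ref{free energy} with reference metric $H$, contract with $\Lambda_H$, normalize pointwise so that $e^{\phi}=e^{\varphi}h_\refe^{-1}\otimes H^{-1}\le 1$, then use Lemma \ref{lem2} with threshold $1$ when $S\ge 2r$ and the trivial nonnegativity bound when $S<2r$, which gives constants $C_1=C/4$ and $C_2=rC/2$ depending only on $r$.
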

Lemma \ref{ephi} is identical to \cite[Lemma 22]{Miy6}, and we omit the proof. 
Throughout this section, we follow the notation in Lemma \ref{ephi} and denote by $\Lambda_H$ the contraction operator induced by a Hermitian metric $H$ on $K_X^{-1} \to X$. For the K\"ahler metric $g_H$ induced by $H$, we have $\Delta_{2g_H}=\inum\Lambda_H\partial\bar\partial$. In this subsection, we apply Propositions \ref{wstoc} and \ref{SL} to inequalities involving this operator with respect to $2g_H$. The following Corollary \ref{CYM} is an extension of the crucial insight discovered by Li--Mochizuki \cite[Section 3.3]{LM1} to more general subharmonic weight functions:

\begin{cor}\label{CYM}{\it Let $g_X$ be a complete continuous K\"ahler metric on $X$ whose Ricci curvature is bounded below and $H$ the Hermitian metric on $K_X^{-1}\rightarrow X$ induced by $g_X$. 
Then for the K\"ahler metric $g_X$, the following (i) and (ii) are equivalent:
\begin{enumerate}[(i)]
\item The function $e^{\varphi}h_\refe^{-1}\otimes H^{-1}$ is a bounded function.
\item For each $j=1,\dots, r$, $H_j\otimes H^{-1}$ is a bounded function. 
\end{enumerate}
}
\end{cor}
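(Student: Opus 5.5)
The plan is to prove the two implications separately. The direction (ii)$\Rightarrow$(i) is essentially algebraic. The direction (i)$\Rightarrow$(ii) uses Lemma \ref{ephi} together with the Keller--Osserman type maximum principle of Proposition \ref{SL}.

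For (ii)$\Rightarrow$(i), suppose that $H_j\otimes H^{-1}\le B$ for every $j=1,\dots,r$; this includes $H_r=H_0$. Since $\det(h)=1$, we have
\begin{align*}
H_1\otimes\cdots\otimes H_{r-1}\otimes H_r=(e^{-\varphi}h_\refe)^{-r}=(e^{\varphi}h_\refe^{-1})^{r}.
\end{align*}
Taking the $r$-th root after tensoring with $H^{-r}$ gives
\begin{align*}
e^{\varphi}h_\refe^{-1}\otimes H^{-1}=\Bigl(\prod_{j=1}^r H_j\otimes H^{-1}\Bigr)^{1/r}\le B .
\end{align*}
This holds pointwise, including at points where $\varphi=-\infty$, where both sides are interpreted suitably. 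So this direction is immediate.

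For (i)$\Rightarrow$(ii), suppose $e^{\varphi}h_\refe^{-1}\le A\,H$ for some constant $A$. Replacing $H$ by $AH$ rescales $g_X$ by a constant. The rescaled metric is still complete with Ricci curvature bounded below, and boundedness of $H_j\otimes H^{-1}$ is unaffected, so we may assume $A=1$. Lemma \ref{ephi} then applies. Since $\inum\Lambda_HF_H$ is a constant multiple of the Gaussian curvature of $g_X$, it is bounded below by assumption. Set
\begin{align*}
u\coloneqq\log\Bigl(\sum_{j=1}^r\vol(H_j)/\vol(H)\Bigr),
\end{align*}
which is continuous because $h\in C^{1,\alpha}_\loc$ and $\varphi$ is upper semicontinuous. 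Inequality (\ref{C1C2}) then reads
\begin{align*}
\Delta_{2g_X}u\ge C_1e^{u}-C_2'
\end{align*}
in the sense of distributions, for constants $C_1,C_2'>0$. Shifting $u$ by the constant $c_0=\log(C_2'/C_1)$ and rescaling the metric by the constant $C_1e^{c_0}$, this becomes
\begin{align*}
\Delta_{\tilde g}(u-c_0)\ge e^{u-c_0}-1=f(u-c_0),\qquad f(t)=e^t-1 .
\end{align*}
As the paper notes right after Proposition \ref{SL}, this $f$ satisfies (\ref{Cinc}) and (\ref{KOs}). Also $f(u-c_0)$ is locally bounded, since $u$ is continuous. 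The metric $\tilde g$ is again complete, continuous, and has Ricci curvature bounded below.

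Proposition \ref{SL} therefore gives $\esssup_X u<\infty$. Since $u$ is continuous, this is a genuine bound, so $\sum_j\vol(H_j)/\vol(H)$ is bounded. Because each $\vol(H_j)\ge0$, every ratio $H_j\otimes H^{-1}$, including $j=r$, is bounded. This proves (ii).

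The main obstacle is verifying the hypotheses of Proposition \ref{SL} in this low-regularity setting. First, (\ref{C1C2}) must hold distributionally with the correct normalisation $\Delta_{2g_H}=\inum\Lambda_H\partial\bar\partial$. Second, the continuous metric $g_X$ must have its Ricci lower bound understood in the current sense, so that $\inum\Lambda_HF_H$ is bounded below as a distribution. I would handle the second point by absorbing that lower bound into the constant $C_2$ before applying Proposition \ref{SL}. Only the first conclusion of Proposition \ref{SL} is needed, not the boundary version.
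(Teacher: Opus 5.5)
Your proof is correct and is essentially the paper's proof. The reverse direction is the identity $\prod_{j=1}^r H_j=(e^{\varphi}h_\refe^{-1})^r$. For (i)$\Rightarrow$(ii) you normalize the bound to $1$, absorb the lower bound $\inum\Lambda_HF_H\ge -C_3$ into the constant, shift $u$ by a logarithmic constant to reach $\Delta u\ge C(e^u-1)$, and apply Proposition~\ref{SL} with $f(t)=e^t-1$, exactly as the paper does.

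One small correction: $u$ is in general only upper semicontinuous, because the $H_r$ term carries $e^{r\varphi}$. Upper semicontinuity still makes $e^u-1$ locally bounded. The passage from $\esssup$ to a pointwise bound then uses continuity of $H_1,\dots,H_{r-1}$, together with the sub-mean-value property of $\varphi$ for the $H_r$ term, rather than continuity of $u$.
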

\begin{proof} It is clear that (ii) implies (i). We prove the converse. 
We suppose that (i) holds. For simplicity, we assume that we can take 1 as an upper bound on the function $e^\varphi h_\refe^{-1}\otimes H^{-1}$. Since the Ricci curvature of $g_X$ is bounded below, there exists a positive constant $C_3$ such that $\inum \Lambda_H F_H\geq -C_3$. We set 
\begin{align*}
C_4&\coloneqq \frac{C_1}{C_2+C_3}, \\
u&\coloneqq \log(\sum_{j=1}^r\vol(H_j)/\vol(H))+\log C_4.
\end{align*}
From inequality (\ref{C1C2}), we obtain
\begin{align}
\inum\Lambda_H\partial\bar{\partial}u\geq (C_2+C_3)(e^u-1). \label{C2C3}
\end{align}
By applying Proposition \ref{SL} to inequality (\ref{C2C3}), we obtain the claim.
\end{proof}
The proof of the following lemma can be found in \cite[proof of Lemma 24]{Miy6}:
\begin{lemm}\label{log}{\it There exist positive constants $C_1$ and $C_2$ such that the following holds for each $i=1,\dots, r-1$:
\begin{align}
\inum\Lambda_{H_i}\partial\bar{\partial} \log(\sum_{j=1}^r(\vol(H_j)/\vol(H_i))) \geq C_1(\sum_{j=1}^r\vol(H_j)/\vol(H_i))-C_2+\inum \Lambda_{H_i}F_{H_i}. \label{Hi}
\end{align}
}
\end{lemm}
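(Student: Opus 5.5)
We reduce Lemma \ref{log} to Lemma \ref{ephi} applied with the reference metric $H\coloneqq H_i$. The only hypothesis of Lemma \ref{ephi} is $e^{\varphi}h_\refe^{-1}\leq H$, and this fails for $H_i$ in general. So the plan is to rerun the proof of Lemma \ref{ephi}, starting from Lemma \ref{free energy}, and to replace the condition $e^\varphi\le H$ by the algebraic fact that the normalised quantities $\vol(H_j)/\vol(H_i)$ automatically control $e^{\varphi}$ relative to $H_i$.

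\textbf{Step 1 (change of reference metric).} Apply Lemma \ref{free energy} with $H_\refe\coloneqq H_i$. This is allowed in the distributional sense because $H_i$ is continuous, indeed $C^{1,\alpha}_{\loc}$. Contract with $\Lambda_{H_i}$. This gives
\begin{align*}
\inum\Lambda_{H_i}\partial\bar\partial\log\Bigl(\sum_{j}\vol(H_j)/\vol(H_i)\Bigr)\geq \frac{\sum_j(x_{j-1}-x_j)^2}{\sum_j x_j}+\inum\Lambda_{H_i}F_{H_i},
\end{align*}
where $x_j\coloneqq \vol(H_j)/\vol(H_i)$ pointwise, so that $x_i=1$.

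\textbf{Step 2 (pointwise algebra).} It remains to prove an inequality in the numbers $x_j$ alone: there are constants $C_1,C_2>0$, depending only on $r$, with
\begin{align*}
\frac{\sum_j(x_{j-1}-x_j)^2}{\sum_j x_j}\geq C_1\sum_j x_j-C_2 .
\end{align*}
This is a statement about tuples $(h_1,\dots,h_r)$ with $\prod h_j=1$, written in the normalisation of the linear-algebra subsection with $H_i$ rescaled to $1$. After that rescaling $e^{\phi}$ is the corresponding normalised value of $e^{\varphi}h_\refe^{-1}$ against $H_i$. We split into two cases.
\begin{enumerate}[(a)]
\item If $e^{\phi}\le 2$, Lemma \ref{lem1} gives $\sum(x_{j-1}-x_j)^2\ge C(\sum x_j-re^\phi)^2\ge C(\sum x_j-2r)^2$. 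Dividing by $\sum x_j$, and using that $(S-2r)^2/S\ge S/4-c$ for a constant $c$, yields the bound.
\item If $e^\phi>2$, then $H_i=1\le \tfrac12 e^{\phi}$. Lemma \ref{lem3} with $\epsilon=1/2$ gives $\sum(x_{j-1}-x_j)^2\ge\delta(\sum x_j)^2$, and dividing by $\sum x_j$ yields the bound with $C_2=0$.
\end{enumerate}
In both cases the constants depend only on $r$, and the index $i\in\{1,\dots,r-1\}$ is exactly what Lemma \ref{lem3} requires. This also explains why $i=r$ is excluded.

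\textbf{Step 3 (zeros of the weight).} At points where $\varphi=-\infty$ we have $x_0=0$. The lemmas remain valid there, since they allow $\phi=-\infty$. Measurability and local integrability of both sides are unchanged from Lemma \ref{free energy}. Combining Steps 1 and 2 gives \eqref{Hi}.

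I expect Step 2 to be the main obstacle. The dichotomy depends on $e^\phi$ relative to $H_i$, but the right-hand side of Lemma \ref{lem1} contains $e^\phi$, which could be large. Lemma \ref{lem3} is what removes the need for an a priori upper bound on $e^\varphi$, and one has to check that its threshold, taken as $\epsilon=1/2$, fits the case split exactly as described.
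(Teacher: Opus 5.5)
Your argument is correct and is essentially the proof the paper defers to (\cite[proof of Lemma 24]{Miy6}, modelled on \cite{LM1}): you take $H_i$ as the reference metric in Lemma \ref{free energy} and split pointwise. When $e^{\varphi}h_\refe^{-1}\otimes H_i^{-1}$ is bounded, Lemmas \ref{lem1}--\ref{lem2} apply; when it is large, $H_i\le\frac12 e^{\varphi}h_\refe^{-1}$ and Lemma \ref{lem3} applies.

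One small repair is needed in case (a). The step $(S-re^\phi)^2\ge(S-2r)^2$ only holds when $S\ge 2r$; it fails, for example, when all $x_j=1$. For $S<2r$ the desired bound is trivial once you choose $C_2\ge 2rC_1$.

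Also, $i=r$ is excluded because $H_r$ degenerates where $\varphi=-\infty$, not because of the index range in Lemma \ref{lem3}.
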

From Lemma \ref{log}, we have the following:
\begin{cor}\label{ji}{\it Suppose that an $H_i$ among $H_1, \dots, H_{r-1}$ is complete. Then $H_j\otimes H_i^{-1}$ is a bounded function for all $j=1,\dots, r$. 
}
\end{cor}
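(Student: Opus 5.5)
The plan is to mimic the proof of Corollary \ref{CYM}, working with the metric $g_{H_i}$ induced by $H_i$ in place of the auxiliary metric $g_X$ and feeding inequality (\ref{Hi}) of Lemma \ref{log} into Proposition \ref{SL}. The obstacle is that Proposition \ref{SL} requires a lower Ricci bound for the background metric. Here $H_i$ is only $C^{1,\alpha}_{\loc}$, and its curvature is not given in advance. So the first step is to bound the curvature from the equation itself. From (\ref{cyc2}),
\begin{align*}
\inum F_{H_i}=-2\vol(H_i)+\vol(H_{i-1})+\vol(H_{i+1})\geq -2\vol(H_i)
\end{align*}
in the weak sense, because $\vol(H_{i\pm1})\geq 0$; when $i-1=0$ or $i+1=r$, the term $\vol(H_0)=\vol(H_r)$ is still nonnegative, possibly degenerate. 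Hence $\inum\Lambda_{H_i}F_{H_i}\geq -2$, so the Gaussian curvature of $2g_{H_i}$, and therefore its Ricci curvature, is bounded below. By assumption $g_{H_i}$ is complete and continuous, so $(X,2g_{H_i})$ satisfies the hypotheses of Proposition \ref{SL}.

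Next I would set
\begin{align*}
u\coloneqq \log\Bigl(\sum_{j=1}^r\vol(H_j)/\vol(H_i)\Bigr)+\log C_4,\qquad C_4\coloneqq \frac{C_1}{C_2+2}.
\end{align*}
Inserting $\inum\Lambda_{H_i}F_{H_i}\geq-2$ into (\ref{Hi}) gives
\begin{align*}
\Delta_{2g_{H_i}}u=\inum\Lambda_{H_i}\partial\bar\partial u\geq (C_2+2)(e^u-1)
\end{align*}
in the sense of distributions. The function $u$ is continuous on the complement of $\varphi^{-1}(-\infty)$, and it is bounded below by $\log(\vol(H_i)/\vol(H_i))+\log C_4=\log C_4$. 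It is also upper semicontinuous, because $\vol(H_r)$ is $e^{r\varphi}$ times a continuous positive density. Therefore $u\in L^1_{\loc}$ and $f(u)=(C_2+2)(e^u-1)$ is locally bounded, hence locally integrable, as noted after Proposition \ref{SL}. The function $f(t)=(C_2+2)(e^t-1)$ satisfies $f(0)=0$, $f>0$ for $t>0$, condition (\ref{Cinc}) with constant $C=1$, and (\ref{KOs}). Proposition \ref{SL} then gives $\esssup_X u<\infty$. Consequently $\sum_j \vol(H_j)/\vol(H_i)$ is essentially bounded. Each $H_j\otimes H_i^{-1}$ with $j\le r-1$ is continuous, so it is genuinely bounded. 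For $j=r$, the function $e^{r\varphi}\times(\text{continuous})$ is upper semicontinuous and a.e. bounded; since subharmonic functions equal the limsup of their essential values, it is bounded everywhere.

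The main obstacle is justifying that inequality (\ref{Hi}) and the curvature bound hold distributionally with the low regularity available, namely $H_i\in C^{1,\alpha}_{\loc}$ and $\vol(H_r)$ possibly vanishing on the polar set $\varphi^{-1}(-\infty)$. I would take Lemma \ref{log} as stated in this distributional sense, following \cite{Miy6}. For the curvature, I would note that (\ref{cyc2}) holds weakly with the right-hand side in $L^1_{\loc}$, so dropping the nonnegative terms is legitimate. I also need to check that "Ricci curvature bounded below" for a continuous metric means this weak inequality, as it is used in Proposition \ref{GWT}; under that reading the argument goes through.
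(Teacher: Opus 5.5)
Your proposal is correct and follows essentially the same route as the paper. The paper also extracts $\inum\Lambda_{H_i}F_{H_i}\geq -2$ from (\ref{cyc2}), takes the same function $u=\log\bigl(\sum_{j=1}^r\vol(H_j)/\vol(H_i)\bigr)+\log\bigl(C_1/(C_2+2)\bigr)$ to obtain $\inum\Lambda_{H_i}\partial\bar\partial u\geq (C_2+2)(e^u-1)$, and applies Proposition \ref{SL} on $(X,2g_{H_i})$. Your extra checks fill in steps the paper leaves implicit: the lower bound $u\geq\log C_4$, the upper semicontinuity of $u$, local integrability, and the upgrade from an essential bound to a pointwise bound (including for $j=r$).
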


\begin{proof} 
From equation (\ref{cyc2}), we have $\inum\Lambda_{H_i}F_{H_i}\geq -2$ (cf. \cite[Lemma 2.4]{LM1}). 
We set
\begin{align}
&C\coloneqq C_2+2, \\
&u\coloneqq \log(\sum_{j=1}^r(\vol(H_j)/\vol(H_i)))+\log(C_1/(C_2+2)).
\end{align}
Then we have
\begin{align}
\inum\Lambda_{H_i}\partial\bar{\partial}u\geq C(e^u-1). \label{eu-1}
\end{align}
By applying Proposition \ref{SL} to inequality (\ref{eu-1}), we obtain the claim.
\end{proof}
Let $h^\prime=(h_1^\prime,\dots, h_r^\prime)$ be another solution to equation (\ref{phi}) satisfying $\det(h^\prime)=1$. We denote by $s=(s_1,\dots, s_r)$ the endomorphism determined by $h(s\cdot,\cdot)=h^\prime(\cdot,\cdot)$. We set $s_0\coloneqq s_r$. The proof of the following lemma can be found in \cite[proof of Lemma 26]{Miy6}:
\begin{lemm}\label{ddbars}{\it The endomorphism $s$ satisfies the following inequality in the sense of distributions:
\begin{align}
\inum\partial\bar{\partial} \sum_{j=1}^rs_j\geq \sum_{j=1}^rs_{j-1}^{-1}(s_{j-1}-s_j)^2\vol(H_{j-1}). \label{s}
\end{align}
}
\end{lemm}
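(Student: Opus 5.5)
The plan is a direct local computation. The inequality is local, so I will fix a coordinate disc $U$ with coordinate $z$ and identify diagonal metrics with positive functions: $h_j=e^{-u_j}|dz|^{\cdots}$, $h_j'=e^{-u_j'}|dz|^{\cdots}$. Then $s_j=e^{u_j-u_j'}$, and $\vol(H_j)$ corresponds to $e^{u_j-u_{j+1}}\,\inum dz\wedge d\bar z$ up to a smooth factor. The weight $\varphi$ enters only through $H_0=H_r$, and it enters $H_r$ and $H_r'$ identically.

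First, I will write equation (\ref{phi}) for $h$ and $h'$ as $\inum\partial\bar\partial\log h_j=\vol(H_{j-1})-\vol(H_j)$, and similarly with primes. Subtracting gives
\begin{align*}
\inum\partial\bar\partial\log s_j=\vol(H'_{j-1})-\vol(H'_j)-\vol(H_{j-1})+\vol(H_j).
\end{align*}
Since $H'_j=s_j^{-1}s_{j+1}H_j$ for $1\le j\le r-1$ and $H'_0=s_r^{-1}s_1H_0$ (the $\varphi$-factor cancels), the right-hand side equals $(s_{j-1}^{-1}s_j-1)\vol(H_{j-1})-(s_j^{-1}s_{j+1}-1)\vol(H_j)$, with indices taken cyclically.

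Second, I will use the identity
\begin{align*}
\inum\partial\bar\partial s_j=s_j\,\inum\partial\bar\partial\log s_j+s_j^{-1}\inum\partial s_j\wedge\bar\partial s_j,
\end{align*}
where the last term is nonnegative and can be dropped. Summing over $j$ and reindexing so that each $\vol(H_{k})$ collects its coefficients gives
\begin{align*}
\sum_j s_j\bigl[(s_{j-1}^{-1}s_j-1)\vol(H_{j-1})-(s_j^{-1}s_{j+1}-1)\vol(H_j)\bigr]
&=\sum_k\bigl(s_{k}^{-1}s_{k+1}^2-2s_{k+1}+s_k\bigr)\vol(H_k)\\
&=\sum_k s_k^{-1}(s_{k+1}-s_k)^2\vol(H_k),
\end{align*}
which is exactly the right-hand side of (\ref{s}) after shifting $k=j-1$.

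The main obstacle is regularity, since the argument must work in the sense of distributions. By the regularity remark preceding Lemma \ref{free energy}, $h$ and $h'$ are $C^{1,\alpha}_{\loc}$, so $s_j$ is $C^{1,\alpha}$ and positive. The $\partial\bar\partial\log s_j$ are then measures, in fact $L^1_{\loc}$ functions times $dz\wedge d\bar z$, since $e^{\varphi}$ is locally bounded. The chain rule $\inum\partial\bar\partial e^{v}=e^{v}\inum\partial\bar\partial v+e^v\inum\partial v\wedge\bar\partial v$ must therefore be justified for $v\in C^{1}$ with $\Delta v\in L^1_{\loc}$. I would do this by mollifying: set $v_\delta=v*\rho_\delta$, apply the identity to $v_\delta$, and pass to the limit. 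Here $e^{v_\delta}\to e^v$ in $C^1_{\loc}$, $\Delta v_\delta\to\Delta v$ in $L^1_{\loc}$, and the product of a uniformly convergent continuous function with an $L^1$-convergent sequence converges in $L^1_{\loc}$. The points where $\varphi=-\infty$ cause no trouble because $\vol(H_0)$ is simply zero there, and the algebra holds pointwise almost everywhere.
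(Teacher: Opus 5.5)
Your proposal is correct and is essentially the standard direct computation that the paper defers to (\cite[proof of Lemma 26]{Miy6}): you subtract the two equations to get $\inum\partial\bar\partial\log s_j$, use $\inum\partial\bar\partial s_j\ge s_j\,\inum\partial\bar\partial\log s_j$, and regroup the cyclic sum, and the $C^{1,\alpha}_{\loc}$ regularity together with mollification (with $e^{\varphi}$ locally bounded) justifies each step in the sense of distributions. One point to make explicit: (\ref{phi}) is stated only for $j\le r-1$, so the $j=r$ equation used in your cyclic sum has to be derived from $\det(h)=\det(h')=1$, by summing the first $r-1$ equations and telescoping with $H_0=H_r$.
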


Then we prove Theorem \ref{main theorem 1}.
\begin{proof}[Proof of Theorem \ref{main theorem 1}] Let $H_1^\prime, \dots, H_r^\prime$ be the Hermitian metrics constructed from $h^\prime$ as in Section \ref{2.2}. From Corollaries \ref{CYM} and \ref{ji}, it follows that $H_1,\dots, H_{r-1}$ and $H_1^\prime,\dots, H_{r-1}^\prime$ are mutually bounded. In particular, $s_1,\dots, s_r$ are bounded functions on $X$. We choose one of $H_1,\dots, H_{r-1}, H_1^\prime,\dots, H_{r-1}^\prime$ and denote it by $H$. Since $H_1,\dots, H_{r-1}$ and $H_1^\prime,\dots, H_{r-1}^\prime$ are solutions to equation (\ref{cyc2}), the Gaussian curvature associated with $H$ is bounded below (cf. \cite[Lemma 2.4]{LM1}). Fix $\epsilon>0$. By Proposition \ref{GWT}, there exists a smooth Hermitian metric $\widetilde{H}$ on $K_X^{-1}\rightarrow X$ such that $e^{-\epsilon}H \leq \widetilde{H}\leq e^\epsilon H$ and the Ricci curvature of $\widetilde{H}$ is bounded below. From inequality (\ref{s}), we obtain
\begin{align}
\inum\Lambda_{\widetilde{H}}\partial\bar{\partial} \sum_{j=1}^rs_j\geq \sum_{j=1}^rs_{j-1}^{-1}(s_{j-1}-s_j)^2H_{j-1}\otimes \widetilde{H}^{-1}\geq \sum_{j=1}^{r-1}s_j^{-1}(s_j-s_{j+1})^2H_j\otimes \widetilde{H}^{-1}. \label{sH}
\end{align}
Applying Proposition \ref{wstoc} to inequality (\ref{sH}), we obtain, for each $m\in\Z_{>0}$, a point $x_m\in X$ such that 
\begin{align*}
&(\sum_{j=1}^rs_j)(x_m)\geq \sup_X(\sum_{j=1}^rs_j)-1/m, \\
&\sum_{j=1}^{r-1}(s_j^{-1}(s_j-s_{j+1})^2)(x_m)(H_j\otimes \widetilde{H}^{-1})(x_m)\leq 1/m.
\end{align*}
Around each $x_m$, we choose a local coordinate $z$ such that $\widetilde{H}(\frac{\partial}{\partial z}, \frac{\partial }{\partial z})(x_m)=1$. 
Since $H_1,\dots, H_{r-1}, \widetilde{H}$ are mutually bounded for all $j=1,\dots, r$, there exists a constant $B>0$ such that $H_j(\frac{\partial}{\partial z}, \frac{\partial}{\partial z})(x_m)^{-1}\leq B$ for all $j=1,\dots, r-1$ and all $m\in \Z_{>0}$. By Lemma \ref{lem4}, there exists a positive constant $C_1>0$ such that
\begin{align*}
\sum_{j=1}^r|s_j(x_m)-1|\leq C_1/m^{1/2}
\end{align*}
for all $m\in \Z_{>0}$. This implies that $\sup_X\sum_{j=1}^rs_j\leq r$. On the other hand, since $s_1\cdots s_r=1$, we have $\sum_{j=1}^rs_j\geq r$. Therefore, $\sum_{j=1}^r s_j = r$ on $X$. By the arithmetic mean--geometric mean inequality, we conclude that $s_1=\cdots=s_r=1$. Hence $h=h^\prime$.
\end{proof}
\subsection{Pullback}\label{pb}
Extending \cite[Section 3.4.7]{LM1}, this section establishes some properties of pullbacks of solutions to equation (\ref{phi}) by holomorphic maps. Let $\pi:Y\rightarrow X$ be a  holomorphic map whose derivative is nowhere vanishing on $Y$. Since the derivative is nowhere vanishing on $Y$, we have a natural isomorphism $\pi^\ast K_X\simeq K_Y$. Choose a square root $K_X^{1/2}$ of the holomorphic vector bundle $K_X\rightarrow X$. Since we have
$\pi^\ast K_X^{1/2}\otimes \pi^\ast K_X^{1/2}\simeq \pi^\ast K_X\simeq K_Y$, by setting $K_Y^{1/2}\coloneqq \pi^\ast K_X^{1/2}$, there is a natural isomorphism
\begin{align*}
\pi^\ast\left(\K_{X,r}\right)\simeq \K_{Y,r},
\end{align*}
where we have used the notation $\K_{X,r}\coloneqq \bigoplus_{j=1}^rK_X^{\frac{r-(2j-1)}{2}}$ and $\K_{Y,r}\coloneqq \bigoplus_{j=1}^rK_Y^{\frac{r-(2j-1)}{2}}$ to explicitly denote the underlying Riemann surfaces. This notation is adopted solely for this subsection and the proof of Corollary \ref{ex}.
For each diagonal Hermitian metric $h=(h_1,\dots, h_r)$ on $\K_{X,r}$, we denote by $\pi^\ast h=(\pi^\ast h_1,\dots, \pi^\ast h_r)$ the Hermitian metric on $\K_{Y,r}$ induced by the above isomorphism. 
Let $\varphi$ be a subharmonic weight function on $K_X$. We denote by $\pi^\ast\varphi$ the induced weight function on $K_Y$. Let $H_1, \dots, H_r$ be the Hermitian metrics constructed from a diagonal Hermitian metric $h=(h_1,\dots, h_r)$ and the semipositive metric $e^{-\varphi}h_\refe$ as in Section \ref{2.2}. Pulling back equation (\ref{phi}) for $h$ by $\pi$, we obtain
\begin{align}
\inum F_{\pi^\ast h_j} + \vol(\pi^\ast H_{j-1}) - \vol(\pi^\ast H_j) = 0 \ \text{for $j=1,\dots, r-1$}.
\end{align}
This implies
\begin{prop}\label{hsolves}{\it If $h$ solves equation (\ref{phi}) associated to $\varphi$, then $\pi^\ast h$ solves equation (\ref{phi}) associated to $\pi^\ast\varphi$. If $\pi$ is surjective, the converse also holds.}
\end{prop}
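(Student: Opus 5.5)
The plan is to prove Proposition \ref{hsolves} by a local computation. Equation (\ref{phi}) is a local equation on $X$, built from curvatures and volume forms; I will check that both sides are natural under pullback by a local biholomorphism, and then use surjectivity for the converse. Since $d\pi$ is nowhere vanishing, $\pi$ is a local biholomorphism. For every $y\in Y$ there is therefore a neighborhood $V\ni y$ such that $\pi|_V:V\to \pi(V)$ is a biholomorphism onto an open set. Everything, including the weak sense of the equation, can then be checked on such charts.

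\textbf{Naturality of each term.} Under the isomorphisms $\pi^\ast K_X\simeq K_Y$ and $\pi^\ast\K_{X,r}\simeq\K_{Y,r}$, the Chern connection and curvature of a pulled-back metric are the pullbacks, so $F_{\pi^\ast h_j}=\pi^\ast F_{h_j}$. This holds in the weak sense too: for $h$ of class $C^{1,\alpha}_{\loc}$, the curvature is $-\partial\bar\partial\log$ of a local weight. Pulling back distributions by a local biholomorphism is well defined, and it commutes with $\partial\bar\partial$ because $\pi$ is holomorphic.

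Next I check that the induced metrics match. We have $\pi^\ast H_j=(\pi^\ast h_j)^{-1}\otimes\pi^\ast h_{j+1}$. Because the weight function on $K_Y$ is defined as $\pi^\ast\varphi$ (with reference $\pi^\ast h_\refe$), the metric $H_r$ built from $(\pi^\ast h,\pi^\ast\varphi)$ equals $\pi^\ast H_r$. For the volume forms, the local expression $\inum H(\partial_z,\partial_z)dz\wedge d\bar z$ transforms under $\pi$ exactly as a $(1,1)$-form, via the identification $\pi^\ast T_X\simeq T_Y$ given by $d\pi$. Hence $\vol(\pi^\ast H_j)=\pi^\ast\vol(H_j)$. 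Pulling back (\ref{phi}) termwise then gives the displayed equation, so $\pi^\ast h$ solves (\ref{phi}) for $\pi^\ast\varphi$.

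\textbf{Converse.} Let $\pi$ be surjective and suppose $\pi^\ast h$ solves the equation for $\pi^\ast\varphi$. Take $x\in X$ and choose $y$ with $\pi(y)=x$. On a neighborhood $V$ of $y$ where $\pi$ is biholomorphic, apply the same naturality with $(\pi|_V)^{-1}$. This shows that $h$ satisfies (\ref{phi}) on $\pi(V)$, which is a neighborhood of $x$. Since (\ref{phi}) is local, as a distributional identity it can be verified with a partition of unity, so it holds on all of $X$.

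\textbf{Expected obstacle.} The only delicate point is the weak formulation: $\varphi$ may be singular, so $\vol(H_0)$ is merely $L^1_{\loc}$ and the curvature is only distributional. I would handle this by noting that pushforward and pullback of test forms under a biholomorphism $V\to\pi(V)$ preserve compact support and integrals of $(1,1)$-forms. The two weak formulations on $V$ and on $\pi(V)$ are then literally equivalent.
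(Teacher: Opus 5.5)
Your proposal is correct and follows essentially the same route as the paper, which simply pulls back equation \eqref{phi} term by term, using that curvature and volume forms are natural under the local biholomorphism $\pi$. Your treatment of the weak formulation, and your local-inverse argument combined with surjectivity for the converse, fill in details that the paper leaves implicit.
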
 
We next discuss completeness. By Ambrose's theorem \cite{Amb1} (see also \cite{Pet1}), a local isometry from a complete Riemannian manifold to a connected Riemannian manifold is a surjective covering map, and the target manifold is complete. Conversely, completeness is preserved under Riemannian coverings. These facts also apply to pullbacks of continuous Hermitian metrics on Riemann surfaces: for each continuous metric $g$ on $X$, choose a smooth metric $\widetilde{g}$ satisfying $\frac{1}{2}g\leq\widetilde{g}\leq2g$. These comparisons preserve completeness and remain valid after pullback. Applying this criterion to the metrics induced by $H_1,\ldots,H_{r-1}$ yields the following:
\begin{prop}\label{pistarphi}{\it For each diagonal Hermitian metric $h$ on $\K_{X,r}$, the pullback $\pi^\ast h$ is complete if and only if $\pi$ is a surjective covering map and $h$ is complete. In particular, if $\pi$ is a surjective covering map and $h$ is a solution to equation (\ref{phi}) associated to $\varphi$, then $h$ is a complete solution if and only if $\pi^\ast h$ is a complete solution associated to $\pi^\ast\varphi$.  
}
\end{prop}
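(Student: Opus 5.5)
The plan is to reduce everything to a comparison between the Kähler metric $g_j$ induced by $H_j$ and the pulled-back metric $\pi^\ast g_j$ induced by $\pi^\ast H_j$. Since the derivative of $\pi$ never vanishes, $\pi^\ast K_X\simeq K_Y$, and under this identification $\pi^\ast H_j = H_j\circ d\pi$ is exactly the pullback of $g_j$ as a Riemannian metric. In particular $\pi:(Y,\pi^\ast g_j)\to(X,g_j)$ is a local isometry of continuous Riemannian metrics for each $j=1,\dots,r-1$. The statement is then essentially the transfer of Ambrose's theorem, and of its converse, to continuous metrics.

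The first step handles the regularity issue. For each $j$, I will pick a smooth metric $\widetilde g_j$ on $X$ with $\frac12 g_j\le\widetilde g_j\le 2g_j$, using Proposition \ref{GWT}(1). Pulling back gives $\frac12\pi^\ast g_j\le\pi^\ast\widetilde g_j\le 2\pi^\ast g_j$. The induced distance functions are then bi-Lipschitz equivalent with constant $\sqrt2$, so Cauchy sequences and completeness coincide for $g_j$ and $\widetilde g_j$, and likewise on $Y$. Moreover $\pi:(Y,\pi^\ast\widetilde g_j)\to(X,\widetilde g_j)$ is a smooth local isometry.

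For the direction in which $\pi^\ast h$ is complete, fix any $j$. By the first step, $\pi^\ast\widetilde g_j$ is complete, and $X$ is connected. Ambrose's theorem then shows that $\pi$ is a surjective covering map and that $\widetilde g_j$ is complete, hence $g_j$ is complete. Since this holds for every $j$, $h$ is complete. For the converse, assume $\pi$ is a surjective covering and each $g_j$ is complete. Then $\widetilde g_j$ is complete, and a Riemannian covering of a complete manifold is complete because geodesics lift. So $\pi^\ast\widetilde g_j$ is complete, hence $\pi^\ast g_j$ is complete, and $\pi^\ast h$ is complete. The "in particular" clause follows by combining this with Proposition \ref{hsolves}: a surjective covering carries solutions for $\varphi$ exactly to solutions for $\pi^\ast\varphi$, and the completeness equivalence has just been established.

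The main obstacle is justifying that $\pi^\ast g_j$ coincides with the Riemannian pullback, and in particular is nondegenerate, which is exactly where the nonvanishing derivative enters. A second point needing care is that Ambrose's theorem requires the metric on the source to be complete; completeness of the target alone is not enough. This is why the forward direction needs completeness of $\pi^\ast h$ for at least one $j$. Our hypothesis gives it for every $j$, so it suffices to apply the argument with any single $j$ to conclude that $\pi$ is a covering.
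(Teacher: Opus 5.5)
Your proposal is correct and follows essentially the same route as the paper. Both identify $\pi^\ast H_j$ with the Riemannian pullback of the induced Kähler metric and replace each continuous metric by a smooth $\widetilde g$ with $\frac12 g\le\widetilde g\le 2g$, which preserves completeness and survives pullback. Both then apply Ambrose's theorem for one direction and completeness of Riemannian coverings for the other, with the ``in particular'' clause following from Proposition \ref{hsolves}.
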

\subsection{Supersolution and subsolution to equation (\ref{phi})}
In this subsection, we establish some properties of subsolutions and supersolutions to equation (\ref{phi}). We set $n\coloneqq [r/2]$. 
Let $e^{-\varphi}h_\refe$ be a semipositive singular Hermitian metric on $K_X\rightarrow X$ with a smooth reference metric $h_\refe$. For each $j=1,\dots, r$, we denote by $h_{\refe,j}\coloneqq h_\refe^{\frac{r-(2j-1)}{2}}$ the real diagonal Hermitian metric on $K_X^{\frac{r-(2j-1)}{2}}$ induced by $h_\refe$. 
Let $h = (h_1, \dots, h_r)=(e^{-\phi_1}h_{\refe,1}, e^{-\phi_2}h_{\refe,2}, \dots, e^{-\phi_r}h_{\refe,r})$ be a real diagonal Hermitian metric on $\K_r \rightarrow X$. We set $H_j\coloneqq h_j^{-1}\otimes h_{j+1}$ for each $j=1,\dots, r-1$ and $H_0=H_r\coloneqq e^{r\varphi}h_\refe^{-r}\otimes h_r^{-1}\otimes h_1$. We suppose that $\phi_1,\dots, \phi_n, H_0,\dots, H_n$ belong to $L^1_\loc$. The notions of supersolutions and subsolutions to equation (\ref{phi}) are then defined as follows (see also \cite[Definition 5.1]{LM1}):
\begin{defi}[\cite{Miy6}] We say that $h$ is a {\it supersolution} (resp. {\it subsolution}) to equation \eqref{phi} if it satisfies the following differential inequalities in the weak sense:
\begin{align*}
\inum F_{h_j} + \vol(H_{j-1}) - \vol(H_j) \geq 0 \quad (\text{resp. } \leq 0) \quad \text{for all } j = 1, \dots, n.
\end{align*}
\end{defi}
\begin{ex}\label{H1Hr} Define a diagonal Hermitian metric $h^{(\varphi)}\coloneqq (h_1^{(\varphi)},\dots, h_r^{(\varphi)})$ by $h_j^{(\varphi)}\coloneqq (e^{-\varphi}h_\refe)^{\frac{r-(2j-1)}{2}}$ for each $j=1,\dots, r$. For each $j=1,\dots, r-1$, define $H_j^{(\varphi)}\coloneqq h_j^{(\varphi)-1}\otimes h_{j+1}^{(\varphi)}$ and define $H_r^{(\varphi)}\coloneqq (e^{-\varphi}h_\refe)^{-r}\otimes H_1^{(\varphi)-1}\otimes \cdots \otimes H_{r-1}^{(\varphi)-1}$. Then $H_1^{(\varphi)}=\cdots =H_r^{(\varphi)}=e^{\varphi}h_\refe^{-1}$.  Since each $h_j^{(\varphi)},$ $j=1,\dots, n$, has semipositive curvature, $h^{(\varphi)}$ is a supersolution to equation (\ref{phi}).
\end{ex}

\begin{ex}\label{subandsuper} Let $h_+$ and $h_-$ be real diagonal Hermitian metrics such that $h_+$ is a supersolution and $h_-$ is a subsolution to equation (\ref{phi}) associated with $\varphi$. Suppose that $\varphi \leq \varphi^\prime$. Then $h_+$ is also a supersolution to equation (\ref{phi}) associated with $\varphi^\prime$. Similarly, if $\varphi \geq \varphi^\prime$, then $h_-$ is also a subsolution to equation (\ref{phi}) associated with $\varphi^\prime$.
\end{ex}
\begin{ex}\label{genkidase} Let $h_+=(h_{+,1},\dots, h_{+,r})$ be a real diagonal supersolution to equation (\ref{phi}), and let $h_-=(h_{-,1},\dots, h_{-,r})$ be a real diagonal subsolution to equation (\ref{phi}). Let $\phi_+$ be a nonpositive bounded subharmonic function, and let $\phi_-$ be a nonnegative bounded superharmonic function. For each $j=1,\dots, n$, define
\begin{align*}
&\widetilde{h}_{+,j}\coloneqq e^{-\phi_+}h_{+,j}, \\
&\widetilde{h}_{-,j}\coloneqq e^{-\phi_-}h_{-,j}.
\end{align*}
For the remaining indices $j=n+1,\dots, r$, we define $\widetilde{h}_{+,j}$ and $\widetilde{h}_{-,j}$ so that $\widetilde{h}_{+}\coloneqq (\widetilde{h}_{+,1},\dots, \widetilde{h}_{+,r})$ and $\widetilde{h}_{-}\coloneqq (\widetilde{h}_{-,1},\dots, \widetilde{h}_{-,r})$ are real diagonal Hermitian metrics. Then $\widetilde{h}+$ and $\widetilde{h}-$ are a supersolution and a subsolution to equation (\ref{phi}), respectively (cf.~\cite[Proof of Proposition 5.9]{LM1}). 
\end{ex}
For the proof of Proposition \ref{min-max}, following the argument in
\cite[Proof of Lemma 1.5]{DS1}, we first establish the following lemma.
\begin{lemm}\label{uvFG}{\it Let $U\subseteq \R^n$ be an open subset, and let $F, G:U\times \R\rightarrow \R$ be Borel measurable functions. We denote by $g_E$ the Euclidean metric and by $\Delta_{g_E}\coloneqq -d^\ast d$ the Laplacian. Then the following holds:
\begin{enumerate}[(i)]
\item Let $u_0,u_1:U\rightarrow \R$ be locally integrable functions. Suppose that $F(\cdot, u_i(\cdot))$ is locally integrable for $i=0,1$, and $u_0$ and $u_1$ satisfy the following in the sense of distributions for $i=0,1$:
\begin{align*}
\Delta_{g_E}u_i \leq F(\cdot ,u_i(\cdot))  
\end{align*}
Then $u_\min\coloneqq\min\{u_0,u_1\}$ and $F(\cdot, u_\min(\cdot))$ are also locally integrable functions and satisfy 
\begin{align}
\Delta_{g_E}u_\min\leq F(\cdot,u_\min(\cdot)) \label{umin}
\end{align}
\item Let $v_0,v_1:U\rightarrow \R$ be locally integrable functions. Suppose that $G(\cdot, v_i(\cdot))$ is locally integrable for $i=0,1$, and $v_0$ and $v_1$ satisfy the following in the sense of distributions for $i=0,1$:
\begin{align*}
\Delta_{g_E}v_i\geq G(\cdot,v_i(\cdot))
\end{align*}
Then $v_\max\coloneqq\max\{v_0,v_1\}$ and $G(\cdot,v_\max(\cdot))$ are also locally integrable functions and satisfy
\begin{align*}
\Delta_{g_E}v_\max\geq G(\cdot,v_\max(\cdot)).
\end{align*}
\end{enumerate}
}
\end{lemm}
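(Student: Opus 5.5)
We prove (ii); then (i) follows by applying (ii) to $v_i\coloneqq -u_i$ and $G(x,t)\coloneqq -F(x,-t)$, since $\min\{u_0,u_1\}=-\max\{-u_0,-u_1\}$. Local integrability of $G(\cdot,v_\max(\cdot))$ is immediate from the pointwise bound $|G(x,v_\max(x))|\le |G(x,v_0(x))|+|G(x,v_1(x))|$, and $v_\max\in L^1_\loc$ likewise. The content is therefore the distributional inequality. The plan is a Kato-type argument in the spirit of \cite[Lemma 1.5]{DS1}: mollify, apply a smooth convex approximation of $\max$, and pass to the limit. Throughout, $\Delta_{g_E}=-d^\ast d$ is the analyst's Laplacian, so convex compositions produce a nonnegative Hessian contribution.

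\emph{Step 1 (structure of the data).} Since $\Delta_{g_E}v_i-G(\cdot,v_i)\ge 0$ in the sense of distributions, it is a nonnegative Radon measure $\mu_i$. Hence
\begin{align*}
\Delta_{g_E}v_i=G(\cdot,v_i)+\mu_i .
\end{align*}
Fix a standard mollifier $\rho_\epsilon$ and set $v_i^\epsilon\coloneqq v_i\ast\rho_\epsilon$ on a relatively compact subdomain $U'\Subset U$. Then $v_i^\epsilon$ is smooth and
\begin{align*}
\Delta_{g_E}v_i^\epsilon=G(\cdot,v_i)\ast\rho_\epsilon+\mu_i\ast\rho_\epsilon\ge G(\cdot,v_i)\ast\rho_\epsilon .
\end{align*}

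\emph{Step 2 (convex smoothing of the maximum).} Choose smooth convex functions $\psi_\delta$ with $\psi_\delta=0$ on $(-\infty,0]$ and $0\le\psi_\delta'\le1$, such that $\psi_\delta(t)\to t_+$ uniformly and $\psi_\delta'(t)\to\mathbf 1_{\{t>0\}}$ pointwise as $\delta\to0$. Define
\begin{align*}
w^{\epsilon,\delta}\coloneqq v_0^\epsilon+\psi_\delta(v_1^\epsilon-v_0^\epsilon).
\end{align*}
Convexity of $\psi_\delta$ gives $\psi_\delta''\,|d(v_1^\epsilon-v_0^\epsilon)|^2\ge0$. Combining this with $0\le\psi_\delta'\le1$, the inequality of Step 1 (with the measure terms discarded), and the chain rule, we obtain
\begin{align*}
\Delta_{g_E}w^{\epsilon,\delta}\ge\bigl(1-\psi_\delta'(v_1^\epsilon-v_0^\epsilon)\bigr)\,G(\cdot,v_0)\ast\rho_\epsilon+\psi_\delta'(v_1^\epsilon-v_0^\epsilon)\,G(\cdot,v_1)\ast\rho_\epsilon .
\end{align*}

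\emph{Step 3 (limits).} Test the inequality of Step 2 against a nonnegative $\chi\in C_c^\infty(U')$.

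First let $\epsilon\to0$ along a subsequence for which $v_i^\epsilon\to v_i$ a.e. and $G(\cdot,v_i)\ast\rho_\epsilon\to G(\cdot,v_i)$ in $L^1_\loc$. On the left-hand side, $w^{\epsilon,\delta}\to v_0+\psi_\delta(v_1-v_0)$ in $L^1_\loc$, because $\psi_\delta$ is $1$-Lipschitz.

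On the right-hand side, each term is a product of a factor bounded by $1$ and converging a.e. with a factor converging in $L^1_\loc$. Such products converge in $L^1_\loc$: split $a_\epsilon b_\epsilon-ab=a_\epsilon(b_\epsilon-b)+(a_\epsilon-a)b$ and apply dominated convergence to the second piece. One point needs care here. At points with $v_1=v_0$, $\psi_\delta'(v_1^\epsilon-v_0^\epsilon)$ need not converge to $\psi_\delta'(0)$ under mere a.e. convergence. This is harmless, because $\psi_\delta'$ is continuous, so $\psi_\delta'(v_1^\epsilon-v_0^\epsilon)\to\psi_\delta'(v_1-v_0)$ a.e. nonetheless.

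Then let $\delta\to0$. The left-hand side tends to $\int v_\max\,\Delta\chi$. The coefficient $\psi_\delta'(v_1-v_0)\to\mathbf 1_{\{v_1>v_0\}}$ pointwise, so by dominated convergence (with dominating function $(|G(\cdot,v_0)|+|G(\cdot,v_1)|)\chi$) the right-hand side tends to
\begin{align*}
\int\bigl(\mathbf 1_{\{v_1\le v_0\}}G(\cdot,v_0)+\mathbf 1_{\{v_1>v_0\}}G(\cdot,v_1)\bigr)\chi=\int G(\cdot,v_\max)\chi .
\end{align*}
This is the desired inequality.

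The main obstacle is the limiting step in Step 3. The hypotheses give only $L^1_\loc$ information and no Sobolev regularity, so we cannot use the weak chain rule for $\max$ directly as in the Sobolev setting. The argument above must instead rely on two facts: that $\Delta v_i-G(\cdot,v_i)$ is a nonnegative measure that may simply be discarded after mollification, and that the bounded, a.e. convergent coefficients $\psi_\delta'$ combined with $L^1$ convergence of the mollified nonlinear terms are enough to pass to the limit. Because $G$ is merely Borel measurable, it is essential never to compose $G$ with the mollified functions. Only $G(\cdot,v_i)\ast\rho_\epsilon$ appears, which is why the measure decomposition in Step 1 is used.
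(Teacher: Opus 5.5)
Your proof is correct, and its overall scheme is the paper's: mollify, derive a pointwise inequality for the smooth approximants with a $[0,1]$-valued convex-combination coefficient, then pass to the limit using $L^1$ convergence of the mollified nonlinear terms, a.e.\ convergence of the bounded coefficients, and dominated convergence. The reduction between (i) and (ii) is the same sign flip, run in the opposite direction, since the paper proves (i) and deduces (ii).

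The real difference is how the non-smoothness of $\min$/$\max$ is handled. The paper writes $\min\{u_{0,\epsilon},u_{1,\epsilon}\}=\frac12(u_{0,\epsilon}+u_{1,\epsilon}-|w_\epsilon|)$ and invokes Kato's inequality $\Delta_{g_E}|w_\epsilon|\ge\sgn(w_\epsilon)\Delta_{g_E}w_\epsilon$. It then passes to the limit in one step with the discontinuous coefficient $\alpha_\epsilon=\frac12(1-\sgn(w_\epsilon))$. This forces a case analysis at common Lebesgue points: on the tie set $\{u_0=u_1\}$, $\alpha_\epsilon(x)$ may oscillate, but this is irrelevant because $f_0=f_1$ there.

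Your smooth convex $\psi_\delta$ and iterated limit ($\epsilon\to0$ with $\delta$ fixed, then $\delta\to0$) replace Kato's inequality by the elementary chain rule. The coefficient $\psi_\delta'(v_1^\epsilon-v_0^\epsilon)$ is continuous in the first limit, and the normalization $\psi_\delta'(0)=0$ assigns the tie set to $v_0$ in the second. In effect you reprove Kato's inequality inline, at the cost of one extra limit.

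Two minor remarks. First, the Radon-measure decomposition in Step 1 is unnecessary. Convolution with the nonnegative kernel $\rho_\epsilon$ preserves the distributional inequality directly, giving $\Delta_{g_E}v_i^\epsilon\ge G(\cdot,v_i)\ast\rho_\epsilon$; this is what the paper uses, and it already avoids composing $G$ with mollified functions. Second, your Step 3 sentence saying $\psi_\delta'(v_1^\epsilon-v_0^\epsilon)$ ``need not converge'' to $\psi_\delta'(0)$ on the tie set contradicts the sentence after it. Continuity of $\psi_\delta'$ gives convergence wherever $v_i^\epsilon\to v_i$, which for mollifiers means every common Lebesgue point, so no subsequence is needed either.
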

\begin{proof} 
We first note that assertion (ii) follows immediately from assertion (i).
Indeed, assuming assertion (i), set
\begin{align*}
u_0\coloneqq -v_0,\ u_1\coloneqq -v_1,\ F(x,t)\coloneqq -G(x,-t).
\end{align*}
Then assertion (ii) follows by applying assertion (i) to $u_0$ and $u_1$.
Therefore, it suffices to prove assertion (i). We begin by proving $u_\min$ and $F(\cdot,u_\min(\cdot))$ are both locally integrable. Indeed, since 
\begin{align*}
|u_{\min}|\leq |u_0|+|u_1|,
\end{align*}
we have $u_\min\in L^1_\loc(U)$. Also, since $u_\min(x)$ coincides with either $u_0(x)$ or $u_1(x)$
for each $x\in U$, we have 
\begin{align*}
|F(x,u_\min(x))|\leq |F(x,u_0(x))|+|F(x,u_1(x))|,
\end{align*}
which proves the local integrability of $F(\cdot, u_\min(\cdot))$. We next prove (\ref{umin}). Let $V\subseteq U$ be an arbitrary relatively compact open subset. Let $(\rho_\epsilon)_{\epsilon>0}$ be a standard mollifier. For each $\epsilon>0$ and each $i=0,1$, we set
\begin{align*}
&u_{i,\epsilon}\coloneqq u_i\ast\rho_\epsilon, \\
&f_{i,\epsilon}\coloneqq F(\cdot,u_i(\cdot))\ast \rho_\epsilon,
\end{align*}
where $\ast$ denotes the convolution. We take $\epsilon$ small enough so that $u_{i,\epsilon}$ and $f_{i,\epsilon}$ are defined on $V$. Note that, by the standard properties of mollification, for each
$i=0,1$, $u_{i,\epsilon}\rightarrow u_i$ and $f_{i,\epsilon}\rightarrow f_i$
in $L^1(V)$ as $\epsilon\searrow 0$, where
$f_i\coloneqq F(\cdot,u_i(\cdot))$. Moreover, $u_{i,\epsilon}(x)\rightarrow u_i(x),\ i=0,1$, at every common Lebesgue point $x\in V$ of $u_0$ and $u_1$, and hence
for almost every $x\in V$. Also, since $\Delta_{g_E}u_i\leq f_i$
in the sense of distributions and $\rho_\epsilon$ is nonnegative,
convolution with $\rho_\epsilon$ yields $\Delta_{g_E}u_{i,\epsilon}\leq f_{i,\epsilon}$
pointwise on $V$. We define
\begin{align*}
u_\epsilon\coloneqq \min\{u_{0,\epsilon}, u_{1,\epsilon}\}=\frac{1}{2}\left(u_{0,\epsilon}+u_{1,\epsilon}-|u_{0,\epsilon}-u_{1,\epsilon}|\right).
\end{align*}
Applying the inequality $|\min\{a,b\}-\min\{c,d\}|\leq |a-c|+|b-d|$ for arbitrary real numbers $a,b,c,d$, we obtain
\begin{align*}
|u_\epsilon-u_\min|\leq |u_{0,\epsilon}-u_0|+|u_{1,\epsilon}-u_1|
\end{align*}
and hence, $u_\epsilon$ converges to $u_\min$ in $L^1(V)$. Set
\begin{align*}
w_\epsilon\coloneqq u_{0,\epsilon}-u_{1,\epsilon}.
\end{align*}
By Kato's inequality,
\begin{align*}
\Delta_{g_E}|w_\epsilon|
\geq
\sgn(w_\epsilon)\Delta_{g_E}w_\epsilon
\end{align*}
in the sense of distributions on $V$, where $\sgn(t)\ (t\in\R)$ is defined by
\begin{align*}
\sgn(t)&\coloneqq 
\begin{cases}
\frac{t}{|t|}\ &\text{if $t\neq 0$} \\
0\ &\text{if $t=0$}.
\end{cases}
\end{align*}
Hence,
\begin{align*}
\Delta_{g_E}u_\epsilon
\leq
\alpha_\epsilon\Delta_{g_E}u_{0,\epsilon}
+(1-\alpha_\epsilon)\Delta_{g_E}u_{1,\epsilon} 
\end{align*}
in the sense of distributions on $V$, where $\alpha_\epsilon\coloneqq
\frac{1}{2}(1-\sgn(w_\epsilon))$.
On the other hand, since
\begin{align*}
\Delta_{g_E}u_{i,\epsilon}\leq f_{i,\epsilon}
\end{align*}
pointwise on $V$ and $0\leq\alpha_\epsilon\leq1$, we have
\begin{align*}
\alpha_\epsilon\Delta_{g_E}u_{0,\epsilon}
+(1-\alpha_\epsilon)\Delta_{g_E}u_{1,\epsilon}
\leq
\alpha_\epsilon f_{0,\epsilon}
+(1-\alpha_\epsilon)f_{1,\epsilon}
\end{align*}
everywhere on $V$. Therefore,
\begin{align}
\Delta_{g_E}u_\epsilon
\leq
\alpha_\epsilon f_{0,\epsilon}
+(1-\alpha_\epsilon)f_{1,\epsilon} \label{alpha01}
\end{align}
in the sense of distributions on $V$.
We prove that $\alpha_\epsilon f_{0,\epsilon}+(1-\alpha_\epsilon)f_{1,\epsilon}$ converges to $F(\cdot, u_\min(\cdot))$ in $L^1(V)$. A simple calculation yields
\begin{align*}
\alpha_\epsilon f_{0,\epsilon}+(1-\alpha_\epsilon)f_{1,\epsilon}=\alpha_\epsilon(f_{0,\epsilon}-f_0)+(1-\alpha_\epsilon)(f_{1,\epsilon}-f_1)+\alpha_\epsilon f_0+(1-\alpha_\epsilon)f_1.
\end{align*}
Since $0\leq\alpha_\epsilon\leq1$ and
$f_{i,\epsilon}\to f_i$ in $L^1(V)$ for $i=0,1$, the first two
terms converge to $0$ in $L^1(V)$. Therefore, it is enough to prove that $g_\epsilon\coloneqq \alpha_\epsilon f_0+(1-\alpha_\epsilon)f_1$ converges to $F(\cdot,u_\min(\cdot))$ in $L^1(V)$. Let $x\in V$ be a common Lebesgue point of $u_0$ and $u_1$.  If $u_0(x)>u_1(x)$, then we obtain $\alpha_\epsilon(x)=0$ for sufficiently small $\epsilon$ and thus $g_\epsilon(x)$ converges to $f_1(x)=F(x,u_\min(x))$. Similarly, if $u_0(x)<u_1(x)$, then $g_\epsilon(x)$ converges to $f_0(x)=F(x,u_\min(x))$. Finally, if $u_0(x)=u_1(x)$, then 
\begin{align*}
g_\epsilon(x)&=\alpha_\epsilon(x)f_0(x)+(1-\alpha_\epsilon(x))f_1(x) \\
&=\alpha_\epsilon(x)f_0(x)+(1-\alpha_\epsilon(x))f_0(x) \\
&=f_0(x) \\
&=F(x,u_\min(x)).
\end{align*}
 Therefore, we conclude that $g_\epsilon(x)$ converges to $F(x,u_\min(x))$ in every case, and hence, $g_\epsilon$ converges to $F(\cdot, u_\min(\cdot))$ almost everywhere. Moreover, we have
\begin{align*}
|g_\epsilon|\leq |f_0|+|f_1|.
\end{align*}
Therefore, by the dominated convergence theorem, $g_\epsilon$ converges to $F(\cdot, u_\min(\cdot))$ in $L^1(V)$. Let $\phi:U\rightarrow \R_{\geq 0}$ be a nonnegative smooth function supported in $V$. From (\ref{alpha01}), we obtain
\begin{align*}
\int_Vu_\epsilon\Delta_{g_E}\phi\leq \int_V (\alpha_\epsilon f_{0,\epsilon}+(1-\alpha_\epsilon)f_{1,\epsilon})\phi.
\end{align*}
By taking the limit $\epsilon\searrow 0$, we have
\begin{align*}
\int_Vu_\min\Delta_{g_E}\phi\leq \int_V F(\cdot, u_\min(\cdot))\phi.
\end{align*}
Since $V$ is an arbitrary relatively compact open subset, this proves the assertion. 
\end{proof}
\begin{rem}
In Lemma \ref{uvFG}, the inequalities are understood in the sense of
distributions. In particular, we do not assume that the functions under
consideration belong to $W^{1,2}_{\loc}(U)$. We emphasize this point
because, in some literature on Poisson equations, weak solutions are
defined from the outset as functions belonging to
$W^{1,2}_{\loc}(U)$.
\end{rem}
The following proposition is an extension of \cite[Lemma 5.4]{LM1}:
\begin{prop}\label{min-max} {\it Let $h_+=(h_{+,1},\dots, h_{+,r})$ and $h_+^\prime=(h_{+,1}^\prime, \dots, h_{+,r}^\prime)$ be two supersolutions to equation (\ref{phi}), and let $h_-=(h_{-,1},\dots, h_{-,r})$ and $h_-^\prime=(h_{-,1}^\prime, \dots, h_{-,r}^\prime)$ be two subsolutions to equation (\ref{phi}). For each $j=1,\dots, n$, define
\begin{align*} 
\widetilde{h}_{+,j}&\coloneqq \min\{h_{+,j}, h_{+,j}^\prime\}, \\
\widetilde{h}_{-,j}&\coloneqq \max\{h_{-,j}, h_{-,j}^\prime\}. 
\end{align*}
For the remaining indices $j=n+1,\dots, r$, we define $\widetilde{h}_{+,j}$ and $\widetilde{h}_{-,j}$ so that $\widetilde{h}_{+}\coloneqq (\widetilde{h}_{+,1},\dots, \widetilde{h}_{+,r})$ and $\widetilde{h}_{-}\coloneqq (\widetilde{h}_{-,1},\dots, \widetilde{h}_{-,r})$ are real diagonal Hermitian metrics. Suppose that for each $j=1,\dots, n$, $\widetilde{h}_{+,j}^{-1}\otimes \widetilde{h}_{+,j+1}$ and $\widetilde{h}_{-,j}^{-1}\otimes \widetilde{h}_{-,j+1}$ belong to $L^1_\loc$. Then $\widetilde{h}_+$ and $\widetilde{h}_-$ are a supersolution and a subsolution to equation (\ref{phi}), respectively.}
\end{prop}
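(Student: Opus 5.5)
The plan is to reduce Proposition \ref{min-max} to Lemma \ref{uvFG} by writing the defining inequalities of a supersolution (resp. subsolution) as a scalar inequality for each $\phi_j$, $j=1,\dots,n$. In this scalar form the nonlinearity is increasing in $\phi_j$ and decreasing in the neighbouring unknowns. I treat the supersolution case; the subsolution case is symmetric, with $\min$ and $\max$ exchanged and Lemma \ref{uvFG}~(i) used instead of (ii).

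\textbf{Step 1: local scalar form.} Work on a coordinate disc $U$ and write $h_{\pm,j}=e^{-\phi_{\pm,j}}h_{\refe,j}$. Then $\inum F_{h_j}=\inum\partial\bar\partial\phi_j+(\text{smooth})$, and $\vol(H_j)=e^{\phi_j-\phi_{j+1}}\,a_j$ for $1\le j\le n$, with $a_j$ smooth and positive. Reality gives $\phi_{r-j+1}=-\phi_j$, so the unknowns are only $\phi_1,\dots,\phi_n$. The index $n+1$ is handled by $\phi_{n+1}=-\phi_n$ if $r$ is even and $\phi_{n+1}=0$ if $r$ is odd, and $\vol(H_0)=e^{r\varphi-2\phi_1}a_0$. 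The $j$-th supersolution inequality then reads
\begin{align*}
\Delta_{g_E}\phi_j\ \ge\ G_j(x,\phi_{j-1},\phi_j,\phi_{j+1})\coloneqq \rho\bigl(e^{\phi_j-\phi_{j+1}}a_j-e^{\phi_{j-1}-\phi_j}a_{j-1}\bigr)+b_j ,
\end{align*}
with $\rho>0$ and $b_j$ smooth, up to a positive conformal factor. The boundary cases $j=1$ (where $\varphi$ enters) and $j=n$ fit the same pattern. The point to check is that in every case $G_j$ is nondecreasing in the $j$-th slot and nonincreasing in $\phi_{j\pm1}$. A term like $e^{2\phi_n}$ at $j=n$ for $r$ even is still increasing in $\phi_n$.

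\textbf{Step 2: freeze the neighbours and apply Lemma \ref{uvFG}~(ii).} Since $\widetilde h_{+,j}=\min\{h_{+,j},h'_{+,j}\}$, we have $\widetilde\phi_j=\max\{\phi_{+,j},\phi'_{+,j}\}$. Set $G(x,t)\coloneqq G_j(x,\widetilde\phi_{j-1}(x),t,\widetilde\phi_{j+1}(x))$. Because $\widetilde\phi_{j\pm1}\ge\phi_{+,j\pm1}$ and $G_j$ is decreasing in the neighbours,
\begin{align*}
\Delta_{g_E}\phi_{+,j}\ \ge\ G_j(x,\phi_{+,j-1},\phi_{+,j},\phi_{+,j+1})\ \ge\ G(x,\phi_{+,j}),
\end{align*}
and the same holds for $\phi'_{+,j}$. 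Lemma \ref{uvFG}~(ii) then gives $\Delta_{g_E}\widetilde\phi_j\ge G(x,\widetilde\phi_j)=G_j(x,\widetilde\phi_{j-1},\widetilde\phi_j,\widetilde\phi_{j+1})$. This is exactly the $j$-th supersolution inequality for $\widetilde h_+$. The integrability of $G(\cdot,\widetilde\phi_j)$, i.e.\ of $\widetilde H_{j-1}$ and $\widetilde H_j$, is the stated hypothesis.

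\textbf{Main obstacle.} The difficulty is the integrability hypothesis of Lemma \ref{uvFG} for the frozen functions $G(\cdot,\phi_{+,j})$. The frozen nonlinearity contains the mixed terms $e^{\widetilde\phi_{j-1}-\phi_{+,j}}$ and $e^{\phi_{+,j}-\widetilde\phi_{j+1}}$. The second is dominated by $H_{+,j}$, but the first is not obviously $L^1_\loc$. I would first try to avoid needing it globally by rerunning the proof of Lemma \ref{uvFG} directly. In that argument the limit of $\alpha_\epsilon f_0+(1-\alpha_\epsilon)f_1$ only sees each $f_i$ where $\phi_{+,j}$ (resp. $\phi'_{+,j}$) realises the maximum, and there the mixed term equals a component of $\widetilde H_{j-1}$. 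Concretely, the plan is to keep the unfrozen right-hand sides $f_i=G_j(x,\phi^{(i)}_{j-1},\phi^{(i)}_j,\phi^{(i)}_{j+1})$, which are integrable by assumption, in the mollified Kato inequality. The monotonicity comparison is then made only after passing to the pointwise a.e.\ limit, on the sets $\{\phi_{+,j}\gtrless\phi'_{+,j}\}$. This yields $\Delta\widetilde\phi_j\ge\mathbf 1\cdot f_0+\mathbf 1\cdot f_1\ge G_j(x,\widetilde\phi_{j-1},\widetilde\phi_j,\widetilde\phi_{j+1})$ a.e., with dominated convergence controlled by $|f_0|+|f_1|$. If this route breaks down, the fallback is to truncate the exponentials at level $N$ in a monotonicity-compatible way and let $N\to\infty$ by monotone convergence, using the $L^1_\loc$ assumption on $\widetilde H_j$.
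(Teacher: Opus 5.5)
Your proof is correct in substance, but it deals with the key obstruction differently from the paper. You rightly observe that after freezing the neighbours at $\widetilde\phi_{j\pm1}$, the function $G(\cdot,\phi_{+,j})$ contains $e^{\widetilde\phi_{j-1}-\phi_{+,j}}$, which need not be $L^1_{\loc}$. So Lemma~\ref{uvFG}~(ii) cannot be applied to $G$ as it stands; the paper notes exactly this in the remark after its proof.

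The paper keeps Lemma~\ref{uvFG} as a black box and truncates from below. Let $f_0,f_1$ be the unfrozen right-hand sides for $\phi_{+,j},\phi'_{+,j}$, and set $B\coloneqq-|f_0|-|f_1|\in L^1_{\loc}$. The paper applies the lemma to $G^\#\coloneqq\max\{G,B\}$. Both $G(\cdot,\phi_{+,j})$ and $B$ are $\le f_0$, so $B\le G^\#(\cdot,\phi_{+,j})\le f_0$, and similarly for $\phi'_{+,j}$. The hypotheses therefore hold, and $\Delta_{g_E}\widetilde\phi_j\ge G^\#(\cdot,\widetilde\phi_j)\ge G(\cdot,\widetilde\phi_j)$ follows with no limiting procedure.

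Your route (a) instead reopens the proof of Lemma~\ref{uvFG}. You carry the integrable $f_i$ through the mollified Kato inequality and use monotonicity in the neighbours only at the end. This costs a re-proof, but it gives a slightly more general statement: a Kato-type inequality for cooperatively coupled systems.

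One step of route (a) needs adjusting. On the coincidence set $\{\phi_{+,j}=\phi'_{+,j}\}$, which may have positive measure, $f_0\neq f_1$ in general because the neighbours differ. So $g_\epsilon=\alpha_\epsilon f_0+(1-\alpha_\epsilon)f_1$ need not converge there. Dominated convergence therefore does not apply as it does in Lemma~\ref{uvFG}, where $f_0=f_1$ on that set.

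What you do have is $\liminf_{\epsilon\searrow0}g_\epsilon\ge G_j(\cdot,\widetilde\phi_{j-1},\widetilde\phi_j,\widetilde\phi_{j+1})$ almost everywhere. On the strict sets, $g_\epsilon$ eventually equals $f_0$ or $f_1$. On the coincidence set, $g_\epsilon\ge\min\{f_0,f_1\}$ for every $\epsilon$. In each case the relevant $f_i$ dominates the frozen expression by monotonicity in the neighbours. Together with $g_\epsilon\ge-(|f_0|+|f_1|)$, Fatou's lemma gives the required inequality against nonnegative test functions.

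Your fallback is unnecessary, and as stated it would not work. Capping $e^{\widetilde\phi_{j-1}-t}$ at level $N$ breaks the bound $G^N(\cdot,\phi_{+,j})\le f_0$ wherever $e^{\phi_{+,j-1}-\phi_{+,j}}>N$. The truncation that works is from below by an integrable barrier, as in the paper.
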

\begin{proof} The proof is based on the argument in \cite[Proof of Lemma 5.4]{LM1}. For simplicity, we only prove that $\widetilde{h}_+$ is a supersolution since the proof for showing that $\widetilde{h}_-$ is a subsolution is similar. Since the assertion is local, we can assume from the outset that $X$ is the unit disc. Let $g_E$ be the Euclidean metric of the unit disc. Let $h_\refe$ be the reference metric on $K_X$ induced by the Euclidean metric and denote by $h_{\refe,j}\coloneqq h_\refe^{\frac{r-(2j-1)}{2}}$ the Hermitian metric on $K_X^{\frac{r-(2j-1)}{2}}$ induced by $h_\refe$. Using $h_{\refe,j}$, we denote the $j$-th components of $h_+$ and $h_+^\prime$ as follows: 
\begin{align*}
&h_{+,j}=e^{-\phi_{+,j}}h_{\refe,j}, \\
&h_{+,j}^\prime=e^{-\phi_{+,j}^\prime}h_{\refe,j}.
\end{align*}
We also set $\hat{\phi}_{+,j}\coloneqq \max\{\phi_{+,j},\phi_{+,j}^\prime\}$ for each $j=1,\dots, n$. Then by the definition of supersolutions, we obtain
\begin{align}
\begin{cases}
\frac{1}{2}\Delta_{g_E}\phi_{+,j}+e^{\phi_{+,j-1}-\phi_{+,j}}-e^{\phi_{+, j}-\phi_{+,j+1}}\geq 0 \ \text{for $j=2,\dots, n-1$}\\
\frac{1}{2}\Delta_{g_E}\phi_{+,n}+e^{\phi_{+,n-1}-\phi_{+,n}}-e^{(2n+2-r)\phi_{+, n}}\geq 0 \\
\frac{1}{2}\Delta_{g_E}\phi_{+,1}+e^{r\varphi-2\phi_{+,1}}-e^{\phi_{+, 1}-\phi_{+,2}}\geq 0, 
\end{cases}
\label{phi+}\\
\begin{cases}
\frac{1}{2}\Delta_{g_E}\phi_{+,j}^\prime+e^{\phi_{+,j-1}^\prime-\phi_{+,j}^\prime}-e^{\phi_{+, j}^\prime-\phi_{+,j+1}^\prime}\geq 0 \ \text{for $j=2,\dots, n-1$}\\
\frac{1}{2}\Delta_{g_E}\phi_{+,n}^\prime+e^{\phi_{+,n-1}^\prime-\phi_{+,n}^\prime}-e^{(2n+2-r)\phi_{+, n}^\prime}\geq 0 \\
\frac{1}{2}\Delta_{g_E}\phi_{+,1}^\prime+e^{r\varphi-2\phi_{+,1}^\prime}-e^{\phi_{+, 1}^\prime-\phi_{+,2}^\prime}\geq 0.
\end{cases}
\label{phi+p}
\end{align}
Strictly speaking, the case $n=1$ requires separate consideration; however, since the following discussion can easily be modified or extended to cover that case, we assume $n \geq 2$ for simplicity. For each $j=2,\dots,n-1$, we set
\begin{align}
&G_j(x,t)\coloneqq  -2(e^{\hat{\phi}_{+,j-1}-t}-e^{t-\hat{\phi}_{+,j+1}}),\\
&G_j^\#(x,t)\coloneqq \max\{G_j(x,t), B_j(x)\},
\end{align}
where $B_j$ is defined by
\begin{align}
B_j\coloneqq -2|e^{\phi_{+,j-1}-\phi_{+,j}}-e^{\phi_{+, j}-\phi_{+,j+1}}|-2|e^{\phi_{+,j-1}^\prime-\phi_{+,j}^\prime}-e^{\phi_{+, j}^\prime-\phi_{+,j+1}^\prime}|.\label{defBj}
\end{align}
Clearly, 
\begin{align}
&G_j(x,t)\leq G_j^\#(x,t), \\
&G_j(x,\phi_{+,j})\leq -2(e^{\phi_{+,j-1}-\phi_{+,j}}-e^{\phi_{+, j}-\phi_{+,j+1}}), \label{random}\\
&G_j(x,\phi_{+,j}^\prime)\leq -2(e^{\phi_{+,j-1}^\prime-\phi_{+,j}^\prime}-e^{\phi_{+, j}^\prime-\phi_{+,j+1}^\prime})\label{random2}. 
\end{align}
Moreover, from (\ref{defBj}), (\ref{random}), and (\ref{random2}), we deduce
\begin{align}
&G_j^\#(x,\phi_{+,j})\leq -2(e^{\phi_{+,j-1}-\phi_{+,j}}-e^{\phi_{+, j}-\phi_{+,j+1}}), \label{Random}\\
&G_j^\#(x,\phi_{+,j}^\prime)\leq -2(e^{\phi_{+,j-1}^\prime-\phi_{+,j}^\prime}-e^{\phi_{+, j}^\prime-\phi^\prime_{+,j+1}})\label{Random2}. 
\end{align}
This in particular implies that $G_j^\#(x,\phi_{+,j})$ and 
$G_j^\#(x,\phi_{+,j}^\prime)$ are locally integrable, since they are bounded below by a locally integrable function $B_j$.
From (\ref{phi+}), (\ref{phi+p}), (\ref{Random}), and (\ref{Random2}), we obtain
\begin{align*}
&\Delta_{g_E}\phi_{+,j}\geq G_j^\#(x,\phi_{+,j}), \\
&\Delta_{g_E}\phi_{+,j}^\prime\geq G_j^\#(x,\phi_{+,j}^\prime).
\end{align*}
By Lemma \ref{uvFG} (ii), we have
\begin{align}
\Delta_{g_E}\hat{\phi}_{+,j}\geq G_j^\#(x,\hat{\phi}_{+,j})\geq G_j(x,\hat{\phi}_{+,j}).
\end{align}
The above discussion can also be easily extended to the cases $j=1,n$. Since $\widetilde{h}_{+,j}=\min\{h_{+,j}, h_{+,j}^\prime\}=e^{-\hat{\phi}_{+,j}}h_{\refe,j}$ for each $j=1,\dots, n$, this implies the claim. 
\end{proof}
\begin{rem}
Following the comparison argument in \cite[Proof of Lemma 5.4]{LM1}, inequalities (\ref{phi+}) and (\ref{phi+p}) also yield the following (\ref{phi++}) and (\ref{phi+p+}):
\begin{align}
&\frac{1}{2}\Delta_{g_E}\phi_{+,j}+e^{\hat{\phi}_{+,j-1}-\phi_{+,j}}-e^{\phi_{+, j}-\hat{\phi}_{+,j+1}}\geq 0, \label{phi++}\\
&\frac{1}{2}\Delta_{g_E}\phi_{+,j}^\prime+e^{\hat{\phi}_{+,j-1}-\phi_{+,j}^\prime}-e^{\phi_{+, j}^\prime-\hat{\phi}_{+,j+1}}\geq 0,  \label{phi+p+}
\end{align}
for each $j=2,\dots, n-1$. In the present setting, these inequalities are interpreted after testing against nonnegative smooth compactly supported functions, allowing the left-hand sides to take the value $+\infty$. This interpretation is well defined because the exponential terms with a minus sign are locally integrable, whereas those with a plus sign need not be. However, the local integrability required by Lemma \ref{uvFG} is not guaranteed, so the lemma cannot be applied directly. The truncation argument above, based on (\ref{phi+}) and (\ref{phi+p}), provides inequalities with locally integrable right-hand sides to which Lemma \ref{uvFG} applies.
\end{rem}
The following lemma is an extension of \cite[Lemma 31]{Miy6} and \cite[Proposition 4.6]{LM1}:
\begin{lemm}\label{cands}{\it Suppose that $e^\varphi$ is continuous. Let $X$ be a Riemann surface with a smooth compact boundary $\partial X$, possibly empty. 
Let $h=(h_1,\dots, h_r)$ be a continuous real diagonal complete subsolution to equation (\ref{phi}), and let $h^\prime=(h_1^\prime,\dots, h_r^\prime)$ be a continuous real diagonal supersolution to equation (\ref{phi}). For each $j=1,\dots, r-1$, set $H_j\coloneqq h_j^{-1}\otimes h_{j+1}, \ H_j^\prime\coloneqq h_j^{\prime-1}\otimes h_{j+1}^\prime$ and $H_0=H_r\coloneqq h_r^{-1}\otimes h_1\otimes (e^{-\varphi}h_\refe)^{-r}, \ H_0^\prime=H_r^\prime\coloneqq h_r^{\prime-1}\otimes h_1^\prime\otimes (e^{-\varphi}h_\refe)^{-r}$. Suppose that the curvatures of $H_1,\dots, H_n$ are bounded below and that $H_{j-1}\otimes H_j^{-1}$ is bounded for every $j=2,\dots, n$. Suppose moreover that $\log(h_j \otimes h_j^{\prime-1})$ is bounded above and belongs to $W^{1,2}_{\loc}(X\setminus \partial X)$ for each $j=1,\dots, n$. Then one of the following holds:
\begin{enumerate}[(i)]
\item
\begin{align*}
\max_{1\leq j\leq n}\sup_X(h_j\otimes h_j^{\prime-1})\leq 1;
\end{align*}
\item The boundary $\partial X$ is nonempty and 
\begin{align*}
\max_{1\leq j\leq n}\sup_X(h_j\otimes h_j^{\prime-1})\leq \max_{1\leq j\leq n}\max_{\partial X}(h_j\otimes h_j^{\prime-1}).
\end{align*}
\end{enumerate}
Moreover, if $\log(h_j\otimes h_j^{\prime-1})$ is of class $C^2$ for all $j=1,\dots, n$, and if (ii) does not occur, then either $h_j\otimes h_j^{\prime-1}<1$ for all $j=1,\dots, n$ or $h_j= h_j^{\prime}$ for all $j=1,\dots, n$. 
}
\end{lemm}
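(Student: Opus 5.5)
We work with the functions $u_j\coloneqq \log(h_j\otimes h_j^{\prime-1})$ for $j=1,\dots,n$, which are bounded above, continuous, and in $W^{1,2}_{\loc}(X\setminus\partial X)$. Let $u^\ast\coloneqq \max_j\sup_X u_j<\infty$. Suppose neither (i) nor (ii) holds, so that $u^\ast>0$ and, when $\partial X\neq\emptyset$, $u^\ast>\max_j\max_{\partial X}u_j$. The plan is to derive a contradiction from the weak maximum principle, Proposition \ref{wstoc} when $\partial X=\emptyset$ and Proposition \ref{wstocb} otherwise. The underlying manifold is $X$ with the complete metric induced by $H_1$, which is complete because $h$ is complete. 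Its curvature is bounded below by hypothesis, and we first smooth it by Proposition \ref{GWT} while keeping the curvature bounded below.

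Subtracting the subsolution inequality for $h$ from the supersolution inequality for $h'$ gives, in the weak sense,
\begin{align*}
\inum\partial\bar\partial u_j \geq (\vol(H_j)-\vol(H_j'))-(\vol(H_{j-1})-\vol(H_{j-1}'))
\end{align*}
for each $j=1,\dots,n$, with the obvious modifications at $j=1$ (involving $H_0$) and at $j=n$ (using reality, $h_{r-j+1}=h_j^{-1}$). We write $\vol(H_j')=e^{u_{j}-u_{j+1}}\vol(H_j)$ and $\vol(H_0')=e^{-2u_1}\vol(H_0)$ to make these explicit. Then $\vol(H_j)-\vol(H_j')=\vol(H_j)(1-e^{u_j-u_{j+1}})$ and $\vol(H_{j-1})-\vol(H_{j-1}')=\vol(H_{j-1})(1-e^{u_{j-1}-u_j})$. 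We follow \cite[Proposition 4.6]{LM1} and \cite[Lemma 31]{Miy6}. Let $j_0$ be the smallest index at which $u^\ast$ is approached, possibly along a sequence escaping to infinity. There the term $\vol(H_{j_0-1})(e^{u_{j_0-1}-u_{j_0}}-1)$ is $\le 0$ with a definite gap. Alternatively, working directly with $\max_j u_j$, Lemma \ref{uvFG}(ii) shows that $w\coloneqq\max_j u_j$ satisfies a weak inequality $\Delta w\ge g$ on a neighbourhood $U$ of $\Omega_\epsilon=\{w>u^\ast-\epsilon\}$. Near $\Omega_\epsilon$ with $\epsilon$ small, the neighbouring $u_k$ are at most $u^\ast$, so the nearest-neighbour terms have the favourable sign. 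The $j=1$ term contributes $\vol(H_0)(e^{2u_1}-1)\ge 0$, and the $j=n$ term contributes $\vol(H_n)(e^{c u_n}-1)$ with $c=2n+2-r>0$, which is $\ge c' \vol(H_n)u^\ast/2>0$ when $u_n\approx u^\ast>0$.

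The key issue is to convert this into a continuous lower bound $g\ge 2c>0$ on $\Omega_\epsilon$, measured against $\vol(H_1)$. The hypothesis that $H_{j-1}\otimes H_j^{-1}$ is bounded gives $\vol(H_1)\lesssim\vol(H_j)$ for all $j\le n$, so each positive "self" term dominates a multiple of $\vol(H_1)$. The chain structure then propagates positivity. If the maximum is attained at index $j$ with $u_{j+1}<u_j-\eta$, then $\vol(H_j)(e^{u_j-u_{j+1}}-1)\gtrsim\vol(H_1)$. Otherwise $u_{j+1}$ is also near $u^\ast$, and we move up to $n$, where the boundary term $e^{c u_n}-1\ge e^{cu^\ast/2}-1$ is positive. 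Continuity of $e^\varphi$ and of the metrics guarantees that $g$ can be chosen continuous. I would organize this by taking $\epsilon$ small relative to $u^\ast$, and on $\Omega_\epsilon$ choosing the largest index $j$ with $u_j>u^\ast-\epsilon'$ for a graded family $\epsilon'\ll\epsilon$, mimicking \cite[Lemma 31]{Miy6}. Once $\Delta_{2g_{H_1}}w\ge 2c$ holds on $\Omega_\epsilon$, with $\overline{\Omega}_\epsilon$ inside the region where the inequality holds (away from $\partial X$ by assumption), Propositions \ref{wstoc}/\ref{wstocb} give the contradiction. I expect this step to be the main obstacle: producing a uniform, continuous, strictly positive lower bound for the Laplacian of $\max_j u_j$ near its supremum from the chain of inequalities, using only the stated mutual bounds.

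For the final assertion, assume the $u_j$ are $C^2$ and (ii) does not occur, so (i) holds and $u_j\le 0$. The differential inequalities then take the form $\Delta u_j - \sum_k a_{jk}u_j\ge \sum_k b_{jk}u_k$, with $b_{jk}\ge 0$ off-diagonal, obtained by linearizing $1-e^{t}\ge -te^{t}$ with locally bounded coefficients. This is a weakly coupled cooperative system. By the strong maximum principle for such systems, if some $u_j$ vanishes at an interior point, the set where all $u_k=0$ is open and closed, so $u_j\equiv0$ for all $j$. Otherwise each $u_j<0$ everywhere, that is, $h_j\otimes h_j^{\prime-1}<1$ for all $j$.
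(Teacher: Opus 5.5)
There is a genuine gap, and it starts with a sign error. Subtracting the two inequalities correctly gives
\begin{align*}
\inum\partial\bar\partial u_j\geq \vol(H_{j-1})\bigl(1-e^{u_{j-1}-u_j}\bigr)+\vol(H_j)\bigl(e^{u_j-u_{j+1}}-1\bigr).
\end{align*}
For $j=1$ the first term is $\vol(H_0)(1-e^{-2u_1})$, and for $j=n$ the second is $\vol(H_n)(e^{(2n+2-r)u_n}-1)$. Your display is the negative of this, and your term-by-term claims match neither it nor each other.

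With the correct signs, the hypothesis bounds $H_{j-1}\otimes H_j^{-1}$ only from \emph{above}, and this dictates the argument. Let $k$ be the \emph{largest} index with $\sup_X u_k=u^\ast$. On $\{u_k>u^\ast-\epsilon\}$:
\begin{itemize}
\item the second term has a definite positive gap in units of $\vol(H_k)$, since $\sup_X u_{k+1}<u^\ast$, or, for $k=n$, since $u^\ast>0$;
\item the first term is $\geq -(e^{\epsilon}-1)\vol(H_{k-1})$, which is negligible relative to $\vol(H_k)$ exactly because $H_{k-1}\otimes H_k^{-1}$ is bounded; for $k=1$ it is even $\geq0$.
\end{itemize}
Your choice of the \emph{smallest} index puts the gap into $\vol(H_{j_0-1})$, or into $\vol(H_0)$ if $j_0=1$, which vanishes where $e^\varphi=0$. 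Nothing bounds this from below relative to $\vol(H_{j_0})$, so in general you get no contradiction.

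A single background metric $H_1$ also fails. For $u_k$ alone, the negligible term then carries the coefficient $H_{k-1}\otimes H_1^{-1}$, which is not known to be bounded. The paper instead proceeds as follows:
\begin{itemize}
\item apply Proposition \ref{wstoc} (or \ref{wstocb}) to $\xi_k=u_k$ alone, with respect to $2g_{\widetilde H_k}$, where $\widetilde H_k$ is the smoothing of $H_k$ from Proposition \ref{GWT} ($H_k$ is complete because $h$ is);
\item extract points $x_m$ with $u_k(x_m)\geq\sup u_k-1/m$ at which the right-hand side is at most $1/m$;
\item let $m\to\infty$ to get $\sup u_k\leq\sup u_{k+1}$ for $k<n$, contradicting the choice of $k$, and $\sup u_n\leq 0$ for $k=n$.
\end{itemize}

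Your alternative through $w=\max_j u_j$ does not close the gap. Lemma \ref{uvFG}(ii) requires both functions to satisfy inequalities with the \emph{same} nonlinearity $G(x,t)$. Here each $u_j$ has its own right-hand side $G_j$, coupled to $u_{j\pm1}$. Forcing a common one via $G=\min_j G_j$ only yields $\Delta w\geq\min_j G_j(\cdot,w)$. This lower bound vanishes, for example, where $u_{j-1}=u_j=u_{j+1}=w$ for some $2\leq j\leq n-1$, since there $G_j(x,w)=0$. So you cannot obtain $g\geq 2c$ on $\Omega_\epsilon$ this way. A pointwise selection of an ``active'' index would instead give a discontinuous $g$, whereas Propositions \ref{wstoc} and \ref{wstocb} need $g$ continuous. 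This is exactly the obstacle you flagged. The way around it is not to form the maximum at all, but to work with the single extremal component $u_k$ in its own metric, as above.

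Your treatment of the final assertion is fine: you linearize and apply a strong maximum principle to the resulting cooperative system. This is essentially the paper's argument, which applies the scalar strong maximum principle to each $\xi_j\leq 0$ and then propagates $\xi_j\equiv 0$ to $\xi_{j\pm1}$ through the inequalities.
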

\begin{proof}
For each $j=1,\dots, n$, we define $\xi_j\coloneqq \log(h_j\otimes h_j^{\prime-1})$, $M_j\coloneqq \sup_Xe^{\xi_j}$, $M\coloneqq \max_{1\leq j\leq n}M_j$.  Assume that (ii) does not occur. 
A direct calculation yields
\begin{align}
\inum\partial \bar{\partial} \xi_j\geq \vol(H_{j-1})-\vol(H_j)-\vol(H_{j-1}^\prime)+\vol(H_j^\prime) \ \text{for $j=1,\dots, n$}. \label{xij}
\end{align}
Suppose that $n=1$. From (\ref{xij}), we have
\begin{align}
\inum \Lambda_{H_1}\partial \bar{\partial} \xi_1&\geq (1-e^{-2\xi_1})H_0\otimes H_1^{-1}-(1-e^{(4-r)\xi_1}). \label{xi01}
\end{align}
By assumption, the curvature of $H_1$ is bounded below. Fix $\epsilon>0$. From Proposition \ref{GWT}, there exists a smooth Hermitian metric $\widetilde{H}_1$ on $K_X^{-1}\rightarrow X\setminus\partial X$ such that 
\begin{align}
e^{-\epsilon} H_1 \le \widetilde{H}_1 \le e^{\epsilon} H_1\label{label68}
\end{align}
and the curvature of $\widetilde{H}_1$ is bounded below. If $\partial X\neq\emptyset$, we choose a compact neighbourhood $K$ of $\partial X$ satisfying $\max_K\xi_1<\sup_X\xi_1$, together with a diffeomorphism $\Psi:K\to\partial X\times[0,1]$ such that $\Psi(\partial X)=\partial X\times\{0\}$. Set $Y\coloneqq X\setminus\Psi^{-1}(\partial X\times[0,1))$. Then $\partial Y=\Psi^{-1}(\partial X\times\{1\})\subseteq X\setminus\partial X$, and $\sup_Y\xi_1=\sup_X\xi_1$. We apply Proposition \ref{wstocb} on $Y$ with the metric $2g_{\widetilde{H_1}}$, where $g_{\widetilde{H_1}}$ denotes the K\"ahler metric induced by $\widetilde H_1$. This metric is smooth up to $\partial Y$ and complete by (\ref{label68}), with Ricci curvature bounded below.
From (\ref{xi01}), we have
\begin{align}
\inum \Lambda_{\widetilde{H}_1}\partial \bar{\partial} \xi_1&\geq (1-e^{-2\xi_1})H_0\otimes \widetilde{H}_1^{-1}-(1-e^{(4-r)\xi_1}) H_1\otimes \widetilde{H}_1^{-1}. \label{xi01*}
\end{align}
By assumption, $\xi_1$ is bounded above, continuous, and belongs to $W^{1,2}_{\loc}$. Moreover, the right-hand side of inequality (\ref{xi01*}) is continuous, and, by assumption, $\sup_X\xi_1>\max_{\partial X}\xi_1$ if $\partial X$ is nonempty. Therefore, by applying Propositions \ref{wstoc} and \ref{wstocb} to (\ref{xi01*}), we obtain, for each $m\in \Z_{>0}$, a point $x_m\in X$ such that
\begin{align}
\xi_1(x_m)&\geq \sup_X\xi_1-1/m, \label{mmm}\\
1/m&\geq (1-e^{-2\xi_1(x_m)})(H_0\otimes \widetilde{H}_1^{-1})(x_m)-(1-e^{(4-r)\xi_1(x_m)})(H_1\otimes \widetilde{H}_1^{-1})(x_m). \label{1mxm}
\end{align}
Suppose that $\sup_X\xi_1>0$. From (\ref{mmm}), for all sufficiently large $m$, we have $\xi_1(x_m)\geq \sup_X\xi_1-1/m>0$. Therefore, for all sufficiently large $m$, inequality (\ref{1mxm}) yields
\begin{align*}
1/m&\geq (1-e^{-2\xi_1(x_m)})(H_0\otimes \widetilde{H}_1^{-1})(x_m)-(1-e^{(4-r)\xi_1(x_m)})(H_1\otimes \widetilde{H}_1^{-1})(x_m) \\
&\geq -(1-e^{(4-r)(\sup_X\xi_1-1/m)})e^{-\epsilon}.
\end{align*}
Taking the limit inferior as $m\to\infty$, we obtain $\sup_X\xi_1\leq 0$, contradicting the assumption $\sup_X\xi_1>0$. Therefore we obtain $\sup_X\xi_1\leq 0$, and thus we have assertion (i) for the case where $n=1$. Suppose that $\xi_1$ is of class $C^2$ and (ii) does not occur. Then inequality (\ref{xi01}), the strong maximum principle, and the assumption that (ii) does not occur immediately yield $\xi_1<0$ or $\xi_1=0$. We next consider the case where $n\geq 2$. From (\ref{xij}), we have
\begin{align}
\inum\Lambda_{H_1}\partial \bar{\partial} \xi_1&\geq (1-e^{-2\xi_1})H_0\otimes H_1^{-1}-(1-e^{\xi_1-\xi_2}), \label{xi12}\\
\inum\Lambda_{H_j} \partial \bar{\partial} \xi_j &\geq (1-e^{\xi_{j-1}-\xi_j})H_{j-1}\otimes H_j^{-1}-(1-e^{\xi_j-\xi_{j+1}}) \ \text{for $j=2,\dots, n-1$}, \label{xi34}\\
\inum \Lambda_{H_n}\partial \bar{\partial} \xi_n&\geq (1-e^{\xi_{n-1}-\xi_n})H_{n-1}\otimes H_n^{-1}-(1-e^{(2n+2-r)\xi_n}). \label{xin}
\end{align}
By assumption, the curvatures of $H_1,\dots, H_n$ are bounded below. Fix $\epsilon>0$. From Proposition \ref{GWT}, there exist smooth Hermitian metrics $\widetilde{H}_1,\dots, \widetilde{H}_n$ on $K_X^{-1}\rightarrow X\setminus \partial X$ such that 
\begin{align}
e^{-\epsilon} H_j \le \widetilde{H}_j \le e^{\epsilon} H_j\ \text{for} \ j=1,\dots, n \label{label75}
\end{align}
and the curvatures of $\widetilde{H}_1,\dots, \widetilde{H}_n$ are bounded below. If $\partial X\neq\emptyset$, then $M>\max_{1\leq j\leq n}\max_{\partial X}e^{\xi_j}$. For each component with $M_j=M$, we therefore use the same construction as above, replacing $\xi_1$ and $\widetilde{H_1}$ by $\xi_j$ and $\widetilde{H_j}$, respectively, and using (\ref{label75}) in place of (\ref{label68}). From inequalities (\ref{xi12}), (\ref{xi34}), and (\ref{xin}), we have
\begin{align}
\inum\Lambda_{\widetilde{H}_1}\partial \bar{\partial} \xi_1&\geq (1-e^{-2\xi_1})H_0\otimes \widetilde{H}_1^{-1}-(1-e^{\xi_1-\xi_2})H_1\otimes \widetilde{H}_1^{-1}, \label{xi12*}\\
\inum\Lambda_{\widetilde{H}_j} \partial \bar{\partial} \xi_j &\geq (1-e^{\xi_{j-1}-\xi_j})H_{j-1}\otimes \widetilde{H}_j^{-1}-(1-e^{\xi_j-\xi_{j+1}})H_j\otimes \widetilde{H}_j^{-1} \ \text{for $j=2,\dots, n-1$}, \label{xi34*}\\
\inum \Lambda_{\widetilde{H}_n}\partial \bar{\partial} \xi_n&\geq (1-e^{\xi_{n-1}-\xi_n})H_{n-1}\otimes \widetilde{H}_n^{-1}-(1-e^{(2n+2-r)\xi_n})H_n\otimes \widetilde{H}_n^{-1}. \label{xin*}
\end{align}
By assumption, $\xi_1,\dots, \xi_n$ are bounded above, continuous, and belong to $W^{1,2}_{\loc}$. Moreover, the right-hand sides of inequalities (\ref{xi12*}), (\ref{xi34*}), and (\ref{xin*}) are all continuous. We set  $k\coloneqq \max\{j\mid M_j=M\}$. Assume $M=M_k>1$ and derive a contradiction. 
Suppose that $k=1$. By applying Proposition \ref{wstoc} when the boundary is empty, and Proposition \ref{wstocb} together with the assumption that (ii) does not occur when the boundary is nonempty, to (\ref{xi12*}), we obtain, for each $m\in \Z_{>0}$, a point $x_m\in X$ such that
\begin{align}
\xi_1(x_m)&\geq \sup_X\xi_1-1/m, \label{mmm2}\\
\frac{1}{m}&\geq (1-e^{-2\xi_1(x_m)})(H_0\otimes \widetilde{H}_1^{-1})(x_m)-(1-e^{\xi_1(x_m)-\xi_2(x_m)})(H_1\otimes \widetilde{H}_1^{-1})(x_m) \notag \\
&\geq (1-e^{-2\xi_1(x_m)})(H_0\otimes \widetilde{H}_1^{-1})(x_m)-(1-M_2^{-1}e^{\xi_1(x_m)})(H_1\otimes \widetilde{H}_1^{-1})(x_m). \label{1mxm2}
\end{align}
By the assumptions $k=1$ and $M_k=M>1$, the first term is nonnegative for all sufficiently large $m$. Hence, from (\ref{label75}), (\ref{mmm2}), and (\ref{1mxm2}),  taking the limit inferior as $m\to\infty$, we obtain $M_1\leq M_2$, which contradicts the assumption $k=1$. Suppose that $2\leq k\leq n-1$. By applying Proposition \ref{wstoc} when the boundary is empty, and Proposition \ref{wstocb} together with the assumption that (ii) does not occur when the boundary is nonempty, to (\ref{xi34*}), we obtain, for each $m\in \Z_{>0}$, a point $x_m\in X$ such that
\begin{align*}
\xi_k(x_m)&\geq \sup_X\xi_k-1/m, \\
\frac{1}{m}&\geq (1-e^{\xi_{k-1}(x_m)-\xi_k(x_m)})(H_{k-1}\otimes \widetilde{H}_k^{-1})(x_m)-(1-e^{\xi_k(x_m)-\xi_{k+1}(x_m)})(H_k\otimes \widetilde{H}_k^{-1})(x_m). 
\end{align*}
By taking the limit inferior as $m\to\infty$, together with the assumption that $H_{k-1}\otimes H_k^{-1}$ is bounded and with (\ref{label75}), we obtain $M_k\leq M_{k+1}$, which contradicts the definition of $k$. Therefore, $k=n$.  By applying Proposition \ref{wstoc} when the boundary is empty, and Proposition \ref{wstocb} together with the assumption that (ii) does not occur when the boundary is nonempty, to (\ref{xin*}), we obtain, for each $m\in \Z_{>0}$, a point $x_m\in X$ such that
\begin{align*}
\xi_n(x_m)&\geq \sup_X\xi_n-1/m \\
\frac{1}{m}&\geq (1-e^{\xi_{n-1}(x_m)-\xi_n(x_m)})(H_{n-1}\otimes \widetilde{H}_n^{-1})(x_m)-(1-e^{(2n+2-r)\xi_n(x_m)})(H_n\otimes \widetilde{H}_n^{-1})(x_m).
\end{align*}
By taking the limit inferior as $m\to\infty$, together with the assumption $H_{k-1}\otimes H_k^{-1}$ is bounded and with (\ref{label75}), we obtain $M=M_n\leq 1$, which contradicts the assumption $M>1$. Therefore, we obtain $M\leq 1$, and thus assertion (i) follows. We prove the latter half of the statement. From (\ref{xi12}), (\ref{xi34}), and (\ref{xin}), we have
\begin{align}
\inum\Lambda_{\widetilde{H}_1}\partial \bar{\partial} \xi_1&\geq (1-e^{-2\xi_1})H_0\otimes \widetilde{H}_1^{-1}-(1-e^{\xi_1})H_1\otimes \widetilde{H}_1^{-1}, \label{xi12+}\\
\inum\Lambda_{\widetilde{H}_j} \partial \bar{\partial} \xi_j &\geq (1-e^{-\xi_j})H_{j-1}\otimes \widetilde{H}_j^{-1}-(1-e^{\xi_j})H_j\otimes \widetilde{H}_j^{-1} \ \text{for $j=2,\dots, n-1$}, \label{xi34+}\\
\inum \Lambda_{\widetilde{H}_n}\partial \bar{\partial} \xi_n&\geq (1-e^{-\xi_n})H_{n-1}\otimes \widetilde{H}_n^{-1}-(1-e^{(2n+2-r)\xi_n})H_n\otimes \widetilde{H}_n^{-1}. \label{xin+}
\end{align}
Suppose that $\xi_1,\dots, \xi_n$ are all of class $C^2$. Assume that (ii) does not occur. Then by this assumption and by the strong maximum principle applied to (\ref{xi12+}), (\ref{xi34+}), and (\ref{xin+}), we obtain either $\xi_j<0$ or $\xi_j=0$ for each $j=1,\dots, n$. If $\xi_1=0$, then from (\ref{xi12}) and $M_2\leq 1$, we have $\xi_2=0$. If $\xi_j=0$ for some $j=2,\dots, n-1$, then from (\ref{xi34}), $M_{j-1}\leq 1$, and $M_{j+1}\leq 1$, we have $\xi_{j-1}=\xi_{j+1}=0$. If $\xi_n=0$,  then from (\ref{xin}) and $M_{n-1}\leq 1$, we have $\xi_{n-1}=0$. We conclude that either $e^{\xi_j}<1$ for all $j=1,\dots, n$, or $\xi_j=0$ for all $j=1,\dots, n$.
\end{proof}

At the end of this subsection, we prove the following proposition:
\begin{prop}\label{coffee}
{\it Suppose that $e^\varphi$ is continuous. Let
$h_+=(h_{+,1},\dots, h_{+,r})$ be a continuous supersolution to equation (\ref{phi}),
and let $h_-=(h_{-,1},\dots, h_{-,r})$ be a continuous subsolution to equation (\ref{phi})
such that $h_{-,j}\leq h_{+,j}$ for each $j=1,\dots, n$.
Then there exists a real solution $h=(h_1,\dots, h_r)$ to equation (\ref{phi})
such that $h_{-,j}\leq h_j\leq h_{+,j}$ for all $j=1,\dots, n$. 
}
\end{prop}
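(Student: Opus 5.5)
This is a sub-/supersolution existence result, and I will prove it by the standard exhaustion argument, as in \cite[Proposition 5.9]{LM1} and the corresponding statement in \cite{Miy6}. Solve Dirichlet problems on an exhaustion by relatively compact domains with smooth boundary, trap each solution between $h_-$ and $h_+$ by a comparison principle, and extract a limit using local estimates.

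Fix an exhaustion $X_1\Subset X_2\Subset\cdots$ of $X$ by relatively compact domains with smooth boundary. On each $X_k$ I work in the reduced unknowns $\phi_1,\dots,\phi_n$, writing $h_j=e^{-\phi_j}h_{\refe,j}$ and extending to $j>n$ by reality. In these unknowns, equation (\ref{phi}) is the system appearing in (\ref{phi+}), with equalities in place of inequalities. This system is quasi-monotone: the right-hand side for $\phi_j$ is nondecreasing in $\phi_{j\pm1}$. That is exactly the structure used in Proposition \ref{min-max} and Lemma \ref{cands}.

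\textbf{Step 1: the Dirichlet problem on $X_k$.} I pose the boundary condition $h_j=h_{+,j}$ on $\partial X_k$ for $j=1,\dots,n$. To handle continuity only, I would instead take a smooth approximation squeezed between $h_{-}$ and $h_+$ as boundary data. I then solve by monotone iteration. Start from $h_+$, whose components $\phi_{+,j}$ are the smallest admissible values. At each step solve the decoupled linear problems
\begin{align*}
\tfrac12\Delta_{g_E}\phi_j^{(m+1)}-K\phi_j^{(m+1)}=-(\text{nonlinear terms at }\phi^{(m)})-K\phi_j^{(m)},
\end{align*}
where $K$ is a large constant, finite because $\phi_\pm$ are continuous and hence bounded on $\overline{X}_k$. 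Because $e^{\varphi}$ is continuous, the term $e^{r\varphi-2\phi_1}$ is continuous, so the iteration takes place among continuous $W^{1,2}$ functions. The linear maximum principle shows that the iterates are monotone and remain between $\phi_+$ and $\phi_-$ (recall that $h_-\le h_+$ means $\phi_-\ge\phi_+$). They converge to a solution $h^{(k)}$ with $h_{-,j}\le h^{(k)}_j\le h_{+,j}$ on $X_k$. Alternatively, I could compare a Dirichlet solution from \cite{Miy6} with $h_\pm$ using Lemma \ref{uvFG} and Kato-type arguments on the bounded domain.

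\textbf{Step 2: uniform local bounds and the limit.} On any compact set $K'\subset X$ and for $k$ large, the bounds $\phi_+\le\phi^{(k)}_j\le\phi_-$ give uniform $L^\infty$ bounds on all $\phi^{(k)}_j$. They also bound every exponential term in the equation, so the Laplacians $\Delta\phi^{(k)}_j$ are uniformly bounded in $L^\infty(K')$. Elliptic $W^{2,p}$ estimates then give uniform $C^{1,\alpha}$ bounds. A diagonal subsequence converges in $C^{1}_{\loc}$ to a limit $\phi_j$, which solves the equation weakly, and the two-sided inequality passes to the limit. Defining $h_j$ for $j>n$ by reality produces the desired real solution.

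\textbf{Main obstacle.} The main obstacle is Step 1 under weak regularity. Two issues arise: $\varphi$ is only such that $e^{\varphi}$ is continuous (possibly $-\infty$ somewhere), and $h_\pm$ are merely continuous weak super- and subsolutions. The comparison in the monotone iteration must therefore be carried out in the distributional setting. I would handle it by applying the weak maximum principle to $(\phi^{(m+1)}_j-\phi_{-,j})_+$ and to $(\phi_{+,j}-\phi^{(m+1)}_j)_+$ tested in $W^{1,2}_0(X_k)$. This requires $\phi_\pm\in W^{1,2}_{\loc}$. That membership follows as in the lemma following Lemma \ref{fRR}: a continuous function whose distributional Laplacian is bounded below by a locally bounded function is locally a bounded subharmonic function plus a smooth function, hence lies in $W^{1,2}_{\loc}$.
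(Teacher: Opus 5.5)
Your proof is correct, and its skeleton matches the paper's. You exhaust $X$, produce on each domain a solution trapped between $h_-$ and $h_+$, and take a diagonal limit using interior elliptic estimates. The paper uses weak $L^p_2$-compactness with almost-everywhere and dominated convergence; you use uniform $L^\infty$ bounds on the Laplacians and $C^{1,\alpha}$-compactness. These are interchangeable.

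The real difference is how the trapped solution on each domain is obtained.
\begin{itemize}
\item The paper takes the solution of the nonlinear Dirichlet problem from \cite{Miy3}, with boundary data squeezed between $h_\pm$. It then proves $h_{-,j}\le h^{(i)}_j\le h_{+,j}$ by a comparison at an attained interior maximum (Remark \ref{remark50}, with the index bookkeeping of Lemma \ref{cands} when $n\ge 2$). This needs only continuity and the distributional inequalities, because at a positive interior maximum the log-ratio is strictly subharmonic.
\item You build the Dirichlet solution by monotone iteration for the cooperative system. The trapping is then built into the construction, and you need no existence theory for the nonlinear Dirichlet problem. The price is a weak maximum principle for $-\tfrac12\Delta+K$ against merely continuous sub- and supersolutions. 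That is why you need $\phi_\pm\in W^{1,2}_{\loc}$, and your argument for it is right: apply it to $\phi_+$ and to $-\phi_-$.
\end{itemize}

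Two small points.

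First, smooth boundary data squeezed between $h_-$ and $h_+$ need not exist where they touch on $\partial X_k$. You do not need it. You can impose $\phi^{(m+1)}_j-\phi_{+,j}\in W^{1,2}_0(X_k)$, which is legitimate since $\phi_{+,j}\in W^{1,2}(X_k)$. Alternatively, keep the continuous data $\phi_{+,j}|_{\partial X_k}$ and test with $(\phi^{(m+1)}_j-\phi_{-,j}-\delta)_+$. This function has compact support in $X_k$, and you then let $\delta\to 0$.

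Second, you do not treat compact $X$, where no such exhaustion exists. The paper solves on $X\setminus\{p\}$ and removes the singularity. In your framework it is simpler: run the same iteration on $X$ itself with no boundary condition. On a closed surface $-\tfrac12\Delta+K$ is invertible for $K>0$, and testing against $w_+$ gives the maximum principle.
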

\begin{proof}We first suppose that $X$ is noncompact. Let $Y_1\subsetneq Y_2\subsetneq \cdots \subsetneq Y_i\subsetneq Y_{i+1}\subsetneq \cdots \subsetneq X$ be a family of relatively compact open subsets with smooth boundaries $\partial Y_i$ such that $\bigcup_{i=1}^\infty Y_i=X$. For each $i$, let $\partial h^{(i)}=(\partial h_1^{(i)},\dots, \partial h_r^{(i)})$ be a real diagonal Hermitian metric on the boundary $\partial Y_i$ of $Y_i$ such that for each $j=1,\dots, n$, $h_{-,j}\left.\right|_{\partial Y_i}\leq \partial h_j^{(i)}\leq h_{+,j}\left.\right|_{\partial Y_i}$ on $\partial Y_i$. Let $h^{(i)}=(h_1^{(i)},\dots, h_r^{(i)})$ be the solution to the Dirichlet problem (cf. \cite{Miy3}) for equation (\ref{phi}) with boundary condition $h^{(i)}\left.\right|_{\partial Y_i}=\partial h^{(i)}$ satisfying $\det(h^{(i)})=1$. The comparison argument in the proof of Lemma \ref{cands} applies here at an attained maximum, using the local maximum principle for subharmonic functions (see Remark \ref{remark50} for more details). Hence $h_{-,j}\le h_j^{(i)}\le h_{+,j}$ for all $j=1,\ldots,n$. We obtain $h_{-,j}\leq h_j^{(i)}\leq h_{+,j}$ for each $i\geq 1$ and each $j=1,\dots, n$.  
By the interior $L^p$-estimate (cf. \cite[Theorem 9.11]{GT1}), for each $i\geq 1$, $1<p<\infty$, and $l\geq 1$, there exist positive constants $C_1$ and $C_2$ independent of $l$ such that
\begin{align*}
|\log h^{(i+l)}|_{L^p_2(Y_i)}\leq C_1(|\log h^{(i+l)}|_{L^p(Y_{i+1})}+|F_{h^{(i+l)}}|_{L^p(Y_{i+1})})\leq C_2.
\end{align*}
Since bounded subsets of reflexive Banach spaces are weakly relatively compact, for each $i\geq 2$, we can find a subsequence of $(h^{(i+l)})_{l\geq 1}$ that converges weakly in $L^p_2(Y_i)$ to a Hermitian metric $h$ on $Y_i$. Using the diagonal trick, we can find a subsequence of $(h^{(k)})_{k\geq 1}$ that converges weakly in $L^p_{2,\loc}$ to a Hermitian metric $h=(h_1,\dots, h_r)$ on $X$ belonging to $L^p_{2, \loc}$. After passing to a further subsequence if necessary, we may assume that $h_j^{(k)}$ converges to $h_j$ almost everywhere for every $j=1,\ldots,r$. Since the $h_j^{(k)}$ are uniformly bounded above and away from zero on compact subsets, Lebesgue's dominated convergence theorem allows us to pass to the limit in the nonlinear terms of equation (\ref{phi}). Therefore, the Hermitian metric $h$ solves equation (\ref{phi}) in the weak sense. Suppose now that $X$ is compact. Fix $p\in X$ and apply the noncompact case to $X\setminus\{p\}$ to obtain a real solution $h$ satisfying the required bounds. These bounds and the continuity of $e^\varphi$ imply that the local logarithmic weights of $h$ and the right-hand sides of their Poisson equations are bounded near $p$. Subtracting local Newtonian potentials of these right-hand sides yields bounded harmonic functions on a punctured coordinate disc. Their singularities are removable, so $h$ extends to a real weak solution on $X$. The required bounds hold at $p$ by continuity.
\end{proof}
\begin{rem}\label{remark50} We illustrate the comparison used in Proposition \ref{coffee} when $n=1$. Let $h$ be a continuous subsolution and $h^\prime$ a continuous supersolution on $\overline{Y_i}$, satisfying $h_1\leq h_1^\prime$ on $\partial Y_i$. Using the notation of Lemma \ref{cands}, set $\xi:=\log(h_1\otimes h_1'^{-1})$. Then, in the sense of distributions,
\begin{align*}
\inum\partial\bar\partial\xi
\geq
(1-e^{-2\xi})\vol(H_0)
+(e^{(4-r)\xi}-1)\vol(H_1).
\end{align*}
If $\max_{\overline{Y_i}}\xi>0$, this maximum is attained at some $p\in Y_i$. The right-hand side is strictly positive near $p$, so $\xi$ is subharmonic there. The maximum principle for subharmonic functions forces $\xi$ to be constant near $p$, contradicting the displayed inequality. Thus $\xi\leq0$ on $\overline{Y_i}$. Applying this comparison to $(h_-,h^{(i)})$ and $(h^{(i)},h_+)$ yields the inequalities used in Proposition \ref{coffee}.
\end{rem}
\subsection{Proof of Theorems \ref{main theorem 2} and \ref{main theorem 3}}
Throughout this section, we assume that the base Riemann surface is the complex plane $\C$. We also assume that the weight function $\varphi$ is not identically $-\infty$. By \cite[Theorems 2 and 3]{Miy6} and Lemma \ref{le56.5}, on the unit disc, there exists a complete solution to equation (\ref{phi}). Therefore, by combining Theorem \ref{main theorem 1} with the standard argument in \cite[Proposition 5.7]{LM1}, on every hyperbolic Riemann surface, there exists a unique complete solution to equation (\ref{phi}) satisfying $\det(h)=1$; a detailed proof will be given in Section \ref{proof of mc}. In what follows, we consider complete solutions having determinant 1. We denote by $\D_R=\{z\in\C\mid |z|<R\}$ the open ball of radius $R$ centered at the origin. We take positive real numbers $0<R_2<R_1$. Let $h^{<R_1}=(h^{<R_1}_1,\dots, h^{<R_1}_r)$ be the complete solution on $\D_{R_1}$ associated with $\varphi|_{\D_{R_1}}$, and let $h^{>R_2}=(h^{>R_2}_1,\dots, h^{>R_2}_r)$ be the complete solution on $\C\setminus \overline{\D}_{R_2}$ associated with $\varphi|_{\C\setminus \overline{\D}_{R_2}}$. Following the argument in \cite[Proposition 5.9]{LM1}, we first establish the following lemma:
\begin{lemm}\label{R1R2}{\it For each $0<R_2<R_1$, define a diagonal Hermitian metric
$h_-^{(R_1,R_2)}=(h_{-,1},\dots,h_{-,r})$ as follows. For $j=1,\dots,n$, set
\begin{align*} h_{-,j}^{(R_1,R_2)}\coloneqq 
\begin{cases} h^{<R_1}_j \quad \text{on } \D_{R_2},\\ 
\max\{h_j^{>R_2}, h_j^{<R_1}\} \quad \text{on } \{z\in\C\mid R_2\leq |z|\leq R_1\},\\ 
h^{>R_2}_j \quad \text{on } \C\setminus \overline{\D}_{R_1}.
\end{cases} 
\end{align*} 
The remaining components $h_{-,j}^{(R_1,R_2)}$, $j=n+1,\dots,r$, are defined so that $h_-^{(R_1,R_2)}$ is real. Then $h_-^{(R_1,R_2)}$ is a continuous subsolution to equation (\ref{phi}) and belongs to $W^{1,2}_{\loc}$. } 
\end{lemm}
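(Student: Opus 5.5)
The plan follows the gluing argument of \cite[Proposition 5.9]{LM1}. The two key facts are that the maximum of two subsolutions is again a subsolution (Proposition \ref{min-max}), and that near each of the two circles only one of the two competing metrics is relevant.

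\textbf{Step 1: boundary behaviour on the two circles.} Since $h^{<R_1}$ is complete on $\D_{R_1}$, Corollaries \ref{CYM} and \ref{ji} applied on $\D_{R_1}$ compare its $H_j$ with the complete hyperbolic metric of $\D_{R_1}$, which blows up at $\partial\D_{R_1}$. I expect to deduce that $h^{<R_1}_j\to 0$ (or $\infty$, depending on the sign convention for $j\le n$) as $|z|\to R_1$. Concretely, I would invoke Lemma \ref{lemma33} with $H_{j-1}\otimes H_j^{-1}<1$, as in \cite[Theorem 4.4]{LM1}. This yields a bound $h_j<e^{-\frac12(r-(2j-1))\phi}$ together with a lower bound $\vol(H_j)\gtrsim$ the hyperbolic volume. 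Together these force $h^{<R_1}_j$ to become smaller than the continuous, locally bounded metric $h^{>R_2}_j$ near $|z|=R_1$. Symmetrically, $h^{>R_2}_j$ becomes smaller than $h^{<R_1}_j$ near $|z|=R_2$. Hence there are $\delta>0$ and neighbourhoods on which the following hold: on $\{R_1-\delta<|z|\le R_1\}$ the maximum equals $h^{>R_2}_j$, and on $\{R_2\le|z|<R_2+\delta\}$ it equals $h^{<R_1}_j$. This is the step I expect to be the main obstacle. The difficulty is obtaining the boundary divergence with the correct sign uniformly in $j=1,\dots,n$ for a general $\varphi$. For this I would use the model supersolution $h^{(\varphi)}$ of Example \ref{H1Hr}, together with the comparison of Lemma \ref{cands} on the complete solutions, to pin down the direction of divergence.

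\textbf{Step 2: local subsolution property.} By Step 1, $h_-^{(R_1,R_2)}$ coincides near each point of $\{|z|=R_2\}\cup\{|z|=R_1\}$ with a single genuine solution, so it is continuous there and trivially a subsolution. On the open annulus it is $\max\{h^{>R_2}_j,h^{<R_1}_j\}$, a maximum of two solutions (in particular subsolutions) on a common domain. Proposition \ref{min-max} therefore applies once the local integrability hypothesis is checked. That hypothesis holds because both metrics are continuous and positive, so all the relevant $H_j$ are locally bounded. Off the closed annulus the metric is a solution. Since being a subsolution is a local, distributional property, these local statements patch together by a partition of unity.

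\textbf{Step 3: regularity.} Continuity follows from Step 1 and from the continuity of maxima of continuous functions. Both solutions are $C^{1,\alpha}_{\loc}$, so their logarithms are locally Lipschitz. The maximum of two locally Lipschitz functions is locally Lipschitz, hence lies in $W^{1,\infty}_{\loc}\subset W^{1,2}_{\loc}$ (cf. \cite[Theorem 4.5]{EG1}). Finally, the components $j>n$ are determined by reality and inherit both properties.
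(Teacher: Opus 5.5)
Your overall architecture matches the paper's proof. You use Proposition \ref{min-max} on the open annulus, identify $h_-^{(R_1,R_2)}$ with a single solution near each circle, and then apply a Lipschitz/Sobolev chain-rule argument; using $C^{1,\alpha}_{\loc}$ regularity in Step 3 is a harmless variant of the paper's argument. Steps 2 and 3 are fine once Step 1 is in place. The gap is Step 1, the only nontrivial step, and the tools you name there cannot close it.

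\textbf{Why the named tools fail.} Every one of them yields an \emph{upper} bound:
\begin{itemize}
\item Corollaries \ref{CYM} and \ref{ji} bound $H_j\otimes H^{-1}$ and $H_j\otimes H_i^{-1}$ from above.
\item Lemma \ref{lemma33} gives $h_j<h_j^{(\varphi)}$.
\item Comparing with $h^{(\varphi)}$ via Lemma \ref{cands} again only gives $h_j\le h_j^{(\varphi)}$. Moreover, Lemma \ref{cands} requires $e^\varphi$ to be continuous, which is not assumed.
\end{itemize}
None of these makes $h^{<R_1}_j$ small near $|z|=R_1$. The metric $h^{(\varphi)}_j$ is a fixed metric on $\C$ that does not degenerate on $\partial\D_{R_1}$. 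In fact $h^{>R_2}_j<h^{(\varphi)}_j$ by (\ref{75}), so an upper bound by $h^{(\varphi)}_j$ can never decide which branch of the maximum wins. Completeness alone does not give pointwise divergence of the $H_k$ either.

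\textbf{What is actually needed.} You need the \emph{lower} bound $H^{<R_1}_k\geq c\,H_{\mathrm{hyp}}$ on $\D_{R_1}$ for $k=1,\dots,r-1$. You also need its analogue for $H^{>R_2}_k$ with respect to the hyperbolic metric of $\C\setminus\overline{\D}_{R_2}$. You state this bound but do not derive it. Reality and $\det=1$ give $h_j^{-2}=H_j\otimes\cdots\otimes H_{r-j}$ for $j\le n$. Combined with the lower bound, this forces $h^{<R_1}_j\to0$ as $|z|\to R_1$. Since $h^{>R_2}_j$ is continuous and positive near that circle, $h^{<R_1}_j<h^{>R_2}_j$ there, and symmetrically near $|z|=R_2$.

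\textbf{How to get the lower bound.} The paper obtains it from the hyperbolic-metric estimates of \cite[Lemma 35]{Miy6}. Those are proved for smooth $e^\varphi$ and transferred to the limiting complete solutions through the monotone approximation of \cite[Theorem 3]{Miy6}.

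Alternatively, a Schwarz-lemma argument works with this paper's own tools:
\begin{enumerate}
\item Let $H_{\mathrm{hyp}}$ be the complete metric solving (\ref{H}) on $\D_{R_1}$, and set $u\coloneqq\log(H_{\mathrm{hyp}}\otimes H_k^{-1})$.
\item Equation (\ref{cyc2}) gives $\inum F_{H_k}\geq-2\vol(H_k)$ in the weak sense, hence $\inum\Lambda_{H_k}\partial\bar\partial u\geq 2(e^u-1)$.
\item Apply Proposition \ref{SL} on the complete surface $(\D_{R_1},2g_{H_k})$, whose curvature is bounded below. This gives $\sup u<\infty$, which is the required bound.
\end{enumerate}
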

\begin{proof}By Proposition \ref{min-max}, $h_-^{(R_1,R_2)}$ is a subsolution to equation (\ref{phi}) on $\{z\in\C\mid R_2< |z|< R_1\}$. Since $\max:\R^2\to \R$ is Lipschitz continuous, it belongs to $W^{1,\infty}_{\loc}$ (cf. \cite[Theorem 4.5]{EG1}). Therefore, since $h_{>R_2}$ and $h_{<R_1}$ belong to $W^{1,2}_\loc$, by the chain rule for Sobolev functions, $h_-^{(R_1,R_2)}$ belongs to $W^{1,2}_{\loc}$ on $\{z\in\C\mid R_2<|z|<R_1\}$. Moreover, $h_j^{<R_1}$ and $h_j^{>R_2}$, $j=1,\dots,n$, satisfy the estimates in terms of the hyperbolic metrics given in \cite[Lemma 35]{Miy6}. Although that lemma assumes that $e^\varphi$ is smooth, we obtain the same estimates for the corresponding limiting complete solutions by passing to the limit along the smooth approximations in \cite[Theorem 3]{Miy6}. Therefore, $h_-^{(R_1,R_2)}$ coincides with $h^{<R_1}$ near the boundary of $\D_{R_2}$ and with $h^{>R_2}$ near the boundary of $\D_{R_1}$. Hence, $h_-^{(R_1,R_2)}$ is continuous and belongs to $W^{1,2}_{\loc}$. This completes the proof. 
\end{proof}
In addition to $R_1$ and $R_2$, we take positive real numbers $R_1^\prime, R_2^\prime$ satisfying $0<R_2<R_2^\prime<R_1^\prime<R_1$. Let $h^{(\varphi)}=(h_1^{(\varphi)},\dots, h_r^{(\varphi)})$ be the supersolution constructed from $e^{-\varphi}h_\refe$ as in Example \ref{H1Hr}. For each positive constant $c$, define a diagonal Hermitian metric $h_+^{(R_2^\prime, R_2)}=(h_{+,1}^{(R_2^\prime, R_2)},\dots, h_{+,r}^{(R_2^\prime, R_2)})$ as follows. For $j=1,\dots,n$, set
\begin{align*} h_{+,j}^{(R_2^\prime, R_2)}\coloneqq 
\begin{cases} h^{(\varphi)}_j \ \text{on} \ \D_{R_2^\prime},\\ 
\min\{h_j^{(\varphi)}, e^ch_j^{>R_2}\} \quad \text{on } \C\setminus\D_{R_2^\prime}.
\end{cases} 
\end{align*}
The remaining components $h_{+,j}^{(R_2^\prime, R_2)}$, $j=n+1,\dots,r$, are defined so that $h_+^{(R_2^\prime,R_2)}$ is real. By Example \ref{H1Hr} and Proposition \ref{min-max}, outside the boundary of $\D_{R_2^\prime}$, $h_+^{(R_2^\prime, R_2)}$ is a supersolution to equation (\ref{phi}). 
Following the argument in \cite[Proposition 5.9]{LM1}, we prove the following:
\begin{lemm}\label{lemma51}{\it 
The following holds:
\begin{enumerate}[(i)]
\item Suppose that the following conditions hold:
\begin{itemize}
\item On $\overline{\D}_{R_1^\prime}$, $e^{-\varphi}$ is finite and continuous.
\item On $\C\setminus\varphi^{-1}(-\infty)$, $\varphi$ is continuous and belongs to $W^{1,2}_\loc$.
\item The positive constant $c$ is large enough so that the following holds on $\{z\in\C\mid R^\prime_2\leq |z|\leq R_1^\prime\}$:
\begin{align}
h_j^{(\varphi)}<e^ch_j^{>R_2}\ \text{for $j=1,\dots, n$}. \label{echj}
\end{align} 
\end{itemize}
Then $h_+^{(R_2^\prime, R_2)}$ is continuous everywhere and belongs to $W^{1,2}_\loc(\C)$. In particular, $h_+^{(R_2^\prime, R_2)}$ is a supersolution on the entire complex plane, including the boundary of $\D_{R_2^\prime}$.
\item Suppose that $c$ is large enough so that the following holds on $\{z\in\C\mid R_2^\prime\leq |z|<R_1\}$:
\begin{align}
h^{<R_1}_j<e^ch^{>R_2}_j\ \text{for $j=1,\dots, n$.} \label{echR}
\end{align}
Then we obtain
\begin{align}
h_{-,j}^{(R_1,R_2)}<h_{+,j}^{(R_2^\prime, R_2)}\ \text{for $j=1,\dots, n$}. \label{74}
\end{align}
\end{enumerate}
}
\end{lemm}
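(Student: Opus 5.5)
For (i), I will show that near the circle $\partial\D_{R_2^\prime}$ the two definitions of $h_+^{(R_2^\prime,R_2)}$ agree on an open annulus, so that no gluing is really needed there. Condition (\ref{echj}) gives the strict inequality $h_j^{(\varphi)}<e^ch_j^{>R_2}$ on the closed annulus $\{R_2^\prime\le|z|\le R_1^\prime\}$. On $\overline{\D}_{R_1^\prime}$ the function $e^{-\varphi}$ is finite and continuous, and $h^{>R_2}$ is continuous. Hence the strict inequality persists on a slightly larger open annulus $\{R_2^\prime-\delta<|z|<R_1^\prime\}$, with $R_2^\prime-\delta>R_2$. On that annulus $\min\{h_j^{(\varphi)},e^ch_j^{>R_2}\}=h_j^{(\varphi)}$, so $h_+^{(R_2^\prime,R_2)}=h^{(\varphi)}$ on the neighbourhood $\D_{R_1^\prime}$ of $\overline{\D}_{R_2^\prime}$.

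Continuity and the $W^{1,2}_\loc$ property then reduce to those of $h^{(\varphi)}$ and of the minimum. On $\D_{R_1^\prime}$, $h_j^{(\varphi)}=(e^{-\varphi}h_\refe)^{(r-(2j-1))/2}$ has weight a multiple of $\varphi$. This weight is finite and continuous there. It is also $W^{1,2}_\loc$ there, because $\varphi$ is $W^{1,2}_\loc$ off $\varphi^{-1}(-\infty)$, which does not meet $\overline{\D}_{R_1^\prime}$. Outside $\D_{R_2^\prime}$, the weight of the minimum is the maximum of the weights of $h_j^{(\varphi)}$ and $e^ch_j^{>R_2}$. Wherever $\varphi=-\infty$, $h_j^{(\varphi)}=+\infty$ for $j\le n$, so the minimum equals $e^ch_j^{>R_2}$ on a neighbourhood, by the upper semicontinuity of $\varphi$ together with the continuity of $e^\varphi$. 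Elsewhere both weights are continuous and $W^{1,2}_\loc$, and the maximum of two such functions is again continuous and $W^{1,2}_\loc$ by the Lipschitz chain rule (\cite[Theorem 4.5]{EG1}).

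For the supersolution property, I will use that being a supersolution is a local condition on the weak inequalities. On $\D_{R_1^\prime}$ it follows from Example \ref{H1Hr}, and on $\C\setminus\overline{\D}_{R_2^\prime}$ from Proposition \ref{min-max}. The $L^1_\loc$ hypotheses of Proposition \ref{min-max} hold by continuity. These two open sets cover $\C$, and a partition of unity on test functions patches the two distributional inequalities together. I expect the main obstacle to be the bookkeeping near $\varphi^{-1}(-\infty)$ for $|z|>R_1^\prime$, which requires checking that $H_0$ and the $H_j$ remain $L^1_\loc$ there. I would handle it by noting that the minimum coincides with $e^ch^{>R_2}$ near these points.

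For (ii), I compare the two metrics region by region. On $\D_{R_2}$ they are $h_j^{<R_1}$ and $h_j^{(\varphi)}$. I will use $h_j^{<R_1}<h_j^{(\varphi)}$, which is Lemma \ref{cands} applied to the complete solution $h^{<R_1}$ (a subsolution) against the supersolution $h^{(\varphi)}$, with the strict version coming from the $C^2$ statement; alternatively I would cite the corresponding estimate of \cite{Miy6}. For $R_2\le|z|<R_2^\prime$, the metric $h_{-,j}$ is $\max\{h_j^{>R_2},h_j^{<R_1}\}$. Here $h_j^{>R_2}<e^ch_j^{>R_2}$ is immediate, but comparing with $h_j^{(\varphi)}$ again needs Lemma \ref{cands}, applied to $h^{>R_2}$ versus $h^{(\varphi)}$ and to $h^{<R_1}$ versus $h^{(\varphi)}$. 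The same comparisons, combined with (\ref{echR}), give the inequality on $R_2^\prime\le|z|<R_1$ against both entries of the minimum. For $|z|\ge R_1$, $h_{-,j}=h_j^{>R_2}$ is smaller than $e^ch_j^{>R_2}$ and, by Lemma \ref{cands}, smaller than $h_j^{(\varphi)}$.
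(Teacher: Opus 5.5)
Your argument for (i) is essentially the paper's. The strict inequality \eqref{echj} on the closed annulus forces $h_+^{(R_2^\prime,R_2)}=h^{(\varphi)}$ on a neighbourhood of $\partial\D_{R_2^\prime}$. The polar set is handled by upper semicontinuity of $\varphi$. Continuity, $W^{1,2}_{\loc}$, and the supersolution property are then all local checks. Your region-by-region bookkeeping in (ii) also matches the paper's.

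\textbf{The gap is in (ii).} You obtain the key inequalities $h_j^{<R_1}<h_j^{(\varphi)}$ and $h_j^{>R_2}<h_j^{(\varphi)}$ from Lemma \ref{cands} with $h^\prime=h^{(\varphi)}$, and that lemma does not apply here, for two reasons.

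\emph{Non-strict bound.} For $j\le n$ the component $h_j^{(\varphi)}=(e^{-\varphi}h_\refe)^{\frac{r-(2j-1)}{2}}$ equals $+\infty$ on $\varphi^{-1}(-\infty)$. So $h^{(\varphi)}$ is not a continuous metric there, and $\xi_j=\log(h_j^{<R_1}\otimes h_j^{(\varphi)-1})$ equals $-\infty$ there. It need not lie in $W^{1,2}_{\loc}$ either. Take $\varphi=\log|z-a|^2$ with $R_1^\prime<|a|<R_1$, which is compatible even with the hypotheses of (i). Near $a$, $\xi_j$ differs from $\frac{r-(2j-1)}{2}\log|z-a|^2$ by a $C^1$ function, so its gradient is not square-integrable. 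Propositions \ref{wstoc} and \ref{wstocb}, on which Lemma \ref{cands} rests, therefore cannot be invoked. In addition, (ii) assumes no regularity of $\varphi$ at all, whereas Lemma \ref{cands} requires $e^\varphi$ to be continuous.

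\emph{Strictness.} Suppose you repaired the non-strict bound, for instance by truncating $\xi_j$ from below under the hypotheses of (i). The \emph{strict} inequality \eqref{74} would still have to come from the last clause of Lemma \ref{cands}, which needs $\xi_j\in C^2$, i.e.\ $\varphi\in C^2$ off the polar set. For merely continuous $\varphi$ you would need a weak strong maximum principle for the coupled system of inequalities for $\xi_1,\dots,\xi_n$. You would also need an argument excluding $h_j^{<R_1}\equiv h_j^{(\varphi)}$. Your proposal supplies neither, and the vague fallback to ``the corresponding estimate of \cite{Miy6}'' does not fill this.

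\textbf{The missing idea.} This comparison is purely pointwise and needs no maximum principle against $h^{(\varphi)}$. By \cite[Proposition 27]{Miy4}, a complete solution is strictly ordered, $H_0<H_1<\cdots<H_n$, at every point. Lemma \ref{lemma33}, a linear-algebra statement, turns this ordering together with reality into $h_j<h_j^{(\varphi)}$ at every point where $\varphi>-\infty$. At the polar set the inequality is trivial because $h_j^{(\varphi)}=+\infty$. This gives the strict bounds for both $h^{<R_1}$ and $h^{>R_2}$ with no regularity assumption on $\varphi$. Your case analysis then goes through unchanged.
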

\begin{proof}We first prove assertion (i). We begin by noting the following two points: 
\begin{enumerate}
\item By the subharmonicity of $\varphi$ and its continuity outside the polar set $\varphi^{-1}(-\infty)$, the polar set is a closed set. 
\item By the upper semicontinuity of $\varphi$ and continuity of $h_j^{>R_2}$, for each $x\in \varphi^{-1}(-\infty)$, there exists an open neighborhood $U$ of $x$ such that $\min\{h_j^{(\varphi)}, e^ch_j^{>R_2}\}=e^ch_j^{>R_2}$ on $U$. 
\end{enumerate}
Therefore, since $e^ch_j^{>R_2}$ is continuous and belongs to $W^{1,2}_\loc$, it is enough to prove, on the open set $\C\setminus \varphi^{-1}(-\infty)$, $h_+^{(R_2^\prime, R_2)}$ is continuous and belongs to $W^{1,2}_\loc$. We then prove the global continuity of $h_+^{(R_2^\prime, R_2)}$. It remains only to check continuity across the boundary of $\D_{R_2^\prime}$. Since $e^{-\varphi}$ is continuous on $\overline{\D}_{R_1^\prime}$, so is $h^{(\varphi)}$. Moreover, by (\ref{echj}), $h_+^{(R'_2,R_2)}$ coincides with $h^{(\varphi)}$ in a neighborhood of $\partial \D_{R^\prime_2}\coloneqq\{z\in\C\mid |z|=R_2^\prime\}$. This implies the continuity of $h^{(R'_2,R_2)}_+$. We then prove that $h^{(R'_2,R_2)}_+$ belongs to $W^{1,2}_\loc$. Since $\varphi$ is continuous on $\C\setminus \varphi^{-1}(-\infty)$, it is locally bounded on $\C\setminus \varphi^{-1}(-\infty)$. Therefore, the assumption that $\varphi$ belongs to $W^{1,2}_\loc(\C\setminus \varphi^{-1}(-\infty))$ implies that $h^{(\varphi)}$ also belongs to $W^{1,2}_\loc(\C\setminus \varphi^{-1}(-\infty))$. Since $h^{(R'_2,R_2)}_+$ coincides with $h^{(\varphi)}$ on a neighbourhood of $\overline{\D}_{R_2^\prime}$, it is enough to prove the assertion on $\C\setminus \varphi^{-1}(-\infty)\cap \C\setminus\overline{\D}_{R_2^\prime}$. Since
$h^{>R_2}$ belongs to $W^{1,2}_{\loc}$ and
$\min:\mathbb R^2\to\mathbb R$ is Lipschitz continuous, the chain
rule for Sobolev functions implies that $\min\{h^{(\varphi)}_j,e^c h^{>R_2}_j\}$ belongs to  $W^{1,2}_{\loc}$
for each $j=1,\ldots,n$. Therefore, it follows that 
$h^{(R^\prime_2,R_2)}_+$ belongs to $W^{1,2}_{\loc}(\C)$. The supersolution property on all of $\C$ follows immediately, given that $h^{(R_2^\prime, R_2)}_+$ coincides with $h^{(\varphi)}$ in the neighbourhood of the boundary of $\D_{R_2^\prime}$. This proves assertion (i). We next prove assertion (ii).  From Lemma \ref{lemma33} and \cite[Proposition 27]{Miy4}, we have
\begin{align}
h_j^{<R_1}<h_j^{(\varphi)} \ \text{and} \ h_j^{>R_2}<h_j^{(\varphi)} \label{75}
\end{align}
for all $j=1,\dots, n$. Therefore, on $\D_{R_2^\prime}$, we obtain (\ref{74}) from (\ref{75}). On $\D_{R_1}\setminus \D_{R^\prime_2}$, by (\ref{echR}), (\ref{75}), and $c>0$, we have $h^{<R_1}_j<
 \min\{h^{(\varphi)}_j,e^c h^{>R_2}_j\}$
and $h^{>R_2}_j<\min\{h^{(\varphi)}_j,e^c h^{>R_2}_j\}$.
Hence, 
\begin{align*}
h^{(R_1,R_2)}_{-,j}
=\max\{h^{<R_1}_j,h^{>R_2}_j\}<\min\{h^{(\varphi)}_j,e^c h^{>R_2}_j\}
=h^{(R'_2,R_2)}_{+,j}.
\end{align*}
Finally, on $\C\setminus \D_{R_1}$, we have $h^{(R_1,R_2)}_{-,j}=h^{>R_2}_j.$
Since $c>0$, it follows from (\ref{75}) that
\begin{align*}
h^{>R_2}_j<\min\{h^{(\varphi)}_j,e^c h^{>R_2}_j\}=h^{(R'_2,R_2)}_{+,j}.
\end{align*}
Thus (\ref{74}) holds on $\C$, which proves assertion (ii).
\end{proof}
\begin{lemm}\label{le52}{\it The subsolution $h_-^{(R_1,R_2)}$ is complete. For each positive constant $c$, $h_-^{(R_1,R_2)}$ and $h^{(R^\prime_2,R_2)}_+$ are mutually bounded on $\C\setminus \D_{R_1}$}.
\end{lemm}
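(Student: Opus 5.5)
Completeness of $h_-^{(R_1,R_2)}$ means that the K\"ahler metrics induced by $H_{-,j}\coloneqq h_{-,j}^{-1}\otimes h_{-,j+1}$, $j=1,\dots,r-1$, are complete on $\C$. Completeness depends only on the metric outside a compact set. On $\C\setminus\overline{\D}_{R_1}$ we have $h_-^{(R_1,R_2)}=h^{>R_2}$. Hence $H_{-,j}=H_j^{>R_2}$ on $\C\setminus\overline{\D}_{R_1}$, including $j>n$, since both metrics are real.

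A divergent path in $\C$ eventually leaves $\overline{\D}_{R_1}$, and it then has infinite length with respect to $H_j^{>R_2}$. To justify this, I would use that $h^{>R_2}$ is complete on $\C\setminus\overline{\D}_{R_2}$. A path in $\C$ tending to infinity tends to the end at infinity of $\C\setminus\overline{\D}_{R_2}$, whose distance from the compact circle $\partial\D_{R_1}$ is infinite. So its $H_j^{>R_2}$-length is infinite. On the compact set $\overline{\D}_{R_1}$ the metric $H_{-,j}$ is continuous and positive by Lemma \ref{R1R2}, so it plays no role. Thus every $H_{-,j}$ induces a complete distance, and the subsolution is complete. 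A Hopf--Rinow style argument suffices here: closed bounded sets are compact, because bounded sets for the glued metric stay in a compact region of $\C$.

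For mutual boundedness on $\C\setminus\D_{R_1}$, I compare componentwise for $j=1,\dots,n$; the remaining components follow by reality. There $h_{-,j}=h_j^{>R_2}$ and $h_{+,j}=\min\{h_j^{(\varphi)},e^ch_j^{>R_2}\}$. By (\ref{75}) we have $h_j^{>R_2}<h_j^{(\varphi)}$, and $c>0$. Hence
\begin{align*}
h_j^{>R_2}\le h_{+,j}^{(R_2',R_2)}\le e^c h_j^{>R_2}\quad\text{on }\C\setminus\D_{R_1},
\end{align*}
which gives mutual boundedness with constant $e^c$. The lower bound uses exactly that $h^{>R_2}$ lies below $h^{(\varphi)}$, which follows from Lemma \ref{lemma33} together with \cite[Proposition 27]{Miy4}, as already invoked in (\ref{75}).

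The main obstacle is the completeness step. One must make sure that gluing in the compact region cannot shorten paths to infinity and that continuity holds across $|z|=R_1$; both are supplied by Lemma \ref{R1R2}. The comparison with the complete metric $H_j^{>R_2}$ near infinity is then routine.
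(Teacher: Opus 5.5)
Your proposal is correct and follows the paper's proof. Completeness is inherited from $h^{>R_2}$ because $h_-^{(R_1,R_2)}$ coincides with it outside $\overline{\D}_{R_1}$; you simply spell out the divergent-path argument behind the paper's one-line claim. Mutual boundedness comes from the same sandwich $h_j^{>R_2}<h_{+,j}^{(R_2',R_2)}\le e^c h_j^{>R_2}$ on $\C\setminus\D_{R_1}$, obtained from (\ref{75}) and $c>0$, with the components $j>n$ handled by reality.

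One attribution is slightly off: the fact that gluing on a compact region cannot shorten paths to infinity comes from your tail argument, not from Lemma \ref{R1R2}, which only supplies continuity.
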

\begin{proof} Since $h^{>R_2}$ is complete, $h_-^{(R_1,R_2)}$ is also complete. As in the proof of Lemma \ref{lemma51}, for each $c>0$, we have the following estimate on $\C\setminus \D_{R_1}$:
\begin{align*}
h_j^{>R_2}=h_{-,j}^{(R_1,R_2)}<h_{+,j}^{(R_2^\prime, R_2)}\leq e^ch_j^{>R_2}\ \text{for $j=1,\dots, n$}.
\end{align*}
This proves the latter half of the statement.
\end{proof}
\begin{cor}\label{cor52} {\it Suppose that 
\begin{itemize}
\item $e^{-\varphi}$ is finite and continuous on some closed ball of radius $R_1^\prime>0$,
\item On $\C\setminus\varphi^{-1}(-\infty)$, $\varphi$ is continuous and belongs to $W^{1,2}_\loc$.
\end{itemize}
Then there exists a unique complete solution $h=(h_1,\dots, h_r)$ to equation (\ref{phi}) with $\det(h)=1$.  Moreover, $h$ is real. Furthermore, the unique complete solution $h$ has the following property concerning its size and rate of divergence: We suppose that the center of the closed ball of radius $R_1^\prime$ is the origin by the translation. Let $R_2, R_2^\prime, R_1,c$ be positive real numbers satisfying the following properties:
\begin{enumerate}
\item $0<R_2<R_2^\prime<R_1^\prime<R_1$. 
\item The following holds on $\{z\in\C\mid R^\prime_2\leq |z|\leq R_1^\prime\}$:
\begin{align*}
h_j^{(\varphi)}<e^ch_j^{>R_2}\ \text{for $j=1,\dots, n$}. 
\end{align*} 
\item The following holds on $\{z\in\C\mid R_2^\prime\leq |z|<R_1\}$:
\begin{align*}
h^{<R_1}_j<e^ch^{>R_2}_j\ \text{for $j=1,\dots, n$.} 
\end{align*}
\end{enumerate}
Then we have the following estimate on $\C\setminus \D_{R_1}$:
\begin{align}
h_j^{>R_2}\leq h_j\leq e^ch_j^{>R_2}\ \text{for $j=1,\dots, n$}. \label{RhcR}
\end{align}
}
\end{cor}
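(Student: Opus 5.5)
The plan is to run the sub/supersolution method of Proposition \ref{coffee} with the barriers built in Lemmas \ref{R1R2}, \ref{lemma51} and \ref{le52}. Then completeness and uniqueness will come from Theorem \ref{main theorem 1}, and the estimate (\ref{RhcR}) from the sandwich on $\C\setminus\D_{R_1}$.

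First I would fix $R_2<R_2^\prime<R_1^\prime<R_1$ and choose $c$ so that conditions 2 and 3 hold. Such a $c$ should exist for the following reasons.
\begin{itemize}
\item On the compact annulus $\{R_2^\prime\le|z|\le R_1^\prime\}$, $h^{(\varphi)}$ is continuous and finite because $e^{-\varphi}$ is, and $h^{>R_2}$ is continuous and positive.
\item For condition 3 on $\{R_2^\prime\le |z|<R_1\}$, I would use the hyperbolic-metric asymptotics of \cite[Lemma 35]{Miy6}, already invoked in Lemma \ref{R1R2}. Near $|z|=R_1$, $h^{<R_1}_j$ blows up (or decays) like the corresponding power of the hyperbolic metric of $\D_{R_1}$, whereas $h^{>R_2}_j$ stays bounded there. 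So the ratio $h^{<R_1}_j/h^{>R_2}_j$ is bounded on this set, with the sign of the exponent making $h^{<R_1}_j$ small, which is consistent with (\ref{75}).
\end{itemize}
With this $c$, Lemma \ref{lemma51}(i) shows that $h_+\coloneqq h_+^{(R_2^\prime,R_2)}$ is a continuous supersolution on $\C$. Lemma \ref{R1R2} gives that $h_-\coloneqq h_-^{(R_1,R_2)}$ is a continuous subsolution, and Lemma \ref{lemma51}(ii) gives $h_{-,j}<h_{+,j}$ for $j\le n$. Since $e^{\varphi}$ is continuous (as $\varphi$ is usc and continuous off the closed polar set), Proposition \ref{coffee} yields a real solution $h$ with $h_{-,j}\le h_j\le h_{+,j}$ for $j\le n$. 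Reality then gives $\det h=1$.

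The main obstacle is completeness of $h$. The plan is to show that each $H_j$ is bounded below by a multiple of the corresponding metric of $h_-$, which is complete by Lemma \ref{le52}. Outside a compact set, $h$ is squeezed between $h^{>R_2}$ and $e^{c}h^{>R_2}$ for $j\le n$. This gives pointwise bounds on $h_j$ but not directly on the differences $H_j=h_j^{-1}h_{j+1}$, since a ratio of two-sided bounds only controls $H_j$ up to $e^{\pm 2c}$ factors. That is in fact enough: $H_j\ge e^{-c}(h^{>R_2}_j)^{-1}h^{>R_2}_{j+1}\cdot$ const, with $j+1\le n$ or using reality for the middle index. Hence $H_j$ and $H_j^{>R_2}$ are mutually bounded on $\C\setminus\D_{R_1}$. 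Completeness of a metric only depends on its behaviour outside a compact set, so each $H_j$ is complete. If the middle index causes trouble, I would alternatively get mutual boundedness from Corollary \ref{ji} applied to $h^{>R_2}$ together with the determinant identity.

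Uniqueness among complete solutions with determinant $1$ is Theorem \ref{main theorem 1}. Finally, (\ref{RhcR}) holds for the unique $h$ because it is the solution just constructed: on $\C\setminus\D_{R_1}$ we have $h_{-,j}=h^{>R_2}_j$ and $h_{+,j}\le e^{c}h^{>R_2}_j$.
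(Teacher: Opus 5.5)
Your proposal is correct and follows essentially the same route as the paper. You use the barriers $h_-^{(R_1,R_2)}$ and $h_+^{(R_2^\prime,R_2)}$ from Lemmas \ref{R1R2} and \ref{lemma51}, Proposition \ref{coffee} to get a real solution (hence $\det(h)=1$), Theorem \ref{main theorem 1} for uniqueness, and the sandwich on $\C\setminus\D_{R_1}$ from Lemma \ref{le52} for both completeness and (\ref{RhcR}). Your extra steps make explicit what the paper leaves implicit: the existence of $c$, and the passage via reality from the componentwise bounds on $h_j$, $j\le n$, to $e^{-2c}H_j^{>R_2}\le H_j\le e^{2c}H_j^{>R_2}$ on $\C\setminus\D_{R_1}$.
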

\begin{proof} We may assume from the beginning that the center of the closed ball of radius $R_1^\prime$ is the origin. Take positive real numbers $R_2, R_2^\prime, R_1$ so that $R_2<R_2^\prime<R_1^\prime<R_1$. Take a positive constant $c$ large enough so that (\ref{echj}) and (\ref{echR}) hold. Then the supersolution $h_+^{(R_2^\prime,R_2)}$ and a subsolution $h_-^{(R_1,R_2)}$ associated with these real numbers constructed as in Lemmas \ref{R1R2} and \ref{lemma51} satisfy (\ref{74}), are globally continuous, and belong to $W^{1,2}_\loc$. Therefore by Proposition \ref{coffee}, there exists a real solution $h$ to equation (\ref{phi}) satisfying 
\begin{align*}
h_{-,j}^{(R_1,R_2)}\leq h_j\leq h_{+,j}^{(R_2^\prime,R_2)}.
\end{align*}
By the construction, $h$ is real, and in particular $\det(h)=1$. By the assumption that $\varphi$ is continuous outside its polar set, $e^\varphi$ is continuous. Hence, by Theorem \ref{main theorem 1}, the solution is unique. Also, by definition, $h_+^{(R_2^\prime,R_2)}$ and $h_-^{(R_1,R_2)}$ are complete, and by Lemma \ref{le52}, they are mutually bounded. Therefore, $h$ is complete. Moreover, by the construction, $h$ satisfies (\ref{RhcR}).
\end{proof}

We then prove Theorems \ref{main theorem 2} and \ref{main theorem 3}. We note that by taking the standard mollification, we can always take a smooth approximation $(\varphi_\epsilon)_{\epsilon>0}$ of the weight function satisfying the condition in Theorem \ref{main theorem 3}. From Corollary \ref{cor52} and Theorem \ref{main theorem 1}, for each $\epsilon>0$, there exists a unique smooth complete solution $h_\epsilon=(h_{\epsilon,1},\dots, h_{\epsilon,r})$ associated with $\varphi_\epsilon$ such that $\det(h_\epsilon)=1$. We prove the corresponding family of complete solutions $(h_\epsilon)_{\epsilon>0}$ with $\det(h_\epsilon)=1$ converges to a complete solution $h$ associated with $\varphi$. We suppose that $\varphi_\epsilon\not\equiv \varphi_{\epsilon^\prime}$ for every $\epsilon\neq\epsilon^\prime$ for simplicity. For each $\epsilon>0$ and each $j=1,\dots,r-1$, we set $H_{\epsilon,j}\coloneqq h_{\epsilon,j}^{-1}\otimes h_{\epsilon,j+1}$, and we also define $H_{\epsilon,r}\coloneqq (e^{-\varphi_\epsilon}h_\refe)^{-r}\otimes H_{\epsilon, 1}^{-1}\otimes\cdots \otimes H_{\epsilon,r-1}^{-1}$. We also take a positive real number $R_2$. We denote by $h_\epsilon^{>R_2}=(h_{\epsilon,1}^{>R_2},\dots, h_{\epsilon,r}^{>R_2})$ the unique complete solution associated with $\varphi_\epsilon|_{\C\setminus \overline{\D}_{R_2}}$. We also set $H_{\epsilon,j}^{>R_2}\coloneqq  h_{\epsilon,j}^{>R_2-1}\otimes h_{\epsilon,j+1}^{>R_2}$ for each $j=1,\dots, r-1$, and set $H_{\epsilon,r}^{>R_2}\coloneqq (e^{-\varphi_\epsilon}h_\refe|_{\C\setminus \overline{\D}_{R_2}})^{-r}\otimes H_{\epsilon, 1}^{>R_2-1}\otimes\cdots \otimes H_{\epsilon,r-1}^{>R_2-1}$. Before proving Theorems \ref{main theorem 2} and \ref{main theorem 3}, we prepare the following Lemmas \ref{le54}, \ref{le55}, \ref{le56.5}, \ref{le56},  and \ref{le57}:
\begin{lemm}\label{le54}{\it For each $0<\epsilon<\epsilon^\prime$ and $j=1,\dots, n$, the following holds:
\begin{enumerate}
\item $H_{\epsilon,j}\otimes H_{\epsilon^\prime,j}^{-1}$ and $H_{\epsilon,j}^{>R_2}\otimes H_{\epsilon^\prime,j}^{>R_2-1}$ are bounded above.
\item  It holds that
\begin{align*}
h_{\epsilon^\prime,j}<h_{\epsilon,j}\ \text{and} \ h_{\epsilon^\prime,j}^{>R_2}<h_{\epsilon,j}^{>R_2}.
\end{align*}
\end{enumerate}
}
\end{lemm}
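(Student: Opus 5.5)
The plan is to prove assertion 1 first, since it supplies the boundedness hypotheses needed in Lemma \ref{cands}, and then deduce assertion 2 from Lemma \ref{cands} by a sub/supersolution comparison for the weight $\varphi_\epsilon$. Throughout, $0<\epsilon<\epsilon^\prime$, so $\varphi_\epsilon\le\varphi_{\epsilon^\prime}$. All metrics are smooth because the weights are smooth. The argument for $h^{>R_2}_\epsilon, h^{>R_2}_{\epsilon^\prime}$ on $\C\setminus\overline{\D}_{R_2}$ is identical to the one for $h_\epsilon,h_{\epsilon^\prime}$ on $\C$, so I describe only the latter.

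\emph{Assertion 1.} First I bound $e^{\varphi_{\epsilon^\prime}}h_\refe^{-1}$ by a multiple of $H_{\epsilon^\prime,1}$. By definition $\prod_{k=0}^{r-1}H_{\epsilon^\prime,k}=e^{r\varphi_{\epsilon^\prime}}h_\refe^{-r}$. Since $h_{\epsilon^\prime}$ is complete, Corollary \ref{ji} gives that every $H_{\epsilon^\prime,k}\otimes H_{\epsilon^\prime,1}^{-1}$, $k=0,\dots,r-1$, is bounded. Taking the $r$-th root of the product therefore gives
\begin{align*}
e^{\varphi_\epsilon}h_\refe^{-1}\le e^{\varphi_{\epsilon^\prime}}h_\refe^{-1}\le C H_{\epsilon^\prime,1}.
\end{align*}
Next, the metric $g_X$ induced by $H_{\epsilon^\prime,1}$ is complete, and its curvature is bounded below by equation \eqref{cyc2} (cf. \cite[Lemma 2.4]{LM1}). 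So Corollary \ref{CYM}, applied to the solution $h_\epsilon$ with $H=H_{\epsilon^\prime,1}$, shows that $H_{\epsilon,j}\otimes H_{\epsilon^\prime,1}^{-1}$ is bounded for every $j$. Finally, Corollary \ref{ji} with $i=j$ applied to $h_{\epsilon^\prime}$ gives $H_{\epsilon^\prime,1}\le C^\prime H_{\epsilon^\prime,j}$. Combining these bounds, $H_{\epsilon,j}\otimes H_{\epsilon^\prime,j}^{-1}$ is bounded above.

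\emph{Assertion 2.} By Example \ref{subandsuper}, the solution $h_{\epsilon^\prime}$ for $\varphi_{\epsilon^\prime}$ is a subsolution for $\varphi_\epsilon$: lowering the weight lowers $\vol(H_0)$, and $\vol(H_0)$ enters the $j=1$ inequality with a plus sign. The solution $h_\epsilon$ is in particular a supersolution for $\varphi_\epsilon$. I apply Lemma \ref{cands} on $X=\C$ (empty boundary) with $h=h_{\epsilon^\prime}$, which is complete, and $h^\prime=h_\epsilon$. Its hypotheses are checked as follows.
\begin{itemize}
\item The curvatures of $H_{\epsilon^\prime,j}$ are bounded below by \cite[Lemma 2.4]{LM1}.
\item $H_{\epsilon^\prime,j-1}\otimes H_{\epsilon^\prime,j}^{-1}$ is bounded by Corollary \ref{ji}.
\item The functions $\log(h_{\epsilon^\prime,j}\otimes h_{\epsilon,j}^{-1})$ are smooth.
\item These functions are bounded above; this is the point that needs work. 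Reality and $\det=1$ give
\begin{align*}
h_j=\prod_{k<j}H_k^{1/2}\prod_{k\ge j}H_k^{-1/2}\ (j\le n),
\end{align*}
so the required bound needs $H_{\epsilon^\prime,k}\lesssim H_{\epsilon,k}$ for $k<j$ and $H_{\epsilon,k}\lesssim H_{\epsilon^\prime,k}$ for $k\ge j$. The second family is exactly assertion 1.
\end{itemize}
Once these hypotheses hold, Lemma \ref{cands} gives $h_{\epsilon^\prime,j}\le h_{\epsilon,j}$. Its $C^2$ clause then gives either strict inequality for all $j\le n$, or $h_{\epsilon^\prime,j}=h_{\epsilon,j}$ for all $j\le n$. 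In the second case, reality gives $h_\epsilon=h_{\epsilon^\prime}$. Subtracting the $j=1$ equations then gives $\vol(H_{\epsilon,0})=\vol(H_{\epsilon^\prime,0})$, hence $\varphi_\epsilon=\varphi_{\epsilon^\prime}$. This contradicts the standing assumption $\varphi_\epsilon\not\equiv\varphi_{\epsilon^\prime}$, so the inequality is strict, as stated.

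The main obstacle is the reverse bound $H_{\epsilon^\prime,k}\lesssim H_{\epsilon,k}$. By the same Corollary \ref{CYM} argument as in assertion 1, it reduces to $e^{\varphi_{\epsilon^\prime}}\lesssim H_{\epsilon,1}$, which does not follow from the pointwise order of the weights. I would first try to get it from the asymptotics of Corollary \ref{cor52}. Both $h_\epsilon$ and $h_{\epsilon^\prime}$ are squeezed between $h^{>R_2}_{\bullet}$ and $e^{c}h^{>R_2}_{\bullet}$ outside $\D_{R_1}$, and on the compact disc $\overline{\D}_{R_1}$ all the smooth metrics involved are mutually bounded. It therefore suffices to compare $h^{>R_2}_\epsilon$ with $h^{>R_2}_{\epsilon^\prime}$ near infinity. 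If that comparison is not available directly, I would instead run the comparison on $\C\setminus\D_R$ via the boundary version (ii) of Lemma \ref{cands}, using mutual boundedness on the compact part.
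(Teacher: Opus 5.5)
Your proof of assertion 1 is correct; going through $H_{\epsilon',1}$ and then Corollary \ref{ji} is a harmless detour compared with applying Corollary \ref{CYM} directly with $H=H_{\epsilon',j}$. Your setup for assertion 2 follows the paper: $h_{\epsilon'}$ is a complete subsolution for $\varphi_\epsilon$ by Example \ref{subandsuper}, $h_\epsilon$ is a supersolution, and you apply Lemma \ref{cands} on $\C$. Your explicit argument that equality would force $\varphi_\epsilon=\varphi_{\epsilon'}$ is also fine.

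The gap is the hypothesis of Lemma \ref{cands} that $\log(h_{\epsilon',j}\otimes h_{\epsilon,j}^{-1})$ is bounded above. You leave it unproved, believing it requires the reverse bound $H_{\epsilon',k}\lesssim H_{\epsilon,k}$ for $k<j$, and neither fallback closes it. Corollary \ref{cor52} only transfers the problem to comparing $h^{>R_2}_{\epsilon'}$ with $h^{>R_2}_{\epsilon}$ near infinity, which is the second half of the statement you are proving. Running Lemma \ref{cands} on $\C\setminus\D_R$ with boundary still needs the same upper bound on the noncompact region.

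The obstacle does not actually exist; you missed a cancellation in your own formula $h_j^2=\prod_{k<j}H_k\prod_{k\ge j}H_k^{-1}$. Reality gives $H_k=H_{r-k}$, and since $j\le n$, each factor $H_k$ with $k<j$ cancels exactly against $H_{r-k}^{-1}$ with $r-k\ge r-j+1$. What remains is
\[
h_j^2=\prod_{k=j}^{r-j}H_k^{-1},\qquad\text{hence}\qquad (h_{\epsilon',j}\otimes h_{\epsilon,j}^{-1})^2=\prod_{k=j}^{r-j}H_{\epsilon,k}\otimes H_{\epsilon',k}^{-1}.
\]
You can also see this by telescoping: $\prod_{k=j}^{r-j}H_k=h_j^{-1}h_{r-j+1}=h_j^{-2}$, using $h_{r-j+1}=h_j^{-1}$. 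Every factor on the right is bounded above by assertion 1, which your argument proves for all $k=1,\dots,r-1$. So the required upper bound follows from assertion 1 alone, with no reverse comparison.

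This identity is exactly the step the paper uses. Once you insert it, the rest of your argument for assertion 2 goes through, and the same reasoning handles the $>R_2$ metrics on $\C\setminus\overline{\D}_{R_2}$.
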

\begin{proof}Fix $j=1,\dots,n$.  We first prove assertion 1. Since the proofs are the same, we only prove the assertion for $H_{\epsilon,j}$ and $H_{\epsilon^\prime,j}$ for simplicity. Because $e^{\varphi_\epsilon^\prime}h_\refe^{-1}\otimes H_{\epsilon^\prime,j}^{-1}$ is bounded, $e^{\varphi_\epsilon}h_\refe^{-1}\otimes H_{\epsilon^\prime,j}^{-1}$ is also bounded. Then by Corollary \ref{CYM}, we observe that $H_{\epsilon,j}\otimes H_{\epsilon^\prime,j}^{-1}$ is bounded. We next prove assertion 2. Since the proofs are the same, we only prove the assertion for $h_j$ and $h_j^\prime$ for simplicity. The following holds:
\begin{align}
(h_{\epsilon^\prime,j}\otimes h_{\epsilon,j}^{-1})^2=\Pi_{k=j}^{r-j}(H_{\epsilon^\prime,k}^{-1}\otimes H_{\epsilon,k})\ \text{for each $j=1,\dots,n$}.\label{eeprime}
\end{align}
Therefore, by assertion 1, $h_{\epsilon^\prime,j}\otimes h_{\epsilon,j}^{-1}$ is bounded, and thus by Lemma \ref{cands}, we have the claim.
\end{proof}
\begin{lemm}\label{le55}{\it The following holds for all $\epsilon>0$ and $j=1,\dots, n$:
\begin{align*}
h_{\epsilon,j}\leq h^{(\varphi)}_j=(e^{-\varphi}h_\refe)^{\frac{r-(2j-1)}{2}}.
\end{align*}
}
\end{lemm}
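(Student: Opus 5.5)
The plan is to compare $h_\epsilon$ with the supersolution $h^{(\varphi_\epsilon)}$, and then pass from $\varphi_\epsilon$ to $\varphi$ using the monotonicity $\varphi\le\varphi_\epsilon$.

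\emph{Step 1: comparison with $h^{(\varphi_\epsilon)}$.} Fix $\epsilon>0$. Since $\varphi_\epsilon$ is smooth, $h_\epsilon$ is a smooth complete real solution with $\det(h_\epsilon)=1$. Its associated metrics $H_{\epsilon,1},\dots,H_{\epsilon,r}$ satisfy $H_{\epsilon,j}=H_{\epsilon,r-j}$. I would apply Lemma \ref{lemma33} at each point, with $\phi=\varphi_\epsilon$ measured against a local flat reference. This gives
\begin{align*}
h_{\epsilon,j}<(e^{-\varphi_\epsilon}h_\refe)^{\frac{r-(2j-1)}{2}}=h_j^{(\varphi_\epsilon)}\quad (j=1,\dots,n),
\end{align*}
provided the ordering hypothesis (\ref{2jn}) holds, namely $H_{\epsilon,j-1}\otimes H_{\epsilon,j}^{-1}<1$ for $1\le j\le n$. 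This ordering is what \cite[Proposition 27]{Miy4} supplies for complete solutions; it was used exactly this way to obtain (\ref{75}) in the proof of Lemma \ref{lemma51}.

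\emph{Step 2: passing to $\varphi$.} Because $\varphi\le\varphi_\epsilon$ and the exponent $\frac{r-(2j-1)}{2}$ is positive for $j\le n$, we have $(e^{-\varphi_\epsilon}h_\refe)^{\frac{r-(2j-1)}{2}}\le(e^{-\varphi}h_\refe)^{\frac{r-(2j-1)}{2}}$, with the right-hand side equal to $+\infty$ on $\varphi^{-1}(-\infty)$. Combining this with Step 1 gives the claim.

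\emph{Alternative route and main obstacle.} If one prefers not to cite \cite{Miy4}, I would instead use Lemma \ref{cands}. Take $h=h_\epsilon$, which is a complete solution and hence a subsolution, and $h'=h^{(\varphi_\epsilon)}$, which is a continuous supersolution by Example \ref{H1Hr}. Here $\partial X=\emptyset$, and the curvature lower bounds on $H_{\epsilon,j}$ hold as in \cite[Lemma 2.4]{LM1}. The main obstacle in that route is verifying the remaining hypotheses: that $H_{\epsilon,j-1}\otimes H_{\epsilon,j}^{-1}$ is bounded, and that $\log(h_{\epsilon,j}\otimes h_j^{(\varphi_\epsilon)-1})$ is bounded above. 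Both should follow from Corollary \ref{CYM} together with Corollary \ref{ji}, since each $H_{\epsilon,j}$, and $e^{\varphi_\epsilon}h_\refe^{-1}$, is bounded by a multiple of a complete $H_{\epsilon,i}$. The upper bound on $h_{\epsilon,j}\otimes h_j^{(\varphi_\epsilon)-1}$ then comes from the product formula, in the spirit of (\ref{eeprime}), expressing $h_j^2$ through the $H_k\otimes (e^{\varphi_\epsilon}h_\refe^{-1})^{-1}$. Lemma \ref{cands}(i) then yields $h_{\epsilon,j}\le h_j^{(\varphi_\epsilon)}$, and Step 2 concludes the proof.
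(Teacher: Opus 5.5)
Your main argument is exactly the paper's proof: Lemma \ref{lemma33}, with the ordering hypothesis supplied by \cite[Proposition 27]{Miy4}, gives $h_{\epsilon,j}\le h_j^{(\varphi_\epsilon)}$. The monotonicity $\varphi\le\varphi_\epsilon$ of the approximation then gives $h_j^{(\varphi_\epsilon)}\le h_j^{(\varphi)}$. Your alternative route through Lemma \ref{cands}, with the boundedness hypotheses obtained from Corollary \ref{ji}, would also work, but the paper does not need it.
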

\begin{proof}
From Lemma \ref{lemma33}, \cite[Proposition 27]{Miy4}, and the monotone property of the mollification, we obtain
\begin{align*}
h_{\epsilon,j}\leq h^{(\varphi_\epsilon)}_j\leq h^{(\varphi)}_j,
\end{align*}
and thus we have the claim.
\end{proof}
\begin{lemm}\label{le56.5}Let $X$ be a domain of $\C$ and let $(\varphi_\epsilon)_{\epsilon>0}$ be either a mollification of a weight function on $X$ or the restriction of a mollification of a weight function defined on $\C$. We denote by $(h_\epsilon=(h_{\epsilon, 1},\dots, h_{\epsilon,r}))_{\epsilon>0}$ the corresponding family of smooth complete solutions and by $H_{\epsilon,j}\coloneqq h_{\epsilon,j}^{-1}\otimes h_{\epsilon,j+1}\ (j=1,\dots, r-1)$ the Hermitian metrics on $K_X^{-1}\rightarrow X$. Then in either case, there exists a positive constant $C$ independent of $\epsilon$ such that 
\begin{align*}
e^{\varphi_\epsilon}h_\refe^{-1}\otimes H_{\epsilon,j}^{-1}\leq C.
\end{align*}
\end{lemm}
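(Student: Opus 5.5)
We will prove the bound by a maximum-principle argument on the free-energy-type quantity of Lemma \ref{ephi}. The reference metric is taken to be the complete hyperbolic metric when $X$ is hyperbolic, and a Euclidean-type metric built from $\varphi_\epsilon$ itself otherwise. The key observation is that, because mollification is monotone, $\varphi_\epsilon\le\varphi_{\epsilon_0}$ for $\epsilon\le\epsilon_0$. Moreover, $e^{\varphi_{\epsilon}}$ is locally uniformly bounded above on compact subsets, since $\varphi_\epsilon\searrow\varphi$ and $\varphi$ is upper semicontinuous. The whole difficulty is therefore to obtain a bound on $H_{\epsilon,j}^{-1}$ that is uniform in $\epsilon$. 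Pointwise, for each fixed $\epsilon$, the boundedness of $e^{\varphi_\epsilon}h_\refe^{-1}\otimes H_{\epsilon,j}^{-1}$ already follows from Corollary \ref{CYM} and Corollary \ref{ji}. By Corollary \ref{ji}, all the $H_{\epsilon,j}$ ($1\le j\le r-1$) and $H_{\epsilon,0}=e^{r\varphi_\epsilon}h_\refe^{-r}\otimes\prod H_{\epsilon,k}^{-1}$ are bounded by a constant multiple of any one $H_{\epsilon,i}$. It therefore suffices to bound $e^{\varphi_\epsilon}h_\refe^{-1}\otimes H_{\epsilon,1}^{-1}$ from above uniformly.

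The plan is to apply the comparison Lemma \ref{cands} to $h_\epsilon$ (a complete solution, hence a complete subsolution) against an explicit $\epsilon$-independent supersolution. Equivalently, we can reverse roles: a lower bound on $H_{\epsilon,j}$ amounts to an upper bound on $h_{\epsilon,j}^{-1}$-type quantities, which we obtain by comparing $h_\epsilon$ with a subsolution built from a hyperbolic metric. Concretely, let $g_{\mathrm{hyp}}$ be the complete hyperbolic metric on the domain in question; if $X=\C$, replace it by the complete metric on $\C\setminus\overline{\D}_{R}$ and work on an exterior region. Let $H$ be the metric on $K_X^{-1}$ with Gaussian curvature $-2$. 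By the second remark after \eqref{cyc2}, the metric $h^{\mathrm{hyp}}$ corresponding to $H_j=\lambda_jH$ is a solution for $\varphi=-\infty$. By Example \ref{subandsuper}, it is then a subsolution for every $\varphi_\epsilon$. Lemma \ref{cands} (with the roles of subsolution and supersolution arranged so that the complete object is $h^{\mathrm{hyp}}$) then yields $h^{\mathrm{hyp}}_j\le h_{\epsilon,j}$ for $j\le n$. From this we get $H_{\epsilon,j}\ge c\,H$ uniformly, in the same way as the estimates of \cite[Lemma 35]{Miy6}.

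To turn the lower bound $H_{\epsilon,j}\ge cH$ into the desired bound against $e^{\varphi_\epsilon}$, we split into two regions. On a fixed compact set, $e^{\varphi_\epsilon}\le e^{\varphi_{1}}$ is bounded and $H$ is bounded below, so the ratio is bounded. Near infinity, or near the boundary of the domain, we instead use Lemma \ref{ephi} with the reference metric $H'=\max\{e^{\varphi_1}h_\refe^{-1},H\}$, suitably smoothed via Proposition \ref{GWT}. This gives, through the Keller--Osserman type Proposition \ref{SL} applied to \eqref{C1C2}, an upper bound for $\sum_j\vol(H_{\epsilon,j})/\vol(H')$ depending only on the constants $C_1,C_2$ and the curvature lower bound of $H'$, none of which involve $\epsilon$.

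The main obstacle is precisely this uniformity. Proposition \ref{SL} controls sums of the $H_{\epsilon,j}$ from above, whereas we need $H_{\epsilon,j}$ from below relative to $e^{\varphi_\epsilon}$. I would get around this by exploiting $\prod_{k=0}^{r-1}H_{\epsilon,k}=e^{r\varphi_\epsilon}h_\refe^{-r}$. A uniform upper bound $H_{\epsilon,k}\le C e^{\varphi_\epsilon}h_\refe^{-1}$ would not suffice on its own. Instead, I would combine the uniform upper bounds $H_{\epsilon,k}\le C'H'$ with the uniform hyperbolic lower bound above and the comparison-of-ratios bounds of Corollary \ref{ji}, whose constants are universal because Lemma \ref{log} has constants depending only on $r$. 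Together these bound $H_{\epsilon,0}/H_{\epsilon,1}$, and hence $e^{r\varphi_\epsilon}$ relative to $H_{\epsilon,1}^{r}$, uniformly.
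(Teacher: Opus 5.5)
Your last paragraph already contains the whole proof, and that argument is exactly the paper's. Everything before it can be dropped. Each $H_{\epsilon,j}$ ($1\le j\le r-1$) is complete with $\inum\Lambda_{H_{\epsilon,j}}F_{H_{\epsilon,j}}\ge -2$. The constants $C_1,C_2$ of Lemma \ref{log} depend only on $r$. So the argument of Corollary \ref{ji} gives $\sum_{k=1}^{r}H_{\epsilon,k}\otimes H_{\epsilon,j}^{-1}\le (C_2+2)/C_1$ for every $\epsilon$, on any domain $X$ including $\C$. This sum contains $H_{\epsilon,r}=H_{\epsilon,0}$, and $\prod_{k=1}^{r}H_{\epsilon,k}=(e^{\varphi_\epsilon}h_\refe^{-1})^{r}$. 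The arithmetic--geometric mean inequality therefore yields
\[
e^{\varphi_\epsilon}h_\refe^{-1}\otimes H_{\epsilon,j}^{-1}=\Bigl(\prod_{k=1}^{r}H_{\epsilon,k}\otimes H_{\epsilon,j}^{-1}\Bigr)^{1/r}\le\frac{1}{r}\sum_{k=1}^{r}H_{\epsilon,k}\otimes H_{\epsilon,j}^{-1}\le\frac{C_2+2}{rC_1}.
\]
So uniformity is not ``the whole difficulty'': Corollary \ref{ji} is uniform from the outset. It also supplies all the ratios $H_{\epsilon,k}/H_{\epsilon,1}$, $k=0,\dots,r-1$, whereas bounding only $H_{\epsilon,0}/H_{\epsilon,1}$, as your last sentence says, would not suffice. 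No hyperbolic lower bound and no auxiliary reference metric are needed.

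The intermediate steps are not only superfluous but, as written, incorrect.

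\begin{itemize}
\item The metric built from $H_j=\lambda_jH$ solves the equation for $\varphi=-\infty$. By Example \ref{subandsuper} it is therefore a \emph{super}solution for every $\varphi_\epsilon$, since subsolutions only pass to smaller weights. In Lemma \ref{cands} the complete subsolution must then be $h_\epsilon$, and the conclusion would be $h_{\epsilon,j}\le h^{\mathrm{hyp}}_j$. That is the inequality that yields $H_{\epsilon,j}\ge cH$; the one you wrote, $h^{\mathrm{hyp}}_j\le h_{\epsilon,j}$, would bound $\prod_k H_{\epsilon,k}$ from above.
\item On $\C$ there is no hyperbolic metric. Your exterior-region version would also have to rule out alternative (ii) of Lemma \ref{cands} on the boundary circle, which you do not address.
\item The metric $H'=\max\{e^{\varphi_1}h_\refe^{-1},H\}$ need not have curvature bounded below. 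A maximum of conformal factors creates a crease carrying negative singular curvature. Moreover, $e^{\varphi_1}h_\refe^{-1}$ itself has curvature $-\inum\partial\bar\partial\varphi_1$ (for flat $h_\refe$), which is not controlled by its volume form.
\item Assertion 2 of Proposition \ref{GWT} needs a curvature lower bound as input, so smoothing cannot produce one. In any case that step gives upper bounds on the $H_{\epsilon,j}$, which is the wrong direction, as you yourself note.
\end{itemize}
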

\begin{proof}Since
\begin{align*}
\inum\Lambda_{H_{\epsilon,j}}
F_{H_{\epsilon,j}}\geq -2
\end{align*}
uniformly in $\epsilon$, the argument in the proof of Corollary \ref{ji}, together with the fact that the constants $C_1$ and $C_2$ are independent of $\epsilon$, yields
\begin{align*}
\sum_{k=1}^r
H_{\epsilon,k}\otimes
H_{\epsilon,j}^{-1}
\le \frac{C_2+2}{C_1}.
\end{align*}
Hence, 
we obtain a uniform upper bound for $e^{\varphi_\epsilon}h_\ast^{-1}\otimes
H_{\epsilon,j}^{-1}$.
\end{proof}
\begin{lemm}\label{le56}{\it The following holds:
\begin{enumerate}
\item There exists a positive constant $C(R_2)$ such that on $\C\setminus \overline{\D}_{R_2}$, the following holds for all $\epsilon>0$ and $j=1,\dots, n$:
\begin{align}
h_{\epsilon,j}^{-1}\otimes h_{\epsilon,j}^{>R_2}\leq C(R_2). \label{79}
\end{align}
\item 
For each real number $R_1$ which is greater than $R_2$ and each $\epsilon>0$, there exists a positive constant $C(\epsilon, R_1, R_2)$ such that on $\C\setminus \D_{R_1}$, the following holds for all $j=1,\dots, n$:
\begin{align}
h_{\epsilon,j}\otimes h_{\epsilon,j}^{>R_2-1}\leq C(\epsilon, R_1, R_2). \label{80}
\end{align}
\end{enumerate}
}
\end{lemm}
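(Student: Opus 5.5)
We treat the two estimates separately; both are comparisons on $\C\setminus\overline{\D}_{R_2}$ (resp. $\C\setminus\D_{R_1}$) between $h_\epsilon$ and $h^{>R_2}_\epsilon$, carried out with Lemma \ref{cands} on a surface with boundary or with Corollary \ref{cor52}.

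For assertion 1, we apply Lemma \ref{cands} on $X=\C\setminus\D_{R}$ for a fixed $R_2<R$, or directly on $\C\setminus\overline{\D}_{R_2}$. We take $h^{>R_2}_\epsilon$ as the complete subsolution and the restriction of $h_\epsilon$ as the supersolution; both are smooth solutions there. The hypotheses hold as follows.
\begin{itemize}
\item The curvatures of $H^{>R_2}_{\epsilon,j}$ are bounded below by $-2$ (cf.~\cite[Lemma 2.4]{LM1}).
\item The ratios $H_{\epsilon,j-1}^{>R_2}\otimes H_{\epsilon,j}^{>R_2-1}$ are bounded by Corollary \ref{ji}.
\item $h^{>R_2}_{\epsilon,j}\otimes h_{\epsilon,j}^{-1}$ is bounded above. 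Write it via the analogue of \eqref{eeprime} as a product of ratios $H_{\epsilon,k}\otimes H^{>R_2-1}_{\epsilon,k}$. Since $H^{>R_2}_{\epsilon,k}$ is complete and $e^{\varphi_\epsilon}h_\refe^{-1}\otimes H_{\epsilon,k}^{-1}$ is bounded (Lemma \ref{le56.5}), Corollary \ref{CYM} applied with $H=H^{>R_2}_{\epsilon,k}$ shows these ratios are bounded.
\end{itemize}
The difficulty is that the boundary $\partial\D_{R_2}$ is where $h^{>R_2}_\epsilon$ blows up; this is the wrong direction for a sup-bound unless $h^{>R_2}_{\epsilon,j}$ \emph{decreases} to $0$ at the inner boundary. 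The hyperbolic-type estimates of \cite[Lemma 35]{Miy6} give this: near $|z|=R_2$ the metric $H^{>R_2}_{\epsilon,j}$ blows up like the hyperbolic metric, so $h^{>R_2}_{\epsilon,j}\to 0$ for $j\le n$. Therefore $h^{>R_2}_{\epsilon,j}\otimes h^{-1}_{\epsilon,j}\to 0$ there, and the supremum is attained away from the boundary.

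So we work on $\C\setminus\D_R$ with $R$ slightly bigger than $R_2$ and use alternative (ii) of Lemma \ref{cands}. This gives
\[
\sup h^{>R_2}_{\epsilon,j}\otimes h^{-1}_{\epsilon,j}\le \max\Bigl\{1,\max_{\partial\D_R}h^{>R_2}_{\epsilon,j}\otimes h_{\epsilon,j}^{-1}\Bigr\}.
\]
The boundary term must be uniform in $\epsilon$. We get this from two monotonicities. First, $h^{>R_2}_{\epsilon,j}$ is increasing in $\epsilon$ (Lemma \ref{le54}), so $h^{>R_2}_{\epsilon,j}\le h^{>R_2}_{1,j}$ for $\epsilon\le 1$. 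Second, $h_{\epsilon,j}$ is decreasing as $\epsilon$ decreases, so a lower bound on $\partial\D_R$ is needed. For this we use $h_{\epsilon,j}\ge$ the complete solution on a large disc associated with $\varphi_\epsilon$, which dominates the $q=0$ hyperbolic solution, a subsolution uniformly in $\epsilon$. This comparison is made via Lemma \ref{cands} on the disc, using Example \ref{subandsuper} with $\varphi_\epsilon\ge-\infty$. Securing this uniform lower bound on $h_{\epsilon,j}$ on the compact circle is the main obstacle. I would first try to compare $h_\epsilon$ with the complete solution on $\D_{R_1}$ for the weight $-\infty$, i.e.\ $H_j=\lambda_j g_{\mathrm{hyp}}$, which is explicit and independent of $\epsilon$.

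For assertion 2, $\epsilon$ is fixed, so there is no uniformity issue. Since $\varphi_\epsilon$ is smooth, Corollary \ref{cor52} applies with any $R_1'$ and gives \eqref{RhcR}, namely $h_{\epsilon,j}\le e^{c}h^{>R_2}_{\epsilon,j}$ on $\C\setminus\D_{R_1}$, where $c$ depends on $\epsilon,R_1,R_2$. We choose $R_2'$ and $R_1'$ between $R_2$ and $R_1$, and take $c$ large enough so that the two conditions of Corollary \ref{cor52} hold on the compact annuli, which is possible by continuity and positivity. Uniqueness (Theorem \ref{main theorem 1}) identifies the constructed solution with $h_\epsilon$. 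Then $C(\epsilon,R_1,R_2)=e^c$.
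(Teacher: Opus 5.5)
\textbf{Assertion 1 has a genuine gap.} Your bullet list correctly checks the hypotheses of Lemma \ref{cands} for the complete subsolution $h^{>R_2}_\epsilon$ and the supersolution $h_\epsilon$ on $\C\setminus\overline{\D}_{R_2}$.

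You then move to $\C\setminus\D_R$ and alternative (ii). This creates a boundary term $\max_{\partial\D_R}h^{>R_2}_{\epsilon,j}\otimes h_{\epsilon,j}^{-1}$, which you never bound uniformly in $\epsilon$; you call it the main obstacle and leave it open. The monotonicities you invoke for it are also reversed. By Lemma \ref{le54}, both $h_{\epsilon,j}$ and $h^{>R_2}_{\epsilon,j}$ \emph{increase} as $\epsilon\searrow0$. So the denominator is harmless ($h_{\epsilon,j}\geq h_{1,j}$ for $\epsilon\leq 1$), while the numerator is the term that grows and would need an $\epsilon$-independent upper bound on $\partial\D_R$. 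Your proposed comparison with the $\varphi=-\infty$ hyperbolic solution also points the wrong way. By Example \ref{subandsuper}, a solution for weight $-\infty$ is a \emph{supersolution} for $\varphi_\epsilon$, so it bounds the components $j\leq n$ from above, not below.

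\textbf{The boundary was never needed.} The circle $|z|=R_2$ is not a boundary of $X=\C\setminus\overline{\D}_{R_2}$. It lies at infinite distance for the complete metrics $H^{>R_2}_{\epsilon,j}$, and Proposition \ref{wstoc}, used inside Lemma \ref{cands} with $\partial X=\emptyset$, handles it with no decay information. Your first option therefore already yields alternative (i), namely $h^{>R_2}_{\epsilon,j}\leq h_{\epsilon,j}$, so $C(R_2)=1$.

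The paper's route is more direct still: your third bullet, with constants tracked, is the whole proof. The hypothesis Corollary \ref{CYM} needs with reference $H^{>R_2}_{\epsilon,j}$ is a bound on $e^{\varphi_\epsilon}h_\refe^{-1}\otimes H^{>R_2-1}_{\epsilon,j}$, not the bound on $H_{\epsilon,k}^{-1}$ that you cite. Lemma \ref{le56.5} (second case) gives this bound uniformly in $\epsilon$. The constants of Lemma \ref{ephi} and the curvature bound $-2$ are also $\epsilon$-independent. Hence Lemma \ref{ephi} with $H=CH^{>R_2}_{\epsilon,j}$, together with Proposition \ref{SL}, gives $\sum_k\vol(H_{\epsilon,k})/\vol(H^{>R_2}_{\epsilon,j})\leq(CC_2+2)/C_1$, and the product identity yields \eqref{79}.

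\textbf{Assertion 2 is correct but takes a different route.} The paper glues a complete metric $H^{(R_1,R_2)}_{\epsilon,j}$, equal to $H_{\epsilon,j}$ outside $\D_{R_1}$ and hyperbolic near $\partial\D_{R_2}$. It then applies Lemma \ref{ephi} and Corollary \ref{CYM} with this metric as reference. You instead read \eqref{RhcR} off Corollary \ref{cor52}, which does not depend on this lemma, and identify the resulting solution with $h_\epsilon$ by uniqueness. That is shorter, since the estimate is built into the existence result.

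One correction: condition 3 of Corollary \ref{cor52} is imposed on the non-compact set $R_2'\leq|z|<R_1$, so continuity and positivity alone do not let you choose $c$. You also need $h^{<R_1}_{\epsilon,j}<h^{(\varphi_\epsilon)}_j$ from \eqref{75}, which is continuous up to $|z|=R_1$.
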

\begin{proof}We first prove assertion 1. 
By Lemma \ref{le56.5}, there exists a constant $C$ independent of $\epsilon$ such that
\begin{align}
e^{\varphi_\epsilon}h_\refe^{-1}\otimes H_{\epsilon,j}^{>R_2-1}\leq C\ \text{for each $j=1,\dots,n$}. \label{-1leqC} 
\end{align}
Applying Lemma \ref{ephi} to $H=CH_{\epsilon,j}^{>R_2}$, we have
\begin{align}
\inum\Lambda_{H_{\epsilon,j}^{>R_2}}\partial\bar{\partial}\log(\sum_{k=1}^r\vol(H_{\epsilon,k})/\vol(H_{\epsilon,j}^{>R_2})) \geq C_1(\sum_{k=1}^r\vol(H_{\epsilon,k})/\vol(H_{\epsilon,j}^{>R_2}))-CC_2-2.\label{2/C}
\end{align}
By applying Proposition \ref{SL} to inequality (\ref{2/C}), we obtain
\begin{align*}
\sum_{k=1}^r(\vol(H_{\epsilon,k})/\vol(H_{\epsilon,j}^{>R_2}))\leq \frac{CC_2+2}{C_1}.
\end{align*}
In particular, it follows that $H_{\epsilon,k}\otimes H_{\epsilon,j}^{>R_2-1}$ is bounded above by a constant independent of $\epsilon$, $j=1,\dots, r-1$, and $k=1,\dots, r$. As a consequence, it follows that 
\begin{align*}
(h_{\epsilon,j}^{>R_2}\otimes h_{\epsilon,j}^{-1})^2=\Pi_{k=j}^{r-j}(H_{\epsilon,k}^{>R_2-1}\otimes H_{\epsilon,k})
\end{align*}
is bounded above by a constant independent of $\epsilon$ and $j=1,\dots, n$. This establishes inequality (\ref{79}). We next provide a proof of assertion 2, which is similar to the proof of assertion 1. For each $\epsilon>0$ and $j=1,\dots, r-1$, let $H_{\epsilon,j}^{(R_1,R_2)}$ be a smooth complete Hermitian metric on the anticanonical bundle of $\C\setminus \overline{\D}_{R_2}$ satisfying the following properties:
\begin{enumerate}
\item On $\C\setminus \D_{R_1}$, $H_{\epsilon,j}^{(R_1,R_2)}$ coincides with $H_{\epsilon,j}$.
\item On $\D_{\frac{R_1+R_2}{2}}\setminus \overline{\D}_{R_2}$, $H_{\epsilon,j}^{(R_1,R_2)}$ coincides with the hyperbolic metric on $\C\setminus \overline{\D}_{R_2}$.
\end{enumerate}
Then by definition, we can easily confirm that $e^{\varphi_\epsilon}h_\refe^{-1}|_{\C\setminus \overline{\D}_{R_2}}\otimes H_{\epsilon,j}^{(R_1,R_2)-1}$ is bounded. Since the Gaussian curvature of $H_{\epsilon,j}^{(R_1,R_2)}$ is bounded below, by Lemma \ref{ephi} and Corollary \ref{CYM}, $H^{>R_2}_{\epsilon,j}\otimes H_{\epsilon,j}^{(R_1,R_2)-1}$ is bounded above by a constant that may depend on $\epsilon$. As a consequence, since $H_{\epsilon,j}^{(R_1,R_2)}$ coincides with $H_{\epsilon,j}$ on $\C\setminus \D_{R_1}$, for each $j=1,\dots, n$, it follows that 
\begin{align*}
(h_{\epsilon,j}^{>R_2-1}\otimes h_{\epsilon,j})^2=\Pi_{k=j}^{r-j}(H_{\epsilon,k}^{>R_2}\otimes H_{\epsilon,k}^{-1})
\end{align*}
is bounded above on $\C\setminus \D_{R_1}$. This completes the proof. 
\end{proof}

\begin{lemm}\label{le57}{\it For each $j=1,\dots, n$, define $h_j$ and $\widetilde{h}_j^{>R_2}$ pointwise by
\begin{align}
h_j(x)&\coloneqq \sup_{\epsilon>0}h_{\epsilon,j}(x)\ \text{for $x\in\C$}, \\
\widetilde{h}_j^{>R_2}(x)&\coloneqq \sup_{\epsilon>0}h_{\epsilon,j}^{>R_2}(x)\ \text{for $x\in \C\setminus \overline{\D}_{R_2}$}.
\end{align}
For the remaining indices $j=n+1,\dots, r$, we define $h_j$ and $\widetilde{h}_j^{>R_2}$ so that $h=(h_1,\dots, h_r)$ and $\widetilde{h}^{>R_2}=(\widetilde{h}^{>R_2}_1,\dots, \widetilde{h}^{>R_2}_r)$ are real. Then the following holds:
\begin{enumerate}
\item For each $j=1,\dots, n$, $h_j$ is of class $C^{2j-1,\alpha}_\loc$ for any $0<\alpha<1$. $h$ solves equation (\ref{phi}) in the sense of distributions. 
\item $\widetilde{h}^{>R_2}=h^{>R_2}$.
\end{enumerate}
}
\end{lemm}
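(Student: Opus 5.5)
The plan is to obtain $h$ and $\widetilde{h}^{>R_2}$ as increasing limits of smooth solutions. Compactness will come from uniform two-sided bounds together with interior elliptic estimates. Assertion 2 will follow from Theorem \ref{main theorem 1} once we know the limit is complete.

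\textbf{Step 1: monotonicity and two-sided bounds.} By Lemma \ref{le54}, for each fixed $x$ and $j\le n$, the function $\epsilon\mapsto h_{\epsilon,j}(x)$ is increasing as $\epsilon\searrow0$, so the supremum defining $h_j$ is a monotone limit. For an upper bound we use Lemma \ref{le55}: $h_{\epsilon,j}\le h^{(\varphi)}_j$. This controls $h_j$ only off the polar set, so it is not yet locally uniform. To get a locally uniform upper bound I would use Lemma \ref{le56.5}, which gives $H_{\epsilon,k}^{-1}\le C e^{-\varphi_\epsilon}h_\refe$ uniformly in $\epsilon$, combined with the relation $\prod$ used in (\ref{eeprime}) and reality ($h_j=h_{r-j+1}^{-1}$). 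The key quantity is
\[
h_{\epsilon,j}^{-2}=\prod_{k=j}^{r-j}H_{\epsilon,k}\otimes(\text{a fixed reference factor}).
\]
Corollary \ref{ji} with uniform constants shows that all the $H_{\epsilon,k}$ are uniformly mutually bounded. So it suffices to bound $H_{\epsilon,1}$ from below locally uniformly. For this I would compare with the hyperbolic metric of a disc via the local estimate \cite[Lemma 35]{Miy6}, or equivalently apply Lemma \ref{cands} on a disc $\D$ against the complete solution on $\D$ associated with $\varphi_\epsilon|_{\D}$. That solution decreases in $\epsilon$ and is bounded by a fixed one thanks to the monotonicity $\varphi_\epsilon\le\varphi_{\epsilon_0}$. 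The lower bound is immediate: $h_{\epsilon,j}\ge h_{\epsilon_0,j}$ for $\epsilon<\epsilon_0$.

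\textbf{Step 2: regularity and the limit equation.} With $\log h_{\epsilon,j}$ locally uniformly bounded, the terms $\vol(H_{\epsilon,k})$ with $k\ge1$ are locally bounded. The term $\vol(H_{\epsilon,0})$ is bounded by $e^{r\varphi_\epsilon}$ times bounded factors, and $\varphi_\epsilon\le\varphi_{1}$ is locally bounded above. Hence the right-hand sides of the Poisson equations for $\log h_{\epsilon,j}$ are uniformly $L^\infty_\loc$. The $L^p$ estimates then give uniform $C^{1,\alpha}_\loc$ bounds, and monotone convergence lets us pass to the limit in the nonlinear terms. Here $e^{r\varphi_\epsilon}\searrow e^{r\varphi}$ monotonically, so the convergence holds in $L^1_\loc$ by dominated convergence. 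Thus $h$ solves (\ref{phi}) in the sense of distributions.

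For the higher regularity $C^{2j-1,\alpha}_\loc$ I would bootstrap along the Toda chain. Only the $j=1$ equation involves $e^{r\varphi}$, which is merely bounded, so it gives $C^{1,\alpha}$. The $j$-th equation involves $H_{j-1}$ and $H_j$, whose regularity is two derivatives better at each step away from $\varphi$. Inducting on $j$ via Schauder estimates then yields $C^{2j-1,\alpha}_\loc$ for $h_j$.

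\textbf{Step 3: assertion 2.} The same argument applied on $\C\setminus\overline{\D}_{R_2}$ shows that $\widetilde{h}^{>R_2}$ is a real solution associated with $\varphi|_{\C\setminus\overline\D_{R_2}}$ with $\det=1$. By Theorem \ref{main theorem 1}, it remains to prove completeness. The sequence $h^{>R_2}_\epsilon$ increases in $\epsilon$ and is bounded above by $h^{>R_2}$. Indeed, $\varphi_\epsilon\ge\varphi$, so by Example \ref{subandsuper} the metric $h^{>R_2}$ is a subsolution for $\varphi_\epsilon$, and Lemma \ref{cands} applies after checking boundedness via Corollary \ref{CYM}. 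The opposite bound comes from $h^{>R_2}_{\epsilon_0}\le\widetilde h^{>R_2}$. Together these give
\[
h_{\epsilon_0,j}^{>R_2}\le\widetilde h_j^{>R_2}\le h_j^{>R_2}.
\]
Both outer metrics are complete, and their $H_k$'s are controlled by the estimates above, so $\widetilde H^{>R_2}_1$ is complete. The other $\widetilde H_k$ are mutually bounded with it by Corollary \ref{ji}.

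\textbf{Main obstacle.} I expect the main obstacle to be the locally uniform upper bound on $h_{\epsilon,j}$ near the polar set of $\varphi$ in Step 1, where Lemma \ref{le55} degenerates. That bound is what drives both the compactness in Step 2 and the completeness comparison in Step 3.
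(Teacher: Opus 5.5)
\textbf{There is a gap in Step 1.} It sits exactly at the obstacle you flag, and the tool you propose for it points the wrong way. Lemma \ref{cands} requires the \emph{subsolution} to be complete. So when you compare $h_\epsilon|_{\D}$ with the complete solution $h^{\D}_\epsilon$ of $\varphi_\epsilon|_{\D}$, the latter must be the subsolution and $h_\epsilon|_{\D}$ the supersolution.

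The output is $h^{\D}_{\epsilon,j}\le h_{\epsilon,j}$. That is a \emph{lower} bound on $h_{\epsilon,j}$, i.e.\ an upper bound on the $H_{\epsilon,k}$ (the Ahlfors--Schwarz direction). You already have this for free from $h_{\epsilon,j}\ge h_{\epsilon_0,j}$.

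None of your fallbacks repairs this:
\begin{itemize}
\item You cannot swap the roles, because $h_\epsilon|_{\D}$ is not complete on $\D$.
\item The hyperbolic estimates of \cite[Lemma 35]{Miy6} concern complete solutions on hyperbolic domains, so they reach $h_\epsilon$ only through this same comparison.
\item The monotonicity of $h^{\D}_\epsilon$ in $\epsilon$ also only bounds it from below.
\end{itemize}
So the locally uniform lower bound on $H_{\epsilon,1}$ near $\varphi^{-1}(-\infty)$ is never established. Your Step 2 begins from ``$\log h_{\epsilon,j}$ locally uniformly bounded'', and the finiteness of $h_j$ on the polar set needs the same bound, so both rest on the missing step.

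\textbf{The missing idea} is that no a priori pointwise upper bound is needed; the integrability of $\varphi$ replaces it. The argument has four steps.
\begin{itemize}
\item Only the trivial lower bound on $h_{\epsilon,n}$ is needed to control the curvature. We have $H_{\epsilon,n}=h_{\epsilon,n}^{-(2n+2-r)}\le h_{\epsilon_0,n}^{-(2n+2-r)}$ and $0\le H_{\epsilon,k}\le H_{\epsilon,n}$ by \cite[Proposition 27]{Miy4}. (Your uniform version of Corollary \ref{ji} would also do here.) Hence the right-hand sides of the Poisson equations for $\phi_{\epsilon,j}$ are uniformly bounded in $L^\infty_\loc$.
\item Lemma \ref{le55} and monotonicity give $\frac{r-(2j-1)}{2}\varphi\le\phi_{\epsilon,j}\le\phi_{\epsilon_0,j}$.
\item Since $\varphi\not\equiv-\infty$ is subharmonic, $\varphi\in L^p_\loc$ for every $p<\infty$. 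So the $\phi_{\epsilon,j}$ are uniformly bounded in $L^p_\loc$ and converge in $L^1_\loc$.
\item Interior $W^{2,p}$ estimates then give uniform $C^{1,\beta}_\loc$ bounds. These yield your upper bound \emph{a posteriori}, the $C^{1,\alpha}_\loc$ convergence, and agreement of the pointwise supremum with the regular representative everywhere.
\end{itemize}
A more elementary variant: $|\Delta\phi_{\epsilon,j}|\le C$ makes $\phi_{\epsilon,j}-C|z|^2$ superharmonic. The mean value inequality then bounds $\phi_{\epsilon,j}(z_0)$ below, up to a constant, by a disc average of $\frac{r-(2j-1)}{2}\varphi$, which is finite.

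\textbf{A smaller slip in Step 3:} you have the roles reversed. Since $\varphi\le\varphi_\epsilon$, Example \ref{subandsuper} makes $h^{>R_2}$ a \emph{super}solution for $\varphi_\epsilon$, not a subsolution. The inequality $h^{>R_2}_{\epsilon,j}\le h^{>R_2}_j$ therefore comes from Lemma \ref{cands} with $h^{>R_2}_\epsilon$ as the complete subsolution. The required boundedness follows from Corollary \ref{CYM} applied to $h^{>R_2}$ with reference metric $H^{>R_2}_{\epsilon,k}$, using $e^{\varphi}\le e^{\varphi_\epsilon}$. The paper obtains completeness more directly, from the $\epsilon$-uniform lower bound of $H^{>R_2}_{\epsilon,j}$ by a multiple of the hyperbolic metric.
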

\begin{proof}We first prove assertion 1. We set $H_j\coloneqq h_j^{-1}\otimes h_{j+1}$ for each $j=1,\dots, r-1$ and $H_r\coloneqq (e^{-\varphi}h_\refe)^{-r}\otimes H_1^{-1}\otimes\cdots\otimes H_{r-1}^{-1}$. Then by Lemma \ref{le55}, $h$ is finite outside the polar set of $\varphi$. Take a smooth reference metric $h_\refe$ on $K_\C$ and define weight functions $\phi_{\epsilon,j}\ (\epsilon>0)$ and $\phi_j$ by 
\begin{align*}
h_j=e^{-\phi_j}h_\refe^{\frac{r-(2j-1)}{2}} \ \text{and}\ h_{\epsilon,j}=e^{-\phi_{\epsilon,j}}h_\refe^{\frac{r-(2j-1)}{2}}
\end{align*}
for $j=1,\dots, n$. By Lemma \ref{le55} and the monotonicity of $h_{\epsilon,j}$, for every fixed $\epsilon_0>0$ and $0<\epsilon<\epsilon_0$, we have
\begin{align}
\frac{r-(2j-1)}{2}\varphi\leq \phi_{\epsilon,j}\leq \phi_{\epsilon_0,j}. \label{phi*3}
\end{align}
Since both $\frac{r-(2j-1)}{2}\varphi$ and $\phi_{\epsilon_0,j}$ belong to $L^1_\loc$, the dominated convergence theorem implies that $\phi_{\epsilon,j}$ converges to $\phi_j$ in $L^1_\loc$ for $j=1,\dots, n$. Therefore, the curvature $F_{h_{\epsilon,j}}$ converges to $F_{h_j}$ in the sense of distributions. On the other hand, for each fixed $\epsilon_0>0$ and each $0<\epsilon<\epsilon_0$, it holds that
\begin{align*}
0<H_{\epsilon,n}=h_{\epsilon,n}^{-(2n+2-r)}\leq h_{\epsilon_0,n}^{-(2n+2-r)}.
\end{align*}
Also, by \cite[Proposition 27]{Miy4}, we obtain
\begin{align*}
0\leq H_{\epsilon,j}\leq H_{\epsilon,n}\leq h_{\epsilon_0,n}^{-(2n+2-r)}
\end{align*}
for each $j=0,1,\dots, n-1$ and $0<\epsilon<\epsilon_0$. 
Therefore,  $H_1,\dots, H_r$ belong to $L^\infty_\loc(\C)$ and by the Lebesgue dominated convergence theorem, for each $j=1,\dots, r$, as $\epsilon\searrow0$, $H_{\epsilon,j}$ converges to $H_j$ in the $L^1_\loc$ sense. Therefore, by taking the limit $\epsilon\searrow 0$ in the equation
\begin{align*}
\inum F_{h_{\epsilon,j}}+\vol(H_{\epsilon,j-1})-\vol(H_{\epsilon,j})=0 \ \text{for $j=1,\dots,n$},
\end{align*}
we conclude that 
\begin{align*}
\inum F_{h_j}+\vol(H_{j-1})-\vol(H_j)=0 \ \text{for $j=1,\dots,n$},
\end{align*}
in the sense of distributions. By reality, the remaining equations follow as well. Therefore, $h$ solves equation (\ref{phi}) in the sense of the distribution. By the standard elliptic theory (cf. \cite{Miy3}), for each $j=1,\dots, n$, $h_j$ agrees almost everywhere with a $C^{2j-1,\alpha}_\loc$ Hermitian metric. We claim that the equality holds everywhere. Indeed, since every subharmonic function that is not identically $-\infty$ belongs to $L^p_\loc$ for every $1\leq p<\infty$, by (\ref{phi*3}), for a fixed $\epsilon_0>0$, the $L^p_\loc$-norm of $(\phi_{\epsilon,j})_{0<\epsilon<\epsilon_0}$ is uniformly bounded, and thus, the standard interior $L^p$-estimate together with the uniform $L^p$-bounds of the curvatures $\inum F_{h_{\epsilon,j}}$ gives a uniform bound on the $W^{2,p}_\loc$-norms of $(\phi_{\epsilon,j})_{0<\epsilon<\epsilon_0}$. Fix $0<\alpha<1$, and choose $\alpha<\beta<1$ and $p>2$ sufficiently large so that $1-2/p>\beta$. The uniform local $W^{2,p}$-bounds imply uniform local $C^{1,\beta}$-bounds. Hence, by the compact embedding of $C^{1,\beta}$ into $C^{1,\alpha}$, every sequence $\epsilon_i\to0$ admits a subsequence converging in $C^{1,\alpha}_\loc$. Since the $L^1_\loc$-limit is unique, every such limit coincides with the $C^{2j-1,\alpha}_\loc$ representative of $h_j$. Therefore $h_{\epsilon,j}\to h_j$ in $C^{1,\alpha}_\loc$, and the pointwise limit $h_j$ coincides everywhere with this representative. This completes the proof of assertion 1. 

We next prove assertion 2. The proof of assertion 2 is essentially the same as that of assertion 1, so we only indicate the necessary modifications. By the same arguments as in Lemmas \ref{le54} and \ref{le55}, together with \cite[Proposition 27]{Miy4}, we can verify that $\widetilde{h}^{>R_2}$ solves equation (\ref{phi}) in the sense of distributions. By the standard elliptic theory (cf. \cite{Miy3}), for each $j=1,\dots,n$, $\widetilde{h}^{>R_2}_j$ agrees almost everywhere with a $C^{2j-1,\alpha}_\loc$ Hermitian metric. By repeating the argument above, we can verify that $h_{\epsilon}^{>R_2}$ converges to $\widetilde{h}^{>R_2}$ in $C^{1,\alpha}_\loc$ for any $0<\alpha<1$. Hence, the pointwise limit $\widetilde h_j^{>R_2}$ agrees everywhere with this representative.
We set
\begin{align*}
\widetilde{H}_j^{>R_2}\coloneqq \widetilde{h}_j^{>R_2-1}\otimes \widetilde{h}_{j+1}^{>R_2}\ \text{for $j=1,\dots, r-1$}.
\end{align*}
Since for each $j=1,\dots, r-1$, $H_{\epsilon,j}^{>R_2}$ is bounded below by a positive constant multiple of the hyperbolic metric (cf. \cite{Miy6}), $\widetilde{H}_j^{>R_2}$ is therefore bounded below by a positive constant multiple of the hyperbolic metric and is, in particular, complete.  Finally, by  reality, $\det(\widetilde{h}^{>R_2})=1$. Therefore, by the uniqueness of the complete solution, we obtain $\widetilde{h}^{>R_2}=h^{>R_2}$.
\end{proof}
Then we prove Theorems \ref{main theorem 2} and \ref{main theorem 3}.
\begin{proof}[Proof of Theorems \ref{main theorem 2} and \ref{main theorem 3}]  
As the last step of the proof, we prove that $h$ is mutually bounded with $h^{>R_2}$ on $\C\setminus\D_{R_1}$, and in particular, it is complete. 
By Lemma \ref{le57}, as $\epsilon\searrow0$, $h_{\epsilon,j}$ and $h_{\epsilon,j}^{>R_2}$ converge to $h_j$ and $h_j^{>R_2}$, respectively. Therefore, by taking the limit $\epsilon\searrow0$ in inequality (\ref{79}) in Lemma \ref{le56}, we obtain
\begin{align}
h_j^{-1}\otimes h_j^{>R_2}\leq C(R_2)\ \text{for $j=1,\dots,n$}.
\end{align}
We prove the converse inequality. Fix $\epsilon_0>0$. Take a constant $c$ so that
\begin{align*}
c>\max\{\max_{1\leq j\leq n}\sup_{z\in \partial \D_{R_1}}\log(h_j(z)\otimes h_{\epsilon_0,j}^{>R_2-1}(z)),0\},
\end{align*}
where $\partial \D_{R_1}$ denotes the boundary $\{z\in\C\mid |z|=R_1\}$ of $\D_{R_1}$. By inequality (\ref{80}) in Lemma \ref{le56}, $h_{\epsilon,j}\otimes h_{\epsilon,j}^{>R_2-1}$ is bounded above. Then by Lemma \ref{le57}, we have the following for each $j=1,\dots, n$ and $0<\epsilon<\epsilon_0$:
\begin{align*}
(h_{\epsilon,j}\otimes h_{\epsilon,j}^{>R_2-1})(z)\leq e^c\ \text{for all $z\in \partial \D_{R_1}$.}
\end{align*}
Then by Example \ref{genkidase} and Lemma \ref{cands}, we obtain 
\begin{align*}
e^{-c}h_{\epsilon,j}\otimes (h_{\epsilon,j}^{>R_2-1})\leq 1.
\end{align*}
Since the constant $c$ does not depend on $0<\epsilon<\epsilon_0$, taking the limit $\epsilon\searrow 0$, we obtain
\begin{align*}
h_j\otimes h_j^{>R_2-1}\leq e^c. 
\end{align*}
This implies the claim. 
\end{proof}

\subsection{Proof of Corollary\ref{ex}}\label{proof of mc}
\begin{proof}[Proof of Corollary \ref{ex}] 
The argument follows \cite[Proof of Propositions 5.7 and 5.10]{LM1}. Let $X$ be a Riemann surface and $\pi:\widetilde{X}\rightarrow X$ the universal covering map. Let $\varphi$ be a subharmonic weight function which is not identically $-\infty$ unless $X$ is hyperbolic. We denote by $\pi^\ast \varphi$ the pullback of this weight function by $\pi$. As in Section \ref{pb}, by setting $K_{\widetilde{X}}^{1/2}\coloneqq \pi^\ast K_X^{1/2}$, there is a natural isomorphism
\begin{align*}
\pi^\ast\left(\K_{X,r}\right)\simeq \K_{\widetilde{X},r},
\end{align*}
where we have used the notation $\K_{X,r}\coloneqq \bigoplus_{j=1}^rK_X^{\frac{r-(2j-1)}{2}}$ and $\K_{\widetilde{X},r}\coloneqq \bigoplus_{j=1}^rK_{\widetilde{X}}^{\frac{r-(2j-1)}{2}}$ to explicitly denote the underlying Riemann surfaces.
Then by Theorems \ref{main theorem 1}, \ref{main theorem 2}, and \cite[Theorem 2]{Miy6}, there exists a unique real complete solution $\widetilde{h}$ on $\K_{\widetilde{X},r}$ associated with $\pi^\ast\varphi$. For each $\gamma\in \pi_1(X)$, we contend that $\widetilde{h}$ is $\gamma$-invariant. Indeed, since $\pi^\ast\varphi$ is $\gamma$-invariant, by Proposition \ref{hsolves}, $\gamma^\ast \widetilde{h}$ also solves equation (\ref{phi}) associated to $\pi^\ast\varphi$, where $\gamma^\ast \K_{\widetilde{X},r}$ and $\K_{\widetilde{X},r}$ are naturally identified. By Theorem \ref{main theorem 1}, we have $\widetilde{h}=\gamma^\ast \widetilde{h}$. Therefore, $\widetilde{h}$ is $\pi_1(X)$-invariant, and thus, there exists a unique Hermitian metric $h$ on $\K_{X,r}$ such that $\widetilde{h}=\pi^\ast h$. By Proposition \ref{hsolves}, $h$ is a solution to equation (\ref{phi}), and moreover, by Proposition \ref{pistarphi}, $h$ is a complete solution. Since $\widetilde{h}$ is real, $h$ is also real. This implies the claim. 
\end{proof}

\noindent
E-mail address 1: natsuo.miyatake.e8@tohoku.ac.jp

\noindent
E-mail address 2: natsuo.m.math@gmail.com \\

\noindent
Mathematical Science Center for Co-creative Society, Tohoku University, 468-1 Aramaki Azaaoba, Aoba-ku, Sendai 980-0845, Japan.

\end{document}